\theoremstyle{remark}
\newcommand{\re}[1]{(\ref{#1})}
\newcommand{\MA} {\mathfrak{A}}
\newcommand{\Sup} {\displaystyle \sup}
\newcommand{\Max} {\displaystyle \max}
\newcommand{\Min} {\displaystyle \min}
\newcommand{\Int }    {\displaystyle \int}
\newcommand{\Sum }    {\displaystyle \sum}
\newcommand{\norm}[1]{\left\Vert#1\right\Vert}
\newcommand{\abs}[1]{\left\vert#1\right\vert}
\theoremstyle{plain}
\newtheorem{assumption}{\textbf{Assumption}}[section]
\newtheorem{theorem}{\textbf{Theorem}}[section]
\newtheorem{lemma}[theorem]{\textbf{Lemma}}
\newtheorem{proposition}[theorem]{\textbf{Proposition}}
\newtheorem{cor}[theorem]{\textbf{Corollary}}
\theoremstyle{definition}
\newtheorem{remark}[theorem]{\textbf{Remark}}
\newtheorem{definition}[theorem]{\textbf{Definition}}
\numberwithin{equation}{section}
\numberwithin{figure}{section}
\newcommand{\bh}{\mathbf{H}}
\newcommand{\bz}{\mathcal{Z}}
\newcommand{\be}{\mathbb{E}}
\newcommand{\bu}{\mathbf{u}}
\newcommand{\bv}{\mathbf{v}}
\newcommand{\bw}{\mathbf{w}}
\newcommand{\bo}{\mathcal{O}}
\newcommand{\bk}{\mathcal{K}}
\newcommand{\bp}{\mathcal{P}}
\newcommand{\buc}{\mathcal{U}}
\newcommand{\bl}{\mathcal{L}}
\newcommand{\bec}{\mathcal{E}}
\newcommand{\bhc}{\mathcal{H}}
\newcommand{\eps}{\varepsilon}
\title{ On the existence and uniqueness of solution to a stochastic  chemotaxis-Navier-stokes model}
\begin{document}
\selectlanguage{english}
\title[On the Stochastic chemotaxis-Navier-Stokes model]{On the existence and uniqueness of  solution to a stochastic  chemotaxis-Navier-stokes model}
\author{E. Hausenblas$^*$, B. Jidjou Moghomye$^*$ and P. A. Razafimandimby$^{**}$}
\dedicatory{\vspace{-10pt}\normalsize{$^{*}$  Department of Mathematics and Information Technology, Montanuniversitaet Leoben, Leoben Franz Josef Strasse 18, 8700 Leoben, Austria \\$^{**}$ School of Mathematical Science, Dublin City University, Collins Avenue Dublin $9$, Ireland  }}

\keywords{Navier-Stokes system; Chemotaxis; Stochastic; Probabilistic weak solution; strong solution }
\subjclass[2000]{35R60,35Q35,60H15,76M35,86A05}

\begin{abstract} In this article, we study a mathematical system which models the dynamic of the collective behaviour of oxygen-driven swimming bacteria in an aquatic fluid flowing in a two dimensional bounded domain under stochastic perturbation. This model can be seen as a  stochastic version of Chemotaxis-Navier-Stokes model. We prove the existence of a unique (probabilistic) strong  solution. In addition, we establish some properties of the strong solution. More precisely, we prove that the unique solution is non-negative and satisfies the mass conservation property and an energy inequality.
\end{abstract}

\date{\today}
\maketitle

\section{Introduction}

The migration of bacteria cells to a higher concentration of a chemical has been observed in biological applications concerning aerobic bacteria. This phenomenon, called chemotaxis, is presumed to have a deep impact on the time evolution of a bacteria population. There are different concepts of chemotaxis depending on the kind of bacteria and the chemical. In the present article, we focus on the mathematical model describing an oxygen-driven bacteria suspension swimming in an incompressible fluid like water which was firstly proposed in \cite{Tuv}. Mainly, the system consists of three coupled partial differential equations. The first equation describes the fluid flow with field velocity $\bu$.  The second equation describes the dynamic of  the oxygen concentration $c$, and the last equation describes the dynamic of the population density $n$ of the bacteria. Now, the coupled model can be written as
\begin{equation}\label{1.2}
\begin{cases}
 d \bu +\left[(\bu\cdot \nabla) \bu +\nabla P-\eta\Delta \bu
 \right]dt=n\nabla\Phi dt\hspace{0.2cm}\text{in}\hspace{0.2cm}[0,T]\times\bo,\vspace{0.2cm}\\
dc + \bu\cdot \nabla cdt=\left[\mu\Delta c -nf(c)\right]dt\hspace{0.2cm}\text{in}\hspace{0.2cm}[0,T]\times\bo,\vspace{0.2cm}\\
d n +\bu\cdot \nabla n dt= \left[\delta\Delta n-\nabla \cdot (n \chi(c)\nabla c)\right]dt\hspace{0.2cm}\text{in}\hspace{0.2cm}[0,T]\times\bo, \vspace{0.2cm}\\
\nabla \cdot \bu=0\hspace{0.2cm}\text{in}\hspace{0.2cm}[0,T]\times\bo, \vspace{0.2cm}\\
n(0)=n_0,\quad c(0)=c_0,\quad \bu(0)=\bu_0\qquad\text{in}\qquad\mathcal{O}.
\end{cases}
\end{equation}
In addition to the unknows  $\bu$, $c$, $n$, we have the scalar pressure $P$. The positive number $T$ is the final observation time, and $\bo\subset \mathbb{R}^2$ is a  domain where the cells and the fluid move and interact. The positive constants $\eta$, $\mu$
and $\delta$ are the corresponding diffusion coefficients for the fluid, the oxygen, and the bacteria, respectively. The given functions $\chi$ and $f$ denote the chemotactic sensitivity and the oxygen
consumption rate, respectively. The symbol $\varPhi$ denotes a given time-independent potential function representing, e.g., the gravitational force or centrifugal force.

The mathematical analysis of system \re{1.2} has been investigated by several authors.   The existence of weak solutions and the existence of a unique classical solution have been  proven, see for instance \cite{Cao,Chae,Duan1,Duan2,Duan3,Liu,Tao,Tao1,Win,Win1} and references therein. In the case $d=2$, the existence of a global weak solutions for \re{1.2} without the nonlinear convective term $(\bu\cdot\nabla)\bu$ is obtained in \cite{Duan2,Tao,Tao1} and in \cite{Duan3} with nonlinear diffusion. The existence of weak global solutions under various assumptions on the data can be found in \cite{Duan1,Liu}; the global existence of smooth solutions has been proven in \cite{Chae,Win}. Results on the existence of classical solution are found in  \cite{Cao,Duan2,Win1}.

Fix $T>0$. In this paper, we are interested in the mathematical analysis of a stochastic version of problem \re{1.2} in the two-dimensional bounded domain. More precisely, for a given family of independent, identically distributed standard real-valued  Brownian motions $\{\beta^k\}_{k=1,2}$,  and  a cylindrical Wiener processes  $W$ evolving on  a fixed separable Hilbert space $\mathcal{U}$  defined on a filtered probability space, $(\Omega, \mathbb{F}, (\mathcal{F}_t)_{t\in[0,T]}, \mathbb{P})$,
we consider the following system
\begin{equation}\label{1.1}
\begin{cases}
 d \bu +\left[(\bu\cdot \nabla) \bu +\nabla P-\eta\Delta \bu
 \right]dt=n\nabla\Phi dt+ g(\bu,c)dW_t\hspace{0.2cm}\text{in}\hspace{0.2cm}[0,T]\times\bo,\\
dc + \bu\cdot \nabla cdt=\left[\mu\Delta c -nf(c)\right]dt+ \gamma\Sum_{k=1}^2\sigma_k\cdot\nabla c \circ d \beta^k_t\hspace{0.2cm}\text{in}\hspace{0.2cm}[0,T]\times\bo,\\
d n +\bu\cdot \nabla n dt= \left[\delta\Delta n-\nabla \cdot (n \chi(c)\nabla c)\right]dt\hspace{0.2cm}\text{in}\hspace{0.2cm}[0,T]\times\bo, \vspace{0.2cm}\\
 \nabla \cdot \bu=0\hspace{0.2cm}\text{in}\hspace{0.2cm}[0,T]\times\bo,\vspace{0.2cm}\\
\frac{\partial n}{\partial \nu}=\frac{\partial c}{\partial \nu}=0\qquad\text{on}\qquad [0,T]\times\partial \mathcal{O},\vspace{0.2cm}\\
\bu=0\qquad\text{on}\qquad [0,T]\times\partial \mathcal{O},\vspace{0.2cm}\\
n(0)=n_0,\quad c(0)=c_0,\quad \bu(0)=\bu_0\qquad\text{in}\qquad\mathcal{O},
\end{cases}
\end{equation}
where $\mathcal{O}\subset\mathbb{R}^2$ is  a bounded domain with smooth boundary $\partial\mathcal{O}$ and the positive constant $\gamma$ is the intensity of the noise. The symbol $\circ$ means that the stochastic differential is understood in the Stratonovich sense. The main difference between the deterministic model \re{1.2} and the stochastic model \re{1.1} is the presence of the terms $g(\bu,c)dW_t$ and $ \gamma\sum_{k=1}^2\sigma_k\cdot\nabla c \circ d \beta^k_t$ called noise terms. The presence of these noise terms weakened the regularity in time of the velocity field and  the concentration of oxygen and so, make the mathematical analysis more involved.

Our investigation is motivated by the need for a sound mathematical analysis for the understanding of the effect of small scale perturbations such as random pollution of water or air which are inherently present in nature (see \cite{Chri,Nis}). The presence of these stochastic perturbations can lead to new and important phenomena. In fact, in two-dimensional case, many models such as  the Navier-Stokes equation, the Oldroy-B type model, the Landau-Lifshitz-Bloch equation, and magnetohydrodynamics model  with sufficiently degenerate noise for example have a unique invariant measure and hence exhibit ergodic behavior in the sense that the time average of a solution is equal to the average over all possible initial data. Despite continuous efforts in the last 30  years, such property has so far not been found for the deterministic counterpart of these equations. This property could lead
to profound understanding of the nature of turbulence. To the best of our knowledge, the only papers that consider the mathematical analysis of a stochastic version of chemotaxis-fluid interaction model are \cite{Zhai,Zhang} where the authors have proved the existence of both mild and weak solutions for the model \re{1.1} with $\gamma=0$ and $g(\bu,c)=g(\bu)$ in a two and three dimensional bounded domain under some strong assumptions on the data.

The aim of this article is to study the global resolvability of problem \re{1.1} with  positive parameters $\eta$, $\mu$ $\gamma$ and $\delta$ different from zero. We prove the existence and uniqueness of a probabilistic strong solution in a two dimensional bounded domain. The proof is based on a Galerkin scheme and  the Yamada-Watanabe Theorem. Let us recall that the presence of the noise on the $c$-equation makes the mathematical analysis of the model more involved. In fact, the noise term in $c$-equation makes impossible the application of the deterministic maximum principle method for the proof of the non-negativity of solution as is done in the literature. Moreover, the stochastic version of maximum principle method where we learn from \cite{Denis1} need to be adapted in order to conserve the positivity of solutions. The main difference between our work and that of \cite{Zhai} is that the  model considered in \cite{Zhai}  does not contain any noise on the $c$-equation and the noise term in the $\bu$ equation depend only on the velocity field $\bu$. Therefore, the present paper can be seen as a generalization of \cite{Zhai}.

The organisation of this article is as follows. In Section 2, we define various functional spaces,  and introduce  assumptions which are used throughout in our paper. In Section 3, we state and prove  the main result which is the existence of a unique probabilistic strong solution. In Section 4, we give a detailed proof of important ingredients which have been useful for the proof of the main result. In Section 5, we prove the mass conservation property and the non-negativity of the strong solution. Besides that, we prove an energy inequality which may be useful for the study of the invariant measure in future.

\section{Functional setting of the model and assumptions}

Throughout the paper, we assume that $\bo \subset \mathbb{R}^2$  is a bounded domain with boundary $\partial \bo$ of class $C^\infty$. The symbol $ L^p(\mathcal{O})$ denotes the $L^p$ space with respect to the Lebesgue measure while $W ^{m,p}(\mathcal{O})$ denotes the Sobolev space of functions whose distributional derivatives of order up to $m$ belong to $L^p(\bo)$. The spaces of functions $\phi : \bo\to \mathbb{R}^2$ such that each component of $\phi$  belongs to $L^p(\bo)$ or to $W^{m,p}(\bo)$ are denoted by $\mathbb{L}^p(\bo)$ or by  $\mathbb{W}^{m,p}(\bo)$.  We denote by $\abs{.}_{L^p}$ the norm on $ L^p(\mathcal{O})$ or  $\mathbb{L}^p(\bo)$  and by  $\norm{.}_{W^{m,q}}$  the norm on $W^{m,p}(\mathcal{O})$ or $\mathbb{W}^{m,p}(\bo)$.  For $p = 2$ the function space $W^{m,2}(\bo)$ (resp. $\mathbb{W}^{m,2}(\bo)$) is denoted by $H^{m}(\bo)$ (resp. $\mathbb{H}^{m}(\bo)$)  and its norm will be denoted by $\abs{\cdot}_{H^m}$. By $\mathbb{H}_0^{1}(\bo)$ we mean the space of functions in $\mathbb{H}^1$ that vanish on the boundary  $\partial\bo$. The inner product on $L^2(\bo)$ will be denoted by $(\cdot,\cdot)$.  Following the notations using in \cite{Tem} for the Navier-Stokes model, we introduce the following space
 $\mathcal{V}=\{ \bv\in C^\infty_c(\bo;\mathbb{R}^2): \text{such that } \nabla\cdot\bv=0\},$
and  define the spaces  $H$ and $V$ as the closure of $\mathcal{V}$ in  $\mathbb{L}^2(\bo)$ and $\mathbb{H}_0^1(\bo)$, respectively. We endow $H$ with the scalar product and norm of  $\mathbb{L}^2(\bo)$. As usual, we equip the space $V$ with the gradient-scalar product and the gradient-norm $\abs{\nabla\cdot}_{L^2}$, which is equivalent to the $\mathbb{H}_0^1(\bo)$-norm. As usual, $\mathcal{P}$ denotes the Helmholtz projection from $\mathbb{L}^2(\bo)$ onto $H$. %We represent $A_0$ as the realization of Stokes operator $-\mathcal{P}\Delta$ in $H$ with domain $D(A_0)=\mathbb{H}^2(\bo)\cap V$. Thanks to \cite[Chapter I, Section 2.6]{Tem},  there exists an orthonormal basis $\{\bw_i\}_{i=1}^\infty$ of $H$ consisting of the eigenfunctions of the Stokes operator $A_0$. More generally, we can define $A_0^s$, for any $s\in\mathbb{R}$ noting that For $s\in [0, \infty)$,  $D(A_0^s )$ is a Hilbert space endowed with the graph inner product with norm $\abs{A_0^s\cdot}_{L^2}$ which is an equivalent norm to the canonical $\mathbb{H}^{2s}$-norm. The  Hilbert space $D(A_0^s )$ for $s\in (-\infty,0)$ can be defined by standard extrapolation methods. In particular, the space $D(A_0^{-s} )$ is the dual of $D(A_0^s )$ for $s\geq0$.
It is also known that $V$ is dense in $H$ and that the embedding is continuous and compact. Identifying $H$ with its dual, we have the Gelfand triple
$V\hookrightarrow H\hookrightarrow V^*$.

We define the Newmann Laplacian  operator on $L^2(\bo)$ by $A_1\phi=-\Delta\phi $ for all $\phi\in D(A_1)$ where
\begin{equation*}
 D(A_1)=\{\phi\in H^2(\bo): \frac{\partial\phi}{\partial\nu}=0, \text{ on } \partial\bo \}.
\end{equation*}
It is known that $A_1$ is a non-negative self-adjoint operator in $L^2(\bo)$. As we are working on a bounded
domain, $A_1$ has compact resolvent, see e.g. \cite{Brz1}.
Hence, there exists an orthonormal basis $\{\varphi_i\}_{i=1}^\infty\subset C^\infty(\bo)$ of $L^2(\bo)$ consisting of the eigenfunctions of the Neumann Laplacian $A_1$. Also we have the dense and compact embeddings $H^2(\bo)\hookrightarrow H^1(\bo)\hookrightarrow L^2(\bo)$.

Now we define the Hilbert space $\bhc$ by
\begin{equation*}
\bhc =H\times H^1(\bo),
\end{equation*}
endowed with the scalar product whose associated norm is given by
\begin{equation*}
\abs{(\bu,c)}^2_\bhc=\abs{\bu}^2_{L^2}+\abs{c}^2_{H^1}, \ (\bu,c)\in \bhc.
\end{equation*}

We introduce the bilinear operators $B_0$, $B_1$ and $R_2$   and their associated trilinear forms $b_0$, $b_1$ and $r_2$ respectively as follows:
 \begin{equation*} 
 	\begin{split}
&(B_0(\bu ,\bv),\bw)=\int_\bo[(\bu(x)\cdot\nabla)\bv(x)]\cdot \bw (x)dx=b_0(\bu,\bv,\bw), \ \forall \bu\in V,\  \bv\in V,\  \bw\in V,\\
&(B_1(\bu, c),\psi)=\int_\bo\bu(x)\cdot\nabla c(x)\psi(x) dx=b_1(\bu,c,\psi),	\ \forall \bu\in V, \  c\in H^1(\bo),\  \psi \in  H^1(\bo),
 	\end{split}
 \end{equation*}
 \begin{align}
(R_2(n, c),\psi)&=\int_\bo \nabla\cdot(n(x)\nabla c(x))\psi(x) dx\notag\\
&=-\int_\bo n(x)\nabla c(x)\cdot\nabla\psi(x) dx=r_2(n,c,\psi),	\ \forall n\in L^2(\bo), \  c\in H^1(\bo),\  \psi \in H^3(\bo).\notag
 \end{align}
It is well known in \cite[Chapter II, Section 1.2]{Tem} that the operator $B_0$ is well-defined.  The operator $B_1$ is well-defined for  $\bu\in V$,   $c\in H^1(\bo)$ and  $ \psi \in  H^1(\bo)$ since by the H\"older inequality and the Sobolev embedding of $H^1(\bo)$ into $L^4(\bo)$, we have
\begin{align*}
(B_1(\bu, c),\psi)&\leq \abs{\bu}_{L^4}\abs{\nabla c}_{L^2}\abs{\psi}_{L^4}\\
&\leq\bk  \abs{\nabla\bu}_{L^2}\abs{ c}_{H^1}\abs{\psi}_{H^1}.
\end{align*}
In a similar way, we can also check that the operator $R_2$ is well-defined for  $n\in L^2(\bo)$,   $ c\in H^1(\bo)$ and  $ \psi \in  H^1(\bo)$. In fact, in addition to  the H\"older inequality, by using the Sobolev embedding of $H^2(\bo)$ into $L^\infty(\bo)$, we see that
\begin{align*}
	(R_2(n, c),\psi)&\leq \abs{n}_{L^2}\abs{\nabla c}_{L^2}\abs{\nabla\psi}_{L^\infty}\\
	&\leq \abs{n}_{L^2}\abs{ c}_{H^1}\abs{\psi}_{H^3}.
\end{align*}

We also introduce the following coupling  mappings $R_0$ and $R_1$
\begin{equation*}
\begin{split}
	&(R_0(n,\varPhi), \bv)=\int_\bo n(x)\nabla\varPhi(x)\cdot\bv(x) dx,\ \forall n  \in L^2(\bo), \ \bv\in H,\ \varPhi\in W^{1,\infty}(\bo),\\
&(R_1(n, c),\psi)=\int_\bo n(x) f(c(x))\psi(x) dx,	\  \forall n\in L^2(\bo), \ c\in L^\infty(\bo),\  \psi \in L^2(\bo), \ f\in L^\infty(\mathbb{R}).
\end{split}
\end{equation*}
We note that the operators $R_0$ and $R_1$ are well-defined. Indeed, for  $n  \in L^2(\bo)$,  $\bv\in H$ and $ \varPhi\in W^{1,\infty}(\bo)$ we see that
\begin{equation*}
(R_0(n,\varPhi), \bv)\leq \abs{\varPhi}_{W^{1,\infty}}\abs{n}_{L^2}\abs{\bv}_{L^2}.
\end{equation*}
Further, for $n\in L^2(\bo)$,  $c\in L^\infty(\bo)$,  $\psi \in L^2(\bo)$ and  $f\in L^\infty(\mathbb{R})$, we also see that
\begin{equation*}
(R_1(n, c),\psi)\leq \abs{f(c)}_{L^{\infty}}\abs{n}_{L^2}\abs{\psi}_{L^2}.
\end{equation*}
Hereafter, $\mathfrak{A}:=(\Omega, \mathbb{F}, (\mathcal{F}_t)_{t\in[0,T]}, \mathbb{P})$ will be a complete probability space equipped with a filtration $(\mathcal{F}_t)_{t\in[0,T]}$ satisfying the usual conditions, i.e. the filtration is right-continuous and all null
sets of $\mathcal{F}$ are elements of $\mathcal{F}_0$.  Let $\buc$ be a separable Hilbert space with basis $\{e_k\}_{k=1}^\infty$  and  $W$ be a cylindrical  Wiener process over $\buc$. In particular, according to \cite[Proposition 4.3]{Da} the Wiener process $t\mapsto W_t$ can be expressed as
\begin{equation*}
	W_t=\sum_{k=1}^\infty W_t^ke_k,\qquad\text{for all } t\in [0,T],
\end{equation*}
where $\{W^k:k\in\mathbb{N}\}$ is a family of mutually independent  standard $\mathbb{R}$-valued Brownian motion over $\mathfrak{A}$. %$(\mathcal{F}_t)_{t\in[0,T]}$.  %Throughout this paper we assume that $W$, $\beta^1$ and $\beta^2$ are independent.

For any Hilbert space $X$, we will denote by $\mathcal{L}^2(\buc;X)$ the separable Hilbert space of Hilbert-Schmidt operators from $\buc$ into $X$.
For a separable Banach space $X$,  $p\in[1,\infty)$ and $T>0$ we denote by  $\mathcal{M}^p_{\MA}(0,T;X)$ the space of all processes $\psi\in L^p(\Omega\times(0,T),d\mathbb{P}\otimes dt;X)$ over $\MA$, being  $\{\mathcal{F}_t\}_{t\in [0,T]}$-progressively measurable. We denote by  $L^p(\Omega;C([0,T];X))$, $1\leq p<\infty$, the space of all continuous and $\{\mathcal{F}_t\}_{t\in [0,T]}$-progressively measurable $X$-valued processes $\{\psi_t;\ 0\leq t\leq T \}$ over $\MA$  satisfying
\begin{equation*}
	\mathbb{E}\left[\sup_{t\in[0,T]}\norm{\psi_t}_X^p \right]<+\infty.
\end{equation*}
If $Y$ is a Banach space, we  will denote by $\mathcal{L} (X, Y )$ the space of bounded linear operators.

From the theory of stochastic integration on infinite dimensional Hilbert space (see \cite[Chapter 4]{Da}), for any process $\rho\in\mathcal{M}^2_{\MA}(0,T;\mathcal{L}^2(U;H))$, the stochastic integral of $\rho$ with respect to the Wiener process $t\mapsto W_t$ is denoted by $$\int_0^t\rho(s)dW_s,\quad0\leq t\leq T,$$ and is defined as the unique continuous $H$-valued martingale over $\mathfrak{A}$, such that for all $h\in H$, we have
\begin{equation*}
	\left(\int_0^t\rho(s)dW_s,h\right)_H=\sum_{k=1}^\infty\int_0^t(\rho(s)e_k,h)_HdW^k_s,\ \ 0\leq t\leq T,
\end{equation*}
where the integral with respect to $dW^k_s$ is understood in the sense of It\^o.

We introduce now the following conditions on the parameters and functions involved in the system (\ref{1.1}).\newline
\begin{assumption}\label{ass_1}
For the parameter functions $\chi$, $f$ and $\varPhi$  in (\ref{1.1}), we assume that $\chi(c)$ is a non-negative constant,  i.e. $\chi(c)=\chi>0$ and  require that  $f$ and $\varPhi$  satisfy
\begin{equation}\label{2.1}
	\begin{split}
		%&S\in C^2([0,\infty)), \qquad S>0\qquad\text{in } [0,\infty),\\
		&f\in C^1([0,\infty)), \qquad f(0)=0,\qquad\text{and}\qquad f>0,\qquad f'>0\qquad\text{in } (0,\infty),\\
		&\varPhi\text{ is time-independent and }\varPhi\in W^{1,\infty}(\bo).
	\end{split}
\end{equation}
\end{assumption}
Throughout this paper, we set
\begin{equation}\label{Eq:K-f}
\bk_f:=\frac{\chi^2}{2\delta\Min_{0\leq c\leq \abs{c_0}_{L^\infty}}f'}+\frac{1}{\Min_{0\leq c\leq \abs{c_0}_{L^\infty}}f'}.
\end{equation}
%\begin{equation}\label{2.2}
%G'>0,\qquad G^{''}\leq0,\qquad (S(s)G(s))'\geq 0\qquad \text{in } [0,\infty).
%\end{equation}
Furthermore,  we consider a family of vector fields $\{\sigma_1,\sigma_2\}$ satisfying the following assumptions.
\begin{assumption}\label{ass_2}
	\item[($\mathbf{A}_1$)] For $k\in \{1,2\}$, $\sigma_k:=(\sigma_k^1,\sigma_k^2)\in  W^{1,\infty}(\mathcal{O})\times W^{1,\infty}(\mathcal{O})$ and $\sigma_k=0$ on $\partial \bo$.
	\item[($\mathbf{A}_2$)] $\sigma_k$ is a divergence free vector fields, that is $\nabla\cdot \sigma_k=0$, for $k=1,2$.
	\item[($\mathbf{A}_3$)]  The matrix-valued function $q : \bo \times  \bo \to \mathbb{R}^2 \otimes  \mathbb{R}^2$ defined by
	\begin{equation}
		q^{i,j}(x,y)=\sum_{k=1}^{2}\sigma_k^i(x)\sigma_k^j(y), \qquad \forall i, j=1,2\ \text{and} \ \forall x, y \in\bo,
	\end{equation}
	satisfies  $q(x, x) = Id_{\mathbb{R}^2}$ for any $x\in  \bo$.
\end{assumption}
Before introducing the other standing assumptions used in this paper, we shall make few important remarks and observations on Assumption \ref{ass_2} and the noise $\Sum_{k=1}^2\sigma_k\cdot\nabla c \circ d \beta^k_t$.
\begin{remark}
	Setting for $k=1,2$, 
	\begin{equation*}
 \sigma_k(x)=\begin{cases}
 	g_k\quad \text{  if } x\in \bar{\bo}\setminus\partial\bo,\vspace{0.2cm}\\
 	0 \quad  \text{  if } x\in \partial\bo,
 \end{cases}
	\end{equation*}
	where $\{ g_1, g_2\}$ is the canonical basis of $\mathbb{R}^2$,  the family of vector fields $\{\sigma_1,\sigma_2\}$ satisfies (\textbf{A}$_1$), (\textbf{A}$_2$) and (\textbf{A}$_3$).
\end{remark}
Hereafter we will use the following notation
\begin{equation}
\abs{\sigma}_{L^\infty}=\left(\sum_{k=1}^2\abs{\sigma_k}^2_{L^\infty}\right)^{1/2}\ \text{ and } \ \abs{\sigma}_{W^{1,\infty}}=\left(\sum_{k=1}^2\abs{\sigma_k}^2_{W^{1,\infty}}\right)^{1/2}.
\end{equation}
Owing to \cite[p. 65, Section 4.5.1]{Duan}, the Stratonovich integral $\gamma\int_0^t\sigma_k\cdot\nabla c(s) \circ d \beta^k_s$ can be expressed  as the It\^o integral with a  correction term as follows:
\begin{equation}
\gamma\int_0^t\sigma_k\cdot\nabla c(s) \circ d \beta^k_s=\frac{1}{2}\int_0^tD_c(\gamma\sigma_k\cdot\nabla c(s))(\gamma\sigma_k\cdot\nabla c(s))ds+\gamma\int_0^t\sigma_k\cdot\nabla c(s)d \beta^k_s, \label{2.6*}
\end{equation}
where, $D_c(\gamma\sigma_k\cdot\nabla c)$ denotes the Fr\'echet derivative of $\gamma\sigma_k\cdot\nabla c$ with respect to $c$.
\begin{lemma}\label{lemma2.1}
	If Assumption \ref{ass_2} holds, then for all $t\in [0,T]$,
	\begin{equation}
	\frac{1}{2}\int_0^t\sum_{k=1}^2  D_c(\gamma\sigma_k\cdot\nabla c(s))(\gamma\sigma_k\cdot\nabla c(s))ds=\frac{\gamma^2}{2}\int_0^t\Delta c(s)ds, \ c\in H^2(\bo).\label{2.7}
	\end{equation}
\end{lemma}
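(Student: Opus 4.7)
The plan is to reduce the claim to the pointwise identity
\[
\sum_{k=1}^{2}\sigma_{k}\cdot\nabla\bigl(\sigma_{k}\cdot\nabla c\bigr)=\Delta c,\qquad c\in H^{2}(\mathcal{O}),
\]
after which multiplying by $\gamma^{2}$, dividing by $2$, and integrating in time immediately gives \eqref{2.7}. So the task is really the computation of $D_{c}(\gamma\sigma_{k}\cdot\nabla c)(\gamma\sigma_{k}\cdot\nabla c)$ and a careful bookkeeping of the vector-field identities in Assumption \ref{ass_2}.

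First I would observe that the map $c\mapsto \gamma\sigma_{k}\cdot\nabla c$ is linear in $c$, so its Fr\'echet derivative in the direction $h$ is $\gamma\sigma_{k}\cdot\nabla h$. Applied to $h=\gamma\sigma_{k}\cdot\nabla c$, this gives
\[
D_{c}(\gamma\sigma_{k}\cdot\nabla c)(\gamma\sigma_{k}\cdot\nabla c)=\gamma^{2}\,\sigma_{k}\cdot\nabla(\sigma_{k}\cdot\nabla c).
\]
Writing this out in coordinates yields
\[
\sigma_{k}\cdot\nabla(\sigma_{k}\cdot\nabla c)=\sum_{i,j=1}^{2}\sigma_{k}^{i}\,(\partial_{i}\sigma_{k}^{j})\,\partial_{j}c+\sum_{i,j=1}^{2}\sigma_{k}^{i}\sigma_{k}^{j}\,\partial_{i}\partial_{j}c,
\]
which is meaningful pointwise for $c\in H^{2}(\mathcal{O})$ and $\sigma_{k}\in W^{1,\infty}$.

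Next I would dispatch the second-order piece by summing in $k$ and invoking $(\mathbf{A}_{3})$: since $q^{i,j}(x,x)=\sum_{k}\sigma_{k}^{i}(x)\sigma_{k}^{j}(x)=\delta_{ij}$, that sum collapses to $\sum_{i,j}\delta_{ij}\partial_{i}\partial_{j}c=\Delta c$. The main obstacle is showing that the first-order part cancels. The idea is to differentiate the identity $\sum_{k}\sigma_{k}^{i}\sigma_{k}^{j}=\delta_{ij}$ with respect to $x_{\ell}$, obtaining
\[
\sum_{k}(\partial_{\ell}\sigma_{k}^{i})\sigma_{k}^{j}+\sum_{k}\sigma_{k}^{i}(\partial_{\ell}\sigma_{k}^{j})=0,
\]
then set $\ell=i$ and sum over $i$. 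Using $(\mathbf{A}_{2})$, namely $\sum_{i}\partial_{i}\sigma_{k}^{i}=\nabla\cdot\sigma_{k}=0$, the first term vanishes and we conclude
\[
\sum_{k,i}\sigma_{k}^{i}\,\partial_{i}\sigma_{k}^{j}=0\qquad\text{for every }j,
\]
so that the first-order piece of $\sum_{k}\sigma_{k}\cdot\nabla(\sigma_{k}\cdot\nabla c)$ disappears.

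Putting the two steps together yields $\sum_{k}\sigma_{k}\cdot\nabla(\sigma_{k}\cdot\nabla c)=\Delta c$, and the integrated form \eqref{2.7} follows. I expect the only delicate point of the argument to be the combined use of $(\mathbf{A}_{2})$ and $(\mathbf{A}_{3})$ to kill the first-order terms; everything else is differentiation and bookkeeping.
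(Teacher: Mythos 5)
Your proof is correct and follows essentially the same route as the paper: both arguments reduce the claim to $\sum_{k}\sigma_{k}\cdot\nabla(\sigma_{k}\cdot\nabla c)=\Delta c$ and kill the first-order terms by differentiating $q^{ij}(x,x)=\delta_{ij}$ and invoking $\nabla\cdot\sigma_{k}=0$, which is exactly the paper's identity $\sum_{k}\sigma_{k}\cdot\nabla\sigma_{k}=0$. The only cosmetic difference is that you expand $\sigma_{k}\cdot\nabla(\sigma_{k}\cdot\nabla c)$ directly by Leibniz, whereas the paper first rewrites it in divergence form as $\nabla\cdot\bigl(\sigma_{k}\nabla\cdot(c\sigma_{k})\bigr)$ before applying the product rule.
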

\begin{proof}
Let $c\in H^2(\bo)$ and  $t\in [0,T]$ be arbitrary but fixed. Then for all $s\in [0,t]$ and  $k= 1, 2$,
	\begin{equation*}
	\sum_{k=1}^2 D_c(\gamma\sigma_k\cdot\nabla c)(\gamma\sigma_k\cdot\nabla c)=\gamma\sum_{k=1}^2\sigma_k\cdot\nabla (\gamma \sigma_k\cdot\nabla c)=\gamma^2\sum_{k=1}^2\sigma_k\cdot\nabla (\sigma_k\cdot\nabla c).
	\end{equation*}
	Since $\nabla\cdot \sigma_k=0$, we remark that $\sigma_k\cdot\nabla c=\nabla\cdot (c\sigma_k)$ and therefore,
	\begin{align}
	\gamma^2\sum_{k=1}^2\sigma_k\cdot\nabla (\sigma_k\cdot\nabla c)=\gamma^2\sum_{k=1}^2\sigma_k\cdot\nabla (\nabla\cdot (c\sigma_k))=\gamma^2\sum_{k=1}^2\nabla\cdot\left(\sigma_k\nabla\cdot (c\sigma_k)\right).\label{2.8}
	\end{align}
	For the second equality we have used once more the fact that $\nabla\cdot \sigma_k=0$ for all $k=1,2$.
	
Since $\sigma_k=(\sigma_k^1,\sigma_k^2)\in W^{1,\infty}(\mathcal{O})\times W^{1,\infty}(\mathcal{O})$ and $c\in H^2(\bo)\hookrightarrow L^\infty(\bo)$,	 we can apply the  differentiation of  product formula given in \cite[Proposition 9.4, P. 269]{Bre} to obtain,
	\begin{equation}
	\sum_{k=1}^2\nabla\cdot\left(\sigma_k(\nabla\cdot (c\sigma_k)\right)=\sum_{i,j=1}^2\frac{\partial^2}{\partial x_i\partial x_j}(q^{ij}(x,x)c)-\nabla\cdot\left(\left(\sum_{k=1}^2\sigma_k\cdot\nabla\sigma_k \right)c\right),\label{2.9}
	\end{equation}
	where  $\sigma_k\cdot\nabla\sigma_k$ is the vector field with components
	\begin{equation*}
	(\sigma_k\cdot\nabla\sigma_k)^i=\sum_{j=1}^2\sigma^j_k\frac{\partial}{\partial x_j}\sigma^i_k.
	\end{equation*}
	Applying the  differentiation of  product formula once more, for $j = 1,2$, we see that
	\begin{equation}
	\sum_{k=1}^2\sum_{j=1}^2(\nabla\sigma_k\cdot\sigma_k)^i=\sum_{j=1}^2\frac{\partial}{\partial x_j}q^{ij}(x,x)-\sum_{k=1}^2\sigma_k^i\nabla\cdot\sigma_k=\sum_{j=1}^2\frac{\partial}{\partial x_j}\delta_{ij}=0.\label{2.10}
	\end{equation}
	In (\ref{2.10}), we have used the fact that $\nabla\cdot\sigma_k=0$ and also the fact that $q^{ij}=\delta_{ij}$ (see  (\textbf{A}$_3$) of assumption $2$). \\
	From \re{2.9} and \re{2.10}, we infer that
	\begin{equation}
	\sum_{k=1}^2\nabla\cdot\left(\sigma_k\nabla\cdot (c\sigma_k)\right)=\sum_{i,j=1}^2\frac{\partial^2}{\partial x_i\partial x_j}(q^{ij}(x,x)c)=\sum_{i,j=1}^2\frac{\partial^2}{\partial x_i\partial x_j}(\delta_{ij}c)=\Delta c.\label{2.11}
	\end{equation}
	Combining \re{2.11} and \re{2.8}, we derive \re{2.7} which completes the proof of Lemma \ref{lemma2.1}.
\end{proof}
Define for $k\in \{1,2\}$, a map $\phi_k:H^1(\bo)\to L^2(\bo)$ by $\phi_k(c)=\sigma_k\cdot\nabla c$. Then, the map  $\phi: H^1(\bo)\to  \mathcal{L}^2(\mathbb{R}^2; L^2(\bo))$ given by
\begin{equation*}
\phi(c)(h)=\sum_{k=1}^2\phi_k(c)h_k,\qquad c\in H^1(\bo),\  h=(h_1,h_2)\in\mathbb{R}^2,
\end{equation*}
is well defined under the condition (\textbf{A}$_1$).
Let $\{g_1,g_2\}$ be the orthonormal basis of $\mathbb{R}^2$ then $\phi(c)(g_k)=\phi_k(c)$, for all $c\in H^1(\bo)$. Let $\beta=(\beta^1,\beta^2)$ be a standard two dimensional Brownian motion over $\MA$, independent of $W$. We will repeatedly use the following notation
\begin{equation}
\phi(c)d\beta_s=\sum_{k=1}^2\phi_k(c)d \beta^k_s.	\label{2.3}
\end{equation}
We recall that
throughout this paper, the symbols $\bk$, $\bk_{GN}$ and $\bk_i$, $i\in\mathbb{N}$ will denote positive constants which may change from one line to another.
\begin{assumption}\label{ass_3}
 Let   $g:\bhc\to \mathcal{L}^2(\buc,H)$ be a continuous mapping.
In particular,
 there exists a positive constant $L_g$ such that for any $(\bu,c)\in  \bhc$,
\begin{equation}\label{2.5}
	\abs{g(\bu,c)}_{\mathcal{L}^2(\buc,H)}\leq L_g(1+\abs{(\bu,c)}_{\bhc}).
\end{equation}
\end{assumption}
\begin{assumption}\label{ass_4}
Let $g:\bhc\to \mathcal{L}^2(\buc,H)$ be a Lipschitz-continuous mapping.
In particular,
 there exists a positive constant $L_{Lip}$ such that for all $(\bu_i,c_i)\in \bhc$, $i=1,2$,
\begin{equation}
\abs{g(\bu_1,c_1)-g(\bu_2,c_2)}_{\mathcal{L}^2(\buc;H)}\leq L_{Lip}\abs{(\bu_1-\bu_2,c_1-c_2)}_\bhc.\label{5.2}
\end{equation}
\end{assumption}
Using  the previous notations,   setting $\xi=\eta+\frac{\gamma^2}{2}$, and taking into account Lemma \ref{lemma2.1},
 the model \re{1.1} can  formally be written in the following abstract form
\begin{equation}\label{3.1}
	\begin{split}
	&	\bu(t) +\Int_0^t[\eta A_0\bu(s)+B_0(\bu(s),\bu(s)]ds=\bu_0+\Int_0^tR_0(n(s),\varPhi)ds + \Int_0^tg(\bu(s),c(s)) dW_s,\vspace{0.2cm}\\
	&	c(t) + \Int_0^t[\xi A_1c(s)+B_1(\bu(s), c(s))]ds= c_0-\Int_0^t R_1(n(s),c(s))ds+\gamma\int_0^t\phi(c(s))d\beta_s,\vspace{0.2cm}\\
	&	n(t) +\Int_0^t[\delta A_1n(s)+B_1(\bu(s),n(s))]ds =n_0-\Int_0^t R_2(n(s),c(s))ds.
	\end{split}
\end{equation}
These equations are understood being valid in $V^*$,  $H^{-2}(\bo)$ and  $H^{-3}(\bo)$, respectively. 

We end this section by introduce  some notations. Let $Y$ be a Banach space.
By $\mathcal{C}([0,T]:Y)$ we denote the space of continuous functions  $\bv:[0,T]\to Y$ with the topology induced by the norm  defined by
	$$\abs{\bv}_{\mathcal{C}([0,T];Y)}:=\Sup_{0\leq s\leq T}\norm{\bv(s)}_{Y}.$$
With $L^{2}(0,T;Y)$ we denote the space of measurable functions $\bv:[0,T]\to Y$ with the topology generated by the norm $$\abs{\bv}_{L^{2}(0,T;Y)}:=\left(\int_0^T\norm{\bv(s)}_{Y}^2ds\right)^{1/2},$$
while by   $L_w^{2}(0,T;Y)$ we denote the space of measurable functions $\bv:[0,T]\to Y$ with weak topology.

For a Hilbert space $X$, we denote by $X_w$ the space $X$ endowed with the weak topology and by 
$C([0,T];X_{w})$ we denote  the space of  functions $\bv:[0,T]\to X_{w}$  that are weakly continuous.

\section{The main result: Existence of probabilistic strong solutions}

This section is devoted to the statement of the main result of this paper.  Before proceeding further,  let us  state the following definition.
\begin{definition}\label{defi4.0}
A probabilistic strong solution of the problem \eqref{1.1} is a $H\times H^1(\mathcal{O})\times L^2(\mathcal{O})$-valued stochastic process $(\bu,c,n)$ such that
\begin{description}
\item[i)] We have %$(\bu,c,n)$ is with
 $\mathbb{P}$-a.e.
\begin{equation*}
\begin{split}
& \bu \in \mathcal{C}([0,T];H) \cap L^{2}(0,T;V), \\
&c\in \mathcal{C}([0,T];H^1(\bo))\cap L^{2}(0,T;H^2(\bo)),\\
&n\in \mathcal{C}([0,T];L^2_{w}(\bo))\cap L^{2}(0,T;H^1(\bo))\cap\mathcal{C}([0,T];H^{-3}(\bo)).
\end{split}
\end{equation*}
\item[ii)] $(\bu,c,n):[0,T]\times\Omega\to H\times H^1(\mathcal{O})\times L^2(\mathcal{O})$ is  progessively measurable  and  for all $p\geq 1$
\begin{align}
	&\be\sup_{0\leq s\leq T}	\abs{\bu(s)}^p_{L^2}+\be\left(\int_0^T\abs{\nabla \bu(s)}_{L^2}^2ds\right)^p<\infty,\notag\\
	& %\be\left(\int_0^T\abs{\nabla\sqrt{n_m}}_{L^2}^2ds\right)^p\leq \bk,
	\qquad\be\left(\int_0^T\abs{n(s)}^2_{L^2}ds\right)^p<\infty,\label{4.41}\\
	&\mbox{and}\quad \be\sup_{0\leq s\leq T}	\abs{c(s)}^p_{H^1}+\be\left(\int_0^T\abs{c(s)}_{H^2}^2ds\right)^p<\infty.\notag
\end{align}
\item[ iii)] for all $t\in [0,T]$ the following identity holds $\mathbb{P}$-a.s.
\begin{equation}\label{4.1}
	\begin{split}
		&\bu(t) +\Int_0^t[\eta A_0\bu(s)+B_0(\bu(s),\bu(s))]ds=\bu_0+\Int_0^tR_0(n(s),\varPhi)ds + \Int_0^tg(\bu(s),c(s)) dW_s,\vspace{0.2cm}\\
	&	c(t) + \Int_0^t[\xi A_1c(s)+B_1(\bu(s), c(s))]ds= c_0-\Int_0^t R_1(n(s),c(s))ds+\gamma\int_0^t\phi(c(s))d\beta_s,\vspace{0.2cm}\\
	&	n(t) +\Int_0^t[\delta A_1n(s)+B_1(\bu(s),n(s))]ds =n_0-\Int_0^t R_2(n(s),c(s))ds,
	\end{split}
\end{equation}
in $V^*$, $H^{-2}(\bo)$ and  $H^{-3}(\bo)$, respectively.
\end{description}
\end{definition}

Let us now present the main result of this section.
\begin{theorem}\label{theo4.2}
	Let Assumption \ref{ass_1}, Assumption  \ref{ass_2}, Assumption  \ref{ass_3}, and Assumption  \ref{ass_4}  be valid.
Let us assume that the initial data $(\bu_0,c_0,n_0)$ belong to
$$ %(\bu_0,c_0,n_0)\in 
H\times L^\infty(\bo)\cap H^1(\bo)\times L^2(\bo).
$$
In addition, let us assume that   $c_0(x)>0$, $n_0(x)>0$ for all $x\in \bo$ and $$\int_\bo n_0(x)\ln n_0(x)dx<\infty,$$ as well as 
	\begin{align}
	\frac{4\bk_f\Max_{0\leq c\leq \abs{c_0}_{L^\infty}}f^2}{\Min_{0\leq c\leq \abs{c_0}_{L^\infty}}f'} \leq \delta,\
	\gamma^2\leq \frac{\min\left(\xi, \frac{\xi}{2\bk_0} \right)}{6\abs{\sigma}_{L^\infty}^2},  \text{ and }  \gamma^{2p}\leq\frac{3^p\xi^p}{2^{2p+1}\abs{\sigma}_{L^\infty}^{2p}8^p},\label{4.46*}
	\end{align}
	for all $p\geq 2$, where $\bk_0$ is positive constant such that $\abs{\psi}^2_{H^2}\leq\bk_0(\abs{\Delta \psi}^2_{L^2}+\abs{\psi}_{H^1}^2)$, for all $\psi\in H^2(\bo)$ (see \cite[Proposition 7.2, P. 404]{Tay} for the existence of  such constant). 
Then, there exists a unique probabilistic strong solution to the problem \re{1.1} in the sense of Definition  \ref{defi4.0}.
\end{theorem}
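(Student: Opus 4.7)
The plan is to construct the solution via a Galerkin approximation combined with a stochastic compactness argument (Skorokhod/Jakubowski), and then upgrade the resulting probabilistic weak (martingale) solution to a probabilistic strong solution via the Yamada--Watanabe theorem. Concretely, I would first introduce finite-dimensional Galerkin spaces $H_m=\operatorname{span}\{\psi_1,\dots,\psi_m\}\subset V$ based on the eigenfunctions of the Stokes operator $A_0$, and $L^2_m=\operatorname{span}\{\varphi_1,\dots,\varphi_m\}$ based on the eigenfunctions of the Neumann Laplacian $A_1$, with associated orthogonal projections $\Pi^0_m$ and $\Pi^1_m$. Projecting \re{3.1} onto these spaces produces a finite-dimensional SDE system for $(\bu_m,c_m,n_m)$ whose coefficients are locally Lipschitz and of at most linear growth in $(\bu_m,c_m)$ (thanks to Assumption \ref{ass_3}), so classical SDE theory yields existence and uniqueness of a strong solution up to a localising stopping time.

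The heart of the argument is the derivation of uniform-in-$m$ a priori bounds consistent with \re{4.41}. I would proceed in three coupled layers. First, a mass conservation/positivity step for $n_m$ in $L^1$ combined with an $L^p$ estimate for $n_m$ (the proof of positivity at this stage is at the approximate level; it transfers to the limit afterwards using the approximate stochastic maximum principle from the reference to Denis). Second, an energy estimate for $\bu_m$ in $H$ using It\^o's formula: the term $(R_0(n_m,\varPhi),\bu_m)$ is absorbed via Young's inequality and the $L^2$ bound on $n_m$, while the stochastic contribution is handled via the Burkholder--Davis--Gundy (BDG) inequality and Assumption \ref{ass_3}. Third, and most delicate, an $H^1$-estimate for $c_m$ coupled with an $H^1$-estimate for $n_m$ derived from an entropy-type functional such as $\int_{\bo} n_m\ln n_m\,dx+\tfrac{\chi}{2}\int_{\bo}|\nabla c_m|^2 dx$: differentiating in time, the cross term $(R_2(n_m,c_m),\psi)$ produces exactly the problematic $\int n_m\chi(c)\nabla c\cdot\nabla\psi$ contribution, whose cancellation in combination with $\mu\Delta c-nf(c)$ forces the structural inequality $\tfrac{4\bk_f\max f^2}{\min f'}\le \delta$ in \re{4.46*}. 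The Stratonovich $\to$ It\^o correction identified in Lemma \ref{lemma2.1} yields a gain of $\tfrac{\gamma^2}{2}\Delta c_m$ which combines with $\mu\Delta c_m$ into $\xi A_1 c_m$; the Ito correction $\tfrac12\sum_k|\phi_k(c_m)|^2_{L^2}$ produced by $\gamma^2\sum_k |\sigma_k\cdot\nabla c_m|^2$ is controlled by $\gamma^2|\sigma|_{L^\infty}^2|\nabla c_m|_{L^2}^2$, which the smallness condition on $\gamma^2$ in \re{4.46*} lets us absorb into the $\xi|\Delta c_m|_{L^2}^2$ term. Higher moment bounds follow by applying the same strategy to $|\cdot|^{2p}$ and using the moment inequalities in \re{4.46*}; the supremum in time is then obtained via BDG.

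With these bounds in hand, I would establish tightness of the laws $\mathcal{L}(\bu_m,c_m,n_m,W,\beta)$ on a product path space of the form
\begin{equation*}
\mathcal{C}([0,T];H_w)\cap L^2(0,T;H)\times \mathcal{C}([0,T];H^1_w(\bo))\cap L^2(0,T;H^1(\bo)) \times L^2_w(0,T;H^1(\bo))\cap \mathcal{C}([0,T];H^{-3}(\bo))
\end{equation*}
together with the canonical Wiener spaces for $W$ and $\beta$. Tightness in the strong $L^2$-norm parts uses an Aubin--Lions/Dubinski\u{\i} compactness argument; tightness in $\mathcal{C}([0,T];H^{-3})$ for $n$ uses the uniform bound on the time derivative in a sufficiently negative Sobolev space, which follows from the PDE together with the bounds on $B_1$ and $R_2$ already recorded after the definition of these operators. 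Because the phase space is non-metrisable (weak topologies are used on $H$, $H^1$ and $L^2$), I would invoke the Jakubowski--Skorokhod representation theorem to obtain a new probability space carrying copies $(\tilde \bu_m,\tilde c_m,\tilde n_m,\tilde W_m,\tilde\beta_m)$ converging a.s.\ to a limit $(\tilde\bu,\tilde c,\tilde n,\tilde W,\tilde\beta)$. Passing to the limit in each equation of \re{3.1} is standard for the linear and Wiener-integral terms; the nonlinear terms $B_0,B_1,R_0,R_1,R_2$ are treated with the continuity estimates already proved in Section 2 together with strong convergence in $L^2(0,T;\mathbb{L}^2)$, while the Stratonovich Wiener integral for $c$ is handled by first converting to It\^o form via Lemma \ref{lemma2.1} and using the martingale representation argument of Da Prato--Zabczyk to identify the limiting stochastic integrals. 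This yields a martingale (probabilistic weak) solution.

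Finally, pathwise uniqueness: given two solutions $(\bu^i,c^i,n^i)$ on the same stochastic basis with the same initial data, set $(U,C,N)=(\bu^1-\bu^2,c^1-c^2,n^1-n^2)$, apply It\^o's formula to $|U|_{L^2}^2+|C|_{H^1}^2+|N|_{H^{-1}}^2$, and exploit the Lipschitz property of $g$ (Assumption \ref{ass_4}), the Lipschitz character of $f,\chi$ on bounded sets, and standard 2D trilinear estimates to obtain an inequality of the form $d\Phi\le K(1+|\nabla \bu^1|_{L^2}^2+\dots)\Phi\,dt+dM$ that, via a stochastic Gronwall lemma, forces $\Phi\equiv 0$. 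Together with existence of a martingale solution, the Yamada--Watanabe theorem then delivers the unique probabilistic strong solution. The main obstacle I anticipate is step three of the a priori estimates: making the entropy$/H^1$ coupling close in a way that the constants $\bk_f$, the smallness of $\gamma$ in \re{4.46*} and the Ito correction produced by the Stratonovich $c$-noise fit together, all while keeping estimates uniform in $m$ despite the nonlocal action of $\Pi^1_m$ on the multiplicative $\sigma_k\cdot\nabla c_m$ term (which forces one to work with the Galerkin-projected Stratonovich correction rather than the clean $\tfrac{\gamma^2}{2}\Delta c$).
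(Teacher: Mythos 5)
Your overall architecture coincides with the paper's: Galerkin approximation, positivity and an entropy functional of the form $\int_\bo n_m\ln n_m\,dx+\bk_f|\nabla c_m|^2_{L^2}+\mathrm{const}\cdot|\bu_m|^2_{L^2}$ whose closure forces the first condition in \re{4.46*}, absorption of the It\^o correction from the transport noise into the $\xi|\Delta c_m|^2_{L^2}$ dissipation via the smallness of $\gamma$, tightness on a mixed weak/strong path space, a Jakubowski--Skorokhod/martingale-representation passage to a weak solution, pathwise uniqueness by a weighted It\^o--Gronwall argument, and finally Yamada--Watanabe. The only substantive deviation is that you measure the difference of cell densities in $H^{-1}$ rather than in $L^2$ as the paper does; this is a legitimate variant (the dissipation then lands in $L^2$ rather than $H^1$, and the cross terms can still be closed by duality), but it buys nothing here and the paper's $L^2$ choice is what lets the $r_2(\varphi,c_1,\varphi)$ term be absorbed by $\delta|\nabla\varphi|^2_{L^2}$ directly.

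There is one genuine gap in the logical chain of your uniqueness step. You invoke ``the Lipschitz character of $f$ on bounded sets'' to control $R_1(n_1,c_1)-R_1(n_2,c_2)$, but in two dimensions $H^1(\bo)\not\hookrightarrow L^\infty(\bo)$, so the regularity class of Definition \ref{defi4.0} does not by itself confine $c^1,c^2$ to any bounded interval. What is actually needed, and what the paper proves as Lemma \ref{lem3.6} and Corollary \ref{lem3.7} \emph{before} the uniqueness argument, is that \emph{every} probabilistic weak solution satisfies $0\le c(t)\le |c_0|_{L^\infty}$ a.s.; only then can $f(c_1)-f(c_2)$ be estimated by $\sup_{0\le r\le|c_0|_{L^\infty}}f'(r)\,|c_1-c_2|$ with a constant depending solely on the data. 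Your proposal establishes positivity only at the Galerkin level and transfers it to the constructed limit, which covers one of the two solutions being compared but not an arbitrary competitor; pathwise uniqueness requires the comparison of two \emph{arbitrary} weak solutions on the same basis. The fix is exactly the paper's: run the stochastic maximum-principle argument (the regularised $\Psi_h(c)=\int_\bo c^2\varphi(hc)\,dx$ construction, whose stochastic terms vanish by the divergence-free property of $\sigma_k$ and $\sigma_k|_{\partial\bo}=0$) and the $L^p\to L^\infty$ iteration directly at the level of an arbitrary weak solution, and only then prove uniqueness. With that ingredient inserted, your argument goes through.
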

\begin{remark}
We note that in the case where $f(c)=c$,  then we have  $\bk_f=\frac{\chi^2+2\delta}{2\delta}$,  and the first inequality of  the condition \re{4.46*} is satisfied if
 $$\abs{c_0}_{L^\infty}\leq\frac{\delta\sqrt{2}}{2\sqrt{\chi^2+2\delta}}.$$ Furthermore, the  condition \re{4.46*}  have been introduced in order to control the cell term in the inequality \re{3.34*} and  the higher regularity of the noise term on the $c$-equation in the inequalities \re{3.51} and \re{3.66}. However, it is known in  \cite[Remark 1.1]{Li} that,  for the two-dimensional deterministic chemotaxis system, there exists a critical mass phenomenon.  When the total initial mass of cells $\int_\bo n_0(x)dx$ above a critical mass $m_{\text{crit}}$ (i.e.  $\int_\bo n_0(x)dx>m_{\text{crit}}$), solutions blow-up in finite time, otherwise, all solutions remain bounded. While, for the two-dimensional stochastic chemotaxis system, it is shown in \cite{May} that, if the chemotaxis sensibility $\chi$ is sufficiently large, then blow-up occurs with probability $1$. For the coupled system \re{1.1}, despite the rapid flow of fluid, we also expect some phenomenons to appear. Then,   it is important to ask oneself what will happen if the  condition  \re{4.46*}  is violated? The answer to this question will be given by the  study of the blow-up criterion of the system \re{1.1} in future.
\end{remark}
In order to prove Theorem \ref{theo4.2}, we will first show that problem \eqref{1.2} has a probabilistic weak solution, see Definition \ref{4.1},  then prove the non-negativity property and the $L^\infty$-stability property  of weak solution,  which give us the possibility to  prove the pathwise uniqueness, and finally apply the Yamada-Watanabe Theorem.
But before proceeding further, we now introduce the concept of a probabilistic weak solution.
%
%%%%%%%%%%%%%%%
\begin{definition}\label{defi4.1}
	A weak probabilistic  solution of the problem \re{1.1} is a system $$(\bar{\Omega}, \bar{\mathcal{F}},\bar{\mathbb{F}}, \bar{\mathbb{P}},(\bu,c,n),(\bar{W},\bar{\beta})),$$
	 where
	\begin{description}
		\item[i)] $(\bar{\Omega}, \bar{\mathcal{F}},\bar{\mathbb{F}}, \bar{\mathbb{P}})$ is a  filtered probability space,
		\item[ii)] $(\bar{W},\bar{\beta})$ is a cylindrical Wiener processes on $\buc\times\mathbb{R}^2$ over $(\bar{\Omega}, \bar{\mathcal{F}},\bar{\mathbb{F}}, \bar{\mathbb{P}})$,
		%\item[iii)] $\bar{\beta}$ is a $\mathbb{R}^2$-valued Wiener processes over $(\Omega',\mathcal{F}',\mathbb{F}',\mathbb{P}')$,
		\item[iii)]and  $(\bu,c,n):[0,T]\times\bar{\Omega}\to \bhc\times L^2(\bo)$ is a strong solution to \eqref{1.2} with driving noise $(\bar{W},\bar{\beta})$ on the filtered probability space  $(\bar{\Omega}, \bar{\mathcal{F}},\bar{\mathbb{F}}, \bar{\mathbb{P}})$.
	\end{description}
\end{definition}
The existence of weak solution to our problem is given in the following proposition.
%The main result we are going to prove in this section is as follows:
\begin{proposition}\label{theo4.1}
	Let us assume that  Assumption \ref{ass_1}, Assumption  \ref{ass_2} and  Assumption  \ref{ass_3} are satisfied. Let $$(\bu_0,c_0,n_0)\in H\times L^\infty(\bo)\cap H^1(\bo)\times L^2(\bo),$$
	such that $c_0(x)>0$, $n_0(x)>0$ for all $x\in \bo$ and $$\int_\bo n_0(x)\ln n_0(x)dx<\infty.$$ We also assume that \re{4.46*} holds.
	Then, there exists at least one probabilistic weak solution to the problem \re{1.1} in the sense of Definition  \ref{defi4.1}.
\end{proposition}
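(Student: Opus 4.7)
\begin{prev}[\underline{Proof proposal}]
The plan is to construct the probabilistic weak solution via a Galerkin-compactness procedure combined with a Skorokhod-type representation, using the bases $\{\varphi_i\}$ of Neumann eigenfunctions for the $c$- and $n$-equations and the basis $\{\psi_i\}$ of Stokes eigenfunctions for the $\bu$-equation. Let $P_m$ denote the orthogonal projection in $H$ onto $\mathrm{span}(\psi_1,\ldots,\psi_m)$ and $\Pi_m$ the projection in $L^2(\bo)$ onto $\mathrm{span}(\varphi_1,\ldots,\varphi_m)$. Projecting \eqref{3.1} we obtain a finite-dimensional SDE in $(\bu_m, c_m, n_m)$ with locally Lipschitz drift and globally Lipschitz diffusion (by Assumption \ref{ass_3} and Lemma \ref{lemma2.1}). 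Standard SDE theory yields a local strong solution; the a priori estimates below then extend it to $[0,T]$.

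The core of the proof is a set of $m$-uniform a priori bounds. First I would apply It\^o's formula to $\tfrac12|\bu_m|_{L^2}^2$, using $(B_0(\bu_m,\bu_m),\bu_m)=0$, Assumption \ref{ass_3} and BDG, to obtain $\mathbb{E}\sup_{t\le T}|\bu_m|_{L^2}^{2p}+\mathbb{E}(\int_0^T|\nabla\bu_m|_{L^2}^2 ds)^p$ in terms of $\mathbb{E}\sup|c_m|_{H^1}^{2p}$ and $\mathbb{E}(\int_0^T|n_m|_{L^2}^2)^p$ through the coupling $R_0$. Next, It\^o's formula for $\tfrac12|c_m|_{L^2}^2$ uses Lemma \ref{lemma2.1} to turn the Stratonovich noise into an It\^o integral plus the term $\tfrac{\gamma^2}{2}\Delta c$, producing with $\eta A_0$ the diffusion coefficient $\xi$; together with $L^\infty$-preservation of the $c$-equation (a maximum-principle-type argument in the stochastic form, for instance as in \cite{Denis1}) this yields $|c_m(t)|_{L^\infty}\le|c_0|_{L^\infty}$ a.s. The application of It\^o to $\tfrac12|\nabla c_m|_{L^2}^2$ then produces the term $\xi|\Delta c_m|_{L^2}^2$ against a noise term bounded by $\gamma^2|\sigma|_{L^\infty}^2|\Delta c_m|_{L^2}^2$; the smallness condition on $\gamma$ in \eqref{4.46*} is exactly what absorbs this into the dissipation, yielding bounds on $c_m$ in $L^p(\Omega; L^\infty(0,T;H^1))\cap L^p(\Omega;L^2(0,T;H^2))$. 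For the $n$-equation the crucial observation is that taking $\psi=\ln n_m$ (via a regularised entropy $(n_m+\eps)\ln(n_m+\eps)$) gives the classical chemotaxis entropy estimate
\begin{equation*}
\int_\bo n_m\ln n_m\,dx+\delta\int_0^t\int_\bo\frac{|\nabla n_m|^2}{n_m}\,dxds\le \int_\bo n_0\ln n_0\,dx+\chi\int_0^t\int_\bo|\nabla c_m||\nabla n_m|\,dxds,
\end{equation*}
and combining with the $L^2$-estimate for $n_m$, using the explicit constant $\bk_f$ from \eqref{Eq:K-f} and the first part of \eqref{4.46*}, yields the $L^p(\Omega;L^\infty(0,T;L^2))\cap L^p(\Omega;L^2(0,T;H^1))$-bound on $n_m$. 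A Gronwall closure on the coupled system delivers all uniform estimates matching \eqref{4.41}.

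With these estimates in hand, I would establish tightness of the laws $\mathcal{L}(\bu_m,c_m,n_m,W,\beta)$ on the Polish-type path space
\begin{equation*}
\mathcal{Z}:=\bigl(L^2(0,T;H)\cap C([0,T];V^*)\cap C([0,T];H_w)\bigr)\times\bigl(L^2(0,T;H^1)\cap C([0,T];L^2)\bigr)\times\bigl(L^2(0,T;L^2)\cap C([0,T];H^{-3})\bigr)\times C([0,T];\buc)\times C([0,T];\mathbb{R}^2),
\end{equation*}
via the Aldous criterion applied to the H\"older moments in $V^*$, $H^{-2}$, $H^{-3}$ obtained from \eqref{3.1}, together with Lions-Aubin compactness. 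The weak-topology component in $H$ for $\bu$ and the weak $L^2$-component for $n$ make this a non-metric setting, so I would invoke the Jakubowski-Skorokhod representation theorem to get, on a new space $(\bar\Omega,\bar{\mathcal F},\bar{\mathbb P})$, random variables $(\bar\bu_m,\bar c_m,\bar n_m,\bar W_m,\bar\beta_m)$ with the same laws and converging $\bar{\mathbb P}$-a.s. in $\mathcal{Z}$ to some $(\bar\bu,\bar c,\bar n,\bar W,\bar\beta)$.

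The final step is identification. Strong convergence of $\bar\bu_m\to\bar\bu$ in $L^2(0,T;H)$ and of $\bar c_m\to\bar c$ in $L^2(0,T;H^1)$, together with the uniform bounds, suffices to pass to the limit in $B_0$, $B_1$, $R_0$, $R_1$; the term $R_2(n,c)=-\Div(n\nabla c)$ requires the strong $L^2(0,T;L^2)$-convergence of $\nabla\bar c_m$ (which does hold via $L^2(0,T;H^2)$ boundedness and the Aubin-Lions lemma) combined with weak $L^2(0,T;L^2)$-convergence of $\bar n_m$, tested against $\psi\in H^3(\bo)$. For the stochastic integrals I would use a Skorokhod-type convergence lemma in the spirit of Debussche-Glatt-Holt-Ziane: since $g$ is continuous from $\bhc$ to $\mathcal{L}^2(\buc,H)$ with linear growth, $g(\bar\bu_m,\bar c_m)\to g(\bar\bu,\bar c)$ in $L^2(0,T;\mathcal{L}^2(\buc,H))$ in probability, and similarly $\phi(\bar c_m)\to\phi(\bar c)$, giving convergence of $\int_0^{\cdot} g(\bar\bu_m,\bar c_m)d\bar W_m$ and $\int_0^{\cdot}\phi(\bar c_m)d\bar\beta_m$ in probability in $L^2(0,T;H)$. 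The trilinear form identification for the $n$-equation is the main obstacle, because only weak-$L^2$ convergence is available for $\bar n_m$ and $R_2$ is genuinely trilinear; this is where I expect to spend the most effort, and the entropy-generated $L^2(0,T;H^1)$ bound on $\bar n_m$ is what ultimately rescues it by providing strong $L^2(0,T;L^2)$-convergence through Aubin-Lions applied to the $n$-equation's time-regularity in $H^{-3}$. Once all terms are identified, the tuple $(\bar\Omega,\bar{\mathcal F},\bar{\mathbb F},\bar{\mathbb P},(\bar\bu,\bar c,\bar n),(\bar W,\bar\beta))$ is a weak probabilistic solution in the sense of Definition \ref{defi4.1}.
\end{prev}
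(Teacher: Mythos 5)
Your proposal follows the same overall architecture as the paper's proof: Galerkin approximation on the Stokes/Neumann eigenbases, the entropy--Lyapunov functional combining $\int_\bo n\ln n\,dx$, $\abs{\nabla c}^2_{L^2}$ and $\abs{\bu}^2_{L^2}$, the pathwise $L^\infty$-bound on $c_m$ via a stochastic maximum-principle argument, the smallness condition \re{4.46*} used to absorb the gradient noise into the dissipation, tightness on a product of weak/strong path spaces via the Aldous condition, and a Skorokhod-type representation followed by identification of the limits. The one genuine methodological difference is the recovery of the noise: you append $W$ and $\beta$ to the tightness argument and invoke a Debussche--Glatt-Holtz--Ziane-type lemma on convergence of stochastic integrals, whereas the paper keeps the Wiener processes out of the compactness step, shows that the limit of the drift-corrected processes is a continuous square-integrable martingale with quadratic variation $\int_0^tG(\bu(s),c(s))G(\bu(s),c(s))^*ds$, and then applies the martingale representation theorem of \cite[Theorem 8.2]{Da} to manufacture a new cylindrical Wiener pair $(\bar W,\bar\beta)$. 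Both routes are legitimate; yours is shorter where it applies, while the paper's avoids having to prove convergence in probability of $g(\bar{\bu}_m,\bar{c}_m)$ inside the stochastic integral, at the cost of the lengthy verification of the martingale identities.

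Two points in your estimates need repair. First, the claimed bound on $n_m$ in $L^p(\Omega;L^\infty(0,T;L^2(\bo)))$ is not obtainable by the Gronwall closure you describe: the $L^2$-energy identity for $n_m$ yields only the pathwise bound $\sup_{t\le T}\abs{n_m(t)}^2_{L^2}\leq\eta_0\exp\bigl(\bk\int_0^T\abs{\nabla c_m(s)}^4_{L^4}ds\bigr)$, and no moment of this exponential is controlled -- the available estimate is merely $\be\int_0^T\abs{\nabla c_m(s)}^4_{L^4}ds\leq\bk$. The paper works with this pathwise exponential bound (Lemma \ref{lemma3.5}) and converts it into a tightness estimate by applying the Markov inequality to the \emph{exponent}, and accordingly Definition \ref{defi4.0} only demands $\be\bigl(\int_0^T\abs{n(s)}^2_{L^2}ds\bigr)^p<\infty$ for the cell density, with no supremum-in-time moment; the $L^2_tL^2_x$ moment comes from the entropy dissipation $\nabla\sqrt{n_m}$ together with mass conservation and Gagliardo--Nirenberg, not from Gronwall. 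Second, Assumption \ref{ass_3} gives only continuity and linear growth of $g$, not (local) Lipschitz continuity, so "standard SDE theory with globally Lipschitz diffusion" does not directly yield the Galerkin solutions; the paper is admittedly equally cavalier on this point, but strictly speaking one needs either Assumption \ref{ass_4} already at this stage or an existence result for merely continuous coefficients. Neither issue invalidates your strategy, but both require restating the bounds you rely on for tightness and for the passage to the limit.
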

The proof of Proposition \ref{theo4.1}, which  is very  technical is postponed  to Section 5.

Next, we prove some properties of  probabilistic weak solutions to the problem \re{1.1} such as the non-negativity and the $L^\infty$-stability which will be useful for the proof of the pathwise uniqueness result. In fact, the main ingredient for the pathwise uniqueness   is the  $L^\infty$-stability property but to obtain this property we will need the  non-negativity property.
\begin{lemma} \label{lem3.6}
Let Assumption \ref{ass_1} and  Assumption  \ref{ass_2} are satisfied.	Let  $(\bar{\Omega}, \bar{\mathcal{F}},\bar{\mathbb{F}}, \bar{\mathbb{P}},(\bu,c,n),(\bar{W},\bar{\beta}))$ be a probabilistic weak solution to the problem \re{1.1}. If  $c_0>0$ and $n_0> 0$, then the following inequality hold $\bar{\mathbb{P}}$-a.s
	\begin{equation}
		n(t)>0,\text{ and } c(t)> 0, \text{   for all  } t\in [0,T].\label{3.4}
	\end{equation}
\end{lemma}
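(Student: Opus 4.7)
The plan is first to prove non-negativity of $c$ (the variable carrying the noise) by an It\^o argument on its negative part, then to prove non-negativity of $n$ pathwise by a Gr\"onwall argument exploiting the regularity of $c$, and finally to upgrade to strict positivity via the parabolic strong maximum principle applied realisation by realisation. I will smooth the map $s\mapsto(s^-)^2$ by a convex $C^2$ regularisation $\psi_\eps$ with $\psi_\eps\downarrow(s^-)^2$, $\psi'_\eps(s)\to-2s^-$ and $\psi''_\eps(s)\to 2\mathbf{1}_{\{s<0\}}$, and extend $f$ to $\mathbb{R}$ by $f\equiv 0$ on $(-\infty,0]$ (consistently with $f(0)=0$).

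Applying It\^o's formula, in the It\^o form of the $c$-equation, to $t\mapsto\int_{\bo}\psi_\eps(c(t,x))\,dx$ produces five contributions. The advective term rewrites as $\int \bu\cdot\nabla\psi_\eps(c)\,dx=0$ since $\nabla\cdot\bu=0$ and $\bu|_{\partial\bo}=0$; the diffusive term, integrated by parts against the Neumann condition, equals $-\xi\int\psi''_\eps(c)|\nabla c|^2\,dx\leq 0$; the reaction $-\int\psi'_\eps(c)\,n\,f(c)\,dx$ is non-positive because $\psi'_\eps$ is (essentially) supported in $\{c<0\}$, where the extension of $f$ vanishes; and the It\^o martingale $\int\psi'_\eps(c)\,\gamma\sigma_k\cdot\nabla c\,dx=\gamma\int\sigma_k\cdot\nabla\psi_\eps(c)\,dx$ vanishes by $\nabla\cdot\sigma_k=0$ and $\sigma_k|_{\partial\bo}=0$ (Assumption \ref{ass_2}). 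The crucial step is that the It\^o quadratic variation $\tfrac{\gamma^2}{2}\int\psi''_\eps(c)\sum_k(\sigma_k\cdot\nabla c)^2\,dx$ cancels exactly the Stratonovich--It\^o corrector $\tfrac{\gamma^2}{2}\int\psi'_\eps(c)\Delta c\,dx=-\tfrac{\gamma^2}{2}\int\psi''_\eps(c)|\nabla c|^2\,dx$, because the isotropy identity
\[
\sum_{k=1}^{2}(\sigma_k(x)\cdot v)^2=|v|^2,\qquad\forall\,v\in\mathbb{R}^2,\ x\in\bo,
\]
stemming from $q(x,x)=\mathrm{Id}$ in ($\mathbf{A}_3$), makes the two quadratic forms coincide. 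Consequently $t\mapsto\int\psi_\eps(c(t))\,dx$ is pathwise non-increasing; letting $\eps\downarrow 0$ and using $c_0>0$ yields $c^-\equiv 0$ on $[0,T]$, so $c\geq 0$ $\bar{\mathbb{P}}$-a.s.

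For $n$, I would proceed pathwise by applying the chain rule to $|n^-(t)|_{L^2}^2$: the advection contribution vanishes as before, diffusion gives $-2\delta|\nabla n^-|_{L^2}^2$, and the chemotactic term, after two integrations by parts using the Neumann boundary conditions on $n$ and on $c$, reduces to $-\chi\int(n^-)^2\Delta c\,dx$. Bounding via H\"older, the two-dimensional Gagliardo--Nirenberg inequality $|n^-|_{L^4}^2\leq\bk_{GN}(|n^-|_{L^2}|\nabla n^-|_{L^2}+|n^-|_{L^2}^2)$ and Young's inequality to absorb $\delta|\nabla n^-|_{L^2}^2$ into the diffusion gain, I obtain
\[
\frac{d}{dt}|n^-(t)|_{L^2}^2\leq \bk\bigl(1+|c(t)|_{H^2}^2\bigr)|n^-(t)|_{L^2}^2.
\]
Because $c\in L^2(0,T;H^2(\bo))$ $\bar{\mathbb{P}}$-a.s.\ (Definition \ref{defi4.1}) and $n_0>0$, Gr\"onwall forces $n^-\equiv 0$, hence $n\geq 0$ $\bar{\mathbb{P}}$-a.s. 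Strict positivity $c(t),n(t)>0$ for $t\in[0,T]$ then follows, pathwise, from the parabolic strong maximum principle applied to the now-linear equations for $c$ and $n$ (with $\bu,\nabla c,n$ treated as given coefficients with the established regularity), using the hypotheses $c_0,n_0>0$.

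The main technical hurdle I anticipate is the careful It\^o chain rule for the infinite-dimensional semimartingale $c$ applied to the non-smooth map $s\mapsto(s^-)^2$, and the passage $\eps\downarrow 0$: one has to justify uniform integrability of the terms carrying $\psi'_\eps,\psi''_\eps$ and identify the limit of $\int\psi''_\eps(c)|\nabla c|^2\,dx$ as a measure concentrated on $\{c<0\}$. Assumption ($\mathbf{A}_3$) is used in full strength precisely to produce the exact cancellation between the It\^o quadratic variation and the Stratonovich corrector: without it a residual noise-induced term of indefinite sign would remain and obstruct the monotonicity that underpins the whole argument.
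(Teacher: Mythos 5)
Your proof follows the same core strategy as the paper's: negative-part energy estimates, a smooth twice-Fr\'echet-differentiable approximation of $z\mapsto\int z_-^2\,dx$, the exact cancellation of the It\^o quadratic-variation term against the Stratonovich corrector $\frac{\gamma^2}{2}\Delta c$ via the isotropy identity $\sum_k(\sigma_k\cdot\nabla c)^2=|\nabla c|^2$ coming from ($\mathbf{A}_3$), the vanishing of the martingale term by $\nabla\cdot\sigma_k=0$ and $\sigma_k=0$ on $\partial\bo$, and a pathwise Gr\"onwall argument for $n_-$ resting on $c\in L^2(0,T;H^2(\bo))$ a.s. The genuine differences are these. (i) You reverse the order: the paper proves $n\geq 0$ first (its argument uses only the regularity of $c$, not its sign) and then controls the reaction term in the $c$-step by the mean value theorem, $f(c)=c\,f'(\lambda)$, so that $n f(c)c\,\mathbf{1}_{\{c<0\}}\geq 0$ requires the already-established sign of $n$; you instead annihilate the reaction term by extending $f\equiv 0$ on $(-\infty,0]$, making your $c$-step independent of the sign of $n$. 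Both routes implicitly fix an extension of $f$ to negative arguments (the paper needs $f'$ on $(\min(0,c),0)$ for its mean-value step), so this is a legitimate variant, though your extension is only Lipschitz at $0$ when $f'(0^+)>0$. (ii) For $n_-$ you integrate the chemotactic term by parts a second time to reach $-\tfrac{\chi}{2}\int n_-^2\Delta c\,dx$ and a Gr\"onwall factor $1+|c|_{H^2}^2$, whereas the paper keeps $|\nabla c|_{L^4}$ and uses $\int_0^T|\nabla c(s)|_{L^4}^4\,ds<\infty$; both are correct and rely on the same regularity of $c$.

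The one place where your proposal has a real gap is the final upgrade to strict positivity. Applying the parabolic strong maximum principle ``realisation by realisation'' is not justified for $c$: pathwise, the $c$-equation is still an SPDE with transport noise, not a classical parabolic equation with differentiable-in-time paths, and for $n$ the frozen coefficients $\bu$ and $\nabla c$ only have $L^2_t H^1_x$-type regularity, well below what the classical strong maximum principle requires; making either step rigorous would need, e.g., a random change of variables removing the transport noise and a Harnack-type inequality for rough coefficients. To be fair, the paper's own proof establishes only $n_-(t)=0$ and $c_-(t)=0$, i.e.\ non-negativity, and silently passes to the strict inequality stated in the lemma, so you have identified, but not actually closed, a gap that is present in the original argument as well.
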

\begin{proof} 
	We  will follow the idea developed in \cite[Section 3.1]{Francesco} combined with the idea of \cite[Lemma 14]{Denis1} and \cite[Theorem 3.7]{Brze}.  Let $t\in[0,T]$ arbitrary but fixed. We then  define
	$n_-(t) := \max(-n(t), 0)$ and  remark that  $n_-(t)\in W^{2,2}(\bo)$.
	Hence,  we multiply equation $(\ref{3.1})_3$ by $n_-(t)$, integrate over $\mathcal{O}$, and use an integration-by-parts   to obtain $\bar{\mathbb{P}}$-a.s.
	\begin{align}
		\frac{1}{2}\frac{d}{dt}\abs{n_-(t)}^2_{L^2}&=-\int_\mathcal{O}\bu(t,x)\cdot\nabla n_-(t,x)n_-(t,x)dx -\delta\abs{\nabla n_-(t)}^2_{L^2}\notag\\
		&\qquad-\chi\int_\mathcal{O}n(t,x)\nabla c(t,x)\nabla n_-(t,x)dx\notag\\
		%&=-\frac{1}{2}\int_0^t\int_\mathcal{M}\bu\cdot\nabla n_-^2dxds -\abs{\nabla n_-}^2_{L^2}+\chi\int_\mathcal{M}n_-\nabla c\nabla n_-dx\notag\\
		&=\frac{1}{2}\int_\mathcal{O}n_-^2(t,x)\nabla\cdot\bu(t,x) dx -\delta\abs{\nabla n_-(t)}^2_{L^2}+\chi\int_\mathcal{\bo}n_-(t,x)\nabla c(t,x)\nabla n_-(t,x)dx\label{2.15*}\\
		&\leq -\delta\abs{\nabla n_-(t)}^2_{L^2}+\chi\abs{n_-(t)}_{L^4}\abs{\nabla c(t)}_{L^4}\abs{\nabla n_-(t)}_{L^2}.\notag
	\end{align}
	By  the Gagliardo-Nirenberg-Sobolev  inequality \re{4.4.} and the  Young inequality, we note that
	\begin{align}
		\chi\abs{n_-}_{L^4}\abs{\nabla c}_{L^4}\abs{\nabla n_-}_{L^2}
		&\leq \bk(\abs{n_-}^{1/2}_{L^2}\abs{\nabla n_-}^{1/2}_{L^2}+\abs{n_-}_{L^2})\abs{\nabla c}_{L^4}\abs{\nabla n_-}_{L^2}\notag\\
		&\leq \bk\abs{n_-}^{1/2}_{L^2}\abs{\nabla c}_{L^4}\abs{\nabla n_-}^{3/2}_{L^2}+\bk\abs{n_-}_{L^2}\abs{\nabla c}_{L^4}\abs{\nabla n_-}_{L^2}\notag\\
		&\leq \frac{\delta}{2}\abs{\nabla n_-}^2_{L^2}+\bk\abs{n_-}^{2}_{L^2}(\abs{\nabla c}^4_{L^4}+\abs{\nabla c}^2_{L^4})\label{2.16}\\
		&\leq \frac{\delta}{2}\abs{\nabla n_-}^2_{L^2}+\bk\abs{n_-}^{2}_{L^2}(\abs{\nabla c}^4_{L^4}+1).\notag
	\end{align}
	Owing to the fact that $\bar{\mathbb{P}}$-a.s. $c\in \mathcal{C}([0,T];H^1(\bo))\cap L^{2}(0,T;H^2(\bo))$, by  the following Gagliardo-Niremberg inequality \begin{equation}
		\abs{f}_{L^4}\leq \bk_{GN} (\abs{f}^{1/2}_{L^2}\abs{\nabla f}^{1/2}_{L^2}+\abs{f}_{L^2}), \ \ f\in W^{1,2}(\bo),\label{4.4.}
	\end{equation}  we note that for all $t\in [0,T]$ and $\bar{\mathbb{P}}$-a.s.
	\begin{align*}
		\int_0^t(\abs{\nabla c(s)}^4_{L^4}+1)ds&\leq \int_0^t\abs{\nabla c(s)}^4_{L^4}ds+t\\
		&\leq \bk \int_0^T\abs{\nabla c(s)}^{2}_{L^2}\abs{c(s)}^{2}_{H^2}ds+\int_0^T\abs{\nabla c(s)}^4_{L^2}ds+T\\
		& \leq \bk \sup_{0\leq s\leq T}\abs{\nabla c(s)}_{L^2}^2\int_0^T\abs{c(s)}^2_{H^2}ds+\sup_{0\leq s\leq T}\abs{\nabla c(s)}^2_{L^2}\int_0^T\abs{\nabla c(s)}^2_{L^2}ds+T\\
		&\leq \bk \sup_{0\leq s\leq T}\abs{c(s)}_{H^1}^2\int_0^T\abs{c(s)}^2_{H^2}ds+T<\infty.
	\end{align*}
	Hence, integrating \re{2.15*} over $[0,T]$, and using the inequality \re{2.16},  we infer that $\bar{\mathbb{P}}$-a.s.
	\begin{equation*}
		\abs{n_-(t)}^2_{L^2}\leq\abs{n_-(0)}^2_{L^2}+\bk \int_0^t (\abs{\nabla c(s)}^4_{L^4}+\abs{\nabla c(s)}^2_{L^4})\abs{n_-(s)}^2_{L^2}ds.
	\end{equation*}
	Thanks to Gronwall's inequality, we derive that
	\begin{equation*}
		\abs{n_-(t)}^2_{L^2}\leq\abs{(n_0)_-}^2_{L^2}\exp\left(\bk\int_0^t(\abs{\nabla c(s)}^4_{L^4}+\abs{\nabla c(s)}^2_{L^4})ds\right),
	\end{equation*}
	which  implies that $\bar{\mathbb{P}}$-a.s,  $n_-(t) = 0$ and the non-negativity of $n(t)$ follows. 
	
	For the proof of the  non-negativity property of $c(t)$,  the main idea is to apply the It\^o formula to the function $\Psi:H^2(\bo)\to \mathbb{R}$ defined by $\Psi(z)=\int_\mathcal{O}z_-^2(x)dx$ where $z_-=\max(-z; 0)$. Since  the function $\Psi$ is not twice Fr\'echet differentiable,  we will follow the idea of \cite[Lemma 14]{Denis1} (see also \cite[Theorem 3.7]{Brze}) by introducing the following  approximation of $\Psi$.  Let $\varphi:\mathbb{R}\to [-1;0]$ be a $C^\infty$ class increasing function such that
	\begin{equation}\label{4.14**}
		\varphi(s)=\begin{cases}
			-1 \text{ if } s\in (-\infty, -2]\\
			0 \text{ if } s\in [-1, +\infty).
		\end{cases}
	\end{equation}
	Let $\{\psi_h\}_{h\in\mathbb{N}}$ be a sequence of smooth functions defined by $\psi_h (y)=y^2\varphi(hy)$, for all $y\in \mathbb{R}$ and $h\in\mathbb{N}$.
	For any $h\in \mathbb{N}$, we consider the following sequence of function $\Psi_h:H^2(\bo)\to \mathbb{R}$ defined by $$\Psi_h(c)=\int_\mathcal{O}\psi_h(c(x))dx, \text{  for  } c\in H^{2}(\mathcal{O}).$$
We note that the  mapping $\Psi_h$ is twice Fr\'echet-differentiable and 
	\begin{equation*}
		\Psi'_h(c)(k)=2\int_\mathcal{O}c(x)\varphi(hc(x))k(x)dx+h\int_\mathcal{O}c^2(x)\varphi'(hc(x))k(x)dx, \quad \forall c, k\in H^{2}(\mathcal{O}),
	\end{equation*}
	as well as
	\begin{align*}
		\Psi_h^{''}(c)(z,k)&=m^2\int_\mathcal{O}c^2(x)\varphi^{''}(hc(x))z(x)k(x)dx\\
		&\qquad{}+4h\int_\bo c(x)\varphi'(hc(x))z(x)k(x)dx+2\int_\mathcal{O}\varphi(hc(x))z(x)k(x)dx,\quad \forall c,z,  k\in H^{2}(\mathcal{O}).
	\end{align*}
	By applying  the It\^o formula to  $t\mapsto\Psi_h(c(t))$, we obtain $\bar{\mathbb{P}}$-a.s.
	\begin{align}
		\Psi_h(c(t))-\Psi_h(c(0))&=\int_0^t\Psi'_h(c(s))\left(\bu(s)\cdot \nabla c(s)+\xi\Delta c(s) -n(s)f(c(s)) \right)ds\notag\\
		&\qquad{}+\frac{1}{2}\int_0^t\sum_{k=1}^2\Psi^{''}_h(c(s))\left(\gamma\phi_k(c(s)),\gamma\phi_k(c(s))\right)ds\label{2.21**}\\
		&\qquad{}+\gamma \sum_{k=1}^2\int_0^t\Psi'_h(c(s))(\phi_k(c(s)))d \bar{\beta}^k_s.\notag
	\end{align}
Now, we will find a simpler representation of the formula \re{2.21**}.

For a fixed $k=1,2$, we remark that for all $h\geq 1$,
\begin{equation}
	h\varphi'(hc)\sigma_k\cdot\nabla c=\sigma_k\cdot(h\varphi'(hc)\nabla c)=\sigma_k\cdot\nabla (\varphi(hc)),\label{4.16***}
\end{equation}
and also that	$2c\sigma_k\cdot\nabla c=\sigma_k\cdot\nabla c^2.$
Hence,  for any $h\geq 1$ thanks to an integration-by-parts and the fact that $\sigma_k=0$ on $\partial \bo$,  we have that for any $h\in \mathbb{N}$,
\begin{align}
	\Psi'_h(c)(\phi_k(c))&=2\int_\bo c(x)\varphi(hc(x))\sigma_k(x)\cdot\nabla c(x)dx+h\int_\bo c^2(x)\varphi'(hc(x))\sigma_k(x)\cdot\nabla c(x)dx\notag\\
	&=\int_\bo \varphi(hc(x))\sigma_k(x)\cdot\nabla c^2(x)dx+\int_\bo c^2(x)\sigma_k(x)\cdot\nabla (\varphi(hc(x)))dx\notag\\
	&=-\int_\bo c^2(x)\nabla\cdot(\varphi(hc(x))\sigma_k(x))dx+\int_{\partial\bo}  c^2(\sigma)\varphi(hc(\sigma))\sigma_k(\sigma)\cdot\nu d\sigma\label{4.18}\\
	&\qquad+\int_\bo c^2(x)\sigma_k(x)\cdot\nabla (\varphi(hc(x)))dx\notag \\
	&=-\int_\bo c^2(x)\nabla\cdot(\varphi(hc(x))\sigma_k(x))dx+\int_\bo c^2(x)\sigma_k(x)\cdot\nabla (\varphi(hc(x)))dx.\notag
\end{align}
Owing to the fact that $\nabla\cdot\sigma_k=0$, we derive that
\begin{align}
	\Psi'_h(c)(\phi_k(c))&=
	-\int_\bo c^2(x)\varphi(hc(x))\nabla\cdot\sigma_k(x)dx\notag\\
	&\qquad{}-\int_\bo c^2(x)\sigma_k(x)\cdot\nabla(\varphi(hc(x)))dx+\int_\bo c^2(x)\sigma_k(x)\cdot\nabla (\varphi(hc(x)))dx\label{4.11*}\\
	&=0\notag.
\end{align}
We note that
\begin{align}
	\sum_{k=1}^2\sigma_k\cdot\nabla c\sigma_k\cdot\nabla c&=\sum_{k=1}^2\sum_{i,j=1}^2\sigma^i_k\sigma_k^j\frac{\partial c}{\partial x_i}\frac{\partial c}{\partial x_j}\notag\\
	&=\sum_{i,j=1}^2q^{ij}(x,x)\frac{\partial c}{\partial x_i}\frac{\partial c}{\partial x_j}\label{2.24*}\\
	&=\sum_{i,j=1}^2\delta_{ij}\frac{\partial c}{\partial x_i}\frac{\partial c}{\partial x_j}=\sum_{i=1}^2\frac{\partial c}{\partial x_i}\frac{\partial c}{\partial x_i}=\abs{\nabla c}^2.\notag
\end{align}	
Therefore,
\begin{align}
	\sum_{k=1}^2\Psi^{''}_h(c)\left(\gamma\phi_k(c),\gamma\phi_k(c)\right)&=\gamma^2h^2\int_\mathcal{O}c^2(x)\varphi^{''}(hc(x))\abs{\nabla c(x)}^2dx\notag\\
	&\qquad{}+4h\gamma^2\int_\bo c(x)\varphi'(hc(x))\abs{\nabla c(x)}^2dx+2\gamma^2\int_\mathcal{O}\varphi(hc(x))\abs{\nabla c(x)}^2dx.\notag
\end{align}
On the other hand, by  integration-by-parts, we get
\begin{align}
	&\gamma^2\Psi'_h(c)(\Delta c)\notag\\
	&=2\gamma^2\int_\mathcal{O}c(x)\varphi(hc(x))\Delta c(x)dx+h\gamma^2\int_\mathcal{O}c^2(x)\varphi'(hc(x))\Delta c(x)dx\notag\\
	&=-2\gamma^2\int_\mathcal{O}\nabla c(x)\cdot\nabla(c(x)\varphi(hc(x)))dx-h\gamma^2\int_\mathcal{O}\nabla c(x)\cdot\nabla(c^2(x)\varphi'(hc(x)))dx\notag\\
	&\quad+2\gamma^2\int_{\partial\mathcal{O}}\frac{\partial c(\sigma)}{\partial\nu}\varphi(hc(\sigma))\Delta c(\sigma)d\sigma+2h\gamma^2\int_{\partial\mathcal{O}}\frac{\partial c(\sigma)}{\partial\nu}c(\sigma)\varphi'(hc(\sigma))\Delta c(\sigma)d\sigma\notag\\
	&=-2\gamma^2\int_\mathcal{O}\varphi(hc(x))\abs{\nabla c(x)}^2dx-2h\gamma^2\int_\bo c(x)\varphi'(hc(x))\abs{\nabla c(x)}^2dx\label{4.13*}\\
	&\qquad{}-2h\gamma^2\int_\bo c(x)\varphi'(hc(x))\abs{\nabla c(x)}^2dx-\gamma^2h^2\int_\mathcal{O}c^2(x)\varphi^{''}(hc(x))\abs{\nabla c(x)}^2dx\notag\\
	&=-\sum_{k=1}^2\Psi^{''}_h(c)\left(\gamma\phi_k(c),\gamma\phi_k(c)\right).\notag
\end{align}
In the equality \re{4.13*}, we have used the fact that $\frac{\partial c}{\partial\nu}$ vanishes on $\partial\bo$.

Therefore, recalling that $\xi=\eta+\frac{\gamma^2}{2}$ and using \re{4.11*} and \re{4.13*}, the equality \re{2.21**} is equivalent to
	\begin{equation}
		\int_\bo\psi_h(c(t,x))dx-\int_\bo\psi_h(c_0(x))dx
		=\int_0^t\Psi'_h(c(s))\left(\bu(s)\cdot \nabla c(s)+\eta\Delta c(s) -n(s)f(c(s)) \right)ds,\notag
	\end{equation}
	from which along with the passage  to the limit as $h\to \infty$ we infer that
	\begin{align}
		&-\int_\bo c^2_-(t,x)dx+\int_\bo (c_0(x))_-^2dx\notag\\
		&=-2\int_0^t\int_\mathcal{O}\left((\bu(s,x)\cdot \nabla c(s,x)+\eta\Delta c(s,x) -n(s,x)f(c(s,x)) )\right)c(s,x)1_{\{c(s,x)<0\}}dxds\notag\\
		&=2\int_0^t\int_\mathcal{O}\left(\eta\abs{\nabla c(s,x)}^2+n(s,x)f(c(s,x))c(s,x) \right)1_{\{c(s,x)<0\}}dxds.\notag
	\end{align}
	We note that, in the last line,  we used an integration-by-parts and the fact that $\nabla\cdot\bu=0$.
	By the mean value theorem,  the fact that $f(0)=0$, and  $f'>0$ as well as  $1_{\{c<0\}}>0$, $c^2>0$, and $n> 0$, we deduce   that $\abs{c_-(t)}^2_{L^2}\leq\abs{(c_0)_-}^2_{L^2}$. This implies that $c_-(t)=0$ $\bar{\mathbb{P}}$-a.s. and end the proof of Lemma \ref{lem3.6}.
\end{proof}
With the non-negativity of probabilistic weak solutions in hand, we are able now to state and prove the $L^\infty$-stability.
\begin{cor}\label{lem3.7}
Under the same assumptions as in Lemma \ref{lem3.6},	if  $(\bar{\Omega}, \bar{\mathcal{F}},\bar{\mathbb{F}}, \bar{\mathbb{P}},(\bu,c,n),(\bar{W},\bar{\beta}))$ is a probabilistic weak solution to the problem \re{1.1}, then   for all $t\in [0,T]$
	\begin{equation}
		\abs{c(t)}_{L^\infty}\leq\abs{c_0}_{L^\infty},\quad\bar{\mathbb{P}}\text{-a.s.}\label{3.14}
	\end{equation}
\end{cor}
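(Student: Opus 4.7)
The natural quantity to study is $\Psi(c):=\int_{\mathcal O}(c-M)_+^2\,dx$, where $M:=|c_0|_{L^\infty}$ and $(y)_+=\max(y,0)$; by definition of $M$ we have $\Psi(c_0)=0$, so it suffices to show $\bar{\mathbb P}$-a.s.\ that $\Psi(c(t))\le 0$ for all $t\in[0,T]$, i.e.\ $(c(t)-M)_+=0$. Since $\Psi$ is not twice Fréchet-differentiable, the plan is to mimic the regularization of \cite{Denis1} used in the proof of Lemma~\ref{lem3.6}: replace $(y-M)_+^2$ by the smooth family $-(y-M)^2\varphi(h(M-y))$ with $\varphi$ as in \re{4.14**}, form functionals $\Psi_h$ on $H^2(\mathcal O)$ that are twice Fréchet-differentiable and approximate $-\Psi$, apply the It\^o formula to $t\mapsto \Psi_h(c(t))$ using \re{4.1} (which is already in It\^o form with drift coefficient $\xi=\eta+\gamma^2/2$), and finally pass to the limit $h\to\infty$.

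At the formal level one has $\Psi'(c)(h)=2\int(c-M)_+h\,dx$ and $\Psi''(c)(h,k)=2\int 1_{\{c>M\}}hk\,dx$, and the easy contributions take care of themselves as follows. The transport term $\Psi'(c)(-\bu\cdot\nabla c)=-\int\bu\cdot\nabla(c-M)_+^2\,dx$ vanishes after integration by parts using $\nabla\cdot\bu=0$ and $\bu|_{\partial\mathcal O}=0$. The reaction term $-2\int(c-M)_+ n f(c)\,dx$ is non-positive because Lemma~\ref{lem3.6} gives $n\ge 0$ and Assumption~\ref{ass_1} (with $f(0)=0$, $f'>0$) gives $f(c)\ge 0$ on $\{c\ge M>0\}$. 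The It\^o stochastic integral $2\gamma\sum_k\int(c-M)_+\,\sigma_k\cdot\nabla c\,dx\,d\bar\beta^k$ vanishes identically after integration by parts using $\nabla\cdot\sigma_k=0$ and $\sigma_k|_{\partial\mathcal O}=0$, via the chain of manipulations in \re{4.18}--\re{4.11*}.

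The heart of the argument is the balance between the diffusion and the It\^o correction. After integration by parts with the Neumann boundary condition on $c$, the diffusion yields $\Psi'(c)(\xi\Delta c)=-2\xi\int|\nabla(c-M)_+|^2\,dx=-2\xi\int 1_{\{c>M\}}|\nabla c|^2\,dx$, while by \re{2.24*} the It\^o correction yields
\[
\tfrac{\gamma^2}{2}\sum_{k=1}^{2}\Psi''(c)(\sigma_k\cdot\nabla c,\sigma_k\cdot\nabla c)=\gamma^2\int 1_{\{c>M\}}\sum_{k=1}^{2}(\sigma_k\cdot\nabla c)^2\,dx=\gamma^2\int 1_{\{c>M\}}|\nabla c|^2\,dx.
\]
Their sum equals $-(2\xi-\gamma^2)\int 1_{\{c>M\}}|\nabla c|^2\,dx=-2\eta\int 1_{\{c>M\}}|\nabla c|^2\,dx\le 0$, precisely because $\xi=\eta+\gamma^2/2$. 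Combining this with the previous observations, $t\mapsto\Psi(c(t))$ is non-increasing $\bar{\mathbb P}$-a.s., so $\Psi(c(t))\le \Psi(c_0)=0$, which yields \re{3.14}.

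The only non-routine work is the regularization step: propagating the three sign identifications above from $\Psi$ to $\Psi_h$ and then passing to the limit $h\to\infty$. The transport and reaction terms go through by dominated convergence using the a priori regularity \re{4.41} and the non-negativity $n\ge 0$; the critical cancellation producing the $(2\xi-\gamma^2)$-coefficient requires the exact analogue of the $\varphi,\varphi'$-bookkeeping in \re{4.13*} with $1_{\{c<0\}}$ replaced by $1_{\{c>M\}}$, and its passage to the limit follows by monotone convergence on the level sets of $c$. Because the stochastic integral already vanishes at the prelimit $\Psi_h$-level, no expectation is needed and the pointwise-in-$t$ a.s.\ bound \re{3.14} follows directly.
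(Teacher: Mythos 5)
Your proof is correct, but it follows a genuinely different route from the paper. The paper applies the It\^o formula to $\Psi(c)=\int_\bo c^p\,dx$ for each integer $p\ge 2$: the transport and Stratonovich-noise contributions vanish exactly as in your argument (via \re{2.31*} and \re{2.29*}), the It\^o correction combines with the $\xi$-diffusion through \re{2.24*} and \re{2.30*} to leave a non-positive gradient term, and the reaction term is non-positive by the non-negativity of $n$, $c$ and $f$ from Lemma \ref{lem3.6}; this yields $\abs{c(t)}_{L^p}\le\abs{c_0}_{L^p}$ for every $p$, and the conclusion follows by letting $p\to\infty$. You instead track the single quantity $\int_\bo (c-M)_+^2\,dx$ with $M=\abs{c_0}_{L^\infty}$ and show it stays at zero. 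The structural ingredients are identical in both proofs (vanishing of the transport and noise terms, the sign of the reaction term via $n\ge 0$, and the crucial cancellation $\xi-\gamma^2/2=\eta$ between the diffusion and the It\^o correction), but the trade-off differs: the paper's functional $\int c^p$ is already twice Fr\'echet differentiable on $H^2(\bo)\hookrightarrow L^\infty(\bo)$, so no regularization is needed, at the price of an extra limit $p\to\infty$; your functional requires the full Denis--Matoussi-type smoothing $\psi_h$ of \re{4.14**} and the $\varphi,\varphi'$ bookkeeping of \re{4.18}--\re{4.13*}, but delivers the $L^\infty$ bound in one pass. Since the paper has already set up that regularization machinery for Lemma \ref{lem3.6}, your route is a legitimate and essentially equally long alternative.

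One small point you should make explicit: your argument only establishes the one-sided bound $c(t)\le M$ almost everywhere. To conclude $\abs{c(t)}_{L^\infty}\le M$ you also need $c(t)\ge -M$, which you must import from the non-negativity $c(t)>0$ of Lemma \ref{lem3.6} (available under your hypotheses, and also needed to guarantee $M=\abs{c_0}_{L^\infty}>0$ so that $f\ge 0$ on the set $\{c>M\}$). This is a one-line addition, not a gap in the method.
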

\begin{proof}
The idea of  the proof comes from \cite[Section 3.2]{Francesco}. We apply the It\^o formula to the process $t\mapsto\Psi(c(t)):=\int_\mathcal{O}c^p(t,x)dx$, for any $p\geq2$ and evaluate the limit as $p$ tends to $\infty$. Let $\Psi: H^2(\bo)\to \mathbb{R}$ be the functional defined by $\Psi(c)=\int_\mathcal{O}c^p(x)dx$. Note that this mapping  is twice Fr\'echet-differentiable and 
	\begin{align*}
		&\Psi'(c)(h)=p\int_\mathcal{M}c^{p-1}(x)h(x)dx, \quad \forall c, h\in H^2(\bo),\\
		&\Psi^{''}(c)(h,k)=p(p-1)\int_\mathcal{M}c^{p-2}(x)h(x)k(x)dx, \quad \forall c,h,  k\in H^2(\bo).
	\end{align*}
	Applying the  It\^o formula to the process $t\mapsto\Psi(c(t))$, yields
	\begin{align}
		\Psi(c(t))-\Psi(c(0))&=\int_0^t\Psi'(c(s))\left(\bu(s)\cdot \nabla c(s)+\xi\Delta c(s) -n(s)f(c(s)) \right)ds\notag\\
		&\quad{}+\frac{1}{2}\int_0^t\sum_{k=1}^2\Psi^{''}(c(s))\left(\gamma\phi_k(c(s)),\gamma\phi_k(c(s))\right)ds+\gamma \sum_{k=1}^2\int_0^t\Psi'(c(s))(\phi_k(c(s)))d\bar{ \beta}^k_s.\label{2.28}
	\end{align}
By integration-by-parts, the divergence free property of $\sigma_k$ and the fact that $\sigma_k=0$ on $\partial\bo$, we remark that for all $k\geq1$,
\begin{align}
	\Psi'(c)(\phi_k(c))&=p\int_\mathcal{O}c^{p-1}(x)\sigma_k(x)\cdot\nabla c(x)dx\notag\\
	&=\int_\mathcal{O}\sigma_k(x)\cdot\nabla c^p(x)dx\label{2.29*}\\
	&=-\int_\mathcal{O}c^p(x)\nabla\cdot\sigma_k(x)dx+\int_{\partial\mathcal{O}} c^p(\sigma)\sigma_k(\sigma)\cdot\nu d\sigma=0.\notag
\end{align}
This implies that the stochastic term in \re{2.28} vanishes.

Note that
\begin{equation}
	\int_\mathcal{O}\Delta c(x)c^{p-1}(x)dx=-(p-1)\int_\mathcal{O}\abs{\nabla c(x)}^2c(x)^{p-2}dx.\label{2.30*}
\end{equation}
Since $\nabla\cdot\bu=0$, by integration by part, we infer that
\begin{align}
	\int_\mathcal{O}\bu(x)\cdot\nabla c(x) c^{p-1}(x)dx=\frac{1}{p}\int_\mathcal{O}\bu(x)\cdot\nabla c^p(x)dx=0.\label{2.31*}
\end{align}
Using the equalities (\ref{2.29*}), (\ref{2.30*}) and  (\ref{2.31*}), we deduce from \re{2.29*} that
\begin{align}
	\Psi(c(t))-\Psi(c_0)&=\int_0^{t}\int_\mathcal{O}\left(-p(p-2)\xi\abs{\nabla c(s,x)}^2c^{p-2}(s,x)-pn(s,x)f(c(s,x))c^{p-1}(s,x) \right)dxds\notag
	\\
	&\qquad{}+\frac{p(p-1)}{2}\int_0^{t}\int_\mathcal{O}c^{p-2}(s)\sum_{k=1}^2\sigma_k(x)\cdot\nabla c(s,x)\sigma_k(x)\cdot\nabla c(s,x)dxds.\label{2.32*}
\end{align}
From the equality \re{2.24*}, we get $\sum_{k=1}^2\sigma_k\cdot\nabla c\sigma_k\cdot\nabla c=\abs{\nabla c}^2$. Hence, the equality \re{2.32*} becomes
	\begin{equation*}
		\Psi(c(t))-\Psi(c_0)=\int_0^t\int_\mathcal{O}\left(-p(p-2)\abs{\nabla c(s,x)}^2c^{p-2}(s,x)-pn(s,x)f(c(s,x))c^{p-1}(s,x) \right)dxds.
	\end{equation*}
	Using the non-negative property of  $n$ and $c$ proved in Lemma \ref{lem3.6} combined with the non-negativity of the function $f$,  we infer from the last equality  that for all  $p\geq 2$ and $t \in [0,T]$, $\abs{c(t)}_{L^p}\leq\abs{c_0}_{L^p}$, which along with the passage to the limit  $p\to +\infty$  completes the  proof of  Theorem \ref{lem2.2} (see \cite[Theorem 2.14]{Adam} for a detailed proof). 
\end{proof}
We now proceed with the statement and proof of the pathwise uniqueness of the weak solution.
\begin{proposition}\label{lem5.1}
	We assume that the assumptions of Theorem \ref{theo4.2}  hold. If $$(\Omega,\mathcal{F},\{\mathcal{F}_t\}_{t\in[0,T]},\mathbb{P},(\bu_1,c_1,n_1),(\bar{W},\bar{\beta})) \text{  and  } (\Omega,\mathcal{F},\{\mathcal{F}_t\}_{t\in[0,T]},\mathbb{P},(\bu_2,c_2,n_2),(\bar{W},\bar{\beta}))$$ are  two weak probabilistic solutions of system \re{3.1} with the same initial data $(\bu_0,c_0,n_0)$, then
	\begin{equation}
	(\bu_1(t),c_1(t),n_1(t))=(\bu_2(t),c_2(t),n_2(t))\qquad \mathbb{P}\text{-a.s.}\qquad \text{for all } t\in[0,T].
	\end{equation}
\end{proposition}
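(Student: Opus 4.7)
The plan is to derive an energy estimate for the difference of the two solutions and to conclude via a localised Gronwall argument. Setting $\bu:=\bu_1-\bu_2$, $c:=c_1-c_2$, $n:=n_1-n_2$, the bilinearity of $B_0,B_1,R_2$ and the Lipschitz-continuity of $f$ and $g$ reduce the problem to a linear stochastic system for $(\bu,c,n)$ with zero initial data, in which, for example, $B_0(\bu_1,\bu_1)-B_0(\bu_2,\bu_2)=B_0(\bu,\bu_1)+B_0(\bu_2,\bu)$, $R_2(n_1,c_1)-R_2(n_2,c_2)=R_2(n,c_1)+R_2(n_2,c)$, and the linearity of $\phi$ in $c$ leaves the noise in the $c$-equation as $\gamma\phi(c)\,d\beta$. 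I target an inequality of the form
\begin{equation*}
X(t):=\abs{\bu(t)}_{L^2}^2+\abs{c(t)}_{H^1}^2+\abs{n(t)}_{L^2}^2\;\leq\;M_t+\int_0^t G(s)\,X(s)\,ds,
\end{equation*}
where $M$ is a local martingale, $X(0)=0$, and $G\in L^1(0,T)$ $\mathbb{P}$-a.s., depending only on $\abs{\nabla\bu_1}^2_{L^2}$, $\abs{c_1}^2_{H^2}$, $\abs{\nabla c_1}^4_{L^4}$, $\abs{n_2}^2_{L^2}\abs{\nabla n_2}^2_{L^2}$, and data-dependent constants.

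The It\^o formula is applied to $\abs{\bu}_{L^2}^2$, to the sum $\abs{c}_{L^2}^2+\abs{\nabla c}_{L^2}^2$, and to $\abs{n}_{L^2}^2$. In the $\bu$-equation the cancellation $(B_0(\bu_2,\bu),\bu)=0$ reduces the transport term to $(B_0(\bu,\bu_1),\bu)$, bounded via the 2D Ladyzhenskaya inequality, while Assumption \ref{ass_4} yields the correction $L_{Lip}^2 X$ plus a local martingale. In the $\abs{c}^2_{L^2}$-formula the stochastic integral vanishes pointwise since $(\sigma_k\cdot\nabla c,c)=0$ (using $\nabla\cdot\sigma_k=0$ and $\sigma_k|_{\partial\bo}=0$), and the It\^o correction $\gamma^2\abs{\nabla c}_{L^2}^2$, stemming from $\sum_k(\sigma_k\cdot\nabla c)^2=\abs{\nabla c}^2$ (see \re{2.24*}), is absorbed by the dissipation. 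In the $\abs{\nabla c}^2_{L^2}$-formula (justified through the Galerkin level and the fact that $\partial_\nu c=0$) the stochastic integral is a local martingale that I include in $M_t$, and the correction $\gamma^2\sum_k\abs{\nabla(\sigma_k\cdot\nabla c)}^2_{L^2}\leq C\gamma^2\abs{\sigma}^2_{W^{1,\infty}}(\abs{\Delta c}^2_{L^2}+\abs{\nabla c}^2_{L^2})$ is absorbed by $2\xi\abs{\Delta c}^2_{L^2}$ via the smallness condition \re{4.46*}. The nonlinear drifts tested against $c$ and $-\Delta c$ are controlled by H\"older, Gagliardo-Nirenberg, the $L^\infty$-bound on $c_i$ (Corollary \ref{lem3.7}), and the mean-value theorem applied to $f$. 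Testing the $n$-equation against $n$ and using $(B_1(\bu_2,n),n)=0$ reduces the chemotactic term to $-\chi\int_\bo n\,\nabla c_1\cdot\nabla n\,dx-\chi\int_\bo n_2\,\nabla c\cdot\nabla n\,dx$: the first integral is handled exactly as in \re{2.16}, while the second, bounded by $\abs{n_2}_{L^4}\abs{\nabla c}_{L^4}\abs{\nabla n}_{L^2}$, is treated by the Gagliardo-Nirenberg interpolations
\begin{equation*}
\abs{n_2}^2_{L^4}\leq C(\abs{n_2}_{L^2}\abs{\nabla n_2}_{L^2}+\abs{n_2}^2_{L^2}),\qquad \abs{\nabla c}^2_{L^4}\leq C(\abs{\nabla c}_{L^2}\abs{\Delta c}_{L^2}+\abs{\nabla c}^2_{L^2}),
\end{equation*}
followed by successive Young inequalities that distribute the resulting powers so that $\tfrac{\delta}{4}\abs{\nabla n}^2_{L^2}$ and an $\varepsilon$-multiple of $\abs{\Delta c}^2_{L^2}$ are each absorbed on the left-hand side, while the residue multiplies $X$ by the $\mathbb{P}$-a.s.\ integrable factor $C(\abs{n_2}^2_{L^2}\abs{\nabla n_2}^2_{L^2}+1)$.

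Summing the three identities produces the target inequality. I localise by $\tau_R:=\inf\{t\in[0,T]: \int_0^t G(s)\,ds\geq R\}\wedge T$, apply It\^o to $t\mapsto\exp(-\int_0^t G)X(t)$ up to $\tau_R$ to kill the Gronwall factor, take expectation to dispose of the (then true) martingale term, and conclude $\mathbb{E}[X(t\wedge\tau_R)]=0$ from $X(0)=0$. Sending $R\to\infty$ with $\tau_R\uparrow T$ a.s.\ (guaranteed by the pathwise regularity \re{4.41}) yields $X\equiv 0$ on $[0,T]$, hence $\bu_1=\bu_2$, $c_1=c_2$, $n_1=n_2$ $\mathbb{P}$-a.s. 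I expect the principal obstacle to be the cross-coupling $\chi\int_\bo n_2\,\nabla c\cdot\nabla n\,dx$: it couples the differences $c$ and $n$ and forces the simultaneous use of the $n$-dissipation $\delta\abs{\nabla n}^2_{L^2}$ and of the $H^2$-level $c$-dissipation $\xi\abs{\Delta c}^2_{L^2}$, which is precisely why the It\^o formula cannot be applied only to $\abs{c}^2_{L^2}$. Balancing this against the Stratonovich-to-It\^o correction $\gamma^2\sum_k\abs{\nabla(\sigma_k\cdot\nabla c)}^2_{L^2}$ (tamed only by \re{4.46*}) while keeping $G$ a.s.\ in $L^1(0,T)$ is the most delicate bookkeeping in the argument.
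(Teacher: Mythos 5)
Your proposal is correct and follows essentially the same route as the paper: the same decomposition of the differences, the same cancellations $(B_0(\bu_2,\bw),\bw)=0$, $(\phi(\psi),\psi)=0$, $(B_1(\bu_2,\varphi),\varphi)=0$, the same It\^o computations at the $L^2$-level for $\bw,\varphi$ and the $H^1$-level for $\psi$, the same absorption of the Stratonovich correction via \re{4.46*}, and the same random exponential weight $\exp(-\int_0^t G)$ followed by Gronwall. The only (welcome) refinement is your explicit localisation by the stopping times $\tau_R$ before taking expectations, which justifies the vanishing of the stochastic integrals more carefully than the paper's direct passage from \re{5.24} to \re{5.25}.
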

\begin{proof}
For $t\in [0,T]$,	let $$(\bw(t),\psi(t),\varphi(t))=(\bu_1(t)-\bu_2(t),c_1(t)-c_2(t),n_1(t)-n_2(t)).$$ Then this process satisfies  $(\bw(0),\psi(0),\varphi(0))=0$ and for all $t\in[0,T]$, we have
	\begin{align}
	\bw(t) &+\Int_0^t[\eta A_0\bw(s)+B_0(\bw(s),\bu_1(s))+B_0(\bu_2(s),\bw(s))]ds \notag\\
		&=\Int_0^tR_0(\varphi(s),\varPhi)ds+ \Int_0^t[g(\bu_1(s),c_1(s))-g(\bu_2(s),c_2(s))] dW_s,\label{5.4}
	\end{align}
	\begin{align}
	\psi(t) &+ \Int_0^t[\xi A_1\psi(s)+B_1(\bw(s), c_1(s))+B_1(\bu_2(s), \psi(s))]ds\notag\\
	&= -\Int_0^t [R_1(n_1(s),c_1(s))-R_1(n_2(s),c_2(s))]ds+\gamma\int_0^t\phi(\psi(s))d\beta_s,\label{5.5}
	\end{align}
	\begin{align}\label{5.6}
	\varphi(t) &+\Int_0^t[\delta A_1\varphi(s)+B_1(\bw(s), n_1(s))+B_1(\bu_2(s), \phi(s))]ds\notag\\
	& =-\Int_0^t [R_2(n_1(s),c_1(s))-R_2(n_2(s),c_2(s))]ds.
	\end{align}
	Using the fact that $(B_0(\bu_2,\bw),\bw)=0$,  we get by applying the It\^o formula to $t\mapsto\abs{\bw(t)}_{L^2}^2$ that
	\begin{align}
	\abs{\bw(t)}_{L^2}^2+2\eta\int_0^t\abs{\nabla\bw(s)}^2_{L^2}ds&= -2\int_0^t(B_0(\bw(s),\bu_1(s)),\bw(s))ds +2\int_0^t(R_0(\varphi(s),\varPhi),\bw(s))ds\notag\\
	&+\int_0^t\abs{g(\bu_1(s),c_1(s))-g(\bu_2(s),c_2(s))}^2_{\mathcal{L}^2(\buc,H)}ds\label{5.7}\\
	&+ 2\int_0^t(g(\bu_1(s),c_1(s))-g(\bu_2(s),c_2(s)),\bw(s)) dW_s.\notag
	\end{align}
Using the continuous embeddings $V\hookrightarrow H$ and $H^1(\bo)\hookrightarrow L^4(\bo)$ as well as the  H\"older inequality and the Young inequality, we derive that
	\begin{align}
	2\abs{(B_0(\bw,\bu_1),\bw)}&\leq 2\abs{\bw}_{L^4}\abs{\bu_1}_{L^4}\abs{\bw}_{L^2}\notag\\
	&\leq \frac{\eta}{5}\abs{\nabla \bw}^2_{L^2}+\bk\abs{\nabla \bu_1}_{L^2}^2\abs{\bw}^2_{L^2},\label{5.8}
	\end{align}
and
	\begin{align}
	2\abs{(R_0(\varphi,\varPhi),\bw)}&\leq 2\abs{\nabla\varPhi}_{L^\infty}\abs{\varphi}_{L^2}\abs{\bw}_{L^2}\notag\\
	&\leq \bk\abs{\nabla\varPhi}_{L^\infty}\abs{\varphi}_{L^2}\abs{\nabla \bw}_{L^2}\label{5.9}\\
	&\leq  \frac{\eta}{5}\abs{\nabla \bw}^2_{L^2}+\bk\abs{\varPhi}^2_{W^{1,\infty}}\abs{\varphi}^2_{L^2}.\notag
	\end{align}
	Thanks to \re{5.2}, we have
	\begin{equation}
	\abs{g(\bu_1,c_1)-g(\bu_2,c_2)}^2_{\mathcal{L}^2(\buc,H)}\leq L^2_{Lip}(\abs{\bw}^2_{L^2}+\abs{\psi}^2_{H^1}).\label{5.10}
	\end{equation}
	Since $\nabla\cdot\sigma_1=\nabla\cdot\sigma_2=0$, we obtain $(\phi(\psi),\psi)=0$. Futhermore,  by the fact that $\nabla\cdot \bu_2=0$, we  derive that  $(B_1(\bu_2, \psi),\psi)=0$.  Next, we recall that ($\mathbf{A}_3$) implies
	\begin{equation*}
	\abs{\phi(\psi)}^2_{\mathcal{L}^2(\mathbb{R}^2;L^2)}=\sum_{k=1}^2\int_\bo\abs{\sigma_k(x)\cdot\nabla \psi(x)}^2dx=\abs{\nabla \psi}^2_{L^2}.
	\end{equation*}
Hence, by applying the It\^o formula to  $t\mapsto\abs{\psi(t)}^2_{H^1}$, we see that
	\begin{align}
	&\abs{\psi(t)}_{H^1}^2+2\int_0^t\left(\mu\abs{\nabla\psi(s)}^2_{L^2}+\xi\abs{A_1\psi(s)}^2_{L^2}\right)ds\notag\\
	&=-2\int_0^t(B_1(\bw(s), c_1(s)),\psi(s))ds-2\int_0^t (R_1(n_1(s),c_1(s))-R_1(n_2(s),c_2(s)),\psi(s))ds\notag\\
	&\qquad{}+2\int_0^t(B_1(\bw(s), c_1(s))+B_1(\bu_2(s), \psi(s)),A_1\psi(s))ds\label{5.11}\\
	&\qquad{}-2\int_0^t (R_1(n_1(s),c_1(s))-R_1(n_2(s),c_2(s)),A_1\psi(s))ds\notag\\
	&\qquad{}+\gamma^2\int_0^t\abs{\nabla\phi(\psi(s))}^2_{\mathcal{L}^2(\mathbb{R}^2;L^2)}ds+2\gamma\int_0^t(\nabla\phi(\psi(s)),\nabla\psi(s))d\beta_s.\notag
	\end{align}
	Taking  the $L^2$-inner product of the  equation \re{5.6}  with $\varphi$ and adding the result to \re{5.11},  yield
	\begin{align}
	&\abs{\varphi(t)}_{L^2}^2+\abs{\psi(t)}_{H^1}^2+2\int_0^t(\mu\abs{\nabla\psi(s)}^2_{L^2}+\xi\abs{A_1\psi(s)}^2_{L^2}+\delta\abs{\nabla\varphi(s)}^2_{L^2})ds\notag\\
	&=-2\int_0^t(B_1(\bw(s), c_1(s)),\psi(s))ds-2\int_0^t (R_1(n_1(s),c_1(s))-R_1(n_2(s),c_2(s)),\psi(s))ds\notag\\
	&\quad{}+2\int_0^t(B_1(\bw(s), c_1(s))+B_1(\bu_2(s), \psi(s)),A_1\psi(s))ds\notag\\
	&\quad{}-2\int_0^t (R_1(n_1(s),c_1(s))-R_1(n_2(s),c_2(s)),A_1\psi(s))ds\label{5.12}\\
	&\quad{} -2\int_0^t [r_2(\varphi(s),c_1(s),\varphi(s))+r_2(n_2(s),\psi(s),\varphi(s))]ds+\gamma^2\int_0^t\abs{\nabla\phi(\psi(s))}^2_{\mathcal{L}^2(\mathbb{R}^2;L^2)}ds\notag\\
	&\quad{}-2\int_0^t(B_1(\bw(s), n_1(s)),\varphi(s))ds+2\gamma\int_0^t(\nabla\phi(\psi(s)),\nabla\psi(s))d\beta_s.\notag
	\end{align}
Now, we give an estimate for the right-hand side of \eqref{5.12}.
Similarly to \eqref{5.8}, we have
	\begin{align}
	2\abs{(B_1(\bw, c_1),\psi)}&\leq 2\abs{\bw}_{L^4}\abs{\nabla c_1}_{L^2}\abs{\psi(s)}_{L^4}\notag\\
	&\leq\bk\abs{\nabla\bw}_{L^2}\abs{\nabla c_1}_{L^2}\abs{\psi}_{H^1}\label{5.13}\\
	&\leq \frac{\eta}{5}\abs{\nabla\bw}^2_{L^2}+\bk\abs{\nabla c_1}^2_{L^2}\abs{\psi}_{H^1}.\notag
	\end{align}
	Thanks to  the continuous embedding $H^1(\bo)\hookrightarrow L^4(\bo)$ and the $L^\infty$-stability property proved in Corollary \ref{lem3.7},  we have
	\begin{align}
	2(R_1(n_1,c_1)-R_1(n_2,c_2),\psi)
&\leq 2\abs{R_1(n_1,c_1)-R_1(n_2,c_2)}_{L^2}\abs{\psi}_{L^2}\notag\\
	&\leq 4\abs{(f(c_1)-f(c_2))n_1}^2_{L^2}+4\abs{f(c_2)\psi}^2_{L^2}+2\abs{\psi}^2_{L^2}\notag\\
	&\leq 4\sup_{0\leq r\leq \abs{c_0}_{L^\infty}}(f'(r))^2\abs{n_1\psi}^2_{L^2}+4\sup_{0\leq r\leq \abs{c_0}_{L^\infty}}f(r)\abs{\psi}^2_{L^2}+2\abs{\psi}^2_{L^2}\notag\\
	&\leq \bk\abs{\psi}_{L^4}^2\abs{n_1}^2_{L^4}+\bk_f\abs{\psi}^2_{L^2}\notag.
	\end{align}
Applying  the Galiardo-Nirenberg-Sobolev  inequality, we arrive at
\begin{align}
2(R_1(n_1,c_1)-R_1(n_2,c_2),\psi)&\leq\bk\abs{\psi}^2_{H^1}\left(\abs{\nabla n_1}_{L^2}\abs{n_1}_{L^2}+\abs{n_1}_{L^2}^2\right)+\bk_f\abs{\psi}^2_{L^2}\notag\\
	&\leq \bk\left(\abs{\nabla n_1}_{L^2}\abs{n_1}_{L^2}+\abs{n_1}_{L^2}^2\right)\abs{\psi}^2_{H^1}+\bk_f\abs{\psi}^2_{H^1}.\label{5.14}
\end{align}
	Thanks to  the Ladyzhenskaya, Galiardo-Nirenberg-Sobolev, and  Young inequalities, we find that
	\begin{align}
	2\abs{(B_1(\bw, c_1),A_1\psi)}
&\leq 2\abs{\bw}_{L^4}\abs{\nabla c_1}_{L^4}\abs{A_1\psi}_{L^2}\notag\\
	&\leq \frac{\xi}{6}\abs{A_1\psi}_{L^2}^2+\bk\abs{\bw}_{L^2}\abs{\nabla\bw}_{L^2}\left(\abs{c_1}_{H^2}\abs{\nabla c_1}_{L^2}+\abs{\nabla c_1}_{L^2}^2\right)\label{5.15}\\
	&\leq \frac{\xi}{6}\abs{A_1\psi}_{L^2}^2+ \frac{\eta}{5}\abs{\nabla\bw}_{L^2}^2+\bk\left(\abs{c_1}^2_{H^2}\abs{\nabla c_1}_{L^2}^2+\abs{\nabla c_1}_{L^2}^4\right)\abs{\bw}_{L^2}^2.\notag
	\end{align}
	We recall that there exist a positive constant $\bk_0$, such that $\abs{\psi}_{H^2}^2\leq \bk_0(\abs{A_1\psi}^2+\abs{\psi}_{H^1}^2)$. Hence, using also the continuous embedding $V\hookrightarrow H$, we obtain
	\begin{align}
	2\abs{(B_1(\bu_2, \psi),A_1\psi)}
 	&\leq 2\abs{\bu_2}_{L^4}\abs{\nabla \psi}_{L^4}\abs{A_1\psi}_{L^2}\notag\\
	&\leq \frac{\xi}{6}\abs{A_1\psi}_{L^2}^2+\bk\abs{\bu_2}_{L^2}\abs{\nabla\bu_2}_{L^2}\left(\abs{\psi}_{H^2}\abs{\nabla\psi}_{L^2}+\abs{\nabla\psi}_{L^2}^2\right)\notag\\
	&\leq \frac{\xi}{6}\abs{A_1\psi}_{L^2}^2+\frac{\bk_0^{-1}\xi}{6}\abs{\psi}_{H^2}^2+\bk\abs{\bu_2}^2_{L^2}\abs{\nabla\bu_2}^2_{L^2}\abs{\nabla\psi}^2_{L^2}\label{5.16}\\
	&\quad {}  +\bk\abs{\bu_2}_{L^2}\abs{\nabla\bu_2}_{L^2}\abs{\nabla\psi}_{L^2}^2\notag\\
	&\leq \frac{\xi}{3}\abs{A_1\psi}_{L^2}^2+\frac{\xi}{6}\abs{\psi}_{H^1}^2+\bk\left(\abs{\bu_2}^2_{L^2}\abs{\nabla\bu_2}^2_{L^2}+\abs{\nabla\bu_2}^2_{L^2}\right)\abs{\psi}_{H^1}^2.\notag
	\end{align}
	Using a similarly argument as in \re{5.14}, we arrive at
	\begin{align}
2\abs{(R_1(n_1,c_1)-R_1(n_2,c_2),A_1\psi}
	&\leq\frac{\xi}{6}\abs{A_1\psi}_{L^2}^2+\bk\abs{R_1(n_1,c_1)-R_1(n_2,c_2)}_{L^2}^2\notag\\
	&\leq \frac{\xi}{6}\abs{A_1\psi}_{L^2}^2+\bk\abs{\psi}_{L^4}^2\abs{n_1}^2_{L^4}+\bk_f\abs{\psi}^2_{L^2}\label{5.17}\\
	&\leq \frac{\xi}{6}\abs{A_1\psi}_{L^2}^2+\bk_f\abs{\psi}^2_{H^1}+\bk\left(\abs{\nabla n_1}_{L^2}\abs{n_1}_{L^2}+\abs{n_1}_{L^2}^2\right)\abs{\psi}^2_{H^1}.\notag
	\end{align}
By 	using an integration-by-parts and H\"older, and the Galiardo-Nirenberg-Sobolev inequalities, we see that
	\begin{align}
	2\abs{(B_1(\bw, n_1),\varphi)}
&\leq2\abs{\int_\bo n_1(x)\bw(x)\cdot\nabla\varphi(x) dx}\notag\\
	&\leq 2\abs{n_1}_{L^4}\abs{\bw}_{L^4}\abs{\nabla\varphi}_{L^2}\notag\\
	&\leq\frac{\delta}{4}\abs{\nabla\varphi}_{L^2}^2+\bk\abs{\bw}_{L^2}\abs{\nabla\bw}_{L^2}\left(\abs{\nabla n_1}_{L^2}\abs{n_1}_{L^2}+\abs{n_1}_{L^2}^2\right)\label{5.18}\\
	&\leq\frac{\delta}{4}\abs{\nabla\varphi}_{L^2}^2+ \frac{\eta}{5}\abs{\nabla\bw}^2_{L^2}+\bk\left(\abs{\nabla n_1}^2_{L^2}\abs{n_1}^2_{L^2}+\abs{n_1}_{L^2}^4\right)\abs{\bw}^2_{L^2}\notag.
	\end{align}
By applying  the Young and Galiardo-Nirenberg-Sobolev  inequalities we obtain
	\begin{align}
	2\abs{r_2(\varphi,c_1,\varphi)}
	&\leq 2\abs{\varphi}_{L^4}\abs{\nabla c_1}_{L^4}\abs{\nabla\varphi}_{L^2}\notag\\
	&\leq \frac{\delta}{4}\abs{\nabla\varphi}_{L^2}^2+\bk \abs{\varphi}_{L^4}^2\abs{\nabla c_1}_{L^4}^2\notag\\
	&\leq\frac{\delta}{4}\abs{\nabla\varphi}_{L^2}^2+\bk\left(\abs{\nabla\varphi}_{L^2}\abs{\varphi}_{L^2}+\abs{\varphi}_{L^2}^2\right)\left(\abs{c_1}_{H^2}\abs{\nabla c_1}_{L^2}+\abs{\nabla c_1}_{L^2}^2\right)\label{5.19}\\
	&\leq \frac{\delta}{2}\abs{\nabla\varphi}_{L^2}^2+\bk\left(\abs{c_1}^2_{H^2}\abs{\nabla c_1}_{L^2}^2+\abs{\nabla c_1}_{L^2}^4+\abs{c_1}_{H^2}\abs{\nabla c_1}_{L^2}+\abs{\nabla c_1}_{L^2}^2\right)\abs{\varphi}_{L^2}^2\notag.
	\end{align}
In a similarly way we have that
	\begin{align}
2\abs{r_2(n_2,\psi,\varphi)}
	&\leq 2\abs{n_2}_{L^4}\abs{\nabla \psi}_{L^4}\abs{\nabla\varphi}_{L^2}\notag\\
	&\leq \frac{\delta}{4}\abs{\nabla\varphi}_{L^2}^2+\bk\abs{n_2}^2_{L^4}\left(\abs{\psi}_{H^2}\abs{\nabla \psi}_{L^2}+\abs{\nabla \psi}_{L^2}^2\right)\notag\\
	&\leq \frac{\delta}{4}\abs{\nabla\varphi}_{L^2}^2+\frac{\bk_0^{-1}\xi}{6}\abs{\psi}_{H^2}^2+\bk\abs{n_2}^4_{L^4}\abs{\nabla \psi}_{L^2}^2+\bk\abs{n_2}^2_{L^4}\abs{\nabla \psi}_{L^2}^2\label{5.20}\\
	&\leq\frac{\delta}{4}\abs{\nabla\varphi}_{L^2}^2+\frac{\xi}{6}\abs{A_1\psi}_{L^2}^2+\frac{\xi}{6}\abs{ \psi}_{H^1}^2+\bk\abs{n_2}_{L^2}^4\abs{ \psi}_{H^1}^2\notag\\
	& \quad{}+\bk\left(\abs{\nabla n_2}_{L^2}\abs{n_2}_{L^2}+\abs{n_2}_{L^2}^2\abs{\nabla n_2}^2_{L^2}+\abs{n_2}^2_{L^2}\right)\abs{ \psi}_{H^1}^2.\notag
	\end{align}
By	using \re{4.46*} we derive that
	\begin{align}
	\gamma^2\abs{\nabla\phi(\psi)}^2_{\mathcal{L}^2(\mathbb{R}^2;L^2)}&=\gamma^2\sum_{k=1}^2\int_\bo\abs{\nabla(\sigma_k(x)\cdot\nabla \psi(x))}^2dx\notag\\
	&\leq2\gamma^2\sum_{k=1}^2\abs{\sigma_k}^2_{W^{1,\infty}}\abs{\nabla\psi}^2_{L^2}+2\gamma^2\sum_{k=1}^2\abs{\sigma_k}^2_{L^{\infty}}\abs{\psi}^2_{H^2}\label{5.21}\\
	&\leq (1+\bk_0)2\gamma^2\abs{\sigma}^2_{W^{1,\infty}}\abs{\nabla\psi}^2_{L^2}+2\gamma^2\bk_0\abs{\sigma}^2_{L^{\infty}}\abs{A_1\psi}^2_{L^2}\notag\\
	&\leq\frac{\xi}{6}\abs{A_1\psi}^2_{L^2}+(1+\bk_0)2\gamma^2\abs{\sigma}^2_{W^{1,\infty}}\abs{\psi}^2_{H^1}\notag.
	\end{align}
Now, for $t\in [0,T]$ and $s\in [0,t]$,  let us set $$\mathcal{Y}(t):=\abs{\bu(t)}^2_{L^2}+\abs{c(t)}^2_{H^1}+\abs{\varphi(t)}^2_{L^2},$$
	\begin{align}
	\bz(s)&:=\bk\abs{\nabla u_1(s)}_{L^2}^2+\bk\abs{\nabla c_1(s)}^2_{L^2}+\bk\left(\abs{\nabla n_1(s)}_{L^2}\abs{n_1(s)}_{L^2}+\abs{n_1(s)}_{L^2}^2\right)\notag\\
	&\qquad{}+\bk\left(\abs{c_1(s)}^2_{H^2}\abs{\nabla c_1(s)}_{L^2}^2+\abs{\nabla c_1(s)}_{L^2}^4\right)+\bk\left(\abs{\bu_2(s)}^2_{L^2}\abs{\nabla\bu_2(s)}^2_{L^2}+\abs{\nabla\bu_2(s)}^2_{L^2}\right)\notag\\
	&\qquad{}+\bk\left(\abs{\nabla n_1(s)}_{L^2}\abs{n_1(s)}_{L^2}+\abs{n_1(s)}_{L^2}^2\right)+\bk\left(\abs{\nabla n_1(s)}^2_{L^2}\abs{n_1(s)}^2_{L^2}+\abs{n_1(s)}_{L^2}^4\right)\label{5.22}\\
	&\qquad{}+\bk\left(\abs{c_1(s)}^2_{H^2}\abs{\nabla c_1(s)}_{L^2}^2+\abs{\nabla c_1(s)}_{L^2}^4+\abs{c_1(s)}_{H^2}\abs{\nabla c_1(s)}_{L^2}+\abs{\nabla c_1(s)}_{L^2}^2\right)\notag
\\
	&\qquad{}+\bk\left(\abs{\nabla n_2(s)}_{L^2}\abs{n_2(s)}_{L^2}+\abs{n_2(s)}_{L^2}^2\abs{\nabla n_2(s)}^2_{L^2}+\abs{n_2(s)}^2_{L^2}+\abs{n_2(s)}_{L^2}^4\right),\notag
	\end{align}
and
	$$\theta(t):=\exp\left(-\int_0^t\bz(s)ds\right).$$
	Applying the It\^o formula to  $t\mapsto\theta(t)\abs{\bu(t)}^2_{L^2}$, we derive that
	\begin{align}
	\theta(t)\abs{\bw(t)}_{L^2}^2+2\eta\int_0^t\theta(s)\abs{\nabla\bw(s)}^2_{L^2}ds&\leq 2\int_0^t\theta(s)(B_0(\bw(s),\bu_1(s)),\bw(s))ds\notag\\
	& +2\int_0^t\theta(s)(R_0(\varphi(s)),\bw(s))ds+\int_0^t\theta'(s)\abs{\bw(s)}_{L^2}^2ds\notag\\
	&+\int_0^t\theta(s)\abs{g(\bu_1(s),c_1(s))-g(\bu_2(s),c_2(s))}^2_{\mathcal{L}^2(\buc,H)}ds\label{5.23}\\
	&+ 2\int_0^t\theta(s)(g(\bu_1(s),c_1(s))-g(\bu_2(s),c_2(s)),\bw(s)) dW_s.\notag
	\end{align}
	Applying the It\^o formula once more to  $t\mapsto\theta(t)(\abs{\varphi(t)}^2_{L^2}+\abs{\psi(t)}^2_{H^1})$ and adding the result with \re{5.23} after taking into account the estimates \re{5.8}-\re{5.10} and \re{5.13}-\re{5.21}, we arrive at
	\begin{align}
	\theta(t)\mathcal{Y}(t)&+\int_0^t\theta(s)\left(\eta\abs{\nabla\bw(s)}^2_{L^2}+\mu\abs{\nabla\psi(s)}^2_{L^2}+\xi\abs{A_1\psi(s)}^2_{L^2}\right)ds\notag\\
	&\leq \left( \bk\abs{\varPhi}^2_{W^{1,\infty}}+L_{Lip}^2+2\bk_f+\frac{\xi}{3}+(1+\bk_0)2\gamma^2\abs{\sigma}^2_{W^{1,\infty}}\right)\int_0^t\theta(s)\mathcal{Y}(s)ds\label{5.24}\\
	&\qquad{}+2\gamma\int_0^t\theta(s)(\nabla\phi(\psi(s)),\nabla\psi(s))d\beta_s\notag\\
	&\qquad{}+ 2\int_0^t\theta(s)(g(\bu_1(s),c_1(s))-g(\bu_2(s),c_2(s)),\bw(s)) dW_s.\notag
	\end{align}
	Next, taking the mathematical expectation yields
	\begin{align}
	\be\theta(t)\mathcal{Y}(t)&+\be\int_0^t\theta(s)\left(\eta\abs{\nabla\bw(s)}^2_{L^2}+\mu\abs{\nabla\psi(s)}^2_{L^2}+\xi\abs{A_1\psi(s)}^2_{L^2}\right)ds\notag\\
	&\leq \left( \bk\abs{\varPhi}^2_{W^{1,\infty}}+L_{Lip}^2+2\bk_f+\frac{\xi}{3}+(1+\bk_0)2\gamma^2\abs{\sigma}^2_{W^{1,\infty}}\right)\be\int_0^t\theta(s)\mathcal{Y}(s)ds\label{5.25}.
	\end{align}
	From which along with the Gronwall inequality we infer that for any $t\in[0,T]$ $$\be\theta(t)\mathcal{Y}(t)=0.$$
	It follows that for all $t \in [0,T ]$,  $\mathcal{Y}(t) = 0$ $\mathbb{P}$-a.s. Since the paths of $(\bu_i, c_i,n_i)$, $i=1,2$ are continuous $\mathbb{P}$-a.s., then   $$(\bu_1(t),c_1(t),n_1(t)) = (\bu_2(t),c_2(t),n_2(t)),\ \  \mathbb{P}\text{-a.s., for all } t \in [0,T ].$$
\end{proof}

With the existence and pathwise uniqueness results at hand we now prove the existence of strong solution stated in Theorem \ref{theo4.2}.

\begin{proof}[\textbf{Proof of Theorem \ref{theo4.2}.} ]
The existence of a probabilistic weak solution to the problem \re{1.1} is shown in Proposition \ref{theo4.1}. The pathwise uniqueness of probabilistic weak solutions is given by Proposition \ref{lem5.1}. Thus, the existence and uniqueness of a probabilistic strong solution to the problem \re{1.1} follows from the Yamada-Watanabe Theorem  (see \cite[Theorem E.1.8]{Pre}), which states that the existence of weak probabilistic solution and the pathwise uniqueness imply the existence of a unique probabilistic strong solution.
\end{proof}

\section{Proof of Proposition \ref{theo4.1}} \label{sect5}
In this section, we will show  Proposition \ref{theo4.1}. We introduce a Galerkin approximation first. We then discuss the existence of the Galerkin approximation and prove the mass conservation property, the  non-negativity property  and the $L^\infty$-norm satibility in finite dimension.  Using these properties, we prove  priori estimates and by these a priori estimates, we show the tightness of the family of approximations,
and pass in a second step, to the limit in the deterministic terms and the  construction of  the noise terms by exploiting the usual martingale representation theorem proved in \cite[Theorem 8.2]{Da}.

\subsection{Galerkin approximation and a priori uniform estimates}

In this subsection, we will construct a family of approximations of the solutions and prove some crucial estimates satisfied uniformly by the approximations.  For this propose, let us recall that there exists an orthonormal basis $\{\bw_i\}_{i=1}^\infty$ of $H$ consisting of the eigenfunctions of the Stokes operator $A_0$ and  an orthonormal basis $\{\varphi_i\}_{i=1}^\infty\subset \mathcal{C}^\infty(\bo)$ of $L^2(\bo)$ consisting of  the eigenfunctions of the Neumann Laplacian operator $A_1$. For $m\in \mathbb{N}$, we will consider the following finite-dimensional spaces
\begin{equation*}
	\bh_m=\text{spam}\{\bw_1,...,\bw_m \},\qquad H_m=\text{spam}\{\varphi_1,...,\varphi_m \},\qquad \bhc_m=\bh_m\times H_m\times H_m,
\end{equation*}
where we endow $\bhc_m$ with the following norm $$\abs{(\bu,c,n)}^2_{\bhc_m}=\abs{\bu}^2_{L^2}+\abs{c}^2_{L^2}+\abs{n}^2_{L^2}, \quad (\bu,c,n)\in \bhc_m.$$  Owing to the fact that $\bhc_m$ is a finite dimensional space, the $L^2(\bo)$, $H^1(\bo)$ and $H^2(\bo)$-norms are equivalent on this space.
We choose as in \cite[P. 335]{Zhai} $n_0^m$, $c_0^m$ and $\bu_0^m$ such that
\begin{equation}\label{4.42}
	\begin{split}
		&n_0^m>0,\ n_0^m\to n_0 \text{  in  } L^2(\bo), \ n_0^m\ln n_0^m\to n_0\ln n_0\text{  in  } L^1(\bo),\\
		&c_0^m>0,\ \abs{c_0^m}_{L^\infty}\leq\abs{c_0}_{L^\infty},\  c_0^m\to c_0 \text{  in  } H^1(\bo),\\
		&\text{ and }\bu_0^m\to \bu_0 \text{  in  } H.
	\end{split}
\end{equation}

We then consider on the filtered probability space $(\Omega,\mathcal{F},\{\mathcal{F}_t\}_{t\in[0,T]},\mathbb{P})$  the following finite dimensional problem. For all $t\in [0,T]$
\begin{equation}\label{3.40}
	\begin{split}
		&\bu_m(t) +\Int_0^t[\eta A_0\bu_m(s)+\bp_m^1B_0(\bu_m(s),\bu_m(s))]ds\\
		&=\bu_0^m+\Int_0^t\bp_m^1R_0(n_m(s),\varPhi)ds + \Int_0^t\bp_m^1g(\bu_m(s),c_m(s)) dW_s,\\
		&	c_m(t) + \Int_0^t[\xi A_1c_m(s)+\bp_m^2B_1(\bu_m(s), c_m(s))]ds\\
		&= c_0^m-\Int_0^t \bp_m^2R_1(n_m(s),c_m(s))ds+\gamma\int_0^t\bp_m^2\phi(c_m(s))d\beta_s,\\
		&	n_m(t) +\Int_0^t[\delta A_1n_m(s)+\bp_m^2B_1(\bu_m(s),n_m(s))]ds =n_0^m-\Int_0^t \bp_m^2R_2(n_m(s),c_m(s))ds,
	\end{split}
\end{equation}
where $\bp_m^1$ and $\bp_m^2$ are the projection from $H$ and 	$L^2(\bo)$ onto $\bh_m$ and $H_m$, respectively, and their operator norms are equal to $1$.

For each $m$, we consider the following mapping $\Psi_m:\bhc_m\to \bhc_m$ defined by
\begin{equation*}
	\Psi_m(\bu,c,n)=	\begin{pmatrix}
		\eta A_0\bu+\bp_m^1B_0(\bu,\bu)-\bp_m^1R_0(n,\varPhi) \\
		\xi A_1c+\bp_m^2B_1(\bu, c)+\bp_m^2R_1(n,c) \\
		\delta A_1n+\bp_m^2B_1(\bu,n)+\bp_m^2R_2(n,c)
	\end{pmatrix}.
\end{equation*}
In the following lemma, we are going to state an important property of  the  mappings $\Psi_m$, $m\in \mathbb{N}$.
\begin{lemma}\label{lemma3.3}
Let Assumption \ref{ass_1} and  Assumption  \ref{ass_3}	be satisfied.  For each $m\in\mathbb{N}$, the mapping $\Psi_m$ is locally Lipschitz continuous. To be more precise, for each $m\in\mathbb{N}$ and every $r > 0$, there exists a constant $\bk_r$ such that
	\begin{equation}\label{3.41}
		\abs{\Psi_m(\bv_1)-\Psi_m(\bv_2)}_{\bhc_m}\leq \bk_r\abs{\bv_1-\bv_2}_{\bhc_m},
	\end{equation}
	for $\bv_1=(\bu_1,c_1,n_1)$, $\bv_2=(\bu_2,c_2,n_2)\in \bhc_m$ with $\abs{\bv_i}_{\bhc_m} \leq r$, $i=1,2$.
\end{lemma}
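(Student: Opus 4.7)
The plan is to exploit the crucial fact that $\bhc_m$ is finite-dimensional, so that all Sobolev norms restricted to $\bhc_m$ are equivalent; this turns the locally defined bilinear/nonlinear operators into locally Lipschitz maps between equivalent norms. Fix two triples $\bv_1=(\bu_1,c_1,n_1)$ and $\bv_2=(\bu_2,c_2,n_2)$ in $\bhc_m$ with $\abs{\bv_i}_{\bhc_m}\le r$ and write $\bw=\bu_1-\bu_2$, $\psi=c_1-c_2$, $\varphi=n_1-n_2$. Because the projections $\bp_m^1, \bp_m^2$ are contractions on $H$ and $L^2(\bo)$ respectively, and all terms output from $\Psi_m$ already lie in $\bhc_m$, it suffices to bound the underlying unprojected expressions in the $\mathbb{L}^2$ / $L^2$ norms.

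I would estimate component by component. The linear terms $\eta A_0\bu$, $\xi A_1 c$, $\delta A_1 n$ are bounded operators on the finite-dimensional spaces $\bh_m$, $H_m$, with operator norms depending only on $m$, hence globally Lipschitz. For $R_0$, linearity in $n$ and $\varPhi\in W^{1,\infty}(\bo)$ yield
\begin{equation*}
\abs{R_0(n_1,\varPhi)-R_0(n_2,\varPhi)}_{L^2}\le \abs{\varPhi}_{W^{1,\infty}}\abs{\varphi}_{L^2}.
\end{equation*}
For the convective terms, I would use the standard bilinear splitting, e.g.\ $B_0(\bu_1,\bu_1)-B_0(\bu_2,\bu_2)=B_0(\bw,\bu_1)+B_0(\bu_2,\bw)$, and similarly for $B_1(\bu,c)$ and $B_1(\bu,n)$. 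Applying the Hölder and Sobolev estimates already established for $B_0$, $B_1$ in Section~2, together with the norm equivalence on $\bhc_m$ (which yields $\abs{\bu_i}_{H^1}\le C_m\abs{\bu_i}_{L^2}\le C_m r$ and likewise for $c_i$, $n_i$), gives bounds of the form $C_m r (\abs{\bw}_{L^2}+\abs{\psi}_{L^2}+\abs{\varphi}_{L^2})$.

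The two nonlinearities needing slightly more care are $R_1$ and $R_2$. For $R_1$, I split
\begin{equation*}
R_1(n_1,c_1)-R_1(n_2,c_2)=\varphi f(c_1)+n_2\bigl(f(c_1)-f(c_2)\bigr).
\end{equation*}
Norm equivalence on $H_m$ gives $\abs{c_i}_{L^\infty}\le C_m r$, so $c_1$ and $c_2$ range in the compact interval $[-C_m r, C_m r]$ on which $f\in C^1$ is Lipschitz with constant $L_{f,r}$; hence $\abs{f(c_1)-f(c_2)}_{L^\infty}\le L_{f,r}\abs{\psi}_{L^\infty}\le L_{f,r} C_m\abs{\psi}_{L^2}$, and $\abs{f(c_1)}_{L^\infty}$ is bounded. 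Combined with $\abs{n_2}_{L^2}\le r$, this yields the desired Lipschitz bound. For $R_2$, the same bilinear splitting
\begin{equation*}
R_2(n_1,c_1)-R_2(n_2,c_2)=-\nabla\cdot(\varphi\nabla c_1)-\nabla\cdot(n_2\nabla\psi),
\end{equation*}
together with the estimate used in Section~2 and the finite-dimensional equivalence that turns $H^3$-norms into $L^2$-norms, produces the bound.

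Summing these component estimates gives \eqref{3.41} with $\bk_r$ depending on $m$, $r$, $\varPhi$, $f$ and the equivalence constants on $\bhc_m$. The only non-mechanical point is the handling of $f(c)$, which is why it is essential that $f$ is $C^1$ and that $L^\infty$-control of $c_1,c_2$ is available; the finite dimensionality of $H_m$ provides precisely that control, so no additional hypothesis is required. Everything else is the routine bilinear cancellation trick and norm equivalence, so I expect no genuine obstacle beyond bookkeeping.
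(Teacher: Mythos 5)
Your proposal is correct and follows essentially the same route as the paper: bilinear splitting of $B_0$, $B_1$, $R_2$, the decomposition $R_1(n_1,c_1)-R_1(n_2,c_2)=\varphi f(c_1)+n_2(f(c_1)-f(c_2))$ handled via $f\in C^1$ together with $L^\infty$-control of $c_i$, and finite-dimensional norm equivalence on $\bhc_m$ to close the estimate. The only difference is presentational — the paper tests $\Psi_m(\bv_1)-\Psi_m(\bv_2)$ against an element of $\bhc_m$ and concludes by duality, while you bound the norms directly — and your explicit remark that $\abs{c_i}_{L^\infty}\leq C_m r$ via norm equivalence is if anything slightly more careful than the paper's $\max_{0\leq c\leq r}f$.
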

\begin{proof}
	Let $\bv_1=(\bu_1,c_1,n_1)$, $\bv_2=(\bu_2,c_2,n_2)\in\bhc_m$ and $\bv=(\bu,c,n)\in\bhc_m$. We assume that $\abs{\bv_i}_{\bhc_m} \leq r$, $i=1,2$. We have
	\begin{align}
		\left(\Psi_m(\bv_1)-\Psi_m(\bv_2),\bv \right)_{\bhc_m}	&=(\eta A_0(\bu_1-\bu_2)+B_0(\bu_1,\bu_1)-B_0(\bu_2,\bu_2)-R_0(n_1,\varPhi)+R_0(n_2,\varPhi),\bu)\notag\\
		&+(\xi A_1(c_1-c_2)+B_1(\bu_1, c_1)-B_1(\bu_2, c_2)+R_1(n_1,c_1)-R_1(n_2,c_2),c)\notag\\
		&+(\delta A_1(n_1-n_2)+B_1(\bu_1,n_1)-B_1(\bu_2,n_2)+R_2(n_1,c_1)-R_2(n_2,c_2),n).\label{3.42}
	\end{align}
	Using the bilinearity  of the operator $B_0$, we see that
	\begin{align}
		\abs{(B_0(\bu_1,\bu_1)-B_0(\bu_2,\bu_2),\bu)}&\leq\abs{(B_0(\bu_1-\bu_2,\bu_1),\bu)}+\abs{(B_0(\bu_2,\bu_1-\bu_2),\bu)}\notag\\
		&\leq 2\bk_r\abs{\bu_1-\bu_2}_{L^2}\abs{\bu}_{L^2}.\notag
	\end{align}
	By the H\"older inequality we also note that
	\begin{align}
		(R_0(n_1,\varPhi)-R_0(n_2,\varPhi),\bu)&\leq \int_\bo\abs{n_1-n_2}\abs{\nabla\varPhi}\abs{\bu}dx\notag\\
		&\leq \abs{\nabla\varPhi}_{L^\infty}\abs{n_1-n_2}_{L^2}\abs{\bu}_{L^2}.\notag
	\end{align}
	Since the space $H^1(\bo)$ is continuously embedded in the space  $L^q(\bo)$ for any $q\geq 2$, we have
	\begin{align}
		\abs{(B_1(\bu_1, c_1)-B_1(\bu_2, c_2),c)}&\leq \abs{(B_1(\bu_1-\bu_2, c_1),c)}+\abs{(B_1(\bu_2, c_1-c_2),c)}\notag\\
		&\leq \abs{\bu_1-\bu_2}_{L^4}\abs{\nabla c_1}_{L^2}\abs{c}_{L^4}+\abs{\bu_2}_{L^4}\abs{\nabla (c_1-c_2)}_{L^2}\abs{c}_{L^4}\notag\\
		&\leq (\abs{\nabla(\bu_1-\bu_2)}_{L^2}\abs{\nabla c_1}+\abs{\nabla\bu_2}_{L^2}\abs{\nabla (c_1-c_2)}_{L^2})\abs{c}_{H^1}\notag\\
		&\leq \bk_r(\abs{\nabla(\bu_1-\bu_2)}_{L^2}+\abs{\nabla (c_1-c_2)}_{L^2})\abs{c}_{H^1}.\notag
	\end{align}
In a similar way we show that
	\begin{equation}\
		\abs{(B_1(\bu_1,n_1)-B_1(\bu_2,n_2),n)}\leq \bk_r(\abs{\nabla(\bu_1-\bu_2)}_{L^2}+\abs{\nabla (n_1-n_2)}_{L^2})\abs{n}_{H^1}.\notag
	\end{equation}
	Owing to the fact that $H_m\subset\mathcal{C}^\infty(\bo)$ and  $f(0)=0$ as well as $f\in C^1([0,\infty))$, we derive that
	\begin{align}
		\abs{(R_1(n_1,c_1)-R_1(n_2,c_2),c)}&\leq \int_\bo\abs{n_1-n_2}f(c_1)\abs{c}dx+\int_\bo \abs{n_2}\abs{f(c_1)-f(c_2)}\abs{c}dx\notag\\
		&\leq \max_{0\leq c\leq\abs{c_1}_{L^\infty}}f(c) \int_\bo\abs{n_1-n_2}\abs{c}dx\notag\\
		&\qquad{}+\max_{0\leq c\leq\max(\abs{c_1}_{L^\infty},\abs{c_2}_{L^\infty})}f'(c)\int_\bo \abs{n_2}\abs{c_1-c_2}\abs{c}dx\notag\\
		&\leq \max_{0\leq c\leq r}f(c) \abs{n_1-n_2}_{L^2}\abs{c}_{L^2}+\max_{0\leq c\leq r}f' \abs{n_2}_{L^4}\abs{c_1-c_2}_{L^4}\abs{c}_{L^2}\notag\\
		&\leq \bk_r (\abs{n_1-n_2}_{L^2}+\abs{c_1-c_2}_{H^1})\abs{c}_{L^2}.\notag
	\end{align}
	Also, we note that
	\begin{align}
		\abs{(R_2(n_1,c_1)-R_2(n_2,c_2),n)}&\leq \int_\bo\abs{n_1-n_2}\abs{\nabla c_1}\abs{\nabla n}dx+\int_\bo\abs{n_2}\abs{\nabla (c_1-c_2)}\abs{\nabla n}dx\notag\\
		&\leq \abs{n_1-n_2}_{L^2}\abs{\nabla c_1}_{L^4}\abs{\nabla n}_{L^2}+\abs{n_2}_{L^4}\abs{\nabla (c_1-c_2)}_{L^4}\abs{\nabla n}_{L2}\notag\\
		&\leq \bk_r(\abs{n_1-n_2}_{L^2}+\abs{ c_1-c_2}_{H^2})\abs{ n}_{H^1}.\notag
	\end{align}
	Taking into account the fact that all norms are equivalent in finite dimensional space, and  the fact that the operators $A_0$ and $A_1$ are linear, we infer  these previous  inequalities and equality \re{3.42}.
\end{proof}
The existence of solutions to the finite dimensional problem \re{3.40} is classical. In fact, due to Lemma \ref{lemma3.3}, the mapping $\Psi_m$ is locally Lipschitz. Also by the inequality \re{2.5}, $\bp_m^1g(\cdot,\cdot)$ is locally Lipschitz. From the linearity of $\phi(.)$, we can easily see that $\bp_m^2\phi(\cdot)$ is  Lipschitz. Hence,  by  well known theory for finite dimensional 
stochastic differential equations with locally Lipschitz coefficients  (see  \cite[Theorem 38, P. 303]{Pro} for full details) there exists a local solution of system \re{3.40} with continuous paths in $\bhc_m$. That  is, there exists a stopping time $\tau_m$, a process  $t\mapsto(\bu_m(t),c_m(t),n_m(t))$ such that $\tau_m>0$ $\mathbb{P}$-a.s., and the stopped process $$t\mapsto(\bu_m(t\wedge\tau_m),c_m(t\wedge\tau_m),n_m(t\wedge\tau_m))$$ satisfies the system of It\^o equation  \re{3.40} and has  continuous paths in $\bhc_m$.  Moreover, if a process $$t\mapsto(\bar{\bu}_m(t),\bar{c}_m(t),\bar{n}_m(t)),$$ 
and a  stopping time  $\sigma_m$ constitute  another local solution, then
\begin{equation*}
(\bu_m(\cdot),c_m(\cdot),n_m(\cdot))=(\bar{\bu}_m(\cdot),\bar{c}_m(\cdot),\bar{n}_m(\cdot)),\ \ \mathbb{P}\text{-a.s. on } [0,\tau_m\wedge\sigma_m].
\end{equation*}
We will show in what follows that the solutions $(\bu_m,c_m,n_m)$ exist almost surely for every $t\in [0,T]$. For this goal, it will be enough to show that 
\begin{equation}
\tau_m(\omega)>T, \ \text{for almost  all } \omega\in \Omega, \text{and all } m\in\mathbb{N}.\label{4.10}
\end{equation}
To this aim, we will use some idea from \cite[P. 132, Proof of Theorem 12.1]{Rog}. Since for all $m\in \mathbb{N}$, the deterministic integrand $\Psi_m$ and  the stochastic integrand $\bp_m^1g$ are locally Lipschitz, for each $N\in \mathbb{N}$, we can define the integrands  $\Psi_m^N$ and $\bp_m^1g^N$, agreeing respectively with $\Psi_m$ and $\bp_m^1g$ on  the ball 
$$\mathbb{B}^N_{\bhc_m}:=\left\{ (\bv,\varphi,\psi)\in \bhc_m: \abs{(\bv,\varphi,\psi)}_{\bhc_m}< N \right\},$$
such that $\Psi_m^N$ and $\bp_m^1g^N$ are globally Lipschitz.  As consequence, since  $\bp_m^2\phi$ is already globally Lipschitz, \cite[P. 128, Theorem 11.2]{Rog} guarantees that there is  a unique solution $(\bu^N_m,c^N_m,n^N_m)$ to a system associated to the system  \re{3.40} with $\Psi_m^N$ and $\bp_m^1g^N$ (instead of $\Psi_m$ and $\bp_m^1g$) and defined on $[0,+\infty)$ almost surely. We then define a sequence of stopping times as follows
 for all $m,N\in\mathbb{N}$ 
\begin{equation}\label{3.46}
	\tau_N^m:=
	\inf \{ t>0: \sqrt{\abs{n^N_m(t)}^2_{L^2}+\abs{\bu^N_m(t)}_{L^2}^2+\abs{c^N_m(t)}_{H^1}^2}\geq N   \}\wedge N,
\end{equation}
where $a\wedge b:=\min\{a,b\}$ for any real numbers $a$ and $b$.

For any fixed $m\in \mathbb{N}$, the sequence $\{\tau_N^m\}_{N\in\mathbb{N}}$ is obviously increasing. Moreover \cite[P. 131, Corollary 11.10]{Rog} implies that  for all $N\in \mathbb{N}$,
\begin{equation}
(\bu_m,c_m,n_m)=(\bu^N_m,c^N_m,n^N_m)\text{ on } [0,\tau_N^m].\notag
\end{equation}
From this last equality, we infer that the solution $(\bu_m,c_m,n_m)$ of system  \re{3.40} is defined on  $[0,\tau_N^m]$ for all $N\in \mathbb{N}$ and hence, $\tau_m>\tau^m_N$ almost surely  for all $N\in \mathbb{N}$.  Therefore, 
\begin{equation}
	\tau_m\geq\sup_{N\in \mathbb{N}}\tau^m_N, \  \mathbb{P}\text{-a.s}.\notag
\end{equation}
In order to prove the inequality \re{4.10}, it is sufficient to prove that 
\begin{equation}
\sup_{N\in \mathbb{N}}\tau^m_N>T, \  \mathbb{P}\text{-a.s}.\label{4.14***}
\end{equation}
Before proving this, in the following lemma, we prove some properties of the  local solution $(\bu_m,c_m,n_m)$ of system  \re{3.40}.
\begin{lemma}\label{lem3.4}
Assumption \ref{ass_1} and  Assumption  \ref{ass_2}. Then for  all $m,N\in\mathbb{N}$, the following equality and inequalities hold  $\mathbb{P}$-a.s.
\begin{equation}
	\int_\mathcal{O}n_m(t\wedge\tau_N^m,x)dx=\int_\mathcal{O}n^m_0(x)dx, \text{   for all } t\in [0,T],\label{2.14*}
\end{equation}
	\begin{equation}
		n_m(t\wedge\tau_N^m)>0,\text{ and } c_m(t\wedge\tau_N^m)> 0, \text{   for all } t\in [0,T],\label{3.45}
	\end{equation}
	and 
	\begin{equation}
		\abs{c_m(t\wedge\tau_N^m)}_{L^\infty}\leq\abs{c_0}_{L^\infty}, \text{   for all } t\in [0,T].\label{4.46}
	\end{equation}
\end{lemma}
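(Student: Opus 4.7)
The plan is to carry the proofs of Lemma \ref{lem3.6} and Corollary \ref{lem3.7} over to the finite-dimensional Galerkin setting, dealing with the three assertions in turn. The mass conservation \eqref{2.14*} is the most direct: pair the $n_m$-equation in \eqref{3.40} with the constant function $1$, which lies in $H_m$ because it is proportional to the first Neumann eigenfunction $\varphi_1$. Since $H_m$ contains constants, the projection $\mathcal{P}_m^2$ drops out; $(A_1 n_m,1)=0$ because $A_1 1=0$; $(B_1(\bu_m,n_m),1)=0$ by integration by parts using $\nabla\cdot\bu_m=0$ and $\bu_m|_{\partial\bo}=0$; and $(R_2(n_m,c_m),1)=0$ by the divergence theorem together with $\partial c_m/\partial\nu|_{\partial\bo}=0$. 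This yields $\frac{d}{dt}\int_\bo n_m\,dx=0$ on $[0,\tau_N^m]$, from which \eqref{2.14*} is immediate.

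For the non-negativity of $n_m$ in \eqref{3.45}, since the $n_m$-equation carries no stochastic integral, the map $t\mapsto n_m(t)$ is classically differentiable in $H_m$, and I would apply the chain rule to $t\mapsto|n_m^-(t)|^2_{L^2}$ with $n_m^-:=\max(-n_m,0)$, following the strategy of Lemma \ref{lem3.6}. Because $\partial_t n_m\in H_m$, the pairing $(\partial_t n_m,n_m^-)$ equals $(\partial_t n_m,\mathcal{P}_m^2 n_m^-)$, so one may substitute the Galerkin equation. Integration by parts on the diffusion (using the Neumann condition inherited by each $\varphi_k$) together with the Gagliardo--Nirenberg--Sobolev estimates used in \eqref{2.16} to bound the convective and chemotactic contributions lead to a differential inequality
\[
\frac{d}{dt}|n_m^-|^2_{L^2}\le C\bigl(|\nabla c_m|^4_{L^4}+1\bigr)|n_m^-|^2_{L^2}
\]
on $[0,\tau_N^m]$. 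The integral $\int_0^{\tau_N^m}(|\nabla c_m|^4_{L^4}+1)\,ds$ is $\mathbb{P}$-a.s.\ finite because all norms are equivalent on the finite-dimensional space $\bhc_m$ and $c_m$ stays bounded there up to $\tau_N^m$; combined with $(n_0^m)_-=0$ from \eqref{4.42}, Gronwall's inequality then gives $n_m^-\equiv 0$ on $[0,\tau_N^m]$.

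For the non-negativity and $L^\infty$-bound of $c_m$ in \eqref{3.45}--\eqref{4.46}, I would apply It\^o's formula to the smoothed functional $\Psi_h(c_m)=\int \psi_h(c_m)\,dx$ (with the truncation $\psi_h$ of Lemma \ref{lem3.6}) and to $\Psi_p(c_m)=\int c_m^p\,dx$ (as in Corollary \ref{lem3.7}), then let $h\to\infty$ and $p\to\infty$ respectively. The two structural cancellations that drive the continuous-case proofs---the It\^o-to-Stratonovich correction $\tfrac{\gamma^2}{2}\Delta c_m$ absorbed into $\xi A_1$ in \eqref{3.40} killing the noise's quadratic-variation contribution, and the stochastic integral $(\Psi'(c_m),\phi_k(c_m))$ vanishing by integration by parts together with $\nabla\cdot\sigma_k=0$ and $\sigma_k|_{\partial\bo}=0$---depend only on the structure of $\sigma_k$ and the Neumann boundary condition, both preserved by the Galerkin basis. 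Combined with the non-negativity of $n_m$ already established and with $|c_0^m|_{L^\infty}\le|c_0|_{L^\infty}$ from \eqref{4.42}, one reproduces $|c_m^-(t)|^2_{L^2}\le|(c_0^m)_-|^2_{L^2}=0$ and $|c_m(t)|_{L^p}\le|c_0^m|_{L^p}$, whence \eqref{3.45} for $c_m$ and \eqref{4.46}. I expect the main obstacle to be the systematic book-keeping of the projection $\mathcal{P}_m^2$ when testing against the functions $n_m^-$, $\psi_h'(c_m)$ and $c_m^{p-1}$, none of which lies in $H_m$ a priori; the key reduction is that the time and stochastic increments of the Galerkin solution remain in $H_m$, so a test function $v$ inside $(\partial_t n_m,v)$ or $(dc_m,v)$ may be freely replaced by $\mathcal{P}_m^2 v$, and the cancellations making the stochastic terms disappear involve only the structure of $\sigma_k$, not the Galerkin cut-off.
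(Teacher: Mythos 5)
Your proposal is correct and follows essentially the same strategy as the paper's own proof: the non-negativity of $c_m$ and the $L^\infty$-bound are obtained exactly as you describe, by repeating the It\^o-formula arguments of Lemma \ref{lem3.6} and Corollary \ref{lem3.7} with the truncated functionals $\Psi_h$ and $\Psi_p$ on $H_m$, relying on the same two structural cancellations you identify. There are, however, two points of genuine divergence worth recording. First, the paper's proof never actually establishes the mass conservation \re{2.14*}; your argument --- testing the $n_m$-equation of \re{3.40} against the constant function $1$, which lies in $H_m$ as the first Neumann eigenfunction so that $\bp_m^2$ drops out and the three spatial terms vanish by $A_1 1=0$, $\nabla\cdot\bu_m=0$ with $\bu_m|_{\partial\bo}=0$, and $\partial c_m/\partial\nu|_{\partial\bo}=0$ --- supplies the missing step. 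Second, for the non-negativity of $n_m$ the paper deliberately replaces the Gagliardo--Nirenberg estimate of \re{2.16} by the equivalence of norms on the finite-dimensional space, arriving at $\frac{d}{dt}\abs{n_{m-}}^2_{L^2}\leq \bk(m)\abs{n_{m-}}^2_{L^2}\abs{c_m}_{L^2}$; this is slightly delicate because $n_{m-}=\max(-n_m,0)$ does not belong to $H_m$, so norm equivalence cannot legitimately be applied to $\abs{n_{m-}}_{H^1}$. Your route --- keeping the Gagliardo--Nirenberg/Young absorption into the dissipation, which is valid for any $W^{1,2}$ function, and invoking finite-dimensionality only through $c_m$ and the stopping time to get $\int_0^{\tau_N^m}\abs{\nabla c_m(s)}^4_{L^4}ds<\infty$ a.s. --- is in fact the more robust of the two at this point. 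Finally, the projection issue you flag at the end (that $n_{m-}$, $\Psi'_h(c_m)$ and $c_m^{p-1}$ are not in $H_m$, so the nonlinear terms end up paired with $\bp_m^2 v$ rather than $v$) is real, but it is equally present and silently ignored in the paper's proof, and the substitution $v\mapsto\bp_m^2 v$ you propose does not by itself restore the pointwise sign and cancellation arguments; since both proofs share this loose end, it does not count against your proposal relative to the paper.
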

\begin{proof}
In order to prove the non-negativity of $n_m(t\wedge\tau_N^m)$ and $c_m(t\wedge\tau_N^m)$,  we  will follow the idea of the proof of Lemma \ref{lem3.6}. But, instead of the Gagliardo-Niremberg-Sobolev inequality, we will use the equivalence of the norms on finite dimensional space. 

 Let $N,m\in\mathbb{N}$ and  $t\in [0,T]$ be arbitrary but fixed.  For all $s\in[0,t]$ define
$$n_{m_-(s\wedge\tau_N^m)} := \max(-n_m(s\wedge\tau_N^m), 0).$$ We remark that $n_{m_-}(s\wedge\tau_N^m)\in W^{2,2}(\bo)$ and
\begin{align*}
	&n_{m_-}(s\wedge\tau_N^m)=0\cdot 1_{\{n_m(s\wedge\tau_N^m)\geq 0\}}-n_m(s\wedge\tau_N^m)\cdot1_{\{n_m(s\wedge\tau_N^m)< 0\}},\\
	& \nabla n_{m_-}(s\wedge\tau_N^m)=0\cdot 1_{\{n_m(s\wedge\tau_N^m)\geq 0\}}-\nabla n_m(s\wedge\tau_N^m)\cdot1_{\{n_m(s\wedge\tau_N^m)< 0\}},\\
	&\Delta
	n_{m_-}(s\wedge\tau_N^m)=0\cdot1_{\{n_m(s\wedge\tau_N^m)\geq 0\}}-\Delta n_m(s\wedge\tau_N^m)\cdot1_{\{n_m(s\wedge\tau_N^m)< 0\}}.
\end{align*}
We can easily see also  that for all $s\in[0,t]$,
\begin{align*}
	&	\frac{dn_m(s\wedge\tau_N^m)}{dt}n_{m_-}(s\wedge\tau_N^m)=-\frac{dn_{m_-}(s\wedge\tau_N^m)}{dt}n_{m_-}(s\wedge\tau_N^m),\\
	& n_{m_-}(s\wedge\tau_N^m)\nabla n_m(s\wedge\tau_N^m) =-n_{m_-}(s\wedge\tau_N^m)\nabla n_{m_-}(s\wedge\tau_N^m),\\
	&\Delta n_m(s\wedge\tau_N^m) n_{m_-}(s\wedge\tau_N^m)=-\Delta n_{m_-}(s\wedge\tau_N^m) n_{m_-}(s\wedge\tau_N^m).
\end{align*}
 Hence,  we multiply equation $(\ref{3.1})_3$ by $n_{m_-}(s\wedge\tau_N^m)$ for any $s\in [0,t]$, integrate over $\mathcal{O}$, and use an integration-by-parts with the fact that $\nabla\cdot\bu_m=0$ to obtain
\begin{align}
	&\frac{1}{2}\frac{d}{dt}\abs{n_{m_-}(s\wedge\tau_N^m)}^2_{L^2}\notag\\
	&=-\int_\mathcal{O}\bu_m(s\wedge\tau_N^m,x)\cdot\nabla n_{m_-}(s\wedge\tau_N^m,x)n_{m_-}(s\wedge\tau_N^m,x)dx-\delta\abs{\nabla n_{m_-}(s\wedge\tau_N^m)}^2_{L^2}\notag\\
	&\qquad -\chi\int_\mathcal{O}n_m(s\wedge\tau_N^m,x)\nabla c_m(s\wedge\tau_N^m,x)\nabla n_{m_-}(s\wedge\tau_N^m,x)dx\notag\\
	&=\frac{1}{2}\int_\mathcal{O}n_{m_-}^2(s\wedge\tau_N^m,x)\nabla\cdot\bu_m(s\wedge\tau_N^m,x) dx -\delta\abs{\nabla n_{m_-}(s\wedge\tau_N^m,x)}^2_{L^2}\notag\\
	&\qquad+\chi\int_\mathcal{\bo}n_{m_-}(s\wedge\tau_N^m,x)\nabla c_m(s\wedge\tau_N^m,x)\nabla n_-(s\wedge\tau_N^m,x)dx\notag\\
	&\leq -\delta\abs{\nabla n_{m_-}(s\wedge\tau_N^m)}^2_{L^2}+\chi\abs{n_{m_-}(s\wedge\tau_N^m)}_{L^4}\abs{\nabla c_m(s\wedge\tau_N^m)}_{L^4}\abs{\nabla n_{m_-}(s\wedge\tau_N^m)}_{L^2}\notag\\
	&\leq \bk\abs{n_{m_-}(s\wedge\tau_N^m)}_{H^1}^2\abs{ c_m(s\wedge\tau_N^m)}_{H^2}.\notag
\end{align}
In the last line we have used the continuous embedding of  $H^1(\bo)$ into $L^4(\bo)$. Since the $L^2(\bo)$, $H^1(\bo)$ and $H^2(\bo)$-norms are equivalent on $H_m$,  we then infer from this last inequality that for all $s\in [0,t]$,
\begin{equation}
		\frac{1}{2}\frac{d}{dt}\abs{n_{m_-}(s\wedge\tau_N^m)}^2_{L^2}\leq \bk(m)\abs{n_{m_-}(s\wedge\tau_N^m)}_{L^2}^2\abs{ c_m(s\wedge\tau_N^m)}_{L^2},\label{2.15**}
\end{equation}
where $\bk(m)$ is a constant depending of $m$ which is the dimension of the space $H_m$.
Owing to the fact that $\mathbb{P}$-a.s. the paths of $c_m$ are continuous, we derive that $$ \Sup_{0\leq s\leq t}\abs{c_m(s\wedge\tau_N^m)}_{L^2}<\infty,\ \  \mathbb{P}\text{-a.s.}$$
Hence, integrating \re{2.15**} over $ [0,t]$ we arrive at
\begin{equation}\label{4.19}
	\abs{n_{m_-}(t\wedge\tau_N^m)}^2_{L^2}\leq\abs{(n^m_{0})_-}^2_{L^2}+\bk \int_0^{t }\abs{c_m(s\wedge\tau_N^m)}_{L^2}\abs{n_{m_-}(s\wedge\tau_N^m)}^2_{L^2}ds.
\end{equation}
Thanks to the  Gronwall inequality, we derive from the inequality \re{4.19} that
\begin{equation*}
	\abs{n_{m_-}(t\wedge\tau_N^m)}^2_{L^2}\leq\abs{(n^m_{0})_-}^2_{L^2}\exp\left(\bk\int_0^t\abs{ c_m(s\wedge\tau_N^m)}_{L^2}ds\right),
\end{equation*}
which  implies that $\mathbb{P}$-a.s,  $n_{m_-}(t\wedge\tau_N^m) = 0$ for  all $t\in [0,T]$ since by the relation \re{4.42}, $n_{0}^m >0$.

The non-negativity property of $c_m(t\wedge\tau_N^m)$ is quite similar to the proof of Lemma \ref{lem3.6}. We consider the function $\Psi:H_m\to \mathbb{R}$ defined by $\Psi(c)=\int_\mathcal{O}c_-^2(x)dx$ where $c_-=\max(-c; 0)$. Let  $\{\psi_h\}_{h\in\mathbb{N}}$ be a sequence of smooth functions defined by $\psi_h (y)=y^2\varphi(hy)$, for all $y\in \mathbb{R}$ and $h\in\mathbb{N}$, where the function $\varphi$ is defined by \re{4.14**}. 
We consider for any $h\geq 1$, the following sequence of function $\Psi_h:H_m\to \mathbb{R}$ defined by $\Psi_h=\int_\mathcal{O}\psi_h(c(x))dx$, for $c\in H_m$. The mapping $\Psi_h$ is twice (Fr\'echet) differentiable and its first and second derivatives are given by
\begin{equation*}
	\Psi'_h(c)(z)=2\int_\mathcal{O}c(x)\varphi(hc(x))z(x)dx+h\int_\mathcal{O}c^2(x)\varphi'(hc(x))z(x)dx, \quad \forall c, z\in H_m,
\end{equation*}
and
\begin{align*}
	\Psi_h^{''}(c)(z,k)&=h^2\int_\mathcal{O}c^2(x)\varphi^{''}(hc(x))z(x)k(x)dx\\
	&\qquad{}+4h\int_\bo c(x)\varphi'(hc(x))z(x)k(x)dx+2\int_\mathcal{O}\varphi(hc(x))z(x)k(x)dx,\quad \forall c,z,  k\in H_m.
\end{align*}
Applying the It\^o formula to  $t\mapsto\Psi_h(c_m(t\wedge\tau_N^m))$, we obtain for all $t\in [0,T]$,
\begin{align}
	\Psi_h(c_m(t\wedge\tau_N^m))-\Psi_h(c_m(0))&=\int_0^{t\wedge\tau_N^m}\Psi'_h(c_m(s))\left(\bu_m(s)\cdot \nabla c_m(s)+\xi\Delta c_m(s) -n_m(s)f(c_m(s)) \right)ds\notag\\
	&\qquad{}+\frac{1}{2}\int_0^{t\wedge\tau_N^m}\sum_{k=1}^2\Psi^{''}_h(c_m(s))\left(\gamma\phi_k(c_m(s)),\gamma\phi_k(c_m(s))\right)ds\notag\\
	&\qquad{}+\gamma \sum_{k=1}^2\int_0^{t\wedge\tau_N^m}\Psi'_h(c_m(s))(\phi_k(c_m(s)))d \beta^k_s.\notag
\end{align}
	Similarly to \re{4.16***}, \re{4.18}, \re{4.11*}, \re{2.24*} and \re{4.13*}, we can infer from this last equality  that
\begin{align}
	\int_\bo\psi_h(c_m(t\wedge\tau_N^m,x))dx&-\int_\bo\psi_h(c_0^m(x))dx\notag\\
	&=\int_0^{t\wedge\tau_N^m}\Psi'_h(c_m(s))\left(\bu_m(s)\cdot \nabla c_m(s)+\eta\Delta c_m(s) -n_m(s)f(c_m(s)) \right)ds.\label{4.14*}
\end{align}
Now, observe that from the assumptions on the function $\varphi$, we infer that  for all $y\in \mathbb{R}$ we have
\begin{equation}
	\lim_{h\longrightarrow\infty}\psi_h(y)=-y^2\cdot1_{\{y<0\}}=-y_-^2 \ \text{ and } \ \lim_{h\longrightarrow\infty}2y\varphi(hy)=-2y\cdot1_{\{y<0\}}.\label{4.17*}
\end{equation}
We note that for any $y\in \mathbb{R}$, we have
\begin{equation}
	\lim_{h\longrightarrow\infty}h\varphi'(hy)=0,\label{4.16****}
\end{equation}
and also that
\begin{equation}
	\abs{\psi_h(y)}\leq \bk y^2\ \text{ and }\ \abs{h\varphi'(hy)}\leq \bk\abs{y},\label{4.16**}
\end{equation}
for any $y\in\mathbb{R}$ and for all $h\geq 1$, where $\bk>0$ is a constant.

Using \re{4.17*}-\re{4.16**}  and applying the Lebesgue Dominated Convergence Theorem, we can  pass to the limit as $h$ tends to infinity in \re{4.14*}.
In this way, we derive that
\begin{align}
	&-\int_\bo c^2_{m_-}(t\wedge\tau_N^m,x)dx+\int_\bo (c_0^m(x))_-^2dx\notag\\
	&=-2\int_0^{t\wedge\tau_N^m}\int_\mathcal{O}\left((\bu_m(s,x)\cdot \nabla c_m(s,x)+\eta\Delta c_m(s,x)) )\right)c_m(s,x)1_{\{c_m(s,x)<0\}}dxds\notag\\
&\qquad+2\int_0^{t\wedge\tau_N^m}\int_\mathcal{O}n_m(s,x)f(c_m(s,x))c_m(s,x)1_{\{c_m(s,x)<0\}}dxds\label{2.21***}\\	&=2\int_0^{t\wedge\tau_N^m}\int_\mathcal{O}\left(\eta\abs{\nabla c_m(s,x)}^2+n_m(s,x)f(c_m(s,x))c_m(s,x) \right)1_{\{c_m(s,x)<0\}}dxds,\notag
\end{align}
where we have used integration-by-parts and the fact that $\nabla\cdot\bu_m=0$.
By the mean value theorem we know that, for all $x\in \bo$, there exists a number  $\lambda_m(x)\in(\min(0,c_m(x)), \max(0,c_m(x)))$ such that \\
$$f(c_m(x))-f(0)=c_m(x)f'(\lambda_m(x)).
$$
By the fact that $f(0)=0$, we  infer from \re{2.21***} that
\begin{equation*}
	\abs{c_{m_-}(t\wedge\tau_N^m)}^2_{L^2}-\abs{(c_0^m)_-}^2_{L^2}=-2\int_0^{t\wedge\tau_N^m}\int_\mathcal{O}n_m(s,x)f'(\lambda_m(s,x))c_m^2(s,x) 1_{\{c_m(s,x)<0\}}dxds.
\end{equation*}
Since $f'>0$ and $1_{\{c_m<0\}}>0$ as well as on $[0,t\wedge\tau_N^m]$, $c_m^2>0$ and $n_m> 0$, we deduce that $\abs{c_{m_-}(t\wedge\tau_N^m)}^2_{L^2}\leq\abs{(c_0^m)_-}^2_{L^2}$. Owing to the fact that by the relation \re{4.42} we have  $c^m_0>0$, we derive that $(c^m_0)_-=0$ and therefore $\abs{c_{m_-}(t\wedge\tau_N^m)}^2_{L^2}=0$. This  gives $c_{m_-}(t\wedge\tau_N^m)=0$ and implies that  for all $t\in[0,T]$, $\mathbb{P}$-a.s, $c_m(t\wedge\tau_N^m)>0$. 

It remains to prove the inequality \re{4.46}.  The  proof is similar to the proof of Corollary \ref{lem3.7}. Let $p\geq2$ be an integer. Let $\Psi: H_m\to \mathbb{R}$ be the functional defined by $\Psi(c)=\int_\mathcal{O}c^p(x)dx$.  Note that the mapping $\Psi$ is twice (Fr\'echet) differentiable and its first and second derivatives are given by
\begin{align*}
	&\Psi'(c)(z)=p\int_\mathcal{O}c^{p-1}(x)z(x)dx, \quad \forall c, z\in H_m,\\
	&\Psi^{''}(c)(z,k)=p(p-1)\int_\mathcal{O}c^{p-2}(x)z(x)k(x)dx, \quad \forall c,z,  k\in H_m.
\end{align*}
By applying the  It\^o formula to the process $t\mapsto\Psi(c_m(t\wedge\tau_N^m))$, we derive that for all $t\in [0,T]$,
\begin{align}
	\Psi(c_m(t\wedge\tau_N^m))-\Psi(c_m(0))&=\int_0^{t\wedge\tau_N^m}\Psi'(c_m(s))\left(\bu(s)\cdot \nabla c_m(s)+\xi\Delta c_m(s) -n_m(s)G(c_m(s)) \right)ds\notag\\
	&\quad{}+\frac{1}{2}\int_0^{t\wedge\tau_N^m}\sum_{k=1}^2\Psi^{''}(c_m(s))\left(\gamma\phi_k(c_m(s)),\gamma\phi_k(c_m(s))\right)ds\notag\\
	&\qquad{}+\gamma \sum_{k=1}^2\int_0^{t\wedge\tau_N^m}\Psi'(c_m(s))(\phi_k(c_m(s)))d \beta^k_s,\notag
\end{align}
	from which and  calculations similar to \re{2.24*}, \re{2.30*}, \re{2.31*} and \re{2.32*} we derive from the last equality  that
\begin{align*}
&	\Psi(c_m(t\wedge\tau_N^m))-\Psi(c_0^m)\\
&=\int_0^{t\wedge\tau_N^m}\int_\mathcal{O}\left(-p(p-2)\abs{\nabla c_m(s,x)}^2c_m^{p-2}(s,x)-pn_m(s,x)f(c_m(s,x))c_m^{p-1}(s,x) \right)dxds.
\end{align*}
Since for all $s\in [0,t]$ the quantities $n_m(s\wedge\tau_N^m)$, $f(c_m(s\wedge\tau_N^m))$ and $c_m(s\wedge\tau_N^m)$ are positive $\mathbb{P}$-a.s, we infer from the last equality  that for all $t \in [0,T]$,		$\Psi(c_m(t\wedge\tau_N^m))\leq\Psi(c_0^m).$
This implies that $\abs{c_m(t\wedge\tau_N^m)}_{L^p}\leq\abs{c^m_0}_{L^p}$ for all $p\geq 2$. Using the fact that $\abs{.}_{L^p}\to \abs{.}_{L^\infty}$ as $p\to +\infty$  and the inequality \re{4.42}, we obtain the result.
\end{proof}
Next, we introduce for any $t\in [0,T]$ and $m,N\in \mathbb{N}$, the following Lyapunov functional
\begin{align*}
	\bec (n_m,c_m,\bu_m)(t\wedge\tau_N^m)&=\int_\bo n_m(t\wedge\tau_N^m)\ln n_m(t\wedge\tau_N^m)dx +\bk_f\abs{\nabla c_m(t\wedge\tau_N^m)}^2_{L^2}\\	&\qquad+\frac{\bk_4}{\eta}\abs{\bu_m(t\wedge\tau_N^m)}^2_{L^2}+e^{-1}\abs{\bo},
\end{align*}
where $\bk_4$ is some positive constant to be given later and $\bk_f$ is defined in \eqref{Eq:K-f}.
Since $x\ln x\geq -e^{-1}$ for any $x> 0$, we can easily see that for all $t\in [0,T]$,  $\bec (n_m,c_m,\bu_m)(t\wedge\tau_N^m)\geq 0$.
As in \cite{Zhai} the property \re{4.42} implies that
\begin{equation}
	\bec (n_0^m,c_0^m,\bu_0^m)\leq \bec (n_0,c_0,\bu_0),\qquad\text{for all } m\geq 1.\label{4.29}
\end{equation}
 In addition, taking into account the inequality \re{4.46} and setting $\bk=\min(\bk_f,\frac{\bk_4}{\eta})$ the following holds  for all $t\in [0,T]$,
\begin{align}
	\abs{(\bu_m(t),c_m(t\wedge\tau_N^m)}_{\bhc}^2&\leq \bk^{-1}\bec (n_m,c_m,\bu_m)(t\wedge\tau_N^m)+\bk^{-1}\abs{c_m(t\wedge\tau_N^m)}_{L^2}^2\notag\\
	&\leq \bk^{-1}\bec (n_m,c_m,\bu_m)(t\wedge\tau_N^m)+\bk^{-1}\abs{\bo}\abs{c_0}_{L^\infty}^2,\quad\mathbb{P}\text{-a.s.}\label{3.20*}
\end{align}
We now proceed to establish some   uniform bounds  for $\bu_m$, $c_m$, and $n_m$ in some suitable spaces. For this purpose, we  recall that hereafter, $\bk$ will denote a positive constant independent of $m$ and $N$, which may change from one term to the next. 
\begin{lemma}\label{proposition 3.2}
	Under the same assumptions as in Proposition \ref{theo4.1}, there exists a positive constant $\bk$ such that for all $m\in\mathbb{N}$ and $N\in\mathbb{N}$,
	\begin{equation}\label{3.47}
		\sup_{0\leq s\leq T}\abs{c_m(s\wedge\tau_N^m)}^{2}_{L^2}+2\eta\int_0^{T\wedge\tau_N^m}\abs{\nabla c_m(s)}_{L^2}^2ds\leq \abs{\bo}\abs{c_0}^2_{L^\infty},\qquad \mathbb{P}\text{-a.s.}
	\end{equation}
	\begin{equation}\label{3.48}
		\begin{split}
			&\be\sup_{0\leq s\leq T}	\bec (n_m,c_m,\bu_m)(s\wedge\tau_N^m)\leq\bk,\\
			&\be\int_0^{T\wedge\tau_N^m}\left(\abs{\nabla\sqrt{n_m(s)}}^2_{L^2}
			+\abs{\Delta c_m(s)}^2_{L^2}+\abs{\nabla \bu_m(s)}^2_{L^2}\right)ds
			\leq\bk.
		\end{split}
	\end{equation}
\end{lemma}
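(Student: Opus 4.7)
For \re{3.47} I would apply the It\^o formula to $t\mapsto\abs{c_m(t\wedge\tau_N^m)}_{L^2}^2$ in the It\^o form of the $c_m$-equation (where the Stratonovich correction has been absorbed into $\xi=\eta+\gamma^2/2$). The drift simplifies through three cancellations: $(B_1(\bu_m,c_m),c_m)=0$ by $\nabla\cdot\bu_m=0$; the reaction term $(R_1(n_m,c_m),c_m)=\int_\bo n_m f(c_m) c_m\,dx\geq 0$ because $n_m,c_m>0$ by Lemma \ref{lem3.4} and $f\geq 0$ by Assumption \ref{ass_1}; and the noise integrand $(\phi_k(c_m),c_m)=\frac{1}{2}\int_\bo \sigma_k\cdot\nabla c_m^2\,dx=0$ by $(\mathbf{A}_1)$-$(\mathbf{A}_2)$, so the stochastic integral vanishes pathwise. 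The It\^o quadratic-variation correction $\gamma^2\sum_k\abs{\phi_k(c_m)}_{L^2}^2=\gamma^2\abs{\nabla c_m}_{L^2}^2$ (by $(\mathbf{A}_3)$) combined with $2\xi$ produces exactly the coefficient $2\eta$. The initial-data bound $\abs{c_0^m}_{L^2}^2\leq\abs{\bo}\abs{c_0}_{L^\infty}^2$ coming from \re{4.42} closes this estimate.

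For \re{3.48} my plan is to compute the It\^o evolution of each of the three summands of $\bec(n_m,c_m,\bu_m)$ and then combine. Since $n_m$ satisfies a deterministic equation, the chain rule and integration by parts (using $\nabla\cdot\bu_m=0$ and Neumann BC) yield
\begin{equation*}
\frac{d}{dt}\int_\bo n_m\ln n_m\,dx = -4\delta\int_\bo\abs{\nabla\sqrt{n_m}}^2\,dx + \chi\int_\bo\nabla n_m\cdot\nabla c_m\,dx.
\end{equation*}
It\^o on $\bk_f\abs{\nabla c_m}^2=\bk_f(A_1 c_m,c_m)$ produces the dissipation $-2\xi\bk_f\abs{\Delta c_m}^2$, a convective term $-2\bk_f(\Delta c_m,\bu_m\cdot\nabla c_m)$, the reaction cross-term $2\bk_f(\Delta c_m,n_m f(c_m))$, the noise correction $\bk_f\gamma^2\sum_k\abs{\nabla\phi_k(c_m)}^2$, and a martingale; It\^o on $(\bk_4/\eta)\abs{\bu_m}^2$ produces $-2\bk_4\abs{\nabla\bu_m}^2$, the forcing $2(\bk_4/\eta)(R_0(n_m,\Phi),\bu_m)$, the correction $(\bk_4/\eta)\abs{g(\bu_m,c_m)}^2_{\mathcal{L}^2}$ bounded via \re{2.5}, and a martingale.

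The hard step is to show that the chemotactic cross-terms cancel. Writing $\nabla n_m=2\sqrt{n_m}\nabla\sqrt{n_m}$ and Young's inequality give
\begin{equation*}
\chi\int_\bo\nabla n_m\cdot\nabla c_m \leq 2\delta\int_\bo\abs{\nabla\sqrt{n_m}}^2\,dx + \frac{\chi^2}{2\delta}\int_\bo n_m\abs{\nabla c_m}^2\,dx;
\end{equation*}
integrating by parts and using $f'>0$, $f(0)=0$,
\begin{equation*}
2\bk_f(\Delta c_m,n_m f(c_m)) = -2\bk_f\int_\bo n_m f'(c_m)\abs{\nabla c_m}^2\,dx - 2\bk_f\int_\bo f(c_m)\nabla n_m\cdot\nabla c_m\,dx,
\end{equation*}
whose first piece is $\leq -2\bk_f\min f'\cdot\int_\bo n_m\abs{\nabla c_m}^2\,dx$, a good sign, and whose second is handled by another Young producing $\delta\int_\bo\abs{\nabla\sqrt{n_m}}^2 + (4\bk_f^2\max f^2/\delta)\int_\bo n_m\abs{\nabla c_m}^2$. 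The definition of $\bk_f$ in \re{Eq:K-f} together with the first inequality of \re{4.46*} is engineered precisely so that $2\bk_f\min f'\geq \chi^2/(2\delta)+4\bk_f^2\max f^2/\delta$; consequently all $\int n_m\abs{\nabla c_m}^2$-contributions cancel while only a controllable fraction of the $4\delta\int\abs{\nabla\sqrt{n_m}}^2$-budget is consumed. The noise correction $\bk_f\gamma^2\sum_k\abs{\nabla\phi_k(c_m)}^2$ is split via $\sigma_k\in W^{1,\infty}$ into an $\abs{\nabla c_m}^2$-piece and an $\abs{c_m}^2_{H^2}$-piece, the latter bounded by $\bk_0(\abs{\Delta c_m}^2+\abs{c_m}^2_{H^1})$; the second condition of \re{4.46*} absorbs the $\abs{\Delta c_m}^2$-part into $2\xi\bk_f\abs{\Delta c_m}^2$. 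The convective term $(\Delta c_m,\bu_m\cdot\nabla c_m)$ is standard Ladyzhenskaya-Young, the forcing $(R_0(n_m,\Phi),\bu_m)$ is handled by $\abs{\Phi}_{W^{1,\infty}}$ and Young, and all residual coefficients of $\abs{\nabla\bu_m}^2$ are absorbed by choosing $\bk_4$ large enough. Finally, taking $\sup_{0\leq s\leq T\wedge\tau_N^m}$ and expectation, the two surviving martingales are controlled by the Burkholder-Davis-Gundy inequality; for the $c$-martingale the identity $(\Delta c_m,\sigma_k\cdot\nabla c_m)=-(\nabla c_m,\nabla(\sigma_k\cdot\nabla c_m))$ keeps the BDG contribution at the $H^1$-level and hence absorbable via Young. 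Gronwall's inequality, together with the $m$-uniform initial bound \re{4.29}, closes both lines of \re{3.48}, with all constants independent of $m$ and $N$. The main obstacle is the tight coefficient bookkeeping around the chemotactic cross-term -- without the exact choice of $\bk_f$ in \re{Eq:K-f} and the smallness conditions \re{4.46*}, the Lyapunov argument would not close.
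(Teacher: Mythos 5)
Your proposal follows essentially the same route as the paper: It\^o on $\abs{c_m}_{L^2}^2$ with the three cancellations and the $2\xi-\gamma^2=2\eta$ bookkeeping for \re{3.47}, then the entropy identity for $n_m$, It\^o on $\bk_f\abs{\nabla c_m}_{L^2}^2$ and on $(\bk_4/\eta)\abs{\bu_m}_{L^2}^2$, absorption of the chemotactic cross-terms via the choice of $\bk_f$ and the first inequality of \re{4.46*}, absorption of the noise correction via the second, and BDG plus Gronwall. Your coefficient arrangement for the cross-term differs slightly (you push a factor of $\bk_f$ into the Young step so that all $\int n_m\abs{\nabla c_m}^2$ contributions cancel, whereas the paper retains $+\abs{\sqrt{n_m}\nabla c_m}^2_{L^2}$ as a good term), but one can check your required inequality $2\bk_f\min f'\geq\chi^2/(2\delta)+4\bk_f^2\max f^2/\delta$ does follow from \re{Eq:K-f} and \re{4.46*}, so this closes.

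Two places in your sketch hide real work. First, "standard Ladyzhenskaya--Young" for the convective term is not enough: the estimate of $\abs{\nabla c_m}^4_{L^4}$ needs the $L^\infty$-stability $\abs{c_m}_{L^\infty}\leq\abs{c_0}_{L^\infty}$ from Lemma \ref{lem3.4}, and since the domain has a boundary with Neumann conditions one has $\abs{D^2c_m}^2_{L^2}\neq\abs{\Delta c_m}^2_{L^2}$; the paper passes from one to the other through the boundary-curvature estimate \re{3.28*} (Mizoguchi--Souplet) plus the trace theorem and interpolation, which is where the constant $\bk_2$ and the extra $\abs{c_0}^2_{L^\infty}$ terms come from. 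Second, the forcing term $(R_0(n_m,\varPhi),\bu_m)$ produces $\abs{n_m}^2_{L^2}$ after Young, and this quantity is not part of $\bec$ and cannot be Gronwalled; the paper converts it via mass conservation \re{2.14*} and Gagliardo--Nirenberg into $\bk(1+\abs{\nabla\sqrt{n_m}}^2_{L^2})$ (inequality \re{4.53}), which then consumes a prescribed fraction $\delta\eta/\bk_4$ of the entropy dissipation. Neither point changes the architecture of your argument, but both are needed to make the constants close.
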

\begin{proof}
Let $t\in [0,T]$ be arbitrary but fixed.	We start by proving the estimate \re{3.47}. To do this, we take $(m,N)\in\mathbb{N}^2$ arbitrary and apply the It\^o formula to $t\mapsto\abs{c_m(t\wedge\tau_N^m)}^2_{L^2}$   to get
	\begin{align}
		&\abs{c_m(t\wedge\tau_N^m)}^2_{L^2}+2\xi\int_0^{t\wedge\tau_N^m}\abs{\nabla c_m(s)}^2_{L^2}ds\notag\\
		&=\abs{c^m_0}^2_{L^2}-2\int_0^{t\wedge\tau_N^m}(B_1(\bu_m(s),c_m(s)),c_m(s))ds-2\int_0^{t \wedge\tau_N^m}(R_1(n_m(s),c_m(s)),c_m(s))ds\label{3.29**}\\
		&\qquad{}+\gamma^2\int_0^{t\wedge\tau_N^m}\abs{\phi(c_m(s))}^2_{\mathcal{L}^2(\mathbb{R}^2;L^2)}\notag+2\gamma\int_0^{t\wedge\tau_N^m}(\phi(c_m(s)),c_m(s))d\beta_s.\notag
	\end{align}
	By integration by part, we derive that
	\begin{equation*}
		(B_1(\bu_m,c_m),c_m)=\frac{1}{2}\int_\bo\bu_m(x)\cdot\nabla c_m^2(x)dx=-\frac{1}{2}\int_\bo c_m^2(x)\nabla\cdot\bu_m(x) dx=0.
	\end{equation*}
	By the free divergence property of $\sigma_k$ and the fact that $\sigma_k=0$ on $\partial\bo$, $k=1,2$, we get
	\begin{align*}
		(\phi(c_m),c_m)&=\sum_{k=1}^2\int_\bo\sigma_k(x)\cdot\nabla c_m(x)c_m(x)dx\notag\\
		&=\frac{1}{2}\sum_{k=1}^2\int_\bo\sigma_k(x)\cdot\nabla c_m^2(x)dx\notag\\
		&=-\frac{1}{2}\sum_{k=1}^2\int_\bo c_m^2(x)\nabla\cdot\sigma_k(x) dx+\frac{1}{2}\sum_{k=1}^2\int_{\partial\bo }c_m^2(\sigma)\sigma_k(\sigma)\cdot\nu d\sigma\notag\\
		&=0.
	\end{align*}
	Taking into account the equality \re{2.24*}, we infer that
	\begin{equation*}
		\abs{\phi(c_m)}^2_{\mathcal{L}^2(\mathbb{R}^2;L^2)}=\sum_{k=1}^2\int_\bo\abs{\sigma_k(x)\cdot\nabla c_m(x)}^2_{L^2}dx=\abs{\nabla c_m}^2_{L^2}.
	\end{equation*}
	Using these three last equalities and the fact that $\abs{c^m_0}^2_{L^2}\leq\abs{\bo}\abs{c_0}^2_{L^\infty}$ (since  by the relation \re{4.42}, $\abs{c^m_0}^2_{L^\infty}\leq\abs{c_0}^2_{L^\infty}$), we infer from the equality \re{3.29**} that  for all $t\in [0,T]$,
		\begin{equation}
		\abs{c_m(t\wedge\tau_N^m)}^2_{L^2}+2\eta\int_0^{t\wedge\tau_N^m}\abs{\nabla c_m(s)}^2_{L^2}ds+2\int_0^{t\wedge\tau_N^m} \int_{\bo}n_m(s,x)f(c_m(s,x))c_m(s,x)dxds\leq\abs{\bo}\abs{c_0}^2_{L^\infty}.\label{4.35}
	\end{equation}
Thanks to the non-negativity of $n_m(s\wedge\tau_N^m)$, $c_m(s\wedge\tau_N^m)$ and $f$  over the interval $[0,t]$ given in  Lemma  \ref{lem3.4} and Assumption \ref{ass_1}, we can deduce from the inequality \re{4.35} that 
	\begin{equation}\label{4.36}
	\sup_{0\leq 
		t\leq T}\abs{c_m(t\wedge\tau_N^m)}^{2}_{L^2}+2\eta\int_0^{T\wedge\tau_N^m}\abs{\nabla c_m(s)}_{L^2}^2ds\leq \abs{\bo}\abs{c_0}^2_{L^\infty},\qquad \mathbb{P}\text{-a.s.}
\end{equation}
Let us now move to the proof  of the estimate \re{3.48}.	

Multiplying equation \re{3.1}$_3$ by $1+\ln n_m(s\wedge\tau_N^m)$ for $s\in [0,t]$ and integrate the resulting equation in $\bo$ and using an integration-by-parts as well as the divergence free property of $\bu_m$, we have
	\begin{align}\label{3.27***}
		&\frac{d}{dt}\int_\bo n_m(s\wedge\tau_N^m,x)\ln n(s\wedge\tau_N^m,x)dx+\delta\int_\bo\dfrac{\abs{\nabla n_m(s\wedge\tau_N^m,x)}^2}{n_m(s\wedge\tau_N^m,x)}dx\notag\\
		&=\chi\int_\bo \nabla n_m(s\wedge\tau_N^m,x)\cdot\nabla c_m(s\wedge\tau_N^m,x)dx.
	\end{align}
In the equality \re{3.27***}, we have used the fact that $\bu_m=\frac{\partial n_m}{\partial \nu}=0$ on $\partial\bo$ and the fact that
\begin{align*}
-\int_\bo\Delta n_m(x)\ln (n_m(x))dx&=\int_\bo\nabla n_m(x)\cdot\nabla\ln (n_m(x))dx-\int_{\partial\bo}\frac{ \partial n_m(\sigma)}{\partial\nu}\ln (n_m(\sigma))d\sigma\\
&=\int_\bo\frac{\nabla n_m(x)\cdot\nabla n_m(x)}{n_m(x)}dx, 
\end{align*}
as well as
\begin{align*}
\int_\bo \bu_m(x)\cdot\nabla n_m(x)\ln (n_m(x))dx
&=-\int_\bo n_m(x)\nabla\cdot(\bu_m(x)\ln (n_m(x)))dx\\
&\qquad+\int_{\partial\bo} n_m(\sigma)\ln (n_m(\sigma))\bu_m(\sigma)\cdot \nu d\sigma\\
&=-\int_\bo n_m(x)\bu_m(x)\cdot\nabla\ln(n_m(x))dx\\
&\qquad-\int_\bo n_m(x)\ln (n_m(x))\nabla\cdot\bu_m(x)dx\\
&=-\int_\bo\bu_m(x)\cdot\nabla n_m(x)dx.
\end{align*}
It follows from the Young  inequality and the Cauchy-Schwarz inequality that
	\begin{equation*}
		\chi\int_\bo \nabla n_m(x)\cdot \nabla c_m(x)dx\leq\frac{\delta}{2}\int_\bo\dfrac{\abs{\nabla n_m(x)}^2}{n_m(x)}dx+\frac{\chi^2}{2\delta}\int_\bo n_m(x)\abs{\nabla c_m(x)}^2dx.
	\end{equation*}
	Since $$\int_\bo\dfrac{\abs{\nabla n_m(x)}^2}{n_m(x)}dx=4\int_\bo\abs{\nabla\sqrt{n_m(x)}}^2dx,$$ we may combine the last inequality with equality \re{3.27***} to obtain
	\begin{align}\label{3.27****}
		&\int_\bo n_m(t\wedge\tau_N^m,x)\ln n_m(t\wedge\tau_N^m,x)dx+2\delta\int_0^{t\wedge\tau_N^m}\abs{\nabla\sqrt{n_m(s)}}^2_{L^2}ds\notag\\
		&\leq	\int_\bo n^m_0(x)\ln n^m_0(x)dx+\frac{\chi^2}{2\delta} \int_0^{t\wedge\tau_N^m}\abs{\sqrt{n_m(s)}\nabla c_m(s)}_{L^2}^2ds.
	\end{align}
	Applying the It\^o formula once more to $t\mapsto\abs{\nabla c_m(t\wedge\tau_N^m)}^2_{L^2}$, yields
	\begin{align}
		&\abs{\nabla c_m(t\wedge\tau_N^m)}^2_{L^2}+2\xi\int_0^{t\wedge\tau_N^m}\abs{\Delta c_m(s)}^2_{L^2}ds\notag\\
		&=\abs{\nabla c^m_0}^2_{L^2}-2\int_0^{t\wedge\tau_N^m}(\nabla B_1(\bu_m(s),c_m(s)),\nabla c_m(s))ds\notag\\
		&\qquad{}-2\int_0^{t\wedge\tau_N^m} (\nabla R_1(n_m(s),c_m(s)),\nabla c_m(s))ds\label{3.31**}\\
		&\qquad{}+\gamma^2\int_0^{t\wedge\tau_N^m}\abs{\nabla\phi(c_m(s))}^2_{\mathcal{L}^2(\mathbb{R}^2;L^2)}+2\gamma\int_0^{t\wedge\tau_N^m}(\nabla\phi(c_m(s)),\nabla c_m(s))d\beta_s.\notag
	\end{align}
	Since $\bu_m$ is solenoidal and vanishes on $\partial\bo$, we derive that
	\begin{align}
		(\nabla B_1(\bu_m,c_m),\nabla c_m)&=\int_\bo\nabla(\bu_m(x)\cdot\nabla c_m(x))\cdot\nabla c_m(x)dx\notag\\
		&=\int_\bo\nabla \bu_m(x)\nabla c_m(x)\cdot\nabla c_m(x)dx\notag\\
		&\qquad{}+\int_\bo\nabla c_m(x)\cdot D^2c_m(x)\bu_m(x) dx\notag\\
		&\leq\int_\bo\abs{\nabla \bu_m(x)}\abs{\nabla c_m(x)}^2dx+\frac{1}{2}\int_\bo u_m(x)\cdot\nabla\abs{\nabla c_m(x)}^2dx\label{3.26*}\\
		&\leq \abs{\nabla \bu_m}_{L^2}\abs{\nabla c_m}^2 _{L^4}.\notag
	\end{align}
	We use the Gagliardo-Niremberg inequality to obtain
	\begin{equation*}
		\abs{\nabla c_m}^4_{L^4}\leq\bk_{GN} \abs{c_m}^2_{L^\infty}\abs{D^2c_m}^2_{L^2}+\bk_{GN}\abs{c_m}_{L^\infty}^{4},
	\end{equation*}
	To cancel $\abs{D^2c_m}_{L^2}$, we invoke the pointwise identity $$\abs{\Delta c_m}^2=\nabla\cdot(\Delta c_m\nabla c_m)-\nabla c_m\cdot \nabla\Delta c_m,$$ and $\Delta\abs{\nabla c_m}^2=2\nabla c_m\cdot\nabla\Delta c_m+2\abs{D^2c_m}^2$, as well as the integration-by-parts to rewrite $\abs{\Delta c_m}^2_{L^2}$ as
	\begin{align}
		\abs{\Delta c_m}^2_{L^2}&=-\int_\bo\nabla c_m(x)\cdot \nabla\Delta c_m(x)dx\notag\\
		&= \abs{D^2c_m}^2_{L^2}-\frac{1}{2}\int_\bo\Delta\abs{\nabla c_m(x)}^2dx\label{3.27*****}\\
		&=\abs{D^2c_m}^2_{L^2}-\frac{1}{2}\int_{\partial\bo}\frac{\partial \abs{\nabla c_m(\sigma)}^2}{\partial\nu}d\sigma.\notag
	\end{align}
	Invoking \cite[Lemma 4.2]{miz} we obtain
	\begin{equation}\label{3.28*}
		\frac{1}{2}\int_{\partial\bo}\frac{\partial \abs{\nabla c_m(\sigma)}^2}{\partial\nu}d\sigma\leq\kappa(\bo)\int_{\partial\bo}\abs{\nabla c_m(\sigma)}^2d\sigma,
	\end{equation}
	where $\kappa(\bo)$ is an upper bound for the curvatures of $\partial\bo$.
	
	Thanks to the trace theorem (see \cite[(ii) of Proposition 4.22 with (i) of Theorem 4.24]{Har}), it holds that
	\begin{equation*}
		\int_{\partial\bo}\abs{\nabla c_m(\sigma)}^2d\sigma\leq \bk(\bo,\varsigma)\abs{c_m}^2_{H^{\frac{3+\varsigma}{2}}} \qquad \text{for any } \varsigma\in (0,1),
	\end{equation*}
	where $\bk(\bo,\varsigma)>0$ depends only on $\bo$ and $\varsigma$, which can be fixed for instance $\varsigma=1/2$. On the other hand, the interpolation inequality, the Young inequality and the inequality  \re{4.46} of Lemma \ref{lem3.4} imply the existence of $\bk_1$ and $\bk_2$ depending on $\bo$  such that
	\begin{align}
		\kappa(\bo)\bk(\bo,\varsigma)\abs{c_m}^2_{H^{\frac{7}{4}}}&\leq \bk_1(\abs{D^2c_m}^{7/4}_{L^2}\abs{c_m}^{1/4}_{L^2}+\abs{c_m}^2_{L^2})\notag\\
		&\leq \frac{1}{4}\abs{D^2c_m}^{2}_{L^2}+\bk_2\abs{c_0}^2_{L^\infty}.\notag
	\end{align}
	Using this previous inequality and   \re{3.28*}, we infer from the equality \re{3.27*****} that
	\begin{equation}\label{3.30**}
		\abs{D^2c_m}_{L^2}^2\leq \frac{4}{3}	\abs{\Delta c_m}_{L^2}^2+\frac{4\bk_2}{3}\abs{c_0}^2_{L^\infty},
	\end{equation}
	and therefore
	\begin{equation*}
		\abs{\nabla c_m}^4_{L^4}\leq\frac{4\bk_{GN}\abs{c_0}^2_{L^\infty}}{3} \abs{\Delta c_m}^2_{L^2}+\left(\frac{4\bk_2}{3}+1\right)\bk_{GN}\abs{c_0}_{L^\infty}^{4}.
	\end{equation*}
	By the inequality  \re{3.26*} and the  Young inequality, we infer that
	\begin{align}
		(\nabla B_1(\bu_m,c_m),\nabla c_m)&\leq \abs{\nabla \bu_m}_{L^2}\abs{\nabla c_m}^2_{L^4}\notag\\
		&\leq \frac{3\xi}{16\bk_{GN}\abs{c_0}^2_{L^\infty}}\abs{\nabla c_m}^4                                                                                                     _{L^4}+\frac{4\bk_{GN}\abs{c_0}^2_{L^\infty}}{3\xi} \abs{\nabla \bu_m}^2_{L^2}\notag\\
		&\leq \frac{\xi}{4}\abs{\Delta c_m}_{L^2}^2+\frac{4\bk_{GN}\abs{c_0}^2_{L^\infty}}{3\xi} \abs{\nabla \bu_m}^2_{L^2}+\frac{\xi(4\bk_2+3)}{16}\abs{c_0}_{L^\infty}^{2}.\notag
	\end{align}
	Due to the Assumption $1$  and the inequality  \re{4.46} of Lemma \ref{lem3.4}, we note that
	\begin{align}
		- (\nabla R_1(n_m,c_m),\nabla c_m)
		&=-\int_\bo \nabla (n_m(x)f(c_m(x)))\cdot\nabla c_m(x)dx\notag\\
		&=-\int_\bo f'(c_m(x))\abs{\nabla c_m(x)}^2n_m(x)dx-\int_\bo f(c_m(x))\nabla c_m(x)\cdot\nabla n_m(x)dx\notag\\
		&\leq -\frac{\Min_{0\leq c\leq \abs{c_0}_{L^\infty}}f'(c)}{2}\int_\bo n_m(x)\abs{\nabla c_m(x)}^2dx\notag\\
		&\qquad{}+\frac{1}{2\Min_{0\leq c\leq \abs{c_0}_{L^\infty}}f'(c)}\int_\bo f^2(c_m(x))\frac{\abs{\nabla n_m(x)}^2}{n_m(x)}dx\notag\\
		&\leq -\frac{\Min_{0\leq c\leq \abs{c_0}_{L^\infty}}f'(c)}{2}\abs{\sqrt{n_m}\nabla c_m}_{L^2}^2+\frac{2\Max_{0\leq c\leq \abs{c_0}_{L^\infty}}f^2(c)}{\Min_{0\leq c\leq \abs{c_0}_{L^\infty}}f'(c)} \abs{\nabla \sqrt{n_m}}_{L^2}^2.\notag
	\end{align}
	Combining these two last inequalities, we derive from equality \re{3.31**} that
	\begin{align}
		\abs{\nabla c_m(t\wedge\tau_N^m)}^2_{L^2}	&+\frac{3\xi}{2}\int_0^{t\wedge\tau_N^m}\abs{\Delta c_m(s)}^2_{L^2}ds+\Min_{0\leq c\leq \abs{c_0}_{L^\infty}}f'(c)\int_0^{t\wedge\tau_N^m}\abs{\sqrt{n_m(s)}\nabla c_m(s)}_{L^2}^2ds\notag\\
		&\leq\abs{\nabla c^m_0}^2_{L^2}+\frac{\xi(4\bk_2+3)}{8}\abs{c_0}_{L^\infty}^{2}t+\frac{8\bk_{GN}\abs{c_0}^2_{L^\infty}}{3\xi} \int_0^{t\wedge\tau_N^m}\abs{\nabla \bu_m(s)}^2_{L^2}ds\notag\\
		&\qquad{}+\frac{4\Max_{0\leq c\leq \abs{c_0}_{L^\infty}}f^2(c)}{\Min_{0\leq c\leq \abs{c_0}_{L^\infty}}f'(c)} \int_0^{t\wedge\tau_N^m}\abs{\nabla \sqrt{n_m(s)}}_{L^2}^2ds\notag\\
		&\qquad{}+\gamma^2\int_0^{t\wedge\tau_N^m}\abs{\nabla\phi(c_m(s))}^2_{\mathcal{L}^2(\mathbb{R}^2;L^2)}ds+2\gamma\int_0^{t\wedge\tau_N^m}(\nabla\phi(c_m(s)),\nabla c_m(s))d\beta_s.\notag
	\end{align}
	Multiplying this last inequality by $\bk_f$ and adding the  result with  inequality \re{3.27****} we obtain
	\begin{align}
	&	\int_\bo n_m(t\wedge\tau_N^m,x)\ln n_m(t\wedge\tau_N^m,x)dx +\bk_f\abs{\nabla c_m(t\wedge\tau_N^m)}^2_{L^2}	+\frac{\xi\bk_f}{4}\int_0^{t\wedge\tau_N^m}\abs{\Delta c_m(s)}^2_{L^2}ds\notag\\
		&\qquad{}+2\delta\int_0^{t\wedge\tau_N^m}\abs{\nabla\sqrt{n_m(s)}}^2_{L^2}ds+\int_0^t\abs{\sqrt{n_m(s)}\nabla c_m(s)}_{L^2}^2ds\notag\\
		&\leq\bk_f\abs{\nabla c^m_0}^2_{L^2}+\int_\bo n^m_0(x)\ln n^m_0(x)dx+\frac{\bk_f\xi(4\bk_f\bk_2+3)}{8}\abs{c_0}_{L^\infty}^{2}t\notag\\
		&\qquad{}+\frac{8\bk_f\bk_{GN}\abs{c_0}^2_{L^\infty}}{3\xi} \int_0^{t\wedge\tau_N^m}\abs{\nabla \bu_m(s)}^2_{L^2}ds+\frac{4\bk_f\Max_{0\leq c\leq \abs{c_0}_{L^\infty}}f^2(c)}{\Min_{0\leq c\leq \abs{c_0}_{L^\infty}}f'(c)} \int_0^{t\wedge\tau_N^m}\abs{\nabla \sqrt{n_m(s)}}_{L^2}^2ds\notag\\
		&\qquad{}+\gamma^2\bk_f\int_0^{t\wedge\tau_N^m}\abs{\nabla\phi(c_m(s))}^2_{\mathcal{L}^2(\mathbb{R}^2;L^2)}ds
		+2\gamma\bk_f\int_0^{t\wedge\tau_N^m}(\nabla\phi(c_m(s)),\nabla c_m(s))d\beta_s.\notag
	\end{align}
By using the first  inequality of   \re{4.46*}, we see that the previous inequality reduces to
	\begin{align}
		&\int_\bo n_m(t\wedge\tau_N^m,x)\ln n_m(t\wedge\tau_N^m,x)dx +\bk_f\abs{\nabla c_m(t\wedge\tau_N^m)}^2_{L^2}	+\frac{3\xi\bk_f}{2}\int_0^{t\wedge\tau_N^m}\abs{\Delta c_m(s)}^2_{L^2}ds\notag\\
		&\qquad{}+2\delta\int_0^{t\wedge\tau_N^m}\abs{\nabla\sqrt{n_m(s)}}^2_{L^2}ds+\int_0^{t\wedge\tau_N^m}\abs{\sqrt{n_m(s)}\nabla c_m(s)}_{L^2}^2ds\notag\\
		&\leq\bk_f\abs{\nabla c^m_0}^2_{L^2}+\int_\bo n^m_0(x)\ln n^m_0(x)dx+\frac{\bk_f\xi(4\bk_f\bk_2+3)}{8}\abs{c_0}_{L^\infty}^{2}t\notag\\
		&\qquad{}+\frac{8\bk_f\bk_{GN}\abs{c_0}^2_{L^\infty}}{3\xi} \int_0^{t\wedge\tau_N^m}\abs{\nabla \bu_m(s)}^2_{L^2}ds+\gamma^2\bk_f\int_0^{t\wedge\tau_N^m}\abs{\nabla\phi(c_m(s))}^2_{\mathcal{L}^2(\mathbb{R}^2;L^2)}ds\label{3.36*}\\
		&\qquad{}+2\gamma\bk_f\int_0^{t\wedge\tau_N^m}(\nabla\phi(c_m(s)),\nabla c_m(s))d\beta_s.\notag
	\end{align}
Now,  we use the equality \re{2.14*} of Lemma \ref{lem3.4} and the  inequality \re{4.4.} to  obtain that 
	\begin{align}
		\abs{n_m}_{L^2}&\leq \bk_{GN}\left( \abs{\sqrt{n_m}}_{L^2}\abs{\nabla\sqrt{n_m}}_{L^2}+ \abs{\sqrt{n_m}}^2_{L^2}\right)\notag\\
		&\leq \bk_{GN}\left( \abs{n^m_0}^{\frac{1}{2}}_{L^1}\abs{\nabla\sqrt{n_m}}_{L^2}+ \abs{n^m_0}_{L^1}\right),\label{3.37*}
	\end{align}
By the relation \re{4.42}, we have $n_0^m\to n_0 \text{  in  } L^2(\bo)$.  Thanks to the continuous embedding of $L^2(\bo)$ into $L^1(\bo)$, we derive that $n_0^m\to n_0 \text{  in  } L^1(\bo)$ and therefore the sequence $\{n_0^m\}_{m\geq 1}$ is bounded in $L^1(\bo)$.  This implies that the inequality \re{3.37*} can be controlled as follows
	\begin{align}
	\abs{n_m}_{L^2}\leq \bk_{GN}\bk^{1/2}\abs{\nabla\sqrt{n_m}}_{L^2}+ \bk,\label{4.53}
\end{align}
where $\bk$ is a constant independent of $m$ and $N$.

Next,  applying the It\^o formula to $t\mapsto\abs{\bu_m(t\wedge\tau_N^m)}^2_{L^2}$ and using the estimation \re{4.53}, we infer the existence of $\bk_3>0$ such that
	\begin{align}
		&\abs{\bu_m(t\wedge\tau_N^m)}^2_{L^2}+2\eta\int_0^{t\wedge\tau_N^m}\abs{\nabla \bu_m(s)}^2_{L^2}ds\notag\\
		&\leq 2\int_0^{t\wedge\tau_N^m}\abs{\nabla\Phi}_{L^\infty}\abs{n_m(s)}_{L^2}\abs{\bu_m(s)}_{L^2}ds\notag\\
		&\qquad{}+\int_0^{t\wedge\tau_N^m}\abs{g(\bu_m(s),c_m(s))}^2_{\mathcal{L}^2(\buc;H)}ds+2\int_0^{t\wedge\tau_N^m}(g(\bu_m(s),c_m(s)),\bu_m(s))dWs\notag\\
		&\leq\abs{\bu^m_0}^2_{L^2}+ \frac{\delta\eta}{\bk_4}\int_0^{t\wedge\tau_N^m}\abs{\nabla\sqrt{n_m(s)}}_{L^2}^2ds+\bk_3\abs{\nabla\Phi}_{L^\infty}^2\int_0^{t\wedge\tau_N^m}\abs{\bu_m(s)}^2_{L^2}ds\notag\\
		&\qquad{}+ \frac{1}{2}t+\frac{1}{2}\abs{\nabla\Phi}_{L^\infty}^2\bk^2\int_0^{t\wedge\tau_N^m}\abs{\bu_m(s)}^2_{L^2}ds\notag\\
		&\qquad{}+\int_0^{t\wedge\tau_N^m}\abs{g(\bu_m(s),c_m(s))}^2_{\mathcal{L}^2(\buc;H)}ds+2\int_0^{t\wedge\tau_N^m}(g(\bu_m(s),c_m(s)),\bu_m(s))dWs,\notag
	\end{align}
	with $\bk_4=\frac{8\bk_f\bk_{GN}\abs{c_0}^2_{L^\infty}}{3\xi}$. Multiplying this inequality by $\frac{\bk_4}{\eta}$, and adding the result with inequality \re{3.36*} after using the inequality \re{4.29}, we see that there exists positive constants $\bk_5$ and $\bk_6$ such that for all $t\in[0,T]$, $\mathbb{P}$-a.s.
	\begin{align}
		&	\bec (n_m,c_m,\bu_m)(t\wedge\tau_N^m)+\delta\int_0^{t\wedge\tau_N^m}\abs{\nabla\sqrt{n_m(s)}}^2_{L^2}ds
		\notag\\
		&\qquad{}+\int_0^{t\wedge\tau_N^m}\left[\frac{3\xi\bk_f}{2}\abs{\Delta c_m(s)}^2_{L^2}+\bk_4\abs{\nabla \bu_m(s)}^2_{L^2}+\abs{\sqrt{n_m(s)}\nabla c_m(s)}_{L^2}^2\right]ds\notag\\
		&\leq \bec (n_0,c_0,\bu_0)+\bk_5T+\bk_6\int_0^{t\wedge\tau_N^m}\abs{\bu_m(s)}^2_{L^2}ds+\gamma^2\bk_f\int_0^{t\wedge\tau_N^m}\abs{\nabla\phi(c_m(s))}^2_{\mathcal{L}^2(\mathbb{R}^2;L^2)}ds\label{3.50}\\
		&\qquad{}+\frac{2\bk_4}{\eta}\int_0^{t\wedge\tau_N^m}(g(\bu_m(s),c_m(s)),\bu_m(s))dW_s\notag\\
		&\qquad{}+\frac{\bk_4}{\eta}\int_0^{t\wedge\tau_N^m}\abs{g(\bu_m(s),c_m(s))}^2_{\mathcal{L}^2(\buc,H)}ds+2\gamma\bk_f\int_0^{t\wedge\tau_N^m}(\nabla\phi(c_m(s)),\nabla c_m(s))d\beta_s.\notag
	\end{align}
	%\re{3.47} is obtained from \re{3.49} since $n_m$, $c_m$ and $f$ are positive.
	Now, since  $\gamma$ satisfies the relation \re{4.46*}, taking into account the inequality \re{3.30**}, we note that
	\begin{align}
		\gamma^2\bk_f\abs{\nabla\phi(c_m)}^2_{\mathcal{L}^2(\mathbb{R}^2;L^2)}%&\leq \gamma^2\bk_f\sum_{k=1}^2\int_\bo\abs{\nabla(\sigma_k\cdot\nabla c(s))}^2dx\notag\\
		&\leq 2\gamma^2\bk_f\sum_{k=1}^2\int_\bo\abs{\nabla\sigma_k(x)\nabla c(x)}^2dx+2\gamma^2\bk_f\sum_{k=1}^2\int_\bo\abs{D^2c(x)\sigma_k(x)}^2dx\notag\\
		&\leq 2\gamma^2\bk_f\abs{\nabla c}^2_{L^2}\sum_{k=1}^2\abs{\sigma_k}^2_{W^{1,\infty}}+\abs{\Delta c}^2_{L^2}\frac{8\gamma^2\bk_f}{3}\sum_{k=1}^2\abs{\sigma_k}^2_{L^{\infty}}\label{3.51}\\
		&+\frac{8\gamma^2\bk_f\bk_2}{3}\abs{c_0}_{L^\infty}\sum_{k=1}^2\abs{\sigma_k}^2_{L^{\infty}}\notag\\
		&\leq \bk\abs{\nabla c}^2_{L^2}+\frac{\xi\bk_f}{2}\abs{\Delta c}^2_{L^2}+\bk.\notag
	\end{align}
	By the inequalities \re{2.5} and \re{3.20*}, we also note that
	\begin{align}
		\abs{g(\bu_m,c_m)}^2_{\mathcal{L}^2(\buc,H)}
		&\leq 2L^2_g\abs{(\bu_m,c_m)}^2_{\bhc}+2L^2_g\notag\\
		&\leq \bk\bec (n_m,c_m,\bu_m)+\bk\abs{c_0}_{L^\infty}^2+2L^2_g.\label{3.52}
	\end{align}
	From the estimates  \re{3.50} until \re{3.52}, we derive that
	\begin{align}
		&\be\sup_{0\leq s\leq T}	\bec (n_m,c_m,\bu_m)(s\wedge\tau^m_N)+\delta\be\int_0^{T\wedge\tau^m_N}\abs{\nabla\sqrt{n_m(s)}}^2_{L^2}ds
		\notag\\
		&\qquad{}+\be\int_0^{T\wedge\tau^m_N}\left[\xi\bk_f\abs{\Delta c_m(s)}^2_{L^2}+\bk_4\abs{\nabla \bu_m(s)}^2_{L^2}+\abs{\sqrt{n_m(s)}\nabla c_m(s)}_{L^2}^2\right]ds\notag\\
		&\leq \bec (n_0,c_0,\bu_0)+\bk T+\bk\be\int_0^{T\wedge\tau^m_N}\bec (n_m(s),c_m(s),\bu_m(s))ds+2L^2_gT\notag\\
		&\qquad{}+2\gamma\bk_f\be\sup_{0\leq s\leq T}	\abs{\int_0^{s\wedge\tau^m_N}(\nabla\phi(c_m(s)),\nabla c_m(s))d\beta_s}\label{3.54}\\
		&\qquad{}+\frac{2\bk_4}{\eta}\be\sup_{0\leq s\leq T}	\abs{\sum_{k=1}^\infty\int_0^{s\wedge\tau^m_N}(g(\bu_m(s),c_m(s))e_k,\bu_m(s))dW^k_s}.\notag
	\end{align}
	Now, by making use of the Burholder-Davis-Gundy, Cauchy-Schwarz, Young inequalities and the fact that $\gamma$ satisfies the  relation \re{4.46*}, we infer that
	\begin{align}
		&2\gamma\bk_f\be\sup_{0\leq s\leq T}	\abs{\int_0^{s\wedge\tau^m_N}(\nabla\phi(c_m(s)),\nabla c_m(s))d\beta_s}\notag\\
		&\leq \bk\be\left(\int_0^{T\wedge\tau^m_N}\abs{(\nabla\phi(c_m(s)),\nabla c_m(s))}^2ds\right)^{1/2}\notag\\
		&\leq \bk\be\left(\int_0^{T\wedge\tau^m_N}\abs{\nabla\phi(c_m(s))}^2_{\mathcal{L}^2(\mathbb{R}^2;L^2)}\abs{\nabla c_m(s)}_{L^2}^2ds\right)^{1/2}\notag\\
		&\leq\frac{\bk_f}{4}\be\sup_{0\leq s\leq T}\abs{\nabla c_m(s\wedge\tau^m_N)}_{L^2}^2+\bk\be\int_0^{T\wedge\tau^m_N}\abs{\nabla\phi(c_m(s))}^2_{\mathcal{L}^2(\mathbb{R}^2;L^2)}ds\notag\\
		&\leq\frac{1}{4}\be\sup_{0\leq s\leq T}	\bec (n_m(s),c_m(s),\bu_m(s))(s\wedge\tau^m_N)\notag\\
		&\qquad{}+\frac{\xi\bk_f}{2}\be\int_0^{T\wedge\tau^m_N}\abs{\Delta c_m(s)}^2_{L^2}ds+\bk\be\int_0^{T\wedge\tau^m_N}\abs{\nabla c_m(s)}^2_{L^2}ds+\bk T.\notag
	\end{align}
Similarly,
	\begin{align}
		&\frac{2\bk_4}{\eta}\be\sup_{0\leq s\leq T}	\abs{\sum_{k=1}^\infty\int_0^{s\wedge\tau^m_N}(g(\bu_m(s),c_m(s))e_k,\bu_m(s))dW^k_s}\notag\\
		%&\leq \bk\be\sum_{k=1}^m\left(\int_0^{t\wedge\tau^m_N}\abs{(g(\bu_m,c_m)e_k,u_m)}^2ds\right)^{1/2}\notag\\
		%&\leq \bk\be\sup_{0\leq s\leq t\wedge\tau^m_N}\abs{ u_m(s)}_{L^2}\left(\int_0^{t\wedge\tau^m_N}\abs{g(\bu_m(s),c_m(s))}^2_{\mathcal{L}^2(\buc;H)}ds\right)^{1/2}\\
		&\leq\frac{1}{4}\be\sup_{0\leq s\leq T}\bec (n_m,c_m,\bu_m)(s\wedge\tau^m_N)+\bk\be\int_0^{T\wedge\tau^m_N}\abs{g(\bu_m(s),c_m(s))}^2_{\mathcal{L}^2(\buc;H)}ds\notag\\
		&\leq\frac{1}{4}\be\sup_{0\leq s\leq T}\bec (n_m,c_m,\bu_m)(s\wedge\tau^m_N)+\bk\be\int_0^{T\wedge\tau^m_N}\abs{(\bu_m(s),c_m(s))}^2_{\bhc}ds+ \bk TL_g^2\notag.
	\end{align}
	It follows from the estimates  \re{3.54} that
	\begin{align}
		&\be\sup_{0\leq s\leq T}	\bec (n_m,c_m,\bu_m)(s\wedge\tau^m_N)\notag\\
		&+\be\int_0^{T\wedge\tau^m_N}\left[\abs{\nabla\sqrt{n_m(s)}}^2_{L^2}
		+\bk_f\abs{\Delta c_m(s)}^2_{L^2}+\abs{\nabla \bu_m(s)}^2_{L^2}+\abs{\sqrt{n_m(s)}\nabla c_m(s)}_{L^2}^2\right]ds\\
		&\leq \bk\bec (n_0,c_0,\bu_0)+\bk T+\bk\be\int_0^{T\wedge\tau^m_N}\bec (n_m,c_m,\bu_m)(s)ds+\bk,\notag
	\end{align}
where $\bk$ is a constant depending on the initial data and  $T$ but independent  of $m$ and $N$. 
Now, the Gronwall lemma yields
	\begin{align}
		\be&\sup_{0\leq s\leq T}	\bec (n_m,c_m,\bu_m)(s\wedge\tau^m_N)\notag\\
	&\qquad	+\be\int_0^{T\wedge\tau^m_N}\left[\abs{\nabla\sqrt{n_m(s)}}^2_{L^2}
		+\abs{\Delta c_m(s)}^2_{L^2}+\abs{\nabla \bu_m(s)}^2_{L^2}+\abs{\sqrt{n_m(s)}\nabla c_m(s)}_{L^2}^2\right]ds\leq \bk,\notag
	\end{align}
from which  we  deduce the estimates \re{3.48} and hence completing the proof of   Lemma \ref{proposition 3.2}.
\end{proof}
\begin{lemma}
	\label{proposition 3.3}
	Under the same assumptions as in Lemma \ref{proposition 3.2}, for all $p\geq 1$, there exists a positive constant $\bk$ such that we have for all $m\in\mathbb{N}$ and $N\in\mathbb{N}$,
	\begin{equation}\label{3.61}
		\sup_{0\leq s\leq T}\abs{c_m(s\wedge\tau^m_N)}^{2p}_{L^2}+\left(\int_0^{T\wedge\tau^m_N}\abs{\nabla c_m(s)}_{L^2}^2ds\right)^p\leq \abs{\bo}^p\abs{c_0}^{2p}_{L^\infty},\qquad \mathbb{P}\text{-a.s.},\\
	\end{equation}
	\begin{equation}
		\begin{split}
			&\be\sup_{0\leq s\leq T}	\bec^p (n_m,c_m,\bu_m)(s\wedge\tau^m_N)+\be\left(\int_0^{T\wedge\tau^m_N}\abs{\nabla\sqrt{n_m(s)}}^2_{L^2}ds\right)^p\leq \bk,\label{3.62}\\
			&\text{and }\be\left(\int_0^{T\wedge\tau^m_N}\abs{\Delta c_m(s)}^2_{L^2}ds\right)^p+\be\left(\int_0^{T\wedge\tau^m_N}\abs{\nabla \bu_m(s)}^2_{L^2}ds\right)^p
			\leq\bk.
		\end{split}
	\end{equation}
\end{lemma}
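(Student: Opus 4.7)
The plan is to reduce both assertions to arguments already present in (or adaptable from) the proof of Lemma \ref{proposition 3.2}. The pathwise bound \re{3.61} follows at once from the pointwise inequality \re{4.36}: since both $\sup_{t\le T}\abs{c_m(t\wedge\tau_N^m)}^2_{L^2}$ and $2\eta\int_0^{T\wedge\tau_N^m}\abs{\nabla c_m(s)}^2_{L^2}ds$ are nonnegative and their sum is bounded by $\abs{\bo}\abs{c_0}^2_{L^\infty}$, the superadditivity $a^p+b^p\le(a+b)^p$ (valid for $a,b\ge 0$ and $p\ge 1$) yields \re{3.61}.

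For the moment bound \re{3.62}, I would revisit the argument leading to \re{3.50} and its subsequent refinements \re{3.51}--\re{3.52}, which together provide a schematic stochastic inequality of the form
\begin{equation*}
\bec(n_m,c_m,\bu_m)(t\wedge\tau_N^m)+\int_0^{t\wedge\tau_N^m}\bigl[\abs{\nabla\sqrt{n_m}}^2_{L^2}+\abs{\Delta c_m}^2_{L^2}+\abs{\nabla\bu_m}^2_{L^2}\bigr]ds\le C_0+C_1\int_0^{t\wedge\tau_N^m}\bec\,ds+M_t,
\end{equation*}
where $M_t$ collects the two stochastic integrals and $C_0,C_1$ depend only on the data. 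Raising this inequality to the $p$-th power by $(a+b+c)^p\le 3^{p-1}(a^p+b^p+c^p)$, taking the supremum in $t$ and then expectation, and converting $\int_0^t\bec\,ds$ into $T^{p-1}\int_0^t\bec^p\,ds$ via Hölder, prepares the ground for a standard Gronwall argument once the martingale contribution $\be\sup_{t\le T}\abs{M_t}^p$ has been controlled.

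The main obstacle is precisely this BDG step. For the Wiener integral driving $\bu_m$, the BDG inequality combined with the linear growth \re{2.5} of $g$ and a Young split yields a bound of the form $\tfrac{1}{4}\be\sup_{t\le T}\bec^p(t\wedge\tau_N^m)+C\be\int_0^{T\wedge\tau_N^m}(1+\bec^p)\,ds$, so the sup-term is absorbed on the left. For the $d\beta$-noise on the $c$-equation, BDG followed by the same computation as in \re{3.51} produces, after Young, a term proportional to $\gamma^{2p}\abs{\sigma}_{L^\infty}^{2p}\,\be\bigl(\int_0^{T\wedge\tau_N^m}\abs{\Delta c_m}^2_{L^2}ds\bigr)^p$. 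This is exactly where the third inequality of \re{4.46*} enters: the smallness requirement $\gamma^{2p}\le 3^p\xi^p/(2^{2p+1}\abs{\sigma}^{2p}_{L^\infty}8^p)$ is tailored so that this contribution can be absorbed into a small fraction of the dissipation $\be\bigl(\int\xi\bk_f\abs{\Delta c_m}^2\,ds\bigr)^p$ on the left-hand side. Once the martingale part is absorbed, Gronwall applied to $t\mapsto\be\sup_{s\le t}\bec^p(s\wedge\tau_N^m)$ gives the first moment estimate of \re{3.62}; the corresponding bounds on the dissipation integrals are then recovered by keeping them on the left-hand side of the pre-Gronwall inequality and inserting the uniform bound on $\be\sup\bec^p$ just established.
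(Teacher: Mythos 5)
Your proposal is correct and follows essentially the same route as the paper: \re{3.61} is read off from the pathwise bound \re{3.47} (your superadditivity remark $a^p+b^p\le(a+b)^p$ just makes explicit what the paper leaves implicit), and \re{3.62} is obtained exactly as in the text by raising the pre-Gronwall inequality \re{3.50}--\re{3.54} to the $p$-th power, applying H\"older to the $\int\bec\,ds$ term, controlling the two martingales by Burkholder--Davis--Gundy with the third condition of \re{4.46*} used to absorb the $\gamma^{2p}$-contribution into the $\Delta c_m$-dissipation, and concluding with Gronwall. You have correctly identified the absorption of the $d\beta$-noise term as the step for which the smallness condition on $\gamma$ is designed.
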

\begin{proof}
	The inequality \re{3.61} follows directly from the estimates \re{3.47} of Lemma \ref{proposition 3.2}. Next, we are going to derive estimate \re{3.62}.  We start with  the inequality \re{3.54} and invoke the Jensen inequality to derive  that for all $p\geq2$,
	\begin{align}
		&\be\sup_{0\leq s\leq T}	\bec ^p(n_m,c_m,\bu_m)(s\wedge\tau^m_N)+\be\left(\int_0^{T\wedge\tau^m_N}\abs{\nabla\sqrt{n_m(s)}}^2_{L^2}ds\right)^p\notag\\
		&\qquad{}+\be\left(\int_0^{T\wedge\tau^m_N}\xi\bk_f\abs{\Delta c_m(s)}^2_{L^2}ds\right)^p+\be\left(\int_0^{T\wedge\tau^m_N}\bk_4\abs{\nabla \bu_m(s)}^2_{L^2}ds\right)^p\notag\\
		&\leq \bec ^p(n_0,c_0,\bu_0)+\bk T^p+\bk\be\left(\int_0^{T\wedge\tau^m_N}\bec (n_m,c_m,\bu_m)(s)ds\right)^p\label{3.63}\\
		&\qquad{}+\bk^p+2^p\gamma^p\bk_f^p\be\sup_{0\leq s\leq T}	\abs{\int_0^{s\wedge\tau^m_N}(\nabla\phi(c_m(s)),\nabla c_m(s))d\beta_s}^p\notag\\
		&\qquad{}+\bk\be\sup_{0\leq s\leq T}	\abs{\sum_{k=1}^\infty\int_0^{s\wedge\tau^m_N}(g(\bu_m(s),c_m(s))e_k,\bu_m(s))dW^k_s}^p.\notag
	\end{align}
	Invoking the  H\"older inequality, we see that
	\begin{equation*}
		\bk\be\left(\int_0^{T\wedge\tau^m_N}\bec (n_m,c_m,\bu_m)(s)ds\right)^p\leq \bk T^{\frac{p}{p-1}}\be\int_0^{T\wedge\tau^m_N}\bec^p (n_m,c_m,\bu_m)(s)ds.
	\end{equation*}
	Thanks to the Burkholder-Davis-Gundy inequality, we see that
	\begin{align}
		&\bk\be\sup_{0\leq s\leq T}	\abs{\sum_{k=1}^\infty\int_0^{s\wedge\tau^m_N}(g(\bu_m(s),c_m(s))e_k,\bu_m(s))dW^k_s}^p\notag\\
		&\leq \bk\be\sum_{k=1}^\infty\left(\int_0^{T\wedge\tau^m_N}\abs{(g(\bu_m(s),c_m(s))e_k,\bu_m(s))}^2ds\right)^{p/2}\notag\\
		&\leq \bk\be\sup_{0\leq s\leq T}\abs{ u_m(s\wedge\tau^m_N)}^p_{L^2}\left(\int_0^{T\wedge\tau^m_N}\abs{g(\bu_m(s),c_m(s))}^2_{\mathcal{L}^2(\buc;H)}ds\right)^{p/2}\notag\\
		&\leq\frac{1}{4}\be\sup_{0\leq s\leq T}\bec^p (n_m,c_m,\bu_m)(s\wedge\tau^m_N)+\bk\be\left(\int_0^{T\wedge\tau^m_N}\abs{g(\bu_m(s),c_m(s))}^2_{\mathcal{L}^2(\buc;H)}ds\right)^p\notag\\
		&\leq\frac{1}{4}\be\sup_{0\leq s\leq T}\bec^p (n_m,c_m,\bu_m)(s\wedge\tau^m_N)+\bk\be\int_0^{T\wedge\tau^m_N}\abs{(\bu_m(s),c_m(s))}^{2p}_{\bhc}ds+ \bk T^{\frac{p}{p-1}}L_g^{2p}\notag.
	\end{align}
	Taking into account the fact that $\gamma$ is sufficiently small such that the relation  \re{4.46*} is satisfied, we also arrive at
	\begin{align}
		&2^p\gamma^p\bk_f^p\bk\be\sup_{0\leq s\leq T}	\abs{\int_0^{s\wedge\tau^m_N}(\nabla\phi(c_m(s)),\nabla c_m(s))d\beta_s}^p\notag\\
		%&\leq \bk\be\left(\int_0^{t\wedge\tau^m_N}\abs{(\nabla\phi(c_m(s)),\nabla c_m(s))}^2ds\right)^{p/2}\notag\\
		&\leq 2^p\gamma^p\bk_f^p\be\left(\int_0^{T\wedge\tau^m_N}\abs{\nabla\phi(c_m(s))}^2_{\mathcal{L}^2(\mathbb{R}^2;L^2)}\abs{\nabla c_m(s)}_{L^2}^2ds\right)^{p/2}\notag\\
		&\leq\frac{1}{4}\be\sup_{0\leq s\leq T}\abs{\nabla c_m(s\wedge\tau^m_N)}_{L^2}^{2p}+2^{2p}\gamma^{2p}\bk_f^{2p}\be\left(\int_0^{T\wedge\tau^m_N}\abs{\nabla\phi(c_m(s))}^2_{\mathcal{L}^2(\mathbb{R}^2;L^2)}ds\right)^p\notag\\
		&\leq\frac{1}{4}\be\sup_{0\leq s\leq T}	\bec^p (n_m,c_m,\bu_m)(s\wedge\tau^m_N)\notag\\
		&\qquad{}+\frac{1}{2}\be\left(\int_0^{T\wedge\tau^m_N}\xi\bk_f\abs{\Delta c_m(s)}^2_{L^2}ds\right)^p+\bk T^{\frac{p}{p-1}}\be\int_0^{T\wedge\tau^m_N}\abs{\nabla c_m(s)}^{2p}_{L^2}ds+\bk T^p.\notag
	\end{align}
	It follows from the estimates \re{3.63}  that
	\begin{align}
		&\be\sup_{0\leq s\leq T}	\bec ^p(n_m,c_m,\bu_m)(s\wedge\tau^m_N)+\be\left(\int_0^{T\wedge\tau^m_N}\abs{\nabla\sqrt{n_m(s)}}^2_{L^2}ds\right)^p\notag\\
		&\qquad{}+\be\left(\int_0^{T\wedge\tau^m_N}\abs{\Delta c_m(s)}^2_{L^2}ds\right)^p+\be\left(\int_0^{T\wedge\tau^m_N}\abs{\nabla \bu_m(s)}^2_{L^2}ds\right)^p\notag\\
		&\leq \bk\bec ^p(n_0,c_0,\bu_0)+\bk T^p+\bk\be\int_0^{T\wedge\tau^m_N}\bec^p (n_m,c_m,\bu_m)(s)ds+\bk.\notag
	\end{align}
	Now, the Gronwall lemma yields
	\begin{align}
		&\be\sup_{0\leq s\leq T}	\bec ^p(n_m,c_m,\bu_m)(s\wedge\tau^m_N)+\be\left(\int_0^{T\wedge\tau^m_N}\abs{\nabla\sqrt{n_m(s)}}^2_{L^2}ds\right)^p\notag\\
		&\qquad{}+\be\left(\int_0^{T\wedge\tau^m_N}\abs{\Delta c_m(s)}^2_{L^2}ds\right)^p+\be\left(\int_0^{T\wedge\tau^m_N}\abs{\nabla \bu_m(s)}^2_{L^2}ds\right)^p\leq \bk,\notag
	\end{align}
and the estimate \re{3.62} follows directly from this last inequality. This completes the proof of Lemma \ref{proposition 3.3}.
\end{proof}
In order to control the process $t\mapsto n_m(t\wedge\tau_N^m)$, we prove the following lemma. 
\begin{lemma}\label{lemma3.5}
	Under the same assumptions as in Lemma \ref{proposition 3.2}, there exists a positive constant $\eta_0>1$  such that for all $m\in \mathbb{N}$, $N\in \mathbb{N}$ and   $\mathbb{P}\text{-a.s.}$,
	\begin{align}
		\sup_{0\leq s\leq T}	\abs{n_m(s\wedge\tau^m_N)}^2_{L^2}+\int_0^{T\wedge\tau^m_N}\abs{ n_m(s)}_{H^1}^2ds\leq \eta_0\exp\left(\bk	\int_0^{T\wedge\tau^m_N}\abs{ \nabla c_m(s)}_{L^4}^{4}ds\right). \label{3.73*}
	\end{align}
\end{lemma}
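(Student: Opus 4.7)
The key observation is that the $n_m$-equation in \re{3.40} carries no stochastic integral, so the argument can be run pathwise in $\omega$. My plan is to test this equation against $n_m$ itself on the time interval $[0,t\wedge\tau_N^m]$, control the chemotaxis cross term using the Gagliardo-Nirenberg inequality \re{4.4.}, and close with Gronwall.

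First I would apply the classical chain rule (no It\^o correction needed, since the $n_m$-equation is purely pathwise) to $t\mapsto\abs{n_m(t\wedge\tau_N^m)}_{L^2}^2$, giving
\begin{equation*}
\tfrac{1}{2}\tfrac{d}{dt}\abs{n_m}_{L^2}^2+\delta\abs{\nabla n_m}_{L^2}^2
=-(B_1(\bu_m,n_m),n_m)-(R_2(n_m,c_m),n_m).
\end{equation*}
The convective term vanishes because $\bu_m\in\bh_m\subset V$ is divergence-free and vanishes on $\partial\bo$: the standard integration by parts gives $(B_1(\bu_m,n_m),n_m)=\tfrac{1}{2}\int_\bo n_m^2\,\nabla\cdot\bu_m\,dx=0$. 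For the chemotaxis term, the definition of $R_2$ together with an integration by parts yields
\begin{equation*}
-(R_2(n_m,c_m),n_m)=\int_\bo n_m(x)\nabla c_m(x)\cdot\nabla n_m(x)\,dx,
\end{equation*}
which I would estimate, in the exact same way as \re{2.16} in the proof of Lemma \ref{lem3.6}, via H\"older, the Gagliardo-Nirenberg inequality \re{4.4.}, and Young's inequality. This gives, for a constant $\bk$ independent of $m$ and $N$,
\begin{equation*}
\bigl|(R_2(n_m,c_m),n_m)\bigr|\leq \tfrac{\delta}{2}\abs{\nabla n_m}_{L^2}^2+\bk\bigl(1+\abs{\nabla c_m}_{L^4}^{4}\bigr)\abs{n_m}_{L^2}^2.
\end{equation*}

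Absorbing the dissipation on the right into the left and integrating on $[0,t\wedge\tau_N^m]$, I obtain
\begin{equation*}
\abs{n_m(t\wedge\tau_N^m)}_{L^2}^2+\delta\int_0^{t\wedge\tau_N^m}\abs{\nabla n_m(s)}_{L^2}^2 ds
\leq \abs{n_0^m}_{L^2}^2+\bk\int_0^{t\wedge\tau_N^m}\bigl(1+\abs{\nabla c_m(s)}_{L^4}^{4}\bigr)\abs{n_m(s)}_{L^2}^2 ds.
\end{equation*}
Since $n_0^m\to n_0$ in $L^2(\bo)$ by \re{4.42}, the sequence $\{\abs{n_0^m}_{L^2}^2\}_m$ is uniformly bounded by some constant, which (together with an $\bk T$ absorbed into the exponent) gives a constant $\eta_0>1$ so that the Gronwall inequality applied to $t\mapsto\abs{n_m(t\wedge\tau_N^m)}_{L^2}^2$ yields
\begin{equation*}
\sup_{0\leq s\leq T}\abs{n_m(s\wedge\tau_N^m)}_{L^2}^2\leq \eta_0\exp\!\Bigl(\bk\int_0^{T\wedge\tau_N^m}\abs{\nabla c_m(s)}_{L^4}^{4}\,ds\Bigr).
\end{equation*}
Feeding this back into the energy identity, I control $\int_0^{T\wedge\tau_N^m}\abs{\nabla n_m(s)}_{L^2}^2 ds$ by the same quantity, and since $\abs{n_m}_{H^1}^2=\abs{n_m}_{L^2}^2+\abs{\nabla n_m}_{L^2}^2$, combining the sup bound (multiplied by $T$) with the gradient bound gives \re{3.73*} after possibly enlarging $\eta_0$ and $\bk$. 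The only nontrivial step is the Young-type splitting that produces the $\abs{\nabla c_m}_{L^4}^{4}$ weight without leaving an uncontrolled $\abs{\nabla n_m}_{L^2}$ term, and this is exactly the estimate already executed in \re{2.16}.
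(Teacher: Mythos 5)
Your proposal is correct and follows essentially the same route as the paper: test the (noise-free) $n_m$-equation with $n_m$, kill the convective term via $\nabla\cdot\bu_m=0$, absorb the chemotaxis cross term with the Gagliardo--Nirenberg and Young inequalities exactly as in \re{2.16}, and close with Gronwall using the uniform $L^2$-bound on $n_0^m$ from \re{4.42}. No gaps.
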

\begin{proof}
Let $t\in [0,T]$ be arbitrary but fixed. Multiplying the last equation of \re{3.40} by $n_m(s\wedge\tau^m_N)$ for $0\leq s\leq t$, and using the fact that $\nabla\cdot \bu_m=0$ and  the inequality \re{4.4.} as well as the  H\"older inequality and the Young inequality,  we obtain
	\begin{align}
		&\frac{1}{2}\frac{d}{dt}\abs{n_m(s\wedge\tau^m_N)}^2_{L^2}+\delta\abs{\nabla n_m(s\wedge\tau^m_N)}_{L^2}^2\notag\\
		&=\xi\int_\bo n_m(s\wedge\tau^m_N,x)\nabla c_m(s\wedge\tau^m_N,x)\cdot\nabla n_m(s\wedge\tau^m_N,x)dx\notag\\
		&\leq \xi\abs{n_m(s\wedge\tau^m_N)}_{L^4}\abs{\nabla c_m(s\wedge\tau^m_N)}_{L^4}\abs{\nabla n_m(s\wedge\tau^m_N)}_{L^2}\notag\\
		&\leq \bk(\abs{n_m(s\wedge\tau^m_N)}^{1/2}_{L^2}\abs{\nabla n_m(s\wedge\tau^m_N)}^{1/2}_{L^2}+\abs{n_m(s\wedge\tau^m_N)}_{L^2})\abs{\nabla c_m(s\wedge\tau^m_N)}_{L^4}\abs{\nabla n_m(s\wedge\tau^m_N)}_{L^2}\notag\\
		&\leq \bk\abs{n_m(s\wedge\tau^m_N)}^{1/2}_{L^2}\abs{\nabla c_m(s\wedge\tau^m_N)}_{L^4}\abs{\nabla n_m(s\wedge\tau^m_N)}^{3/2}_{L^2}\notag\\
		&\qquad+\bk\abs{n_m(s\wedge\tau^m_N)}_{L^2}\abs{\nabla c_m(s\wedge\tau^m_N)}_{L^4}\abs{\nabla n_m(s\wedge\tau^m_N)}_{L^2}\notag\\
		&\leq \frac{\delta}{2}\abs{\nabla n_m(s\wedge\tau^m_N)}^2_{L^2}+\bk\abs{n_m(s\wedge\tau^m_N)}^{2}_{L^2}(\abs{\nabla c_m(s\wedge\tau^m_N)}^4_{L^4}+\abs{\nabla c_m(s\wedge\tau^m_N)}^2_{L^4})\notag\\
		&\leq \frac{\delta}{2}\abs{\nabla n_m(s\wedge\tau^m_N)}^2_{L^2}+\bk\abs{n_m(s\wedge\tau^m_N)}^{2}_{L^2}\left(\abs{\nabla c_m(s\wedge\tau^m_N)}^4_{L^4}+1\right).\notag
	\end{align}
	This implies that for all $t\in [0,T]$,
	\begin{align}
		\sup_{0\leq s\leq t}\abs{n_m(s\wedge\tau^m_N)}^2_{L^2}+\delta\int_0^{t\wedge\tau^m_N}\abs{\nabla n_m(s)}_{L^2}^2ds\leq \abs{n^m_0}_{L^2}^2+\bk\int_0^{t\wedge\tau^m_N}\abs{n_m(s)}_{L^2}^{2}\left(\abs{\nabla c_m(s)}^4_{L^4}+1\right)ds.\notag
	\end{align}
Since $n_m^0\to n_0$ in $L^2(\bo)$, $\abs{n_0^m}_{L^2}^2$ is uniformly bounded.  Thus, applying the Gronwall lemma, we obtain that
	\begin{align}
		\sup_{0\leq s\leq t}	\abs{n_m(s\wedge\tau^m_N)}^2_{L^2}+\int_0^{t\wedge\tau^m_N}\abs{ n_m(s)}_{H^1}^2ds&\leq \bk_\delta\exp\left(\bk\int_0^{t\wedge\tau^m_N}\left(\abs{\nabla c_m(s)}^4_{L^4}+1\right)ds\right)\notag\\
		&\leq (\bk_\delta+1)e^{\bk T}\exp\left(\bk	\int_0^{t\wedge\tau^m_N}\abs{\nabla c_m(s)}^4_{L^4}ds\right),\notag
	\end{align}
and  complete the proof of  Lemma \ref{lemma3.5}.
\end{proof}
\begin{cor}\label{cor4.11*}
	Under the same assumptions as in Lemma \ref{proposition 3.2}, for any $p\geq1$, there exists a positive constant $\bk$  such that for all $m\in \mathbb{N}$ and $N\in \mathbb{N}$,
	\begin{align}
		&\be\sup_{0\leq s\leq T}	\abs{\bu_m(s\wedge\tau^m_N)}^{2p}_{L^2}+\be\left(\int_0^{T\wedge\tau^m_N}\abs{\nabla \bu_m(s)}_{L^2}^2ds\right)^p\leq \bk,\label{3.68*}\\
		& %\be\left(\int_0^T\abs{\nabla\sqrt{n_m}}_{L^2}^2ds\right)^p\leq \bk,
		\qquad\be\left(\int_0^{T\wedge\tau^m_N}\abs{n_m(s)}^2_{L^2}ds\right)^p\leq \bk,\label{3.69*}\\
		&\be\sup_{0\leq s\leq {T}}	\abs{c_m(s\wedge\tau^m_N)}^{2p}_{H^1}+\be\left(\int_0^{T\wedge\tau^m_N}\abs{c_m(s)}_{H^2}^2ds\right)^p\leq \bk\label{3.70*},\\
		&\qquad\be\int_0^{T\wedge\tau^m_N}\abs{\nabla c_m(s)}^4_{L^2}ds\leq \bk.\label{4.75*}
	\end{align}
\end{cor}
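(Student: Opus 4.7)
The plan is to deduce all four estimates as rather direct corollaries of Lemma \ref{proposition 3.3}, combined with the auxiliary inequality \re{4.53}, the $L^\infty$-stability \re{4.46}, and the elliptic coercivity $\abs{\psi}^2_{H^2}\leq\bk_0(\abs{\Delta\psi}^2_{L^2}+\abs{\psi}^2_{H^1})$. The delicate analytic work has already been carried out in Lemma \ref{proposition 3.3} through the Lyapunov functional $\bec$; here I merely translate those estimates into the function-space norms occurring in Definition \ref{defi4.0}.

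First I would read off \re{3.68*} directly from the inequality $\bec(n_m,c_m,\bu_m)(t\wedge\tau^m_N)\geq\frac{\bk_4}{\eta}\abs{\bu_m(t\wedge\tau^m_N)}^2_{L^2}$ together with \re{3.62}: the supremum piece is dominated by $(\eta/\bk_4)^p\,\be\sup_s\bec^p(n_m,c_m,\bu_m)$, while the $L^2_tV$ integral is already contained in the left-hand side of \re{3.62}.

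Next, for \re{3.69*} I would apply \re{4.53} pointwise in $s$, square, integrate over $[0,T\wedge\tau^m_N]$, and use $(a+b)^p\leq 2^{p-1}(a^p+b^p)$ to obtain
\begin{equation*}
\be\left(\int_0^{T\wedge\tau^m_N}\abs{n_m(s)}^2_{L^2}ds\right)^p\leq\bk\,\be\left(\int_0^{T\wedge\tau^m_N}\abs{\nabla\sqrt{n_m(s)}}^2_{L^2}ds\right)^p+\bk T^p,
\end{equation*}
and conclude via \re{3.62}. For \re{3.70*} I would split $\abs{c_m}^2_{H^1}=\abs{c_m}^2_{L^2}+\abs{\nabla c_m}^2_{L^2}$, bound the first summand pathwise by $\abs{\bo}\abs{c_0}^2_{L^\infty}$ using \re{4.46}, the second by $\bk_f^{-1}\bec(n_m,c_m,\bu_m)$, then raise to the $p$-th power and take $\be\sup_s$ via \re{3.62}. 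The integrated $H^2$-bound follows after applying $\abs{c_m}^2_{H^2}\leq\bk_0(\abs{\Delta c_m}^2_{L^2}+\abs{c_m}^2_{H^1})$, integrating over $[0,T\wedge\tau^m_N]$, raising to the $p$-th power, and invoking \re{3.62} together with the just-established $H^1$-bound.

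Finally, for \re{4.75*} I would exploit the factorisation
\begin{equation*}
\int_0^{T\wedge\tau^m_N}\abs{\nabla c_m(s)}^4_{L^2}ds\leq\Bigl(\sup_{0\leq s\leq T}\abs{\nabla c_m(s\wedge\tau^m_N)}^2_{L^2}\Bigr)\int_0^{T\wedge\tau^m_N}\abs{\nabla c_m(s)}^2_{L^2}ds,
\end{equation*}
apply Cauchy--Schwarz on $\Omega$, and dominate each factor by \re{3.70*} with $p=2$ and by \re{3.62} with $p=2$, respectively. No genuinely new obstacle arises: the hard work (the logarithmic Lyapunov functional, the boundary trace control via \cite{miz}, the Burkholder--Davis--Gundy step for the stochastic integrals, and the Gronwall closure) was already performed in the proof of Lemma \ref{proposition 3.3}, and the present corollary is a bookkeeping reformulation. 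The only minor point of care is that all constants entering \re{3.62} are independent of $m$ and $N$, which is built into \re{4.42}.
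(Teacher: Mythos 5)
Your treatment of \re{3.68*}, \re{3.69*} and \re{3.70*} coincides with the paper's: the first follows from the lower bound $\bec\geq\frac{\bk_4}{\eta}\abs{\bu_m}^2_{L^2}$ together with \re{3.48} and \re{3.62}; the second from inserting \re{4.53} and invoking \re{3.62}; the third from the pathwise $L^2$-bound, the term $\bk_f\abs{\nabla c_m}^2_{L^2}$ inside $\bec$, and the elliptic estimate $\abs{c_m}^2_{H^2}\leq\bk_0(\abs{\Delta c_m}^2_{L^2}+\abs{c_m}^2_{H^1})$. These are exactly the steps in the printed proof, so nothing to add there.

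For \re{4.75*} your route diverges in a way that matters. Your factorisation $\int_0^{T\wedge\tau^m_N}\abs{\nabla c_m(s)}^4_{L^2}ds\leq\bigl(\sup_{s}\abs{\nabla c_m(s\wedge\tau^m_N)}^2_{L^2}\bigr)\int_0^{T\wedge\tau^m_N}\abs{\nabla c_m(s)}^2_{L^2}ds$ followed by Cauchy--Schwarz on $\Omega$ is correct and does prove the display as literally printed (with the spatial $L^2$-norm). However, the paper's own proof establishes the strictly stronger bound $\be\int_0^{T\wedge\tau^m_N}\abs{\nabla c_m(s)}^4_{L^4}ds\leq\bk$, obtained by applying the Gagliardo--Nirenberg inequality \re{4.4.} to $\nabla c_m$, i.e. $\abs{\nabla c_m}^4_{L^4}\leq\bk(\abs{c_m}^2_{H^2}\abs{\nabla c_m}^2_{L^2}+\abs{\nabla c_m}^4_{L^2})$, and then using \re{3.70*}. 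It is this $L^4$-in-space quantity, not the $L^2$ one, that is invoked downstream: Lemma \ref{lemma3.5} bounds $\abs{n_m}^2_{L^2}$ by $\exp(\bk\int\abs{\nabla c_m}^4_{L^4}ds)$, and Lemma \ref{lem4.7*} and the tightness argument both apply Markov's inequality to $\be\int\abs{\nabla c_m}^4_{L^4}ds$. So while your argument verifies the statement as typeset (the $L^2$ in \re{4.75*} appears to be a typo), it would leave the later steps unsupported; you should insert the Gagliardo--Nirenberg step so that the $L^4_x$ version is what you actually prove.
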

\begin{proof}
	The estimate \re{3.68*} is a consequence of the  estimates  \re{3.48} and \re{3.62}. From the inequalities \re{4.53}, \re{3.48} and \re{3.62}, we infer that
	\begin{equation*}
		\be\left(\int_0^{T\wedge\tau^m_N}\abs{n_m(s)}^2_{L^2}ds\right)^p\leq\be\left(\int_0^{T\wedge\tau^m_N}\left(\bk_{GN}\bk^{1/2}\abs{\nabla\sqrt{n_m(s)}}^2_{L^2}+ \bk\right)ds\right)^p\leq \bk,
	\end{equation*}
	which proves the second estimate of inequality \re{3.69*}.
	
	According to  \cite[Proposition 7.2, P. 404]{Tay}, we have
	\begin{equation*}
		\abs{c_m}^2_{H^2}\leq\bk(\abs{\Delta c_m}^2_{L^2}+\abs{c_m}_{H^1}^2),
	\end{equation*}
	from which along with  \re{3.48} and \re{3.62} we deduce \re{3.70*}.
	
By applying the  inequality \re{4.4.}, we obtain that $$\abs{\nabla c_m}_{L^4}^4\leq \bk(\abs{c_m}_{H^2}^2\abs{\nabla c_m}^2_{L^2}+\abs{\nabla c_m}^4_{L^2}).$$ 
Therefore, 
	\begin{align}
		\be\int_0^{T\wedge\tau^m_N}\abs{\nabla c_m(s)}_{L^4}^4ds&\leq \bk\be\int_0^{T\wedge\tau^m_N}\abs{c_m(s)}_{H^2}^2\abs{\nabla c_m(s)}^2_{L^2}ds+\bk\be\int_0^{T\wedge\tau^m_N}\abs{\nabla c_m(s)}^4_{L^2}ds\notag\\
		&\leq\bk\be\sup_{0\leq s\leq T}	\abs{c_m(s\wedge\tau^m_N)}^2_{H^1}\int_0^T\abs{c_m(s)}_{H^2}^2ds+\bk T\be\sup_{0\leq s\leq T}	\abs{c_m(s\wedge\tau^m_N)}^4_{H^1}\notag\\
		&\leq\bk \be\sup_{0\leq s\leq T}	\abs{c_m(s\wedge\tau^m_N)}^4_{H^1}+\bk\be\left(\int_0^{T\wedge\tau^m_N}\abs{c_m(s)}_{H^2}^2ds\right)^2,\notag
	\end{align}
from which along with \re{3.70*}  we deduce  \re{4.75*}. This completes  the proof of Corollary \ref{cor4.11*}.
\end{proof}
In the following lemma, we state and prove a result concerning the stopping time $\tau^m_N$. More precisely, we prove that $\Sup_{N\in\mathbb{N}}\tau_m^N\geq2T$ with probability $1$ such that the inequality \re{4.14***} holds.
\begin{lemma}\label{lem4.7*}
Let $\tau^m_N$, $m,N\in \mathbb{N}$ be the stopping times defined in \re{3.46}. Then, under the same assumptions as in Lemma \ref{proposition 3.2}, it holds that 
\begin{equation}
	\mathbb{P}\left\{ \omega\in \Omega: \sup_{N\in\mathbb{N}}\tau_m^N(\omega)\geq2T \right\}=1.\label{4.81*}
\end{equation}
Consequently, the solutions $(\bu_m,c_m,n_m)$ of system \re{3.40} exist almost surely for every $t\in [0,T]$.
\end{lemma}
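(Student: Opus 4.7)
The plan is to prove $\mathbb{P}\bigl(\sup_{N\in\mathbb{N}} \tau_N^m \ge 2T\bigr) = 1$ by showing $\mathbb{P}(\tau_N^m < 2T) \to 0$ as $N \to \infty$; this suffices because $\{\tau_N^m\}_N$ is nondecreasing, so the events $\{\tau_N^m \ge 2T\}$ are nested increasing and $\sigma$-continuity of $\mathbb{P}$ gives $\mathbb{P}(\sup_N \tau_N^m \ge 2T) = \lim_N \mathbb{P}(\tau_N^m \ge 2T)$. The a priori bounds of Lemma \ref{proposition 3.2}, Lemma \ref{proposition 3.3}, Lemma \ref{lemma3.5} and Corollary \ref{cor4.11*}, although stated for horizon $T$, carry over verbatim to any finite horizon; I shall apply them on $[0, 2T]$, the constants then depending on $2T$ but still being independent of $m$ and $N$.

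For $N \ge 2T$ the cap $\wedge N$ in \re{3.46} is inactive, so on $\{\tau_N^m < 2T\}$ path continuity of the Galerkin solution forces the squared norm to equal $N^2$ exactly at $\tau_N^m$: $\abs{n_m(\tau_N^m)}_{L^2}^2 + \abs{\bu_m(\tau_N^m)}_{L^2}^2 + \abs{c_m(\tau_N^m)}_{H^1}^2 = N^2$. Hence at least one of the three components of the stopped process must reach $N^2/3$ somewhere on $[0, 2T]$, and a union bound gives
\begin{align*}
\mathbb{P}(\tau_N^m < 2T) &\le \mathbb{P}\Bigl(\sup_{0 \le s \le 2T} \abs{\bu_m(s\wedge\tau_N^m)}_{L^2}^2 \ge \tfrac{N^2}{3}\Bigr) + \mathbb{P}\Bigl(\sup_{0 \le s \le 2T} \abs{c_m(s\wedge\tau_N^m)}_{H^1}^2 \ge \tfrac{N^2}{3}\Bigr) \\
&\quad{}+ \mathbb{P}\Bigl(\sup_{0 \le s \le 2T} \abs{n_m(s\wedge\tau_N^m)}_{L^2}^2 \ge \tfrac{N^2}{3}\Bigr).
\end{align*}
Chebyshev's inequality combined with the uniform $L^{2p}$-moment estimates \re{3.68*} and \re{3.70*} bounds the first two probabilities by $\bk\,N^{-2p}$ for every $p \ge 1$.

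The main obstacle is the third probability, because no uniform moment of $\sup\abs{n_m(\cdot\wedge\tau_N^m)}_{L^2}$ is available. I would instead exploit the pathwise exponential Gronwall estimate \re{3.73*} of Lemma \ref{lemma3.5}: with $Z_N := \int_0^{2T \wedge \tau_N^m} \abs{\nabla c_m(s)}_{L^4}^4\,ds$, one has $\sup_{0\le s\le 2T} \abs{n_m(s\wedge\tau_N^m)}_{L^2}^2 \le \eta_0 e^{\bk Z_N}$, so the event in question is contained in $\{\bk Z_N \ge \ln(N^2/(3\eta_0))\}$. Markov's inequality together with the first-moment bound $\be Z_N \le \bk$ from \re{4.75*} then yields a bound of order $(\ln N)^{-1}$. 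Summing the three contributions produces $\mathbb{P}(\tau_N^m < 2T) \le \bk\bigl(N^{-2p} + (\ln N)^{-1}\bigr) \to 0$, which is \re{4.81*}. The consequence is immediate: for $\mathbb{P}$-a.e.\ $\omega$ one can pick $N(\omega)$ with $\tau_{N(\omega)}^m(\omega) \ge 2T > T$, so on $[0, T]$ the local solution $(\bu_m, c_m, n_m)$ coincides with the globally defined truncated solution $(\bu_m^{N(\omega)}, c_m^{N(\omega)}, n_m^{N(\omega)})$, delivering the claimed existence on the whole interval $[0,T]$.
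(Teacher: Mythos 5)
Your proposal is correct and follows essentially the same route as the paper: the same reduction to $\lim_N \mathbb{P}(\tau_N^m < 2T) = 0$, the same union bound over the three components with threshold $N^2/3$, Chebyshev with the moment bounds \re{3.68*} and \re{3.70*} for $\bu_m$ and $c_m$, and the pathwise exponential estimate \re{3.73*} combined with Markov's inequality and \re{4.75*} to get the $(\ln N)^{-1}$ bound for $n_m$. The only cosmetic differences are that you invoke $\sigma$-continuity of nested events where the paper uses an explicit inclusion, and you note the sharper $N^{-2p}$ rate where the paper settles for $p=1$.
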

\begin{proof}
We notice that the inequalities of Corollary \ref{cor4.11*} hold for every $T>0$. Hence, for a fixed $T>0$, we set $\tilde{T}=2T$ and note that for all $J\in \mathbb{N}$,  $$\left\{\omega\in \Omega: \sup_{N\in\mathbb{N}}\tau_m^N(\omega)<\tilde{T} \right\}\subset\left\{\omega\in \Omega: \tau_m^J(\omega)<\tilde{T} \right\},$$
which implies that
\begin{equation}
\mathbb{P}\left\{ \omega\in \Omega: \sup_{N\in\mathbb{N}}\tau_m^N(\omega)<2T \right\}\leq\lim_{N\longrightarrow\infty}\mathbb{P}\left\{\omega\in \Omega: \tau_m^N(\omega)<\tilde{T} \right\},\label{4.82*}
\end{equation}
and therefore, it is enough to show that the second term of the right hand side of this last equality converges to zero as $N\to \infty$. To this end, let $$A_N=\left\{ \omega\in \Omega: \tau_m^N<\tilde{T} \right\}$$ and $$B_N= \left\{ \omega\in \Omega:\abs{n_m(\tilde{T}\wedge\tau_m^N	)}^2_{L^2}+\abs{\bu_m(\tilde{T}\wedge\tau_m^N	)}_{L^2}^2+\abs{c_m(\tilde{T}\wedge\tau_m^N)}_{H^1}^2\geq N^2 
 \right\}. $$ Then, we have  $A_N\subset B_N$ for $N>\tilde{T}$. Indeed, let $\omega\in A_N$, then $\tilde{T}\wedge\tau_m^N(\omega)=\tau_m^N(\omega)$. Thus,  by the definition of the stopping time $\tau_m^N$, we see that for $N>\tilde{T}$,
\begin{align*}
	\abs{n_m(\tilde{T}\wedge\tau_m^N	)}^2_{L^2}+\abs{\bu_m(\tilde{T}\wedge\tau_m^N	)}_{L^2}^2+\abs{c_m(\tilde{T}\wedge\tau_m^N)}_{H^1}^2
	&=\abs{n_m(\tau_m^N)}^2_{L^2}+\abs{\bu_m(\tau_m^N	)}_{L^2}^2+\abs{c_m(\tau_m^N	)}_{H^1}^2\\
	&\geq N^2.
\end{align*}
We then conclude that $\omega\in B_N$.  

Now, for $N>\tilde{T}$, using the inclusion $A_N\subset B_N$  we derive that  
\begin{align}
\mathbb{P}\left\{ \omega\in \Omega: \tau_m^N	<\tilde{T} \right\}&\leq \mathbb{P}\left\{ \omega\in \Omega:\abs{n_m(\tilde{T}\wedge\tau_m^N	)}^2_{L^2}+\abs{\bu_m(\tilde{T}\wedge\tau_m^N	)}_{L^2}^2+\abs{c_m(\tilde{T}\wedge\tau_m^N)}_{H^1}^2\geq N^2 
\right\}\notag\\
&\leq \mathbb{P}\left\{ \omega\in \Omega:\abs{n_m(\tilde{T}\wedge\tau_m^N	)}^2_{L^2}\geq \frac{N^2}{3} 
\right\}+\mathbb{P}\left\{ \omega\in \Omega:\abs{\bu_m(\tilde{T}\wedge\tau_m^N	)}_{L^2}^2\geq \frac{N^2}{3} 
\right\}\label{4.83*}\\
&\qquad{}+\mathbb{P}\left\{ \omega\in \Omega:\abs{c_m(\tilde{T}\wedge\tau_m^N	)}_{H^1}^2\geq \frac{N^2 }{3}
\right\}.\notag
\end{align}
According to the estimates \re{3.68*} and \re{3.70*} of Corollary \ref{cor4.11*} as well as the Markov inequality, we derive that for $N>\tilde{T}$
\begin{align}
\mathbb{P}\left\{ \omega\in \Omega:\abs{c_m(\tilde{T}\wedge\tau_m^N	)}_{H^1}^2\geq \frac{N^2 }{3}\right\}&\leq \mathbb{P}\left\{ \omega\in \Omega:\sup_{0\leq s\leq {\tilde{T}}}\abs{c_m(s\wedge\tau^m_N)}_{H^1}^2\geq \frac{N^2 }{3}\right\}\notag\\
&\leq \frac{3}{N^2 }\be\sup_{0\leq s\leq {\tilde{T}}}\abs{c_m(s\wedge\tau^m_N)}_{H^1}^2\notag\\
&\leq \frac{\bk}{N^2 },\notag 
\end{align}
and 
\begin{align}
	\mathbb{P}\left\{ \omega\in \Omega:\abs{\bu_m(\tilde{T}\wedge\tau_m^N	)}_{L^2}^2\geq \frac{N^2 }{3}\right\}&\leq \mathbb{P}\left\{ \omega\in \Omega:\sup_{0\leq s\leq {\tilde{T}}}\abs{\bu_m(s\wedge\tau^m_N)}_{L^2}^2\geq \frac{N^2 }{3}\right\}\notag\\
	&\leq \frac{3}{N^2 }\be\sup_{0\leq s\leq {\tilde{T}}}\abs{\bu_m(s\wedge\tau^m_N)}_{L^2}^2\notag\\
	&\leq \frac{\bk}{N^2 }.\notag 
\end{align}
Also for $N>\max(\sqrt{3\eta_0},\tilde{T})$ (where $\eta_0$ is a constant obtained in Lemma \ref{lemma3.5}), we use the inequality \re{3.73*} of Lemma \ref{lemma3.5} to infer that
\begin{align}
\mathbb{P}\left\{ \omega\in \Omega:\abs{n_m(\tilde{T}\wedge\tau_m^N	)}^2_{L^2}\geq \frac{N^2}{3} 
\right\}&\leq \mathbb{P}\left\{ \omega\in \Omega:\sup_{0\leq s\leq {\tilde{T}}}\abs{n_m(s\wedge\tau^m_N)}^2_{L^2}\geq \frac{N^2}{3} 
\right\}\notag\\
&\leq \mathbb{P}\left\{ \omega\in \Omega:\eta_0\exp\left(\bk	\int_0^{\tilde{T}\wedge\tau^m_N}\abs{ \nabla c_m(s)}_{L^4}^{4}ds\right)\geq \frac{N^2}{3} 
\right\}\notag\\
&\leq \mathbb{P}\left\{ \omega\in \Omega:	\int_0^{\tilde{T}\wedge\tau^m_N}\abs{ \nabla c_m(s)}_{L^4}^{4}ds\geq \frac{\ln(\frac{N^2}{3\eta_0}) }{\bk}
\right\}\notag.
\end{align}
Invoking the Markov inequality and using the estimate \re{4.75*} of Corollary \ref{cor4.11*}, we see that 
\begin{align}
\mathbb{P}\left\{ \omega\in \Omega:\abs{n_m(\tilde{T}\wedge\tau_m^N	)}^2_{L^2}\geq \frac{N^2}{3} 
\right\}&\leq  \frac{\bk}{\ln(\frac{N^2}{3\eta_0}) }\be \int_0^{\tilde{T}\wedge\tau^m_N}\abs{ \nabla c_m(s)}_{L^4}^{4}ds\notag\\
&\leq \frac{\bk}{2\ln(N)-\ln(3\eta_0) }.\notag
\end{align}
Plugging these inequalities  into the inequality \re{4.83*}, we arrive at $$\mathbb{P}\left\{ \omega\in \Omega: \tau_m^N	<\tilde{T} \right\}\leq \frac{\bk}{N^2}+\frac{\bk}{2\ln(N)-\ln(\eta_0)-\ln(3)},$$ for all for $N>\max(\sqrt{3\eta_0},\tilde{T})$. Letting $N$ to infinity in this last  inequality we get $$\lim_{N\longrightarrow\infty}\mathbb{P}\left\{ \omega\in \Omega: \tau_m^N	<\tilde{T} \right\}=0,$$
which along with \re{4.82*} imply \re{4.81*}.

By the equality \re{4.81*} we infer the inequality \re{4.14***} and therefore, the relation \re{4.10} hold
and the lemma is then proved. 
\end{proof}
Since  $(T\wedge \tau_m^{N} )_{N\in\mathbb{N}}$ is increasing, we have $T\wedge \tau_m^{N}\to T$ a.s., as $N\to \infty$. 
With this almost surely convergence in hand, we are going  to give some consequences of Lemma \ref{lemma3.5} and Corollary \ref{cor4.11*}. 
\begin{cor}\label{cor4.11}
	Under the same assumptions as in Lemma \ref{proposition 3.2}, for any $p\geq1$, there exists a positive constant $\bk$  such that for all $m\in \mathbb{N}$,
	\begin{align}
		&	\sup_{0\leq s\leq T}	\abs{n_m(s)}^2_{L^2}+\int_0^{T}\abs{ n_m(s)}_{H^1}^2ds\leq \eta_0\exp\left(\bk	\int_0^{T}\abs{ \nabla c_m(s)}_{L^4}^{4}ds\right), \  \mathbb{P}\text{-a.s.}\label{3.73}\\
		&\be\sup_{0\leq s\leq T}	\abs{\bu_m(s)}^{2p}_{L^2}+\be\left(\int_0^T\abs{\nabla \bu_m(s)}_{L^2}^2ds\right)^p\leq \bk,\label{3.68}\\
		& %\be\left(\int_0^T\abs{\nabla\sqrt{n_m}}_{L^2}^2ds\right)^p\leq \bk,
		\qquad\be\left(\int_0^T\abs{n_m(s)}^2_{L^2}ds\right)^p\leq \bk,\label{3.69}\\
		&\be\sup_{0\leq s\leq T}	\abs{c_m(s)}^{2p}_{H^1}+\be\left(\int_0^T\abs{c_m(s)}_{H^2}^2ds\right)^p\leq \bk\label{3.70},\\
		&\qquad\be\int_0^T\abs{\nabla c_m(s)}^4_{L^2}ds\leq \bk,\label{4.75}
	\end{align}
where $\eta_0>1$ is a constant obtained in Lemma \ref{lemma3.5}.
\end{cor}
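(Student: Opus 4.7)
The strategy is to remove the stopping times $\tau_N^m$ from the estimates of Lemma \ref{lemma3.5} and Corollary \ref{cor4.11*} by sending $N\to\infty$, which is licit because Lemma \ref{lem4.7*} guarantees that $\sup_{N\in\mathbb{N}}\tau_N^m\geq 2T$ $\mathbb{P}$-a.s., so that $T\wedge\tau_N^m\nearrow T$ $\mathbb{P}$-a.s. Since the solutions $(\bu_m,c_m,n_m)$ of \eqref{3.40} are defined and continuous in $\bhc_m$ on $[0,T]$ $\mathbb{P}$-a.s., the stopped processes converge pointwise $\mathbb{P}$-a.s. to the unstopped processes, and we can exchange the limit with the integrals and suprema using standard monotone/Fatou arguments.

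For the pathwise inequality \eqref{3.73}, I would fix an $\omega$ in the full-measure event where the continuous trajectories exist on $[0,T]$ and $T\wedge\tau_N^m\nearrow T$. Both sides of \eqref{3.73*} are nondecreasing in the upper time limit, and the integrands on $[0,T\wedge\tau_N^m]$ coincide with those on $[0,T]$ truncated to this subinterval. Hence, letting $N\to\infty$ in \eqref{3.73*} and invoking monotone convergence on the right-hand side together with the pointwise continuity of $s\mapsto\abs{n_m(s)}_{L^2}^2$ on the left yields \eqref{3.73} (the supremum over $[0,T\wedge\tau_N^m]$ increases to the supremum over $[0,T]$ since these suprema are attained by continuity).

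For the expectation estimates \eqref{3.68}--\eqref{4.75}, I would apply Fatou's lemma. Concretely, for \eqref{3.68}, the integrand $\sup_{0\leq s\leq T}\abs{\bu_m(s\wedge\tau_N^m)}^{2p}_{L^2}$ converges $\mathbb{P}$-a.s., as $N\to\infty$, to $\sup_{0\leq s\leq T}\abs{\bu_m(s)}^{2p}_{L^2}$ by the pathwise continuity of $\bu_m$; similarly the $p$-th power of the gradient integral increases to $\be(\int_0^T\abs{\nabla\bu_m(s)}_{L^2}^2 ds)^p$ by monotone convergence inside the outer expectation. The uniform bound \eqref{3.68*} then gives, via Fatou,
\[
\be\sup_{0\leq s\leq T}\abs{\bu_m(s)}^{2p}_{L^2}+\be\left(\int_0^T\abs{\nabla\bu_m(s)}_{L^2}^2 ds\right)^p\leq\liminf_{N\to\infty}(\cdots)\leq\bk.
\]
Exactly the same mechanism gives \eqref{3.69}, \eqref{3.70} and \eqref{4.75} from \eqref{3.69*}, \eqref{3.70*} and \eqref{4.75*}, respectively. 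There is no genuine obstacle here: the only delicate point is to verify pointwise $\mathbb{P}$-a.s.\ convergence of the stopped quantities to the unstopped ones, which follows from the continuity of the paths of $(\bu_m,c_m,n_m)$ in $\bhc_m$ combined with $T\wedge\tau_N^m\to T$ $\mathbb{P}$-a.s., a consequence of Lemma \ref{lem4.7*}.
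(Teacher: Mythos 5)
Your proposal is correct and follows essentially the same route as the paper: the paper's own proof likewise lets $N\to\infty$ in \re{3.73*} using the a.s.\ convergence $T\wedge\tau_N^m\to T$ and the path continuity of $(\bu_m,c_m,n_m)$, and then applies Fatou's lemma to pass from \re{3.68*}--\re{4.75*} to \re{3.68}--\re{4.75}. You simply spell out the monotone-convergence and pointwise-convergence details that the paper leaves implicit.
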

\begin{proof}
Since $T\wedge \tau_m^{N}\to T$ a.s., as $N\to \infty$, by the 
path continuity of the process $t\mapsto(\bu_m(t), c_m(t),n_m(t))$,  we can let $N\to \infty$ in the inequality \re{3.73*} of Lemma \ref{lemma3.5} and derive the inequality \re{3.73}. In addition to the almost surely convergence of $T\wedge \tau_m^{N}$ to $T$ and  the 
path continuity of the process $t\mapsto(\bu_m(t), c_m(t),n_m(t))$, we invoke the Fatou lemma and pass to the limit as $N\to \infty$ in the inequalities \re{3.68*}, \re{3.69*}, \re{3.70*} and \re{4.75*} and derive the estimate \re{3.68}, \re{3.69}, \re{3.70} and \re{4.75}.
\end{proof}
\begin{cor}
	Under the same assumptions as in Lemma \ref{proposition 3.2}, there exists a positive constant $\bk$ such that for all $m\in \mathbb{N}$,
	\begin{equation}\label{3.77}
		\be \abs{n_m}^2_{C^{1/2}([0,T]; H^{-3})}\leq \bk.
	\end{equation}
\end{cor}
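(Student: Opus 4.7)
Since the $n$-equation in \re{3.40} contains no stochastic integral, the regularity in time is pathwise and only a duality argument is needed. The plan is to write, for $0\le s<t\le T$,
\begin{equation*}
n_m(t)-n_m(s)=-\int_s^t\bigl[\delta A_1 n_m(r)+\bp_m^2 B_1(\bu_m(r),n_m(r))+\bp_m^2 R_2(n_m(r),c_m(r))\bigr]\,dr,
\end{equation*}
estimate each integrand in the $H^{-3}(\bo)$ norm, and apply Cauchy--Schwarz in time.

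For the integrand, I would bound each piece by duality against a test function $\psi\in H^3(\bo)$, using the Sobolev embedding $H^3(\bo)\hookrightarrow W^{1,\infty}(\bo)$ together with the boundedness of $\bp_m^2$ on $L^2(\bo)$:
\begin{align*}
\abs{A_1 n_m}_{H^{-3}}&\le \bk\abs{n_m}_{L^2},\\
\abs{B_1(\bu_m,n_m)}_{H^{-3}}&=\sup_{\abs{\psi}_{H^3}\le 1}\Bigl|\int_\bo n_m\,\bu_m\!\cdot\!\nabla\psi\,dx\Bigr|\le \bk\abs{n_m}_{L^2}\abs{\bu_m}_{L^2},\\
\abs{R_2(n_m,c_m)}_{H^{-3}}&=\sup_{\abs{\psi}_{H^3}\le 1}\Bigl|\int_\bo n_m\,\nabla c_m\!\cdot\!\nabla\psi\,dx\Bigr|\le \bk\abs{n_m}_{L^2}\abs{c_m}_{H^1},
\end{align*}
where in the first line I used $\abs{\Delta n_m}_{H^{-3}}\le\abs{n_m}_{H^{-1}}\le\bk\abs{n_m}_{L^2}$ and in the other two lines the divergence-free property of $\bu_m$ (after integration by parts) and the formula defining $R_2$. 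Denote the sum of these three contributions by $F_m(r)\le \bk\abs{n_m(r)}_{L^2}\bigl(1+\abs{\bu_m(r)}_{L^2}+\abs{c_m(r)}_{H^1}\bigr)$.

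Then Cauchy--Schwarz yields
\begin{equation*}
\abs{n_m(t)-n_m(s)}_{H^{-3}}\le \int_s^t F_m(r)\,dr\le \sqrt{t-s}\,\Bigl(\int_0^T F_m(r)^2\,dr\Bigr)^{1/2},
\end{equation*}
so that the Hölder seminorm satisfies
\begin{equation*}
\sup_{0\le s<t\le T}\frac{\abs{n_m(t)-n_m(s)}_{H^{-3}}^2}{t-s}\le \int_0^T F_m(r)^2\,dr.
\end{equation*}
Combined with $\sup_{0\le t\le T}\abs{n_m(t)}_{H^{-3}}^2\le \bk\sup_{0\le t\le T}\abs{n_m(t)}_{L^2}^2$, this gives a bound of $\abs{n_m}^2_{C^{1/2}([0,T];H^{-3})}$ by a sum of terms of the form $\abs{n_m(r)}_{L^2}^2(1+\abs{\bu_m(r)}_{L^2}^2+\abs{c_m(r)}_{H^1}^2)$ integrated in $r$, plus the sup of $\abs{n_m}_{L^2}^2$.

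Taking expectation and applying Cauchy--Schwarz in $(\omega,r)$, the main obstacle is that $\abs{n_m}_{L^2}$ itself appears only inside an $L^2$-in-time bound (see \re{3.69}), not as a supremum; however this is precisely enough, since I only need $\be\int_0^T F_m(r)^2\,dr$, and $F_m^2$ is a product of $\abs{n_m(r)}_{L^2}^2$ with a factor that, by \re{3.68} and \re{3.70}, has finite $\be\sup_{0\le r\le T}$. Hölder's inequality then reduces the estimate to the product of $\be\bigl(\int_0^T\abs{n_m(r)}_{L^2}^2 dr\bigr)^2$ (finite by \re{3.69} with $p=2$) and $\be\sup_{0\le r\le T}(1+\abs{\bu_m(r)}_{L^2}^2+\abs{c_m(r)}_{H^1}^2)^2$ (finite by \re{3.68} and \re{3.70} with $p=2$), yielding the constant $\bk$ in \re{3.77} independently of $m$.
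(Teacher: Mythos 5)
Your proposal is correct and follows essentially the same route as the paper: the same duality estimates in $H^{-3}(\bo)$ for $A_1 n_m$, $B_1(\bu_m,n_m)$ and $R_2(n_m,c_m)$, followed by the embedding $W^{1,2}(0,T;H^{-3})\hookrightarrow C^{1/2}([0,T];H^{-3})$ (which your explicit Cauchy--Schwarz-in-time computation reproves), and the same reduction via H\"older/Young to the moment bounds \re{3.68}, \re{3.69} and \re{3.70}.
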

\begin{proof}
	Let $v\in H^{3}(\bo)$. We recall that $\abs{\nabla v}_{L^\infty}\leq \bk\abs{v}_{H^3}$.  So, using an integration by part and the  H\"older inequality, we derive that
	\begin{equation*}
		\abs{(A_1n_m,v)}= \abs{(n_m,\Delta v)}\leq \abs{n_m}_{L^2}\abs{\Delta v}_{L^2}\leq \abs{n_m}_{L^2}\abs{ v}_{H^3},
	\end{equation*}	
	\begin{align*}
		\abs{(\bp_m^1B_1(\bu_m,n_m),v)}&= \abs{(B_1(\bu_m,n_m),\bp_m^1v)}
		\\
		&= \abs{(n_m\bu_m,\nabla\bp_m^1v)}\\
		&\leq \bk\abs{n_m}_{L^2}\abs{\bu_m}_{L^2}\abs{\nabla\bp_m^1v}_{L^\infty}\\
		&\leq \bk\abs{n_m}_{L^2}\abs{\bu_m}_{L^2}\abs{v}_{H^3},
	\end{align*}
	and
	\begin{align*}
		\abs{(\bp_m^1R_2(n_m,c_m),v)}	&= \xi\abs{(n_m\nabla c_m,\nabla\bp_m^1v)}\\
		&\leq\bk \abs{n_m}_{L^2}\abs{\nabla c_m}_{L^2}\abs{\nabla\bp_m^1v}_{L^\infty}\\
		&\leq\bk \abs{n_m}_{L^2}\abs{\nabla c_m}_{L^2}\abs{v}_{H^3}.
	\end{align*}	
	Due to the continuous Sobolev embeddings $W^{1,2}(0,T; H^{-3 }(\bo))\hookrightarrow C^{1/2}(0,T; H^{-3 }(\bo))$, and $L^2(\bo)\hookrightarrow H^{-3 }(\bo)$, we have
	\begin{align}
		\be \abs{n_m}^2_{C^{1/2}(0,T; H^{-3})}&\leq\be \abs{n_m}^2_{W^{1,2}(0,T; H^{-3})}\notag\\
		&=\be\int_0^T\abs{n_m(s)}^2_{H^{-3}}ds+\be\int_0^T\abs{\frac{d}{dt}n_m(s)}^2_{H^{-3}}ds\notag\\
		&\leq\bk\be\int_0^T\abs{n_m(s)}^2_{L^2}ds+\be\int_0^T\abs{\frac{d}{dt}n_m(s)}^2_{H^{-3}}ds.\notag
	\end{align}
	Using the estimates  \re{3.68}, \re{3.69} and \re{3.70}, we arrive at
	\begin{align}
		\be \abs{n_m}^2_{C^{1/2}(0,T; H^{-3}))}
		&\leq \bk +\bk\be\int_0^T\abs{A_1n_m(s)}^2_{H^{-3}}ds\notag\\
		&\qquad{}+\bk\be\int_0^T\left[\abs{\bp_m^1B_1(\bu_m(s),n_m(s))}^2_{H^{-3}}+\abs{\bp_m^1R_2(n_m(s),c_m(s))}^2_{H^{-3}}\right]ds\notag\\
		&\leq \bk +\bk\be\int_0^T\left[\abs{n_m(s)}^2_{L^{2}}+\abs{u_m(s)}^2_{L^2}\abs{n_m(s)}^2_{L^2}+\abs{n_m(s)}^2_{L^2}\abs{\nabla c_m(s)}^2_{L^2}\right]ds\notag\\
		&\leq \bk +\bk\be\sup_{0\leq s\leq T}\abs{\bu_m(s)}^2_{L^2}\int_0^T\abs{n_m(s)}^2_{L^2}ds+\bk\be\sup_{0\leq s\leq T}\abs{\nabla c_m(s)}^2_{L^2}\int_0^T\abs{n_m(s)}^2_{L^2}ds\notag\\
		&\leq \bk+\bk\be\sup_{0\leq s\leq T}\abs{\bu_m(s)}^4_{L^2}+\bk\be\sup_{0\leq s\leq T}\abs{\nabla c_m(s)}^4_{L^2}+\bk\be\left(\int_0^T\abs{n_m(s)}^2_{L^2}ds\right)^2\leq \bk.\notag
	\end{align}
\end{proof}
\begin{lemma}\label{proposition4.4}
	Under the same assumptions as in Lemma \ref{proposition 3.2}, there exists a positive constant $\bk$ such that for all $m\in \mathbb{N}$,
	\begin{equation}\label{3.78}
		\begin{split}
			&\be\int_0^T\left[\abs{A_1c_m(s)}_{L^2}^2+\abs{\bp_m^2B_1(\bu_m(s), c_m(s))}^2_{L^2}+\abs{\bp_m^2R_1(n_m(s),c_m(s))}_{L^2}^2\right]ds\leq \bk,\\
			&\be\int_0^T\left[\abs{A_0\bu_m(s)}_{V^*}^2+\abs{\bp_m^2B_0(\bu_m(s), \bu_m(s))}^2_{V^*}+\abs{\bp_m^2R_0(n_m(s),\varPhi)}_{V^*}^2\right]ds\leq \bk.
		\end{split}
	\end{equation}
\end{lemma}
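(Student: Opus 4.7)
The plan is to bound each of the six terms on the left-hand side of \re{3.78} separately, exploiting the fact that the projections $\bp_m^1,\bp_m^2$ have operator norm $1$, so that it suffices to estimate the unprojected operators. All arguments rely on H\"older-type inequalities, the Gagliardo--Nirenberg--Sobolev inequality \re{4.4.}, the $L^{\infty}$-stability \re{4.46}, the continuous embeddings $H^1(\bo)\hookrightarrow L^4(\bo)$ and $V\hookrightarrow H$, and the uniform a priori moment estimates \re{3.68}--\re{4.75} of Corollary \ref{cor4.11}.

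I would start with the easy linear diffusion terms. For the $c$-equation, $\abs{A_1c_m}_{L^2}^2=\abs{\Delta c_m}_{L^2}^2\leq \bk\abs{c_m}_{H^2}^2$, and its expected time integral is controlled directly by \re{3.70} with $p=1$. For the velocity equation, testing against $\bv\in V$ gives $|(A_0\bu_m,\bv)|=|(\nabla\bu_m,\nabla\bv)|\leq\abs{\nabla\bu_m}_{L^2}\abs{\bv}_V$, so $\abs{A_0\bu_m}_{V^*}^2\leq\abs{\nabla\bu_m}_{L^2}^2$ and \re{3.68} (with $p=1$) gives the bound. Next, the zeroth-order coupling terms: using \re{4.46} together with $f\in C^1([0,\infty))$ yields $|f(c_m)|\leq \max_{0\leq c\leq\abs{c_0}_{L^\infty}}f(c)$, hence $\abs{R_1(n_m,c_m)}_{L^2}^2\leq \bk\abs{n_m}_{L^2}^2$, which is integrable by \re{3.69}. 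For $R_0$, the estimate $|(R_0(n_m,\varPhi),\bv)|\leq\abs{\varPhi}_{W^{1,\infty}}\abs{n_m}_{L^2}\abs{\bv}_{L^2}\leq \bk\abs{\varPhi}_{W^{1,\infty}}\abs{n_m}_{L^2}\abs{\bv}_V$ gives $\abs{R_0(n_m,\varPhi)}_{V^*}^2\leq \bk\abs{n_m}_{L^2}^2$, again controlled by \re{3.69}.

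The core of the argument lies in the two nonlinear bilinear terms. For $B_0(\bu_m,\bu_m)$, applying H\"older, the embedding $H^1\hookrightarrow L^4$, and \re{4.4.}, one has for $\bv\in V$
\begin{equation*}
|(B_0(\bu_m,\bu_m),\bv)|\leq\abs{\bu_m}_{L^4}\abs{\nabla\bu_m}_{L^2}\abs{\bv}_{L^4}\leq \bk\bigl(\abs{\bu_m}_{L^2}^{1/2}\abs{\nabla\bu_m}_{L^2}^{1/2}+\abs{\bu_m}_{L^2}\bigr)\abs{\nabla\bu_m}_{L^2}\abs{\bv}_{V},
\end{equation*}
so that $\abs{B_0(\bu_m,\bu_m)}_{V^*}^2\leq \bk\bigl(\abs{\bu_m}_{L^2}\abs{\nabla\bu_m}_{L^2}^{3}+\abs{\bu_m}_{L^2}^{2}\abs{\nabla\bu_m}_{L^2}^{2}\bigr)$. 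Taking expectation and pulling out $\sup_{t}\abs{\bu_m(t)}_{L^2}$ via Cauchy--Schwarz, the bound follows from \re{3.68} applied with $p=2$ and $p=1$. For $B_1(\bu_m,c_m)$ in $L^2$, using $\abs{\bu_m\cdot\nabla c_m}_{L^2}\leq\abs{\bu_m}_{L^4}\abs{\nabla c_m}_{L^4}$ and applying \re{4.4.} to both factors yields
\begin{equation*}
\abs{B_1(\bu_m,c_m)}_{L^2}^{2}\leq \bk\bigl(\abs{\bu_m}_{L^2}\abs{\nabla\bu_m}_{L^2}+\abs{\bu_m}_{L^2}^{2}\bigr)\bigl(\abs{c_m}_{H^2}\abs{\nabla c_m}_{L^2}+\abs{\nabla c_m}_{L^2}^{2}\bigr),
\end{equation*}
and after taking expectation of the time integral one splits the factors via Cauchy--Schwarz, pulling $\sup_{t}\abs{\bu_m(t)}_{L^2}^{2}$ and $\sup_{t}\abs{c_m(t)}_{H^1}^{2}$ outside to obtain a bound by higher moments of $\int_0^T\abs{\nabla\bu_m}_{L^2}^{2}ds$, $\int_0^T\abs{c_m}_{H^2}^{2}ds$, $\sup_t\abs{\bu_m}_{L^2}$ and $\sup_t\abs{c_m}_{H^1}$; all of these are finite uniformly in $m$ by Corollary \ref{cor4.11} for every $p\geq 1$.

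The main (mild) obstacle is bookkeeping of the Gagliardo--Nirenberg exponents in the two nonlinear estimates and choosing the exponents $p$ in Corollary \ref{cor4.11} appropriately so that the product terms obtained after Cauchy--Schwarz remain integrable. Since Corollary \ref{cor4.11} provides moment bounds of every order $p\geq 1$, there is plenty of room and the final uniform bound $\bk$ independent of $m$ is obtained without difficulty.
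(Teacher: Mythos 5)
Your overall strategy is the same as the paper's: estimate each of the six terms separately, using that the projections have operator norm one, the $L^\infty$-stability \re{4.46}, and the uniform moment bounds of Corollary \ref{cor4.11}. Five of your six estimates are fine. However, your treatment of $B_0(\bu_m,\bu_m)$ contains a genuine gap. From
$|(B_0(\bu_m,\bu_m),\bv)|\leq\abs{\bu_m}_{L^4}\abs{\nabla\bu_m}_{L^2}\abs{\bv}_{L^4}$
and Gagliardo--Nirenberg you arrive at
\begin{equation*}
\abs{B_0(\bu_m,\bu_m)}_{V^*}^2\leq \bk\left(\abs{\bu_m}_{L^2}\abs{\nabla\bu_m}_{L^2}^{3}+\abs{\bu_m}_{L^2}^{2}\abs{\nabla\bu_m}_{L^2}^{2}\right),
\end{equation*}
and you claim the expectation of the time integral is controlled by \re{3.68}. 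It is not: the first term requires $\int_0^T\abs{\nabla\bu_m(s)}_{L^2}^{3}\,ds$, i.e.\ $L^3$-in-time integrability of $s\mapsto\abs{\nabla\bu_m(s)}_{L^2}$, whereas Corollary \ref{cor4.11} only provides moments of the $L^2(0,T)$-norm of the gradient ($\be(\int_0^T\abs{\nabla\bu_m}^2_{L^2}ds)^p\leq\bk$). High moments of an $L^2(0,T)$-norm do not give integrability of the cube; a function in $L^2(0,T)$ need not lie in $L^3(0,T)$, and there is no $\sup_t\abs{\nabla\bu_m(t)}_{L^2}$ bound available to interpolate with. So this term cannot be closed with the stated estimates.

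The fix is the standard skew-symmetry trick, which is what the paper uses: since $b_0(\bu,\bv,\bv)=0$, one has $(B_0(\bu_m,\bu_m),\bv)=-b_0(\bu_m,\bv,\bu_m)\leq\abs{\bu_m}^2_{L^4}\abs{\nabla\bv}_{L^2}$, which places both copies of $\bu_m$ in $L^4$ and only the test function's gradient in $L^2$. With the Ladyzhenskaya inequality this yields $\abs{B_0(\bu_m,\bu_m)}_{V^*}\leq\bk\abs{\bu_m}_{L^2}\abs{\nabla\bu_m}_{L^2}$, hence $\abs{B_0(\bu_m,\bu_m)}_{V^*}^2\leq\bk\abs{\bu_m}^2_{L^2}\abs{\nabla\bu_m}^2_{L^2}$, whose expected time integral is bounded by $\bk\be\sup_s\abs{\bu_m(s)}^4_{L^2}+\bk\be(\int_0^T\abs{\nabla\bu_m}^2_{L^2}ds)^2$ via Cauchy--Schwarz and Young, exactly as in \re{3.68} with $p=2$. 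Your remaining estimates (for $A_1c_m$, $A_0\bu_m$, $R_0$, $R_1$, and $B_1(\bu_m,c_m)$) are correct and, for $B_1$, arguably more carefully justified than the corresponding display in the paper.
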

\begin{proof}
	Thanks to the inequalities \re{3.68}, \re{3.69} and \re{3.70} once more, we note that
	\begin{equation*}
		\be\int_0^T \abs{A_1c_m(s)}_{L^2}^2ds= \be\int_0^T\abs{\Delta c_m(s)}^2_{L^2}ds\leq \bk\be\int_0^T\abs{ c_m(s)}^2_{H^2}ds\leq \bk,
	\end{equation*}
	and
	\begin{align*}
		\be\int_0^T\abs{\bp_m^2B_1(\bu_m(s), c_m(s))}_{L^2}^2ds&\leq \bk\be\int_0^T\abs{\bu_m(s)\cdot\nabla c_m(s)}^2_{L^2}ds\\
		&\leq\bk\be\sup_{0\leq s\leq T}\abs{\bu_m(s)}^2_{L^2}\int_0^T\abs{\nabla c_m(s)}^2_{L^2}\\
		&\leq \bk\be\sup_{0\leq s\leq T}\abs{u_m(s)}^4_{L^2}+\bk\be\left(\int_0^T\abs{\nabla c_m(s)}^2_{L^2}ds\right)^2\leq \bk,
	\end{align*}
	as well as
	\begin{align*}
		\be\int_0^T\abs{ \bp_m^2R_1(n_m(s),c_m(s))}^2_{L^2}ds&\leq \bk\be\int_0^T\abs{n_m(s)f(c_m(s))}^2_{L^2}ds\\
		&\leq\bk\sup_{0\leq s\leq \abs{c_0}_{L^\infty}}f^2(s)\be\int_0^T\abs{n_m(s)}^2_{L^2}\leq \bk,
	\end{align*}
	and
	\begin{equation*}
		\be\int_0^T \abs{A_0\bu_m(s)}_{V^*}^2ds\leq  \be\int_0^T\abs{\nabla \bu_m(s)}^2_{L^2}ds\leq \bk.
	\end{equation*}
	In the same way,
	\begin{align*}
		\be\int_0^T\abs{\bp_m^2B_0(\bu_m(s), \bu_m(s))}_{V^*}^2ds&\leq \bk\be\int_0^T\abs{\bu_m}_{L^2}^2\abs{\nabla\bu_m(s)}_{L^2}^2ds\\
		&\leq\bk\be\sup_{0\leq s\leq T}\abs{\bu_m(s)}^2_{L^2}\int_0^T\abs{\nabla \bu_m(s)}^2_{L^2}ds\\
		&\leq \bk\be\sup_{0\leq s\leq T}\abs{u_m(s)}^4_{L^2}+\bk\be\left(\int_0^T\abs{\nabla \bu_m(s)}^2_{L^2}ds\right)^2\leq \bk,
	\end{align*}
	and
	\begin{equation*}
		\be\int_0^T \abs{R_0(n_m(s),\varPhi)}_{V^*}^2ds\leq  \abs{\varPhi}^2_{W^{1,\infty}}\be\int_0^T\abs{ n_m(s)}^2_{L^2}ds\leq \bk.
	\end{equation*}
	Combining all these inequalities, we obtain the relation \re{3.78}.
\end{proof}

\subsection{Tightness result and passage to the limit}
This subsection is devoted to the study of the tightness of the approximations solutions and the proof of several convergences which will enable us to pass to the limit and construct a weak probabilistic solution to our problem via the martingale representation theorem given in \cite[Theorem 8.2]{Da}. For this purpose,  we consider the following  spaces:
\begin{equation}\label{3.79}
	\begin{split}
		&\mathcal{Z}_{n}=L_{w}^{2}(0,T;H^1(\bo))\cap L^{2}(0,T;L^2(\bo))\cap\mathcal{C}([0,T];H^{-3}(\bo))\cap\mathcal{C}([0,T];L^2_{w}(\bo)),\\
		&\mathcal{Z}_{\bu}=L_{w}^{2}(0,T;V)\cap L^{2}(0,T;H)\cap\mathcal{C}([0,T];V^*)\cap\mathcal{C}([0,T];H_{w}),\\
		&\mathcal{Z}_{c}=L_{w}^{2}(0,T;H^2(\bo))\cap L^{2}(0,T;H^1(\bo))\cap\mathcal{C}([0,T];L^2(\bo))\cap\mathcal{C}([0,T];H_w^1(\bo)),\\
		&  %\bx_W=\mathcal{C}(0,T;\buc),\qquad\bz_\beta=\mathcal{C}(0,T;\mathbb{R}^2),
		\qquad\qquad\qquad\qquad\qquad\qquad\qquad \bz=\bz_n\times\bz_\bu\times\bz_c.
	\end{split}
\end{equation}
By making appropriate use of Lemma \ref{lemma 3.3}, Corollary \ref{cor3.1}, and Corollary \ref{cor3.2}, we will now show that the sequence of  probability law $\bl_m=\bl(n_m)\times\bl(\bu_m)\times\bl(c_m)$ is tight in $\bz$.
\begin{lemma}
	We suppose that the hypotheses of Proposition \ref{proposition 3.2} hold. Then the family of probability laws $(\bl_m)_{m\in \mathbb{N}}$ is tight on the space $\bz$.
\end{lemma}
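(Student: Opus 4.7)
The plan is to establish tightness of each marginal $\mathcal{L}(n_m)$, $\mathcal{L}(\bu_m)$, $\mathcal{L}(c_m)$ on the corresponding factors $\mathcal{Z}_n$, $\mathcal{Z}_{\bu}$, $\mathcal{Z}_c$ defined in \re{3.79}. Since $\mathcal{Z}$ is the Cartesian product of these spaces, tightness of the joint law $\bl_m$ on $\bz$ will follow from tightness of each marginal (the product of compacts is compact in the product topology). For each of the three factors, tightness in an intersection of topological spaces reduces to tightness in each constituent space, so the task splits into twelve sub-claims, which we would group by the type of topology.

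First, I would handle the weak-$L^2$-in-time pieces $L_w^2(0,T;H^1)$, $L_w^2(0,T;V)$ and $L_w^2(0,T;H^2)$. Closed balls in the strong reflexive spaces $L^2(0,T;H^1)$, $L^2(0,T;V)$, $L^2(0,T;H^2)$ are compact in the corresponding weak topologies, so the Chebyshev inequality combined with the uniform bounds \re{3.69}, \re{3.68} and \re{3.70} from Corollary \ref{cor4.11} directly gives tightness. Next, for the strong-in-time pieces $L^2(0,T;L^2)$ in $\mathcal{Z}_n$, $L^2(0,T;H)$ in $\mathcal{Z}_\bu$ and $L^2(0,T;H^1)$ in $\mathcal{Z}_c$, I would apply an Aubin--Lions--Simon type compactness result: the embeddings $H^1(\bo)\hookrightarrow L^2(\bo)\hookrightarrow H^{-3}(\bo)$, $V\hookrightarrow H\hookrightarrow V^*$ and $H^2(\bo)\hookrightarrow H^1(\bo)\hookrightarrow L^2(\bo)$ are compact-continuous, so a ball that is bounded in $L^2(0,T;\text{high})$ and in $W^{\alpha,p}(0,T;\text{low})$ for some $\alpha p>1$ is relatively compact in $L^2(0,T;\text{middle})$. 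The required spatial bounds are in hand from \re{3.68}--\re{3.70}; for the temporal regularity of $n_m$ the $C^{1/2}([0,T];H^{-3})$ bound \re{3.77} is already available, while for $\bu_m$ and $c_m$ I would derive a bound in $W^{\alpha,p}(0,T;V^*)$ and $W^{\alpha,p}(0,T;L^2)$ with $\alpha\in(0,1/2)$, $p>2/(1-2\alpha)$, by splitting into the drift part (controlled by \re{3.78}) and the stochastic part (controlled by the Burkholder--Davis--Gundy inequality together with Assumption \ref{ass_3} and Assumption \ref{ass_2}, using the bounds in Corollary \ref{cor4.11}).

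For the continuous-in-time negative-Sobolev pieces $\mathcal{C}([0,T];H^{-3})$ and $\mathcal{C}([0,T];V^*)$, I would invoke the Arzel\`a--Ascoli theorem: the fractional-time bounds just discussed give equi-H\"older continuity in the target space, and boundedness in $L^2$ in a strictly larger intermediate space gives pointwise relative compactness, so the set
\[
\{v:\|v\|_{C^{1/2}([0,T];H^{-3})}\le R\}\cap\{v:\|v\|_{L^2(0,T;H^1)}\le R\}
\]
is compact in $\mathcal{C}([0,T];H^{-3})$, and similarly for $\bu_m$. Finally, for the weak-continuous-in-time pieces $\mathcal{C}([0,T];L^2_w)$, $\mathcal{C}([0,T];H_w)$ and $\mathcal{C}([0,T];H_w^1)$, I would rely on a classical lemma (a version due to Strauss, used for instance in tightness proofs for stochastic Navier--Stokes equations) which says that if a sequence is bounded in $L^\infty(0,T;X)$ with $X$ a reflexive separable Hilbert space and relatively compact in $\mathcal{C}([0,T];Y)$ for some Banach space $Y$ in which $X$ embeds continuously, then it is relatively compact in $\mathcal{C}([0,T];X_w)$. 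The required $L^\infty$-bounds come from \re{3.68}, \re{3.70} and \re{3.73}, and the required compactness in $\mathcal{C}([0,T];Y)$ comes from the previous step.

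The main obstacle I anticipate is the derivation of the fractional-time bound for $\bu_m$ and $c_m$ needed to feed both the Aubin--Lions step and the Arzel\`a--Ascoli step, because the stochastic integrals must be controlled carefully in $W^{\alpha,p}(0,T;\cdot)$ using BDG, and for the $c_m$-equation one additionally has to absorb the It\^o--Stratonovich correction $\frac{\gamma^2}{2}\Delta c$ (this is why $\xi=\eta+\gamma^2/2$ appears). A secondary difficulty is the pathwise bound on $n_m$ in \re{3.73} which is only almost sure and depends exponentially on $\int_0^T|\nabla c_m|_{L^4}^4\,ds$; to turn this into a tightness statement for $n_m$ one needs to combine it with \re{4.75} via Chebyshev on the exponential, i.e.\ a large-deviation-flavoured step to exhibit, for every $\varepsilon>0$, a constant $R_\varepsilon$ such that $\bp(\sup_{[0,T]}|n_m|_{L^2}^2+\|n_m\|_{L^2(0,T;H^1)}^2\le R_\varepsilon)>1-\varepsilon$ uniformly in $m$. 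Once this uniform probabilistic bound is secured, the pieces assemble into tightness of $\bl_m$ on $\bz$.
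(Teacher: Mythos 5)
Your proposal is correct in substance but follows a genuinely different route from the paper. The paper does not use fractional Sobolev spaces $W^{\alpha,p}(0,T;\cdot)$ in time at all: for $\bu_m$ and $c_m$ it verifies the \emph{Aldous condition} (Definition \ref{definition3.1}) in $V^*$ and $L^2(\bo)$ respectively, by estimating $\be\,|y_m(\tau_\ell+\theta)-y_m(\tau_\ell)|^2$ over stopping times --- the drift increments give $\bk\theta^{1/2}$ via H\"older and \re{3.78}, and the stochastic increments give $\bk\theta$ via the It\^o isometry --- and then feeds this, together with the moment bounds \re{3.68} and \re{3.70}, into the deterministic compactness criteria of Corollaries \ref{cor3.1} and \ref{cor3.2} (Brz\'ezniak--Motyl type). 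Your Flandoli--Gatarek-style plan (BDG bound on the stochastic convolution in $W^{\alpha,p}$ with $\alpha p>1$, then Aubin--Lions--Simon and Arzel\`a--Ascoli, then a Strauss-type lemma for the weakly continuous components) is a well-established alternative and would work here; its cost is precisely the extra labour you identify, namely controlling the stochastic integrals in fractional Sobolev norms, which the Aldous route sidesteps entirely since only second moments of increments over stopping-time windows are needed. For $n_m$ your treatment coincides with the paper's: the pathwise bound \re{3.73} is converted into a uniform-in-$m$ probabilistic bound by applying the Markov inequality to $\int_0^T|\nabla c_m(s)|_{L^4}^4\,ds$ using \re{4.75}, and the $C^{1/2}([0,T];H^{-3})$ bound \re{3.77} supplies the equicontinuity. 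One point to be careful about: your reduction ``tightness in an intersection of topological spaces reduces to tightness in each constituent space'' is not automatic, because a finite intersection of sets, each compact in its own topology, need not be compact in the supremum topology; one must exhibit a single set, cut out by the simultaneous bounds, that is relatively compact for the supremum topology. This is exactly what the Dubinsky-type lemmas in the paper's appendix provide, and in the present setting the standard diagonal-subsequence argument (all limits agree because every topology involved is finer than convergence in $H^{-3}(\bo)$ or $V^*$) closes the gap, so this is a presentational rather than a mathematical defect.
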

\begin{proof}
	We firstly prove that $(\bl(n_m))_m$ is tight on $\bz_n$. For any $\eps>0$ we set $\bk_\eps=\eta_0e^{\bk/\eps}>\eta_0$ where $\eta_0>1$ is given by Lemma \ref{lemma3.5}. From  the inequality \re{3.73}, we deduce that
	\begin{align*}
		\sup_m\mathbb{P}\left\{\omega\in \Omega:\  \abs{n_m}^2_{L^\infty(0,T;L^2)}>\bk_\eps\right\}&\leq \sup_m\mathbb{P}\left\{\omega\in \Omega:\  \eta_0\exp\left(\bk	\int_0^T\abs{ \nabla c_m(s)}_{L^4}^{4}ds\right)>\bk_\eps\right\}\\
		&\leq \sup_m\mathbb{P}\left\{\omega\in \Omega:\  \bk	\int_0^T\abs{ \nabla c_m(s)}_{L^4}^{4}ds>\ln\left(\frac{\bk_\eps}{\eta_0}\right)\right\}.
	\end{align*}
	Using the Markov inequality and inequality \re{4.75}, we infer that
	\begin{align}
		\sup_m\mathbb{P}\left\{\omega\in \Omega:\  \abs{n_m}^2_{L^\infty(0,T;L^2)}>\bk_\eps\right\}&\leq\frac{1}{\ln\left(\frac{\bk_\eps}{\eta_0}\right)}\be \left(\bk	\int_0^T\abs{ \nabla c_m(s)}_{L^4}^{4}ds\right)\notag\\
		&\leq \frac{\eps}{\bk}\be \left(\bk	\int_0^T\abs{ \nabla c_m(s)}_{L^4}^{4}ds\right)\notag\\
		&\le \eps.\notag
	\end{align}
	Similarly, we can also prove that
	\begin{align}
		\sup_m\mathbb{P}\left\{\omega\in \Omega:\  \abs{n_m}^2_{L^2(0,T;H^1)}>\bk_\eps\right\}&\leq \sup_m\mathbb{P}\left\{\omega\in \Omega:\  \eta_0\exp\left(\bk	\int_0^T\abs{ \nabla c_m(s)}_{L^4}^{4}ds\right)>\bk_\eps\right\}\notag\\
		&\leq \eps.\notag
	\end{align}
	Thanks to inequality \re{3.77} we derive that
	\begin{align}
		\sup_m\mathbb{P}\left\{\omega\in \Omega:\  \abs{n_m}^2_{\mathcal{C}^{1/2}([0,T];H^{-3})}>\frac{\bk}{\eps}\right\}&\leq \frac{\eps}{\bk}\be\abs{n_m}^2_{\mathcal{C}^{1/2}([0,T];H^{-3})}\leq \eps.\notag
	\end{align}
	Since these three last inequalities  hold, we can apply Lemma \ref{lemma 3.3} and  conclude that the law of $n_m$ form a family of probability measures which is tight on $\bz_n$.
	
	Secondly, we will prove  that  the laws of $\bu_m$ and $c_m$ are tight on $\bz_\bu\times \bz_c$. From inequalities \re{3.68} and \re{3.70},  we obtain the first two conditions of Corollaries \ref{cor3.1} and \ref{cor3.2} for $\bu_m$ and $c_m$ respectively. Hence, it is sufficient to prove that the sequences $(\bu_m)_m$ and $(c_m)$ satisfy the Aldous condition in the spaces $V^*$ and $L^2(\bo)$ respectively. Let $\theta > 0$ $(\tau_\ell)_{\ell\geq1}$ be a sequence of stopping times such that $0\leq \tau_\ell\leq  T$. From the second equation of system \re{3.40} we have
	\begin{align}
		c_m(\tau_\ell+\theta)-c_m(\tau_\ell) &=\xi \int_{\tau_\ell}^{\tau_\ell+\theta}A_1c_m(s)ds- \int_{\tau_\ell}^{\tau_\ell+\theta}\bp_m^2B_1(\bu_m(s), c_m(s))ds\notag\\
		&+\int_{\tau_\ell}^{\tau_\ell+\theta} \bp_m^2R_1(n_m(s),c_m(s))ds+\gamma\int_{\tau_\ell}^{\tau_\ell+\theta}\bp_m^2\phi(c_m(s))d\beta_s.\label{4.80}
	\end{align}
	By the Fubini theorem, the H\"older inequality and inequality \re{3.78}, we have the following estimates
	\begin{align}
		\be\abs{\xi \int_{\tau_\ell}^{\tau_\ell+\theta}A_1c_m(s)ds}_{L^2}^2&\leq \xi^2\theta^{1/2}\be\int_{\tau_\ell}^{\tau_\ell+\theta}\abs{A_1c_m(s)}^2_{L^2}ds\notag\\
		&\leq \xi^2\theta^{1/2}\be\int_{0}^{T}\abs{A_1c_m(s)}^2_{L^2}ds\leq\bk\theta^{1/2},\notag
	\end{align}
	\begin{align}
		\be\abs{\int_{\tau_\ell}^{\tau_\ell+\theta}\bp_m^2B_1(\bu_m(s), c_m(s))ds}_{L^2}^2ds&\leq \theta^{1/2}\be\int_{\tau_\ell}^{\tau_\ell+\theta}\abs{\bp_m^2B_1(\bu_m(s), c_m(s))}^2_{L^2}ds\notag\\
		&\leq \theta^{1/2}\be\int_{0}^{T}\abs{\bp_m^2B_1(\bu_m(s), c_m(s))}^2_{L^2}ds\leq\bk\theta^{1/2},\notag
	\end{align}
	and
	\begin{align}
		\be\abs{\int_{\tau_\ell}^{\tau_\ell+\theta}\bp_m^2R_1(n_m(s), c_m(s))ds}_{L^2}^2ds&\leq \theta^{1/2}\be\int_{\tau_\ell}^{\tau_\ell+\theta}\abs{\bp_m^2R_1(n_m(s), c_m(s))}^2_{L^2}ds\notag\\
		&\leq \theta^{1/2}\be\int_{0}^{T}\abs{\bp_m^2R_1(n_m(s), c_m(s))}^2_{L^2}ds\leq\bk\theta^{1/2}.\notag
	\end{align}
	By the It\^o isometry, we note that
	\begin{align}
		\be\abs{\gamma\int_{\tau_\ell}^{\tau_\ell+\theta}\bp_m^2\phi(c_m(s))d\beta_s}^2_{L^2}&\leq \gamma^2\be\int_{\tau_\ell}^{\tau_\ell+\theta}\abs{\phi(c_m(s))}^2_{\mathcal{L}^2(\mathbb{R}^2,L^2)}\notag\\
		&\leq \gamma^2 \sum_{k=1}^2\abs{\sigma_k}^2_{L^2}\be\int_{\tau_\ell}^{\tau_\ell+\theta}\abs{\nabla c_m(s)}_{L^2}^2ds\notag\\
		&\leq \bk \theta\be\sup_{0\leq s\leq T}\abs{\nabla c_m(s)}^2_{L^2}\leq \bk\theta.\notag
	\end{align}
	Combining these inequalities, we infer from equality \re{4.80} that the condition \re{equa3.10} is satisfies for $(c_m)_{m\geq 1}$ in $L^{2}(\bo)$. Hence by Lemma \ref{lem3.2} the sequence $(c_m)_{m\geq 1}$ satisfies the Aldous condition in the space $L^{2}(\bo)$.
	
	Now we will consider the sequence $(\bu_m)_{m\geq 1}$. We first observe that from the first equation of system \re{3.40} we infer that
	\begin{align}
		\bu_m(\tau_\ell+\theta)-\bu_m(\tau_\ell)&=-\eta\int_{\tau_\ell}^{\tau_\ell+\theta}A_0\bu_m(s)ds-\int_{\tau_\ell}^{\tau_\ell+\theta}\bp_m^1B_0(\bu_m(s),\bu_m(s))ds\notag\\
		&+\int_{\tau_\ell}^{\tau_\ell+\theta}\bp_m^1R_0(n_m(s),\varPhi)ds +\int_{\tau_\ell}^{\tau_\ell+\theta}\bp_m^1g(\bu_m(s),c_m(s)) dW_s.\label{4.85*}
	\end{align}
	Thanks to the H\"older inequality and \re{3.78}, we have the following estimates
	\begin{align}
		\be\abs{\eta \int_{\tau_\ell}^{\tau_\ell+\theta}A_0\bu_m(s)ds}_{V^*}^2&\leq \eta^2\theta^{1/2}\be\int_{\tau_\ell}^{\tau_\ell+\theta}\abs{A_0\bu_m(s)}^2_{V^*}ds\notag\\
		&\leq \eta^2\theta^{1/2}\be\int_{0}^{T}\abs{A_0\bu_m(s)}^2_{V^*}ds\leq\bk\theta^{1/2},\notag
	\end{align}
and
	\begin{align}
		\be\abs{\int_{\tau_\ell}^{\tau_\ell+\theta}\bp_m^2B_0(\bu_m(s), \bu_m(s))ds}_{V^*}^2ds&\leq \theta^{1/2}\be\int_{\tau_\ell}^{\tau_\ell+\theta}\abs{\bp_m^2B_1(\bu_m(s), \bu_m(s))}^2_{V^*}ds\notag\\
		&\leq \theta^{1/2}\be\int_{0}^{T}\abs{\bp_m^2B_1(\bu_m(s), \bu_m(s))}^2_{V^*}ds\leq\bk\theta^{1/2},\notag
	\end{align}
	as well as
	\begin{align}
		\be\abs{\int_{\tau_\ell}^{\tau_\ell+\theta}\bp_m^2R_0(n_m(s),\varPhi)ds}_{V^*}^2ds&\leq \theta^{1/2}\be\int_{\tau_\ell}^{\tau_\ell+\theta}\abs{\bp_m^2R_0(n_m(s),\varPhi)}^2_{V^*}ds\notag\\
		&\leq \theta^{1/2}\be\int_{0}^{T}\abs{\bp_m^2R_0(n_m(s),\varPhi)}^2_{V^*}ds\leq\bk\theta^{1/2}.\notag
	\end{align}
	Thanks to the  It\^o isometry and the assumption on $g$  we obtain
	\begin{align}
		\be\abs{\int_{\tau_\ell}^{\tau_\ell+\theta}\bp_m^1g(\bu_m(s),c_m(s)) dW_s}^2_{V^*}&\leq\bk\be\abs{\int_{\tau_\ell}^{\tau_\ell+\theta}\bp_m^1g(\bu_m(s),c_m(s)) dW_s}^2_{L^2}\notag\\
		&\leq\bk\be\int_{\tau_\ell}^{\tau_\ell+\theta}\abs{\bp_m^1g(\bu_m(s),c_m(s))}^2_{\mathcal{L}^2(\buc,H)}ds\notag\\
		&\leq \bk\be\int_{\tau_\ell}^{\tau_\ell+\theta}(1+\abs{(\bu_m(s),c_m(s))}_{\bhc}^2)ds\notag\\
		&\leq \bk\left(1+ \be\sup_{0\leq s\leq T}\abs{(\bu_m(s),c_m(s))}_{\bhc}^2\right)\theta\leq \bk\theta.\notag
	\end{align}
	From  these inequalities and  equality \re{4.85*}, we can conclude by Lemma \ref{lem3.2} that  the sequence $(\bu_m)_{m\geq 1}$ satisfies the Aldous condition in the space $V^*$.
	Hence, by  applying  Corollary \ref{cor3.1} and  Corollary \ref{cor3.2}, we see that  the laws of $c_m$ and $\bu_m$ are tight on $\bz_c$ and $\bz_\bu$, respectively.
\end{proof}
Since $(\bl_m)_m$ is tight on  $\bz$, invoking \cite[Corollary 2, Appendix B]{mot} (see also \cite[Theorem 4.13]{Brz1}) there exists a probability space $$(\Omega',\mathcal{F}', \mathbb{P}'),$$ and a subsequence  of random
vectors $(\bar{\bu}_{m_k},\bar{c}_{m_k},\bar{n}_{m_k})$ with values in $\bz$ such that
\begin{description}
	\item[i)] $(\bar{\bu}_{m_k},\bar{c}_{m_k},\bar{n}_{m_k})$ have the same probability distributions as $(\bu_{m_k},c_{m_k},n_{m_k})$,
	\item[ii)] $(\bar{\bu}_{m_k},\bar{c}_{m_k},\bar{n}_{m_k})$ converges  in the topology of $\bz$ to a random element $(\bu,c,n)\in \bz$ with probability $1$ on $(\Omega',\mathcal{F}', \mathbb{P}')$ as $k\to\infty$.
	%\item[iii)] $(\bar{\bar{W}}^{m_k}(\omega'),\bar{\bar{\beta}}^{m_k}(\omega'))=(\bar{W}(\omega'),\bar{\beta}(\omega'))$ for all $\omega'\in\Omega'$.
\end{description}
To simplify the notation, we will simply denote these sequences  by $(\bu_{m},c_{m},n_{m})_{m\geq1}$ and $(\bar{\bu}_{m},\bar{c}_{m},\bar{n}_{m})_{m\geq1}$, respectively.

Next, from the definition of the space $\bz$, we deduce that $\mathbb{P}'$-a.s.,
\begin{equation}\label{3.80}
	\begin{split}
		&\bar{\bu}_m\to \bu \text{ in } L_{w}^{2}(0,T;V)\cap L^{2}(0,T;H)\cap\mathcal{C}([0,T];V^*)\cap\mathcal{C}([0,T];H_{w}), \\
		&\bar{c}_m\to c \text{ in } L_{w}^{2}(0,T;H^2(\bo))\cap L^{2}(0,T;H^1(\bo))\cap\mathcal{C}([0,T];L^2(\bo))\cap\mathcal{C}([0,T];H_w^1(\bo)),\\
		&\bar{n}_m\to n \text{ in } L_{w}^{2}(0,T;H^1(\bo))\cap L^{2}(0,T;L^2(\bo))\cap\mathcal{C}([0,T];H^{-3}(\bo))\cap\mathcal{C}([0,T];L^2_{w}(\bo)).
	\end{split}
\end{equation}
According to   \cite[Theorem 1.10.4 and Addendum 1.10.5]{Van},  a family of measurable map $\Psi_m:\Omega'\to \Omega$ can be constructed such that   together with  the new probability space $(\Omega', \mathcal{F}', \mathbb{P}')$  satisfy the property 
\begin{equation}\label{5.61}
	\begin{split}
		&	\bar{\bu}_m(\omega')=\bu_m\circ\Psi_m(\omega'),\ \bar{n}_m(\omega')=n_m\circ\Psi_m(\omega'),\\ &\bar{c}_m(\omega')=c_m\circ\Psi_m(\omega'),\text{ and }\mathbb{P}=\mathbb{P}'\circ\Psi^{-1}_m,
	\end{split}
\end{equation}
for all $\omega'\in \Omega'$.
Taking into account the fact that inequality \re{4.46} holds, we can derive that for almost every $(t,\omega')\in[0,T]\times \Omega'$,
\begin{equation}
	\abs{\bar{c}_m(t,\omega')}_{L^\infty}=\abs{c_m(t,\Psi_m(\omega'))}_{L^\infty}\leq \abs{c_0}_{L^\infty},\qquad \text{for all } m\geq 1.\label{4.93*}
\end{equation}
Since the laws of $(\bu_{m},c_{m},n_{m})$ and $(\bar{\bu}_{m},\bar{c}_{m},\bar{n}_{m})$ are equal in the space $\bz_\bu\times\bz_c\times \bz_n$, we have the estimates (\ref{3.68}), (\ref{3.70}) and
\begin{equation}
	\be'\int_0^T\abs{\bar{c}_m(s)}_{H^2}^2ds\leq \bk,\quad\be'\int_0^T\abs{\nabla \bar{\bu}_m(s)}_{L^2}^2ds\leq \bk,\label{4.92}\\
\end{equation}
as well as
\begin{equation}
	\qquad\be'\int_0^T\abs{\bar{n}_m(s)}^2_{L^2}ds\leq \bk.\label{4.94}
\end{equation}
From (\ref{4.92}) and (\ref{4.94}) and the  Banach-Alaoglu Theorem, we conclude that, there exists a subsequence of $(\bar{\bu}_m)_{m\geq 1}$, $(\bar{c}_m)_{m\geq 1}$, and $(\bar{n}_m)_{m\geq 1}$ weakly convergent  in
$L^2(\Omega', \mathcal{F}', \mathbb{P}';L^2(0,T;V))$, $L^2(\Omega', \mathcal{F}', \mathbb{P}';L^2(0,T;H^2(\bo)))$, and $L^2(\Omega', \mathcal{F}', \mathbb{P}';L^2(0,T;L^2(\bo)))$ respectively.
i.e. 
\begin{equation}\label{4.100}
	\begin{split}
		&\bu\in L^2(\Omega', \mathcal{F}', \mathbb{P}';L^2(0,T;V)),\quad c\in L^2(\Omega', \mathcal{F}', \mathbb{P}';L^2(0,T;H^2(\bo))),\\
		&\qquad\qquad \qquad n\in L^2(\Omega', \mathcal{F}', \mathbb{P}';L^2(0,T;L^2(\bo))).
	\end{split}
\end{equation}
On the other hand, from estimates (\ref{3.68}), (\ref{3.69}) and (\ref{3.70}) of Corollary \ref{cor4.11}, and the equalities given by \re{5.61}, we get for any $p\geq 1$,
\begin{align}
	&\be'\sup_{0\leq s\leq T}	\abs{\bar{\bu}_m(s)}^{2p}_{L^2}+\be'\left(\int_0^T\abs{\nabla \bar{\bu}_m(s)}_{L^2}^2ds\right)^p\leq \bk,\label{4.95}\\
	& %\be\left(\int_0^T\abs{\nabla\sqrt{n_m}}_{L^2}^2ds\right)^p\leq \bk,
	\qquad\be'\left(\int_0^T\abs{\bar{n}_m(s)}^2_{L^2}ds\right)^p\leq \bk,\label{4.96}\\
	&\be'\sup_{0\leq s\leq T}	\abs{\bar{c}_m(s)}^p_{H^1}+\be'\left(\int_0^T\abs{\bar{c}_m(s)}_{H^2}^2ds\right)^p\leq \bk\label{4.97}.
\end{align}

Then, invoking the Fatou lemma,  we infer that for $p\geq 2$, we have
\begin{equation}
	\be'\sup_{0\leq s\leq T}	\abs{\bu(s)}^p_{L^2}<\infty,\qquad\be'\sup_{0\leq s\leq T}	\abs{c(s)}^p_{H^1}<\infty.\label{4.104*}
\end{equation}
and 
\begin{align}
	&\be'\left(\int_0^T\abs{\nabla \bu(s)}_{L^2}^2ds\right)^p< \infty,\ \be'\left(\int_0^T\abs{n(s)}^2_{L^2}ds\right)^p<\infty,\ \be'\left(\int_0^T\abs{c(s)}_{H^2}^2ds\right)^p<\infty\label{4.108}.
\end{align}
%Now, let us consider the positive non decreasing function $\phi(x)=x^4$. This function obviously satisfies $$\lim_{x\to \infty}\frac{\phi(x)}{x}=\infty.$$
%Using the inequalities \re{4.95} and \re{4.97} with $p=2$, we derive that
%$$\be'(\phi(\abs{\bu_m}_{L^2(0,T;H)}))\leq \bk,\ \be'(\phi(\abs{n_m}_{L^2(0,T;L^2(\bo))}))\leq \bk \text{  and  } \be'(\phi(\abs{c_m}_{L^2(0,T;H^1(\bo))}))\leq \bk.$$
%Thus, since the convergence \re{3.80} holds, using these three last inequalities one can infer from the  Vitali Convergence Theorem (see \cite[Chapter 3, Proposition3]{Kall}) that
%\begin{equation}\label{4.102}
%	\begin{split}
	%	&\bar{\bu}_m\to \bu \text{ strongly in }  L^{2}(\Omega', \mathcal{F}', \mathbb{P}';L^{2}(0,T;H)), \\
	%&\bar{c}_m\to c \text{ strongly in }  L^{2}(\Omega', \mathcal{F}', \mathbb{P}';L^{2}(0,T;H^1(\bo))),\\
	%&\bar{n}_m\to n \text{ strongly in } L^{2}(\Omega', \mathcal{F}', \mathbb{P}';L^{2}(0,T;L^2(\bo))),
	%\end{split}
	%\end{equation}
	%and, therefore possibly extracting a new subsequence denoted in the same way to save notation, one deduces also that
	%\begin{equation}\label{4.103}
	%	(\bar{\bu}_m,\bar{c}_m)\to(\bu,c) \text{ in } \bhc\text{ for almost all } \omega,\  t  \text{ with respect to the measure }d\mathbb{P}'\otimes dt.
	%\end{equation}
	Now, we prove  three lemmata which show how  convergence in $\bz$ given by \re{3.80} will be used for the convergence of the deterministic terms appearing in the Galerkin approximation.  We start by noting that since $n_0^m$, $c_0^m$ and $\bu_0^m$ have been chosen such that \re{4.42} holds, we can derive that for all $\psi\in H^3(\bo)$ and $(\psi,\bv)\in L^2(\bo)\times V$,
	\begin{equation}
		\lim_{m\longrightarrow\infty}(n^m_0,\psi)= (n_0,\psi),\
		\lim_{m\longrightarrow\infty}(c^m_0,\psi)=(c_0,\psi),\text{  and  }
		\lim_{m\longrightarrow\infty}(\bu^m_0,\bv)= (\bu_0,\bv).\label{4.115*}
	\end{equation}
	\begin{lemma}\label{lem4.7}
		For any $r,t\in[0,T]$ with $r\leq t$ and  $\psi\in H^3(\bo)$, the following  convergences hold $\mathbb{P}'$-a.s.
		\begin{equation}
			\begin{split}
				&\lim_{m\longrightarrow\infty}(\bar{n}_m(t),\psi)=(n(t),\psi),\\
				&\lim_{m\longrightarrow\infty}\int_r^t(A_1\bar{n}_m(s),\psi)ds= \int_r^t(A_1n(s),\psi)ds\label{4.106}\\
				&\lim_{m\longrightarrow\infty}\int_r^t(\bp_m^2B_1(\bar{\bu}_m(s),\bar{n}_m(s)),\psi)ds=\int_r^t(B_1(\bu(s),n(s)),\psi)ds,\\
				&\lim_{m\longrightarrow\infty}\int_r^t(\bp_m^2R_2(\bar{n}_m(s),\bar{c}_m(s)),\psi)ds=\int_r^t(R_2(n(s),c(s)),\psi)ds.
			\end{split}
		\end{equation}
	\end{lemma}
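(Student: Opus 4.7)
The plan is to treat each of the four convergences by reducing them to the strong/weak convergences of $(\bar{\bu}_m,\bar{c}_m,\bar{n}_m)$ established in \re{3.80}, after moving all derivatives off $\bar{n}_m$ whenever possible by integrating by parts against the smooth test function $\psi\in H^3(\bo)$. Throughout, I will use that $\{\varphi_i\}_{i\geq 1}$ consists of smooth eigenfunctions of the Neumann Laplacian, so $\bp_m^2\psi\to\psi$ in $H^1(\bo)$ for any $\psi\in H^1(\bo)$, and by elliptic regularity together with the smoothness of $\partial\bo$ one also has $\bp_m^2\psi\to\psi$ in $H^3(\bo)$, hence in $W^{1,\infty}(\bo)$; in particular $|\nabla\bp_m^2\psi|_{L^\infty}$ is uniformly bounded in $m$ by $\bk|\psi|_{H^3}$.

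The first convergence is immediate: since $\bar{n}_m\to n$ in $\mathcal{C}([0,T];L^2_w(\bo))$ and $\psi\in L^2(\bo)$, the map $t\mapsto(\bar{n}_m(t),\psi)$ converges uniformly on $[0,T]$ to $t\mapsto(n(t),\psi)$, $\mathbb{P}'$-a.s. For the second convergence, I would integrate by parts using the Neumann boundary condition of the eigenfunctions, obtaining $\int_r^t(A_1\bar{n}_m(s),\psi)\,ds=\int_r^t(\nabla\bar{n}_m(s),\nabla\psi)\,ds$, and then invoke the weak convergence $\bar{n}_m\rightharpoonup n$ in $L^2(0,T;H^1(\bo))$ (tested against $\mathbf{1}_{[r,t]}\nabla\psi\in L^2(0,T;L^2(\bo))$).

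For the third convergence I would use $\nabla\cdot\bar{\bu}_m=0$ together with $\bar{\bu}_m=0$ on $\partial\bo$ and the self-adjointness of $\bp_m^2$ to rewrite
\begin{equation*}
\int_r^t(\bp_m^2B_1(\bar{\bu}_m(s),\bar{n}_m(s)),\psi)\,ds=-\int_r^t\int_\bo \bar{n}_m(s,x)\,\bar{\bu}_m(s,x)\cdot\nabla\bp_m^2\psi(x)\,dx\,ds,
\end{equation*}
and similarly for the limit term with $\psi$ instead of $\bp_m^2\psi$. I would then split the difference in the usual bilinear manner and control each piece by H\"older using the uniform bound on $|\nabla\bp_m^2\psi|_{L^\infty}$, the strong convergence $\bar{\bu}_m\to\bu$ in $L^2(0,T;H)$, the strong convergence $\bar{n}_m\to n$ in $L^2(0,T;L^2(\bo))$ (from \re{3.80}), the uniform bounds on $|\bar{n}_m|_{L^2(0,T;L^2)}$ and $|\bar{\bu}_m|_{L^2(0,T;H)}$ from \re{4.95}--\re{4.97}, and finally the convergence $\nabla\bp_m^2\psi\to\nabla\psi$ in $L^\infty(\bo)$. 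An entirely analogous computation handles the fourth convergence, starting from
\begin{equation*}
\int_r^t(\bp_m^2R_2(\bar{n}_m(s),\bar{c}_m(s)),\psi)\,ds=-\int_r^t\int_\bo \bar{n}_m(s,x)\nabla\bar{c}_m(s,x)\cdot\nabla\bp_m^2\psi(x)\,dx\,ds,
\end{equation*}
and using strong convergence of $\bar{c}_m\to c$ in $L^2(0,T;H^1(\bo))$ instead of strong convergence of $\bar{\bu}_m$.

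I expect the main obstacle to be the third (and fourth) convergence, because the nonlinear terms $\bar{\bu}_m\cdot\nabla\bar{n}_m$ and $\bar{n}_m\nabla\bar{c}_m$ only converge in a very weak sense a priori, and one genuinely needs the strong compactness of $\bar{\bu}_m$ in $L^2(0,T;H)$ and $\bar{n}_m$ in $L^2(0,T;L^2)$ (respectively $\bar{c}_m$ in $L^2(0,T;H^1)$) delivered by the tightness argument. The secondary technical point is to justify cleanly that $\nabla\bp_m^2\psi\to\nabla\psi$ uniformly for $\psi\in H^3(\bo)$; this follows from the fact that the $H^3$-norm is equivalent to $\sum\lambda_i^3|c_i|^2$ on the domain of $A_1^{3/2}$ together with the Sobolev embedding $H^3(\bo)\hookrightarrow W^{1,\infty}(\bo)$ in two dimensions.
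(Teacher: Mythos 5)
Your proposal is correct and follows essentially the same route as the paper: the first two convergences are read off from the convergence of $\bar{n}_m$ in $\mathcal{C}([0,T];L^2_w(\bo))$ (the paper uses the $\mathcal{C}([0,T];H^{-3})$ topology for the first and moves both derivatives onto $\psi$ for the second, but this is cosmetic), while the third and fourth are treated exactly as you describe, by integrating by parts onto the test function, splitting the nonlinearity bilinearly, and combining the strong convergences $\bar{\bu}_m\to\bu$ in $L^{2}(0,T;H)$, $\bar{n}_m\to n$ in $L^{2}(0,T;L^2(\bo))$, $\bar{c}_m\to c$ in $L^{2}(0,T;H^1(\bo))$ with the projection error $\abs{\bp_m^2\psi-\psi}_{H^3}\to 0$. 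The only caveat, shared by the paper, is that the convergence $\bp_m^2\psi\to\psi$ in $H^3(\bo)$ for arbitrary $\psi\in H^3(\bo)$ identifies $H^3(\bo)$ with $D(A_1^{3/2})$, which strictly speaking requires the Neumann compatibility condition on $\psi$; your stated justification inherits this looseness rather than resolving it.
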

	\begin{proof}
	Let $\psi\in H^3(\bo)$ and $t\in [0,T]$ be arbitrary but fixed. By  the H\"older inequality we have
		\begin{align}
			\abs{(\bar{n}_m(t),\psi)-(n(t),\psi)}&\leq \abs{\bar{n}_m(t)-n(t)}_{H^{-3}}\abs{\psi}_{H^3}\notag\\
			&\leq \abs{\bar{n}_m-n}_{\mathcal{C}([0,T]; H^{-3})}\abs{\psi}_{H^3}\label{4.109},
		\end{align}
		which along with \re{3.80} implies the first convergence in  \re{4.106}.
		
		Now, we also fix $r\in [0,T]$ such that $r\leq t$.	By an integration-by-parts and  the H\"older inequality we note that
		\begin{align}
			\abs{\int_r^t(A_1\bar{n}_m(s),\psi)ds-\int_r^t(A_1n(s),\psi)ds}dt&\leq \int_0^T\abs{(A_1\bar{n}_m(s)-A_1n(s),\psi)}ds\notag\\
			&\leq \int_0^T\abs{(\bar{n}_m(s)-n(s),A_1\psi)}ds\label{4.110}\\
			&\leq T\sup_{0\leq s\leq T}\abs{(\bar{n}_m(s)-n(s),A_1\psi)}. \notag
		\end{align}
		From the convergence \re{3.80} we infer that  $\bar{n}_m\to n$ in $\mathcal{C}([0,T]; L_w^{2}(\bo)$, $\mathbb{P}'$-a.s. This means that $\sup_{0\leq s\leq T}\abs{(\bar{n}_m(s)-n(s),\varphi)}$ tends to zero for all $\varphi\in L^2(\bo)$ as $m$ goes to infinity with probability one. We plug $\varphi=A_1\psi$ and pass to the limit in \re{4.110} and derive the second convergence of \re{4.106}.
		We have for all $\omega\in \Omega$,
		\begin{align}
			&\abs{\int_r^t(\bp_m^2B_1(\bar{\bu}_m(s),\bar{n}_m(s)),\psi)ds-\int_r^t(B_1(\bu(s),n(s)),\psi)ds}\notag\\
			&\leq\int_0^T\abs{(B_1(\bar{\bu}_m(s),\bar{n}_m(s)),\bp_m^2\psi-\psi)}+\int_0^T\abs{(B_1(\bar{\bu}_m(s),\bar{n}_m(s))-B_1(\bu(s),n(s)),\psi)}ds\notag
		\end{align}
		Since $\bar{\bu}_m\to \bu$ in $L^{2}(0,T;H)$, and $\bar{n}_m\to n$ in $L^{2}(0,T;L^2(\bo))$ $\mathbb{P}'$-a.s.,   by integration-by-parts, we derive that
		\begin{align}
			&\int_0^T\abs{(B_1(\bar{\bu}_m(s),\bar{n}_m(s)),\bp_m^2\psi-\psi)}ds\notag\\
			&\leq \int_0^T\abs{(\bar{n}_m(s)\bar{\bu}_m(s),\nabla(\bp_m^2\psi-\psi))}\notag\\
			&\leq \abs{\nabla(\bp_m^2\psi-\psi)}_{L^\infty}\int_0^T\abs{\bar{n}_m(s)}_{L^2}\abs{\bar{\bu}_m(s)}_{L^2}ds\notag\\
			&\leq \abs{\bp_m^2\psi-\psi}_{H^3}\left(\int_0^T\abs{\bar{\bu}_m(s)}^2_{L^2}ds\right)^{1/2}\left(\int_0^T\abs{\bar{n}_m(s)}^2_{L^2}ds\right)^{1/2}\notag\\
			&\leq\bk\abs{\bp_m^2\psi-\psi}_{H^3}.\notag
		\end{align}
By using an integration-by-parts and the fact that  $\nabla\cdot \bu=0$ we get
		\begin{align}
			%&\be'\int_0^T\abs{\int_0^t(\bp_m^2B_1(\bar{\bu}_m,\bar{n}_m),\psi)ds-\int_0^t(B_1(\bu,n),\psi)ds}dt\notag\\
			& \int_0^T\abs{(B_1(\bar{\bu}_m(s),\bar{n}_m(s))-B_1(\bu(s),n(s)),\psi),\psi)}ds\notag\\
			&\leq \int_0^T\abs{((\bar{\bu}_m(s)-\bu(s))\nabla\bar{n}_m(s),\psi)}ds+\int_0^T\abs{(\bu(s)\nabla(\bar{n}_m(s)-n(s)),\psi)}ds\notag  \\
			&\leq \int_0^T\abs{(\bar{n}_m(s),(\bar{\bu}_m(s)-\bu(s))\cdot\nabla\psi)}ds+\int_0^T\abs{((\bar{n}_m(s)-n(s)),\bu(s)\cdot\nabla\psi)}ds\notag \\
			&\leq \abs{\psi}_{L^\infty}\int_0^T\abs{\bar{\bu}_m(s)-\bu(s)}_{L^2}\abs{\bar{n}_m(s)}_{L^2}ds+ \abs{\nabla\psi}_{L^\infty}\int_0^T\abs{\bar{n}_m(s)-n(s)}_{L^2}\abs{\bu(s)}_{L^2}ds.\notag
		\end{align}
		Using the fact that $\abs{\nabla\psi}_{L^\infty}\leq \abs{\psi}_{H^3}$, we infer from the two last inequalities that
		\begin{align}
			&\abs{\int_0^t(\bp_m^2B_1(\bar{\bu}_m(s),\bar{n}_m(s)),\psi)ds-\int_0^t(B_1(\bu(s),n(s)),\psi)ds}\notag\\
			&\leq T\abs{\psi}_{H^3}\left(\int_0^T\abs{\bar{\bu}_m(s)-\bu(s)}^2_{L^2}ds\right)^{1/2}\left(\int_0^T\abs{\bar{n}_m(s)}^2_{L^2}ds\right)^{1/2}\notag\\
			&\qquad{}+ T\abs{\psi}_{H^3}\left(\int_0^T\abs{\bar{n}_m(s)-n(s)}^2_{L^2}ds\right)^{1/2}\left(\int_0^T\abs{\bu(s)}^2_{L^2}ds\right)^{1/2}+\bk\abs{\bp_m^2\psi-\psi}_{H^3}\\
			&\leq \bk \left(\int_0^T\abs{\bar{\bu}_m(s)-\bu(s)}^2_{L^2}ds\right)^{1/2}\notag\\
			&\qquad{}+\bk\left(\int_0^T\abs{\bar{n}_m(s)-n(s)}^2_{L^2}ds\right)^{1/2}\left(\int_0^T\abs{\bu(s)}^2_{L^2}ds\right)^{1/2}+\bk\abs{\bp_m^2\psi-\psi}_{H^3},\notag
		\end{align}
which upon letting $n\to \infty$, implies the third convergence in \re{4.106}.	

		Similarly, we have
		\begin{align}
			&\abs{\int_r^t(\bp_m^2R_2(\bar{n}_m(s),\bar{c}_m(s)),\psi)ds-\int_r^t(R_2(n(s),c(s)),\psi)ds}\notag\\
			&\leq\int_0^T\abs{(R_2(\bar{n}_m(s),\bar{c}_m(s))-R_2(n(s),c(s)),\psi)}ds\\
			&\qquad{}+\int_0^T\abs{(R_2(\bar{n}_m(s),\bar{c}_m(s)),\bp_m^2\psi-\psi)}ds.\notag
		\end{align}
Since $(\bar{c}_m,\bar{n}_m)\to (c,n)$ in $\bz_c\times\bz_n$, we see that $\mathbb{P}'$-a.s, 
		\begin{align}
			&\int_0^T\abs{(R_2(\bar{n}_m(s),\bar{c}_m(s)),\bp_m^2\psi-\psi)}ds\notag\\
			&\leq \abs{\nabla(\bp_m^2\psi-\psi)}_{L^\infty}\int_0^T\abs{\bar{n}_m(s)}_{L^2}\abs{\nabla\bar{c}_m(s)}_{L^2}ds\notag\\
			%&\leq T^{3/2}\abs{\bp_m^2\psi-\psi}_{H^3}\be'\sup_{0\leq s\leq T}\abs{\nabla\bar{c}_m(s)}_{L^2}\left(\be'\int_0^T\abs{\bar{n}_m(s)}^2_{L^2}ds\right)^{1/2}\notag\\
			&\leq\bk\abs{\bp_m^2\psi-\psi}_{H^3}.\notag
		\end{align}
		On the other hand, we obtain
		\begin{align}
			&\int_0^T\abs{(R_2(\bar{n}_m(s),\bar{c}_m(s))-R_2(n(s),c(s)),\psi),\psi)}ds\notag\\
			&\leq \int_0^T\abs{((\bar{n}_m(s)-n(s))\nabla\bar{c}_m(s),\nabla\psi)}ds+\int_0^T\abs{(n(s)\nabla(\bar{c}_m(s)-c(s)),\nabla\psi)}ds\notag  \\
			&\leq \abs{\nabla\psi}_{L^\infty}\int_0^T\abs{\bar{n}_m(s)-n(s)}_{L^2}\abs{\nabla\bar{c}_m(s)}_{L^2}ds\notag\\
			&\qquad{}+ \abs{\nabla\psi}_{L^\infty}\int_0^T\abs{\nabla(\bar{c}_m(s)-c(s))}_{L^2}\abs{n(s)}_{L^2}ds\notag\\
			&\leq \bk \left(\int_0^T\abs{\bar{n}_m(s)-n(s)}^2_{L^2}ds\right)^{1/2}\notag\\
			&\qquad{}+\bk\left(\int_0^T\abs{\nabla(\bar{c}_m(s)-c(s))}^2_{L^2}ds\right)^{1/2}\left(\int_0^T\abs{n(s)}^2_{L^2}ds\right)^{1/2},\notag
		\end{align}
which along with \re{3.80} implies the fourth convergence in \re{4.106}.
	\end{proof}
	
	\begin{lemma}\label{lem4.8}
		For any $r,t\in[0,T]$ with $r\leq t$ and  $\psi\in H^2(\bo)$, the following  convergences hold $\mathbb{P}'$-a.s.
		\begin{align}
			\begin{split}
				&\lim_{m\longrightarrow\infty}(\bar{c}_m(t),\psi)= (c(t),\psi),\label{4.114}\\
				&\lim_{m\longrightarrow\infty}\int_r^t(A_1\bar{c}_m(s),\psi)ds= \int_r^t(A_1c(s),\psi)ds,\\
				&\lim_{m\longrightarrow\infty}\int_r^t(\bp_m^2B_1(\bar{\bu}_m(s),\bar{c}_m(s)),\psi)ds= \int_r^t(B_1(\bu(s),c(s)),\psi)ds,\\
				&\lim_{m\longrightarrow\infty}\int_r^t(\bp_m^2R_1(\bar{n}_m(s),\bar{c}_m(s)),\psi)ds= \int_r^t(R_1(n(s),c(s)),\psi)ds.
			\end{split}
		\end{align}
	\end{lemma}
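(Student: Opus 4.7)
The plan is to mimic the strategy of Lemma \ref{lem4.7}, adapting the integrations by parts and the Sobolev embeddings to the fact that here the test function $\psi$ only belongs to $H^2(\bo)$ (rather than $H^3(\bo)$). Throughout, I will use the convergences listed in \re{3.80} together with the uniform estimates \re{4.95}--\re{4.97} (which also apply to the limit by \re{4.104*}--\re{4.108}) and the projection properties $\abs{\bp_m^2\psi - \psi}_{H^2}\to 0$ for any $\psi\in H^2(\bo)$ that are compatible with the Neumann boundary condition (for a general $\psi\in H^2(\bo)$, we replace $\bp_m^2\psi$-style terms by a triangle inequality argument with $\abs{\bp_m^2\psi - \psi}_{L^2}\to 0$ and use the uniform $H^2$-bound of $\bar c_m$ to control the remaining factor).

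First, for the pointwise-in-$t$ convergence $(\bar c_m(t),\psi)\to (c(t),\psi)$, I simply estimate
\[
\abs{(\bar c_m(t)-c(t),\psi)}\le \abs{\bar c_m(t)-c(t)}_{L^2}\abs{\psi}_{L^2}\le \abs{\bar c_m-c}_{\mathcal{C}([0,T];L^2)}\abs{\psi}_{L^2},
\]
and invoke $\bar c_m\to c$ in $\mathcal{C}([0,T];L^2(\bo))$ from \re{3.80}. For the linear term with $A_1$, I use the fact that $A_1:H^2(\bo)\to L^2(\bo)$ is a bounded linear operator. Since $\bar c_m\to c$ weakly in $L^2(0,T;H^2(\bo))$ $\mathbb{P}'$-a.s., the functional $\phi\mapsto\int_r^t(A_1\phi(s),\psi)_{L^2}ds$, being continuous and linear on $L^2(0,T;H^2(\bo))$, passes to the limit, yielding the second convergence.

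For the convective term (3), I first write
\[
\int_r^t(\bp_m^2 B_1(\bar\bu_m(s),\bar c_m(s)),\psi)\,ds=\int_r^t(B_1(\bar\bu_m(s),\bar c_m(s)),\bp_m^2\psi)\,ds,
\]
and split the difference with the target integral into a piece involving $\bp_m^2\psi-\psi$ and a piece with $\psi$. Using $\nabla\cdot\bar\bu_m=0$ and $\bar\bu_m=0$ on $\partial\bo$, I integrate by parts to get $(B_1(\bar\bu_m,\bar c_m),\varphi)=-(\bar c_m,\bar\bu_m\cdot\nabla\varphi)_{L^2}$ for any $\varphi\in H^2(\bo)$. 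Then I bound
\[
\int_r^t\abs{(\bar c_m,\bar\bu_m\cdot\nabla(\bp_m^2\psi-\psi))}\,ds\le \abs{\nabla(\bp_m^2\psi-\psi)}_{L^4}\int_0^T\abs{\bar c_m}_{L^4}\abs{\bar\bu_m}_{L^2}ds,
\]
using the 2D Sobolev embedding $H^1(\bo)\hookrightarrow L^4(\bo)$ (so $H^2\hookrightarrow W^{1,4}$), the uniform bounds \re{4.95}, \re{4.97} and $\abs{\bp_m^2\psi-\psi}_{H^2}\to 0$. For the remaining piece I write $B_1(\bar\bu_m,\bar c_m)-B_1(\bu,c)=B_1(\bar\bu_m-\bu,\bar c_m)+B_1(\bu,\bar c_m-c)$, integrate each by parts (again against $\psi$) and estimate via Cauchy--Schwarz and $H^1\hookrightarrow L^4$ by
\[
\abs{\psi}_{H^2}\Bigl(\abs{\bar\bu_m-\bu}_{L^2(0,T;H)}\abs{\bar c_m}_{L^2(0,T;H^1)}+\abs{\bu}_{L^2(0,T;H)}\abs{\bar c_m-c}_{L^2(0,T;H^1)}\Bigr),
\]
which tends to $0$ by the strong convergences $\bar\bu_m\to\bu$ in $L^2(0,T;H)$ and $\bar c_m\to c$ in $L^2(0,T;H^1(\bo))$ from \re{3.80}.

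Finally, for (4) I use Assumption \ref{ass_1} together with the $L^\infty$-stability \re{4.93*} to write, via the mean value theorem, $\abs{f(\bar c_m)-f(c)}\le M\abs{\bar c_m-c}$ with $M:=\max_{0\le s\le \abs{c_0}_{L^\infty}}f'(s)$ and $\abs{f(\bar c_m)}_{L^\infty}\le M_0:=\max_{0\le s\le \abs{c_0}_{L^\infty}}f(s)$. Splitting
\[
R_1(\bar n_m,\bar c_m)-R_1(n,c)=(\bar n_m-n)f(\bar c_m)+n(f(\bar c_m)-f(c)),
\]
testing against $\psi$ and integrating in time, I estimate
\[
\int_r^t\abs{((\bar n_m-n)f(\bar c_m),\psi)}\,ds\le M_0\abs{\psi}_{L^2}\,T^{1/2}\Bigl(\int_0^T\abs{\bar n_m-n}_{L^2}^2\,ds\Bigr)^{1/2},
\]
which vanishes by $\bar n_m\to n$ in $L^2(0,T;L^2(\bo))$, and
\[
\int_r^t\abs{(n(f(\bar c_m)-f(c)),\psi)}\,ds\le M\abs{\psi}_{L^4}\Bigl(\int_0^T\abs{n}_{L^4}^2\,ds\Bigr)^{1/2}\Bigl(\int_0^T\abs{\bar c_m-c}_{L^2}^2\,ds\Bigr)^{1/2}\to 0,
\]
again using $H^1\hookrightarrow L^4$, the uniform $L^2(0,T;H^1)$ bound on $n$ from \re{4.108}, and the strong convergence $\bar c_m\to c$ in $L^2(0,T;L^2)$. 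The only subtle point that I expect to require extra care is the passage $\bp_m^2\psi\to\psi$ in the correct topology for the convective term, which is handled cleanly by absorbing one derivative onto $\psi$ via integration by parts and then using $H^2\hookrightarrow W^{1,4}$ together with $\abs{\bp_m^2\psi-\psi}_{H^2}\to 0$ when $\psi$ lies in the domain of $A_1$ (and by a density/approximation argument otherwise).
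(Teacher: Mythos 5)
Your argument is correct in structure and reaches the same conclusion, but three of the four terms are handled by genuinely different means than in the paper. For the $A_1$ term the paper integrates by parts and uses the \emph{strong} convergence $\bar c_m\to c$ in $L^{2}(0,T;H^1(\bo))$, whereas you test a fixed continuous linear functional against the \emph{weak} convergence in $L^{2}(0,T;H^2(\bo))$; both work, and yours is arguably cleaner since it needs no boundary information on $\psi$. For the convective term the paper estimates $(B_1(\bar\bu_m-\bu,\bar c_m),\psi)$ directly via $\abs{\nabla\bar c_m}_{L^4}\leq\bk\abs{\bar c_m}_{H^2}$, while you move the derivative onto the test function by integration by parts; your route then forces you to control $\nabla(\bp_m^2\psi-\psi)$ in $L^4$, and here you should be careful: $\abs{\bp_m^2\psi-\psi}_{H^2}\to 0$ holds only for $\psi\in D(A_1)$ (the Neumann eigenfunction expansion converges in the graph norm only there), so for general $\psi\in H^2(\bo)$ you must, as the paper does, put only the $L^2$-norm on $\bp_m^2\psi-\psi$ and absorb the derivatives into the uniformly bounded factor $\int_0^T\abs{\bar\bu_m}_{L^4}\abs{\nabla\bar c_m}_{L^4}\,ds$ — your parenthetical fallback is exactly this, and it should be the main argument rather than an aside. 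For the reaction term the paper deliberately avoids a Lipschitz estimate on $f$ and instead combines a.e.\ convergence of $\bar c_m$ with uniform integrability and the Vitali convergence theorem; your mean value theorem argument is more elementary but silently uses $0\leq c\leq\abs{c_0}_{L^\infty}$ a.e.\ for the \emph{limit}, which at this stage of the construction is known only by extracting an a.e.-convergent subsequence from the strong $L^2(0,T;L^2)$ convergence and passing the bound \re{4.93*} to the limit — state this, since the $L^\infty$-stability of the limit (Corollary \ref{lem3.7}) is not yet available here. With those two points made explicit your proof is complete.
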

	\begin{proof}
		Since $\bar{c}_m\to c$ in $\mathcal{C}([0,T]; L^{2}(\bo))$, $\mathbb{P}'$-a.s.,	the first convergence is done exactly using a similarly inequality as \re{4.109}.
		By an integration by part and  the  H\"older inequality we note that
		\begin{align}
			\abs{\int_r^t(A_1\bar{c}_m(s),\psi)ds-\int_r^t(A_1c(s),\psi)ds}&\leq \int_0^T\abs{(A_1\bar{c}_m(s)-A_1c(s),\psi)}ds\notag\\
			&\leq \int_0^T\abs{(\nabla(\bar{c}_m(s)-c(s)),\nabla\psi)}ds\notag\\
			&\leq T^{1/2}\abs{\psi}_{H^1}\left(\int_0^T\abs{\bar{c}_m(s)-c(s)}^2_{H^1}ds\right)^{1/2}, \notag
		\end{align}
which altogether with \re{3.80} implies  the second convergence in \re{4.114}.

Next, using  the Sobolev embedding $H^1(\bo)\hookrightarrow L^4(\bo)$, we get
		\begin{align}
			&\abs{\int_r^t(\bp_m^2B_1(\bar{\bu}_m(s),\bar{c}_m(s)),\psi)ds-\int_r^t(B_1(\bu(s),c(s)),\psi)ds}\notag\\
			&\leq \int_0^T\abs{(B_1(\bar{\bu}_m(s),\bar{c}_m(s))-B_1(\bu(s),c(s)),\psi),\psi)}ds+\int_0^T\abs{(B_1(\bar{\bu}_m(s),\bar{c}_m(s)),\bp_m^2\psi-\psi)}ds\notag\\
			&\leq \int_0^T\abs{((\bar{\bu}_m(s)-\bu(s))\nabla\bar{c}_m(s),\psi)}ds+\int_0^T\abs{(\bu(s)\nabla(\bar{c}_m(s)-c(s)),\psi)}ds\notag  \\
			&\qquad{}+T^{1/2}\abs{\bp_m^2\psi-\psi}_{L^2}\left(\int_0^T\abs{B_1(\bar{\bu}_m(s),\bar{c}_m(s))}^2_{L^2}ds\right)^{1/2}\notag\\
			&\leq \abs{\psi}_{L^{4}}\int_0^T\abs{\bar{\bu}_m(s)-\bu(s)}_{L^2}\abs{\nabla\bar{c}_m(s)}_{L^4}ds+ \abs{\psi}_{L^4}\int_0^T\abs{\nabla(\bar{c}_m(s)-c(s))}_{L^2}\abs{\bu(s)}_{L^4}ds\notag\\
			&\qquad{}+\bk\abs{\bp_m^2\psi-\psi}_{L^2}\notag\\
			&\leq T\abs{\psi}_{H^{1}}\int_0^T\abs{\bar{\bu}_m(s)-\bu(s)}_{L^2}\abs{\bar{c}_m(s)}_{H^2}ds\notag\\
			&\qquad{}+ T\abs{\psi}_{H^1}\int_0^T\abs{\nabla(\bar{c}_m(s)-c(s))}_{L^2}\abs{\nabla\bu(s)}_{L^2}ds+\bk\abs{\bp_m^2\psi-\psi}_{L^2}.\notag
		\end{align}
		Since the convergence \re{3.80} holds, we arrive at
		\begin{align}
			&\abs{\int_r^t(\bp_m^2B_1(\bar{\bu}_m(s),\bar{c}_m(s)),\psi)ds-\int_r^t(B_1(\bu(s),c(s)),\psi)ds}\notag\\
			&\leq T\abs{\psi}_{H^1}\left(\int_0^T\abs{\bar{\bu}_m(s)-\bu(s)}^2_{L^2}ds\right)^{1/2}\left(\int_0^T\abs{\bar{c}_m(s)}^2_{H^2}ds\right)^{\frac{1}{2}}\notag\\
			&\quad{}+ T\abs{\psi}_{H^1}\left(\int_0^T\abs{\bar{c}_m(s)-c(s)}^2_{H^1}ds\right)^{\frac{1}{2}}\left(\int_0^T\abs{\nabla\bu(s)}^2_{L^2}ds\right)^{\frac{1}{2}}+\bk\abs{\bp_m^2\psi-\psi}_{L^2},\notag
			%&\leq \bk \left(\be'\int_0^T\abs{\bar{\bu}_m(s)-\bu(s)}^2_{L^2}ds\right)^{1/2}\notag\\
			%&+\bk\left(\be'\int_0^T\abs{\bar{c}_m(s)-c(s)}^2_{H^1}ds\right)^{1/2}\left(\be'\int_0^T\abs{\nabla\bu(s)}^2_{L^2}ds\right)^{\frac{1}{2}}+\bk\abs{\bp_m^2\psi-\psi}_{L^2}.\notag
		\end{align}
	which along with \re{3.80} implies  the third convergence in \re{4.114}.
	
	Now we prove the last convergence. To this purpose, we note that
		\begin{align}
			&\abs{\int_r^t(\bp_m^2R_1(\bar{n}_m(s),\bar{c}_m(s)),\psi)ds-\int_r^t(R_1(n(s),c(s)),\psi)ds}\notag\\
			&\leq \int_0^T\abs{(R_1(\bar{n}_m(s),\bar{c}_m(s))-R_1(n(s),c(s)),\psi)}ds\notag\\
			&\qquad{}+\int_0^T\abs{(R_1(\bar{n}_m(s),\bar{c}_m(s)),\bp_m^2\psi-\psi)}ds\label{4.116}\\
			&\leq \int_0^T\abs{((\bar{n}_m(s)-n(s))f(\bar{c}_m(s)),\psi)}ds\notag\\
			&\qquad{}+\int_0^T\abs{n(s)(f(\bar{c}_m(s))-f(c(s))),\psi)}ds+\bk\abs{\bp_m^2\psi-\psi}_{L^2}\notag.
		\end{align}
		Using \re{4.93*}, we derive that
		\begin{align}
			&\int_0^T\abs{((\bar{n}_m(s)-n(s))f(\bar{c}_m(s)),\psi)}ds\notag\\
			&\leq\abs{\psi}_{L^\infty}\int_0^T\int_\bo\abs{\bar{n}_m(s)-n(s)}\abs{f(\bar{c}_m(s))}dxds\notag\\
			&\leq T^{1/2}\abs{\bo}^{1/2}\abs{\psi}_{H^2}\sup_{0\leq s\leq \abs{c_0}_{L^\infty}}f(s)\left(\int_0^T\abs{\bar{n}_m(s)-n(s)}^2_{L^2}ds\right)^{1/2}.\notag
		\end{align}
In a similar way, we see that
		\begin{equation}
			\begin{split}
				&\int_0^T\abs{n(s,x)(f(\bar{c}_m(s,x))-f(c(s,x))),\psi)}dsdx\\
				&\leq \abs{\psi}_{H^2}\int_0^T\int_\bo\abs{n(s,x)f(\bar{c}_m(s,x))-n(s,x)f(c(s,x))}dxds.\label{4.118}
			\end{split}
		\end{equation}
		Since the strong convergence $\bar{c}_m\to c$ in $L^2(0,T; H^{1}(\bo))$, $\mathbb{P}'$-a.s., holds, we derive that  up to a subsequence
		$$\bar{c}_m\to c \qquad dt\otimes dx\text{-a.e}$$
		Owing to the fact that $f$  is continuous, we infer that $\mathbb{P}'$-a.s., 
		$$nf(\bar{c}_m)\to nf(c) \qquad a.e \text{ in } \times (0,T)\times \bo.$$
		We also note that $\mathbb{P}$-a.s.,  $\{nf(\bar{c}_m)\}_{m\geq 1}$ is  uniformly integrable over $(0,T)\times \bo$. Indeed, we have
		\begin{align*}
			\int_{ (0,T)\times \bo}\abs{n(s,x)f(\bar{c}_m(s,x))}^2dxdsd\mathbb{P}&\leq \sup_{0\leq s\leq \abs{c_0}_{L^\infty}}f^2(s)\int_0^T\int_\bo\abs{n(s,x)}^2dxds\\
			&\leq \bk\int_0^T\abs{n(s)}^2_{L^2}ds.
		\end{align*}
		Therefore, by the Vitali Convergence Theorem, we derive that $\mathbb{P}'$-a.s.,  the right answer of the inequality \re{4.118} tends to zero as $m$ tends to $\infty$. Owing to this result, we can pass to the limit in the inequality \re{4.116} and obtain the last convergence of \re{4.114}.
	\end{proof}
	Next we prove the following convergences.
	\begin{lemma}\label{lem4.9}
		For any $r,t\in[0,T]$ with $r\leq t$ and  $\bv\in V$, the following  convergences hold $\mathbb{P}'$-a.s.
		\begin{equation}
			\begin{split}
				&\lim_{m\longrightarrow\infty}(\bar{\bu}_m(t),\bv)= (\bu(t),\bv),\\
				&\lim_{m\longrightarrow\infty}\int_r^t(A_0\bar{\bu}_m(s),\bv)ds= \int_r^t(A_0\bu(s),\bv)ds,\label{4.119}\\
				&\lim_{m\longrightarrow\infty}\int_r^t(\bp_m^1B_0(\bar{\bu}_m(s),\bar{\bu}_m(s)),\bv)ds= \int_r^t(B_0(\bu(s),\bu(s)),\bv)ds,\\
				&\lim_{m\longrightarrow\infty}\int_r^t(\bp_m^1R_0(\bar{n}_m(s),\varPhi),\bv)ds= \int_r^t(R_0(n(s),\varPhi),\bv)ds.
			\end{split}
		\end{equation}
	\end{lemma}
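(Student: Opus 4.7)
The plan is to follow the same strategy used in the proofs of Lemma \ref{lem4.7} and Lemma \ref{lem4.8}, adapting each step to the velocity equation. Throughout, I will repeatedly exploit the convergences \re{3.80}, the uniform bounds \re{4.95}--\re{4.97}, and the fact that $\bp_m^1\bv\to\bv$ in $V$ for every $\bv\in V$, with $\abs{\bp_m^1\bv}_V\le\abs{\bv}_V$. Fix $\bv\in V$ and $0\le r\le t\le T$.

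For the first convergence I would use the duality estimate
\[
\abs{(\bar{\bu}_m(t)-\bu(t),\bv)}\le \abs{\bar{\bu}_m(t)-\bu(t)}_{V^*}\abs{\bv}_V
\le \abs{\bar{\bu}_m-\bu}_{\mathcal{C}([0,T];V^*)}\abs{\bv}_V,
\]
and invoke $\bar{\bu}_m\to\bu$ in $\mathcal{C}([0,T];V^*)$ from \re{3.80}. For the Stokes term I would write $(A_0\bar{\bu}_m,\bv)=(\nabla\bar{\bu}_m,\nabla\bv)$ and apply $\bar{\bu}_m\rightharpoonup\bu$ in $L^2(0,T;V)$ against the fixed test function $\mathbf{1}_{[r,t]}\nabla\bv\in L^2(0,T;\mathbb{L}^2)$, giving
\[
\int_r^t(\nabla\bar{\bu}_m(s),\nabla\bv)\,ds\longrightarrow\int_r^t(\nabla\bu(s),\nabla\bv)\,ds.
\]
For the coupling term $R_0$, the linearity in $n$ and $\abs{R_0(\bar{n}_m-n,\varPhi)}_{L^2}\le\abs{\varPhi}_{W^{1,\infty}}\abs{\bar{n}_m-n}_{L^2}$ reduce matters to the strong convergence $\bar{n}_m\to n$ in $L^2(0,T;L^2(\bo))$ provided by \re{3.80}; an additional term of size $\abs{\bp_m^1\bv-\bv}_H$ is handled by the Bessel inequality and the $L^2$-bound \re{4.96} on $\bar{n}_m$.

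The main obstacle is the nonlinear convective term, which I expect to be the only delicate step. I would first separate the projection error by writing
\[
(\bp_m^1B_0(\bar{\bu}_m,\bar{\bu}_m),\bv)=(B_0(\bar{\bu}_m,\bar{\bu}_m),\bp_m^1\bv-\bv)+(B_0(\bar{\bu}_m,\bar{\bu}_m),\bv),
\]
and control the first summand by $\abs{\bar{\bu}_m}_{L^4}^2\abs{\nabla(\bp_m^1\bv-\bv)}_{L^2}$ combined with the uniform bound $\be'\int_0^T\abs{\nabla\bar{\bu}_m}_{L^2}^2ds\le \bk$ and the $L^4$--$H^1$ Sobolev embedding. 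For the second summand, using the antisymmetry $b_0(\bu,\bv,\bw)=-b_0(\bu,\bw,\bv)$ valid since $\nabla\cdot\bar{\bu}_m=\nabla\cdot\bu=0$, I would split
\[
b_0(\bar{\bu}_m,\bar{\bu}_m,\bv)-b_0(\bu,\bu,\bv)=-b_0(\bar{\bu}_m-\bu,\bv,\bar{\bu}_m)-b_0(\bu,\bv,\bar{\bu}_m-\bu),
\]
and bound both terms by $\abs{\bar{\bu}_m-\bu}_{L^2}\abs{\nabla\bv}_{L^4}\bigl(\abs{\bar{\bu}_m}_{L^4}+\abs{\bu}_{L^4}\bigr)$. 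Integrating in time and applying the Cauchy--Schwarz inequality yields the bound
\[
\Bigl(\int_0^T\abs{\bar{\bu}_m(s)-\bu(s)}_{L^2}^2ds\Bigr)^{1/2}\Bigl(\int_0^T(\abs{\bar{\bu}_m(s)}_V^2+\abs{\bu(s)}_V^2)\,ds\Bigr)^{1/2}\abs{\bv}_V,
\]
where the second factor is $\mathbb{P}'$-a.s.\ finite thanks to \re{4.95} and \re{4.100}, while the first factor tends to zero by the strong convergence $\bar{\bu}_m\to\bu$ in $L^2(0,T;H)$ from \re{3.80}. This completes all four convergences; no new tightness or stochastic argument is needed beyond what has already been established.
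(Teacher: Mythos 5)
The paper itself gives no details here (it simply says the proof is similar to Lemmas \ref{lem4.7} and \ref{lem4.8}), and your overall strategy --- duality in $V^*$ for the pointwise term, weak $L^2(0,T;V)$ convergence for the Stokes term, strong $L^2(0,T;L^2)$ convergence of $\bar{n}_m$ for $R_0$, and a projection-error plus bilinear splitting for $B_0$ --- is exactly the intended adaptation. However, there is a concrete flaw in your treatment of the convective term. After the antisymmetric splitting you bound each piece by $\abs{\bar{\bu}_m-\bu}_{L^2}\abs{\nabla\bv}_{L^4}\bigl(\abs{\bar{\bu}_m}_{L^4}+\abs{\bu}_{L^4}\bigr)$, i.e.\ you apply H\"older with exponents $(2,4,4)$ placing the $L^4$ norm on $\nabla\bv$. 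For a general test function $\bv\in V$ one only has $\nabla\bv\in L^2(\bo)$, so $\abs{\nabla\bv}_{L^4}$ need not be finite; this is precisely why Lemmas \ref{lem4.7} and \ref{lem4.8} take $\psi\in H^3(\bo)$ or $H^2(\bo)$, where the gradient of the test function enjoys extra integrability. Your final displayed bound carrying the factor $\abs{\bv}_V$ does not follow from the intermediate estimate, since $\abs{\nabla\bv}_{L^4}$ is not controlled by $\abs{\nabla\bv}_{L^2}$.

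The step is repairable with the other admissible H\"older pairing: $\abs{b_0(\bw,\bv,\bz)}\leq\abs{\bw}_{L^4}\abs{\nabla\bv}_{L^2}\abs{\bz}_{L^4}$, which only uses $\bv\in V$. The price is that the vanishing factor is now $\abs{\bar{\bu}_m-\bu}_{L^4}$ rather than $\abs{\bar{\bu}_m-\bu}_{L^2}$, and you must convert it via the Ladyzhenskaya interpolation $\abs{\bw}_{L^4}\leq\bk\abs{\bw}_{L^2}^{1/2}\abs{\nabla\bw}_{L^2}^{1/2}$: after integrating in time and applying H\"older with exponents $(4,4,2)$ one obtains a factor $\bigl(\int_0^T\abs{\bar{\bu}_m(s)-\bu(s)}_{L^2}^2ds\bigr)^{1/4}$, which tends to zero by \re{3.80}, multiplied by powers of $\int_0^T\abs{\nabla(\bar{\bu}_m-\bu)}_{L^2}^2ds$ and $\int_0^T\abs{\bar{\bu}_m}_{L^4}^2ds$, which are $\mathbb{P}'$-a.s.\ finite by \re{4.95} and \re{4.100}. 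The same correction should be applied to your bound for the projection-error term, which as written also relies on $\abs{\nabla(\bp_m^1\bv-\bv)}_{L^2}\to 0$; this part is fine since $\bp_m^1\bv\to\bv$ in $V$, but the prefactor $\abs{\bar{\bu}_m}_{L^4}^2$ must likewise be integrated using the Ladyzhenskaya inequality together with the uniform bounds. The remaining three convergences are argued correctly.
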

	\begin{proof}
		The proof is similar to the proof of Lemma \ref{lem4.7} and Lemma \ref{lem4.8}.
	\end{proof}
	In what follows, we will combine the convergence result from Lemma \ref{lem4.7}, Lemma \ref{lem4.8} and Lemma \ref{lem4.9} as well as martingale representation theorem  to construct a probabilistic weak solution to the problem \re{1.1}. In order to simplify the notation, we define on the probability space $(\Omega',\mathcal{F}',\mathbb{P}')$ the processes $N_m^1$, $N_m^2$, and $N_m^3$, respectively, by for $t\in[0,T]$,
	\begin{equation*}
		N_m^1(t):=-\bar{\bu}_m(t) -\Int_0^t[\eta A_0\bar{\bu}_m(s)+\bp_m^1B_0(\bar{\bu}_m(s),\bar{\bu}_m(s))]ds
		+\bu^m_0+\Int_0^t\bp_m^1R_0(\bar{n}_m(s),\varPhi)ds ,
	\end{equation*}
	\begin{equation*}
		N_m^2(t):=	-\bar{c}_m(t) - \Int_0^t[\xi A_1\bar{c}_m(s)+\bp_m^2B_1(\bar{\bu}_m(s), \bar{c}_m(s))]ds+c^m_0-\Int_0^t \bp_m^2R_1(\bar{n}_m(s),\bar{c}_m(s))ds,
	\end{equation*}
	and 
	\begin{equation*}
		N_m^3(t):=	-	\bar{n}_m(t) -\Int_0^t[\delta A_1\bar{n}_m(s)+\bp_m^2B_1(\bar{\bu}_m(s),\bar{n}_m(s))]ds +n^m_0-\Int_0^t \bp_m^2R_2(\bar{n}_m(s),\bar{c}_m(s))ds.
	\end{equation*}
	\begin{lemma}\label{lem5.13}
For all $m\in \mathbb{N}$ and for any $t\in[0,T]$,  we have
		\begin{equation}\label{5.88}
			N_m^3(t)=0, \qquad\mathbb{P}'\text{-a.s.}
		\end{equation}
	\end{lemma}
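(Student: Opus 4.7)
The statement is purely deterministic in nature once paths have been fixed, since the $n$-equation in \eqref{3.40} carries no stochastic integral. The plan is therefore to write $N_m^3(t)$ as the image of the triple $(\bar{\bu}_m,\bar{c}_m,\bar{n}_m)$ under a measurable functional on $\mathcal{Z}$, exploit the fact that $(\bu_m,c_m,n_m)$ satisfies \eqref{3.40} by construction (so the corresponding residual vanishes almost surely on $(\Omega,\mathcal{F},\mathbb{P})$), and then transport this identity to $(\Omega',\mathcal{F}',\mathbb{P}')$ via the equality of laws recorded in \eqref{5.61}.

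First, I would fix $\psi\in H^3(\bo)$ and $t\in[0,T]$, and consider the real-valued functional $F_{t,\psi}:\mathcal{Z}\to\mathbb{R}$ defined by
\begin{equation*}
F_{t,\psi}(\mathbf{v},\varphi,\zeta)= (\zeta(t),\psi)-(n_0^m,\psi)+\int_0^t\bigl[(A_1\zeta(s),\psi)+(\bp_m^2B_1(\mathbf{v}(s),\zeta(s)),\psi)+(\bp_m^2R_2(\zeta(s),\varphi(s)),\psi)\bigr]ds.
\end{equation*}
The computations already carried out in Lemma \ref{lem4.7} show that $F_{t,\psi}$ is sequentially continuous with respect to the topology of $\mathcal{Z}$: indeed, the four terms there control precisely the evaluation at time $t$ and the three time integrals appearing in $F_{t,\psi}$. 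In particular $F_{t,\psi}$ is Borel measurable on $\mathcal{Z}$.

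By construction of the Galerkin scheme \eqref{3.40} and the projection $\bp_m^2$, the third equation is an equality in $H_m$ which holds pointwise in $t$ and $\mathbb{P}$-a.s.; testing against $\bp_m^2\psi$ and recalling that $\bp_m^2$ is self-adjoint yields $F_{t,\psi}(\bu_m,c_m,n_m)=0$ $\mathbb{P}$-a.s. The equality of laws \eqref{5.61} together with the measurability of $F_{t,\psi}$ then gives
\begin{equation*}
\mathbb{P}'\bigl(F_{t,\psi}(\bar{\bu}_m,\bar{c}_m,\bar{n}_m)=0\bigr)=\mathbb{P}\bigl(F_{t,\psi}(\bu_m,c_m,n_m)=0\bigr)=1.
\end{equation*}
The issue of handling uncountably many test functions and times is dealt with as follows: pick a countable dense subset $\mathcal{D}\subset H^3(\bo)$ and the countable set $\mathbb{Q}\cap[0,T]$. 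Off a single $\mathbb{P}'$-null set, $F_{t,\psi}(\bar{\bu}_m,\bar{c}_m,\bar{n}_m)=0$ for every $\psi\in\mathcal{D}$ and every rational $t$. Density of $\mathcal{D}$ in $H^3(\bo)$, together with the $\mathbb{P}'$-a.s.\ continuity of $t\mapsto\bar{n}_m(t)$ in $H^{-3}(\bo)$ (inherited from $\mathcal{Z}_n\subset\mathcal{C}([0,T];H^{-3}(\bo))$) and the boundedness of the remaining terms in $H^{-3}(\bo)$, extends the identity to every $\psi\in H^3(\bo)$ and every $t\in[0,T]$. This shows that $N_m^3(t)=0$ in $H^{-3}(\bo)$ for all $t\in[0,T]$, $\mathbb{P}'$-a.s., which is \eqref{5.88}.

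I do not expect any serious obstacle here: the only point requiring care is the joint measurability/continuity of each term in $F_{t,\psi}$ with respect to the $\mathcal{Z}$-topology, but this has already been verified termwise in Lemma \ref{lem4.7}. The main conceptual work of this section lies in the analogous statements for $N_m^1$ and $N_m^2$, where the stochastic integrals force one to re-identify noise processes via the martingale representation theorem; by contrast the $n$-equation is deterministic and the argument reduces to a clean transfer of a zero identity across two probability spaces.
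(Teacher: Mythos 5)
Your proof is correct, but it takes a genuinely different route from the paper's. The paper does not argue via equality of laws at all: it invokes the pointwise identification \re{5.61} coming from \cite[Theorem 1.10.4 and Addendum 1.10.5]{Van}, namely the existence of measurable maps $\Psi_m:\Omega'\to\Omega$ with $\bar{\bu}_m(\omega')=\bu_m(\Psi_m(\omega'))$, $\bar{c}_m(\omega')=c_m(\Psi_m(\omega'))$, $\bar{n}_m(\omega')=n_m(\Psi_m(\omega'))$ and $\mathbb{P}=\mathbb{P}'\circ\Psi_m^{-1}$. With this in hand, $N_m^3(t,\omega')=M_m^3(t,\Psi_m(\omega'))$ holds for \emph{every} $\omega'$, where $M_m^3$ is the corresponding residual on the original space, so the event $\{N_m^3(t)=0\}$ is exactly $\Psi_m^{-1}(\{M_m^3(t)=0\})$ and its $\mathbb{P}'$-probability equals $1$ immediately; no measurability of a residual functional on $\mathcal{Z}$, no countable dense set of test functions, and no continuity-in-$t$ argument are needed. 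Your approach, by contrast, uses only equality of laws and therefore must pay for it: you need $F_{t,\psi}$ to be Borel on $\mathcal{Z}$ (your appeal to sequential continuity should really be routed through the metrizable strong components $L^2(0,T;H)\times L^2(0,T;H^1)\times L^2(0,T;L^2)$ and $\mathcal{C}([0,T];H^{-3})$, since the weak topologies entering $\mathcal{Z}$ are not metrizable globally), and you need the density/continuity step to upgrade from countably many $(t,\psi)$ to all of them. Both arguments are sound; the paper's is shorter because the Skorokhod-type representation it uses is pointwise rather than merely in law, while yours is more robust in that it would survive in a setting where only convergence in distribution (and not the pointwise maps $\Psi_m$) is available.
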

	\begin{proof}
Let  $m\in \mathbb{N}$ and $t\in[0,T]$ be arbitrary but fixed. On the probability space $(\Omega,\mathcal{F},\mathbb{P})$, we define  the processes $M_m^3(t)$ by 
		\begin{equation*}
			M_m^3(t):=-n_m(t) -\int_0^t[\delta A_1n_m(s)+\bp_m^2B_1(\bu_m(s),n_m(s))]ds +n_0^m-\int_0^t \bp_m^2R_2(n_m(s),c_m(s))ds.
		\end{equation*}
		We also define the following subsets of $\Omega$ and $\Omega'$
		$$ \mathcal{A}_m^N(t):=\left\{\omega'\in \Omega':  N_m^3(t)=0\right\}\text{ and }\mathcal{A}_m^M(t):=\left\{\omega\in \Omega:  M_m^3(t)=0\right\}.$$
		We note that, since the last equation of \re{3.40} holds,  $\mathbb{P}(\mathcal{A}_m^M(t))=1$. Furthermore,  by \re{5.61}, we derive that for all $\omega'\in \Omega$,  $N_m^3(t, \omega')=M_m^3(t,\Psi_m(\omega'))$ and therefore we observe that $ \mathcal{A}_m^N(t)=\Psi_m^{-1}( \mathcal{A}_m^M(t))$. Invoking  \re{5.61} once more,  we deduce that $$ \mathbb{P}'(\mathcal{A}_m^N(t))= \mathbb{P}'(\Psi^{-1}_m(\mathcal{A}_m^M(t)))=\mathbb{P}(\mathcal{A}_m^M(t))=1,$$
which completes the proof of Lemma \ref{lem5.13}.
	\end{proof}
	Using the convergences \re{4.115*} and \re{4.106} as well as 
	Lemma \ref{lem5.13} we see that  for all $t\in [0,T]$, $\mathbb{P}'$-a.s. 
	\begin{equation}\label{5.89}
		n(t) +\Int_0^t[\delta A_1n(s)+B_1(\bu(s),n(s))]ds =n_0-\Int_0^t R_2(n(s),c(s))ds, \qquad \text{in } H^{-3}(\bo).
	\end{equation}
	Now, on the probability space $(\Omega',\mathcal{F}',\mathbb{P}')$ we define a  the $\bh_m\times H_m$-valued processes $N_m$ by  $N_m(t)=(N^1_m(t),N^2_m(t) )$ for all $m\geq 1$ and $t\in [0,T]$. Since 
	\begin{equation}
		\bh_m\times H_m\subset H\times L^2(\bo)\hookrightarrow V^*\times H^{-2}(\bo),\label{5.90}
	\end{equation}
	the process $N_m$ can be seen as a $V^*\times H^{-2}(\bo)$-valued process.  
	
	Next,  we  collect the necessary ingredients for the application of the martingale representation theorem from \cite[Theorem 8.2]{Da}. To this aim, we consider the following Gelfand triple $V
	\hookrightarrow H\hookrightarrow V^*$ and $ H^2(\bo)
	\hookrightarrow  L^2(\bo)\hookrightarrow H^{-2}(\bo)$.  Let  $i^1:V
	\hookrightarrow H$ be the usual embedding and $i^{1*}$ its Hilbert-space-adjoint such that $(ix,y)=(x,i^{1*}y)_V$ for all $x\in V$ and $y\in H$.  In a very similar way,  we denote the usual embedding $H^2(\bo)
	\hookrightarrow  L^2(\bo)$ by $i^2$ and by $i^{*2}$ its Hilbert-space-adjoint. 
	We define the   embedding $i:V\times H^2(\bo)
	\hookrightarrow H\times L^2(\bo)$  and its adjoint  $i^*: H\times L^2(\bo)\longrightarrow V\times H^2(\bo)$ respectively by 
	\begin{equation*}
		i=\begin{pmatrix}
			i^1&0\\
			0&i^2
		\end{pmatrix}, \qquad  	i^*=\begin{pmatrix}
			i^{1*}&0\\
			0&i^{2*}
		\end{pmatrix}. 
	\end{equation*}
	
	Further, we set $L^1=(i^{1*})':V^*\longrightarrow H$  as the dual operator of $i^{1*}$ such that  for all $x\in H$ and $y\in V^*$, $(Ly,x)=\langle y,x\rangle$.  Similarly, the dual operator of  $i^{2*}$  will be denoted by $L^2:H^{-2}(\bo)\longrightarrow L^2(\bo)$.  We then define the following dual operator $L:=(i^*)':V^*\times H^{-2}(\bo)\longrightarrow H\times L^2(\bo)$ by 
	\begin{equation*}
		L=\begin{pmatrix}
			L^{1}&0\\
			0&L^{2}
		\end{pmatrix}. 
	\end{equation*}
	On the space  $\bh_m\times H_m$, we define  a mapping $G_m$ by 
	\begin{equation*}
		G_m	(\bv,\psi)=\begin{pmatrix}
			L^1\bp_m^1g(\bv,\psi)&0\\
			0&L^2\bp_m^2\phi(\psi)
		\end{pmatrix}, \qquad  (\bv,\psi)\in \bh_m\times H_m.
	\end{equation*}
	Here $(\bp_m^1g(\bv,\psi), \bp_m^2\phi(\psi))=(\bp_m^1g(\bv,\psi), \bp_m^2\phi(\psi))$ is seen as an element of $V^*\times H^{-2}(\bo)$ owing to the inclusion \re{5.90}.
	
	In the following lemma, we prove the martingale property of the process $LN_m$.
	\begin{lemma}
		For each $m\geq 1$, the process $LN_m$ is an $H\times L^2(\bo)$-valued continuous square integrable martingale with respect to the filtration $$\mathbb{F}^{'m}=\left\{\sigma\left(\sigma\left(  (\bar{\bu}_{m}(s),\bar{c}_{m}(s),\bar{n}_{m}(s)); s\leq t\right)\cup\mathcal{N}'\right)\right\}_{t\in [0,T]},$$ where $\mathcal{N}'$ is the set of null sets of $\mathcal{F}'$. The quadratic variation of $LN_m$ is given by 
		\begin{equation}\label{5.90*}
			\langle\langle LN_m\rangle\rangle_t=\int_0^tG_m	(\bar{\bu}_m(s),\bar{c}_m(s))G_m	(\bar{\bu}_m(s),\bar{c}_m(s))^*ds,
		\end{equation}
		where $G_m(\bar{\bu}_m,\bar{c}_m)^*:H\times L^2(\bo)\to \buc\times\mathbb{R}^2$ is the adjoint of the operator $G_m(\bar{\bu}_m,\bar{c}_m)$ and is given by
		$$G_m(\bar{\bu}_m,\bar{c}_m)^*\bv=\left( \sum_{k=1}^\infty(\bp_m^1g(\bar{\bu}_m,\bar{c}_m)e_k,i^{1*}\bw)e_k, \sum_{k=1}^2(\bp_m^2\phi(\bar{c}_m)g_k,i^{2*}\psi)g_k \right),$$
		for all $\bv=(\bw,\psi)\in H\times L^2(\bo)$.
	\end{lemma}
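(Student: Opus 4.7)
The plan is to transfer the martingale property from the original probability space $(\Omega,\mathcal{F},\mathbb{P})$ to the Skorokhod space $(\Omega',\mathcal{F}',\mathbb{P}')$ by exploiting the equality of laws \re{5.61}. First I would observe that, by construction of $N_m^1$ and $N_m^2$ together with the Galerkin system \re{3.40}, the corresponding processes on the original space, namely
\begin{equation*}
M_m^1(t):=\int_0^t\bp_m^1 g(\bu_m(s),c_m(s))\,dW_s,\qquad M_m^2(t):=\gamma\int_0^t\bp_m^2\phi(c_m(s))\,d\beta_s,
\end{equation*}
satisfy $N_m^i(t,\omega')=M_m^i(t,\Psi_m(\omega'))$ for $i=1,2$. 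Because $M_m^1$ and $M_m^2$ are It\^o stochastic integrals with respect to a cylindrical Wiener process, each is a continuous $H$-valued (respectively $L^2(\mathcal{O})$-valued) martingale with respect to the natural filtration of $(\bu_m,c_m,n_m)$, and Lemma \ref{proposition 3.3} combined with Assumption \ref{ass_3} and $\abs{\sigma_k}_{L^\infty}<\infty$ gives square-integrability in the sense that $\be\abs{M_m^i(t)}^2_H$ and $\be\abs{M_m^i(t)}^2_{L^2}$ are finite uniformly in $t\in[0,T]$.

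Next I would establish the martingale property of $LN_m$ with respect to $\mathbb{F}^{'m}$. The standard way is to verify that for every $0\leq s\leq t\leq T$ and every bounded continuous functional $\Theta$ on $\mathcal{C}([0,s];V^*\times L^2(\mathcal{O})\times H^{-3}(\mathcal{O}))$,
\begin{equation*}
\be'\bigl[\Theta(\bar{\bu}_m|_{[0,s]},\bar{c}_m|_{[0,s]},\bar{n}_m|_{[0,s]})\cdot (LN_m(t)-LN_m(s))\bigr]=0,
\end{equation*}
and that this quantity equals the analogous expression computed on $(\Omega,\mathcal{F},\mathbb{P})$ with $LM_m(t)-LM_m(s)$ in place of $LN_m(t)-LN_m(s)$. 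The equality of laws \re{5.61} forces the two expectations to coincide, and the latter vanishes because $M_m^1,M_m^2$ are $(\mathcal{F}_t)$-martingales. Adaptedness of $N_m^i$ to $\mathbb{F}^{'m}$ is built in, since $N_m^i(t)$ is, by construction, a measurable functional of $(\bar{\bu}_m|_{[0,t]},\bar{c}_m|_{[0,t]},\bar{n}_m|_{[0,t]})$ plus deterministic initial data. Path continuity of $LN_m$ transfers in exactly the same way, using the continuous paths of $M_m^i$.

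For the quadratic variation formula \re{5.90*}, I would use the same transfer argument applied now to the process $t\mapsto LN_m(t)\otimes LN_m(t)-\int_0^t G_m(\bar{\bu}_m(s),\bar{c}_m(s))G_m(\bar{\bu}_m(s),\bar{c}_m(s))^*\,ds$. On $(\Omega,\mathcal{F},\mathbb{P})$ the It\^o isometry for stochastic integrals against $(W,\beta)$ on the Hilbert space $\buc\times\mathbb{R}^2$ yields
\begin{equation*}
\langle\langle LM_m\rangle\rangle_t=\int_0^tG_m(\bu_m(s),c_m(s))G_m(\bu_m(s),c_m(s))^*\,ds,
\end{equation*}
by the standard formula $\langle\langle\int_0^\cdot\Phi\,dW\rangle\rangle_t=\int_0^t\Phi\Phi^*\,ds$, and the block-diagonal structure of $G_m$ ensures that the cross terms between the $W$-integral and the $\beta$-integral vanish (since $W$ and $\beta$ are independent). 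Then, because $G_m(\bar{\bu}_m,\bar{c}_m)G_m(\bar{\bu}_m,\bar{c}_m)^*=G_m(\bu_m,c_m)G_m(\bu_m,c_m)^*\circ\Psi_m$ and the map $(\bu_m,c_m)\mapsto G_mG_m^*$ is measurable, the Doob--Meyer uniqueness of the angle bracket and the law equality \re{5.61} together force \re{5.90*}.

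The main obstacle in this argument is the filtration-transfer step: verifying that the filtration $\mathbb{F}^{'m}$ generated by $(\bar{\bu}_m,\bar{c}_m,\bar{n}_m)$ is rich enough to make $LN_m$ a martingale. This is not immediate because the Skorokhod representation only provides equality of \emph{laws of paths}, not of filtrations. The remedy is the standard trick of characterising martingales through bounded continuous test functionals of the path restricted to $[0,s]$, which depends only on the finite-dimensional distributions of $(\bar{\bu}_m,\bar{c}_m,\bar{n}_m)$. Once this characterisation is set up carefully, everything else reduces to routine application of It\^o isometry and the estimates already established in Lemma \ref{proposition 3.2} and Lemma \ref{proposition 3.3}.
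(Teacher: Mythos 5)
Your proposal is correct and follows essentially the same route as the paper: identify $N_m$ with the stochastic integral $\int_0^\cdot G_m\,d\mathbf{W}$ on the original space via the Galerkin equation, establish square integrability from the a priori estimates and Assumption \ref{ass_3}, characterise the martingale and quadratic-variation properties through bounded continuous test functionals of the paths restricted to $[0,s]$, and transfer them to $(\Omega',\mathcal{F}',\mathbb{P}')$ using the equality of laws \re{5.61}. The filtration subtlety you flag is exactly the one the paper handles, and it handles it the same way.
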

	\begin{proof}
		For any $m\geq 1$ we define the  $V^*\times H^{-2}(\bo)$-valued processes $M_m$ by  $$M_m(t)=(M^1_m(t),M^2_m(t) ), \quad t\in [0,T],$$ where
		\begin{equation*}
			M_m^1(t):=-\bu_m(t) -\Int_0^t[\eta A_0\bu_m(s)+\bp_m^1B_0(\bu_m(s),\bu_m(s))]ds
			+\bu^m_0+\Int_0^t\bp_m^1R_0(n_m(s),\varPhi)ds ,
		\end{equation*}
		\begin{equation*}
			M_m^2(t):=	-c_m(t) - \Int_0^t[\xi A_1c_m(s)+\bp_m^2B_1(\bu_m(s), c_m(s))]ds+c^m_0-\Int_0^t \bp_m^2R_1(n_m(s),c_m(s))ds.
		\end{equation*}
Let us set $\mathbf{W}_s:=(W_s,\beta_s)$.  Then, since $(\bu_m, c_m,n_m )$ is a solution of the finite dimensional system \re{3.40}, we deduce that $LM_m$ can be represented as
		\begin{equation*}
			LM_m(t)=\int_0^t	G_m	(\bu_m(s),c_m(s))d\mathbf{W}_s,\quad \mathbb{P}\text{-a.s.} \quad\text{for all } t\in[0.T].
		\end{equation*}
Using the continuity property of the operators $L^1$ and $L^2$  as well as Corollary \ref{cor4.11}, the estimate 
		\begin{align}
			&\be\int_{0}^{T}\abs{G_m(\bu_m(s),c_m(s))}^2_{\mathcal{L}^2(\buc\times\mathbb{R}^2,H\times L^2)}ds\notag\\
			&\leq\bk\be\int_{0}^{T}\abs{\bp_m^1g(\bu_m(s),c_m(s))}^2_{\mathcal{L}^2(\buc,H)}ds+\bk\be\int_{0}^{T}\abs{\bp_m^2\phi(c_m(s))}^2_{\mathcal{L}^2(\mathbb{R}^2,L^2)}ds\notag\\
			&\leq \bk\be\int_{0}^{T}(1+\abs{(\bu_m(s),c_m(s))}_{\bhc}^2)ds+\gamma^2 \sum_{k=1}^2\abs{\sigma_k}^2_{L^2}\be\int_{0}^{T}\abs{\nabla c_m(s)}_{L^2}^2ds\notag\\
			&\leq\bk\left(1+ \be\sup_{0\leq s\leq T}\abs{(\bu_m(s),c_m(s))}_{\bhc}^2\right)+ \bk \be\sup_{0\leq s\leq T}\abs{\nabla c_m(s)}^2_{L^2}<\infty,\notag
		\end{align}
		yields that $M_m$ is a square integrable continuous martingale over the probability space $(\Omega,\mathcal{F},(\mathcal{F}_t)_{t\in[0,T]},\mathbb{P})$.  Moreover, from the definition of $M_m$ we derive that for each $t\in[0.T]$,  $M_m(t)$ is measurable with respect to the $\sigma$-field $$\mathbb{F}^{m}=\left\{\sigma\left(\sigma\left(  (\bu_{m}(s),c_{m}(s),n_{m}(s)); s\leq t\right)\cup \mathcal{N}\right)\right\}_{t\in [0,T]},$$ where $\mathcal{N}$ is the set of null sets of $\mathcal{F}$. Hence, invoking \cite[Theorem 4.27]{Da} we infer that $M_m$ is a $\mathbb{F}^{m}$-martingale with quadratic variation 
		\begin{equation*}
			\langle\langle M_m\rangle\rangle_t=\int_0^tG_m	(\bu_m(s),c_m(s))G_m	(\bu_m(s),c_m(s))^*ds.
		\end{equation*}
		This means that  for all $s,t\in [0,T]$, $s\leq t$, all  $\bv_i=(\bw_i,\psi_i)\in H\times L^2(\bo)$, $i=1,2$, and  all bounded and continuous  real-valued functions $h=(h_1,h_2,h_3)$ on $\mathcal{C}([0,T]; H\times  L^2(\bo)\times  L^2(\bo))$, we have
		\begin{equation*}
			\be\left[\left(LM_m(t)-LM_m(s),\bv_1\right)_{H\times L^2(\bo)}h_1(\bu_m|_{[0,s]})h_2(c_m|_{[0,s]})h_3(n_m|_{[0,s]})\right]=0,
		\end{equation*}
		and 
		\begin{align*}
			&\be\left[\left(\left(LM_m(t),\bv_1\right)_{H\times L^2(\bo)}\left(LM_m(t),\bv_2\right)_{H\times L^2(\bo)}-\left(LM_m(s),\bv_1\right)_{H\times L^2(\bo)}\left(LM_m(s),\bv_2\right)_{H\times L^2(\bo)}\right.\right.\\
			&\qquad	\left.\left. -\int_0^t\left(G_m	(\bu_m(s),c_m(s))^*\bv_1,G_m	(\bu_m(s),c_m(s))^*\bv_2\right)_{\buc\times \mathbb{R}^2}ds \right)\times\right.\\
			&\qquad\left.\times h_1(\bu_m|_{[0,s]})h_2(c_m|_{[0,s]})h_3(n_m|_{[0,s]})\right]=0.
		\end{align*}
		Since $(\bu_m,c_m,n_m)$ and $(\bar{\bu}_m,\bar{c}_m,\bar{n}_m)$ have the same laws on $\mathcal{C}([0,T]; \bhc_m)$, we deduce from these two last equalities that 
		\begin{equation}
			\be'\left[\left(LN_m(t)-LN_m(s),\bv_1\right)_{H\times L^2(\bo)}h_1(\bar{\bu}_m|_{[0,s]})h_2(\bar{c}_m|_{[0,s]})h_3(\bar{n}_m|_{[0,s]})\right]=0,\label{5.91*}
		\end{equation}
		and 
		\begin{align}
			&\be'\left[\left(\left(LN_m(t),\bv_1\right)_{H\times L^2(\bo)}\left(LN_m(t),\bv_2\right)_{H\times L^2(\bo)}\right.\right.\notag\\
			&	\left.\left.\qquad-\left(LN_m(s),\bv_1\right)_{H\times L^2(\bo)}\left(LN_m(s),\bv_2\right)_{H\times L^2(\bo)}\right.\right.\notag\\
			&	\left.\left. \qquad-\int_0^t\left(G_m	(\bar{\bu}_m(s),\bar{c}_m(s))^*\bv_1,G_m	(\bar{\bu}_m(s),\bar{c}_m(s))^*\bv_2\right)_{\buc\times \mathbb{R}^2}ds \right)\times\right.\label{5.92*}\\
			&\qquad\left.\times h_1(\bar{\bu}_m|_{[0,s]})h_2(\bar{c}_m|_{[0,s]})h_3(\bar{n}_m|_{[0,s]})\right]=0,\notag
		\end{align}
		for all $s,t\in [0,T]$, $s\leq t$, all  $\bv_i=(\bw_i,\psi_i)\in H\times L^2(\bo)$, $i=1,2$, and  all (real-valued) function $h_i$, $i=1,2,3$ bounded and continuous on $\mathcal{C}([0,T]; \bhc_m)$, $\mathcal{C}([0,T];  H_m)$, and $ \mathcal{C}([0,T];  H_m)$ respectively,
		This implies that $LN_m$ is a continuous square integrable martingale with respect to $\mathbb{F}^{'m}$ and the quadratic variation is given as claimed by equality \re{5.90*}.
	\end{proof}
	On the new probability space $(\Omega',\mathcal{F}',\mathbb{P}')$, we consider the $V^*\times H^{-2}(\bo)$-valued continuous process $N$ defined by $N(t)=(N^1(t),N^2(t))$ for all $t\in [0,T]$, where
	\begin{equation*}
		N^1(t):=-\bu(t) -\int_0^t[\eta A_0\bu(s)+B_0(\bu(s),\bu(s))]ds
		+\bu_0+\int_0^tR_0(n(s),\varPhi)ds ,
	\end{equation*}
	\begin{equation*}
		N^2(t):=	-c(t) - \int_0^t[\xi A_1c(s)+B_1(\bu(s), c(s))]ds+c_0-\int_0^t R_1(n(s),c(s))ds.
	\end{equation*}
	In the next lemma, we state that $LN=(L^1N^1,L^2N^2)$ is also an $H\times L^2(\bo)$-valued martingale.
	\begin{lemma}\label{lem5.15}
		The process $LN$ is an $H\times L^2(\bo)$-valued continuous square integrable martingale with respect to the filtration $\mathbb{F}'=\left\{\sigma\left(  (\bu(s),c(s),n(s)); s\leq t\right)\right\}_{t\in[0,T]}$. The quadratic variation is given by 
		\begin{equation*}
			\langle\langle LN\rangle\rangle_t=\int_0^tG	(\bu(s),c(s))G	(\bu(s),c(s))^*ds, 
		\end{equation*} 
		where \begin{equation*}
			G	(\bu,c)=\begin{pmatrix}
				L^1g(\bu,c)&0\\
				0&L^2\phi(c)
			\end{pmatrix},
		\end{equation*}
		and  $G	(\bu,c)^*:H\times L^2(\bo)\to \buc\times\mathbb{R}^2$ is the adjoint of the operator $G	(\bu,c)$ given by
		$$G	(\bu,c)^*\bv=\left( \sum_{k=1}^\infty(L^1g(\bu(s),c(s))e_k,\bw)e_k, \sum_{k=1}^2(L^2\phi(c(s))g_k,\psi)g_k \right),$$
		for all $\bv=(\bw,\psi)\in H\times L^2(\bo)$.
	\end{lemma}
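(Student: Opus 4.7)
The plan is to obtain the result by passing to the limit $m \to \infty$ in the identities \re{5.91*} and \re{5.92*} that the previous lemma establishes for $LN_m$. First I would test against $\bv_i \in V \times H^2(\bo)$, and use Lemmas \ref{lem4.7}, \ref{lem4.8}, \ref{lem4.9} combined with the convergence of initial data \re{4.115*} to deduce that $\mathbb{P}'$-a.s.
\begin{equation*}
(LN_m(t), \bv_i)_{H \times L^2(\bo)} \longrightarrow (LN(t), \bv_i)_{H \times L^2(\bo)}
\end{equation*}
for every $t \in [0,T]$. Since $V \times H^2(\bo)$ is dense in $H \times L^2(\bo)$ and the $LN_m(t)$ will be uniformly bounded in $L^2(\Omega'; H \times L^2(\bo))$, the identification extends by density.

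Next, I would establish uniform integrability of the relevant scalar random variables. Combining the a priori bounds \re{4.95}--\re{4.97} with the linear growth of $g$ in Assumption \ref{ass_3}, the Lipschitz structure of $\phi$, and the bounds used in Lemma \ref{proposition4.4} for the deterministic terms, one obtains
\begin{equation*}
\sup_m \be' \bigl|(LN_m(t), \bv_i)_{H\times L^2(\bo)}\bigr|^p < \infty
\end{equation*}
for some $p > 1$. The Vitali convergence theorem then allows passing the limit through the expectation in \re{5.91*}. Passage with the conditioning functionals $h_i$ is justified because $(\bar{\bu}_m, \bar{c}_m, \bar{n}_m) \to (\bu,c,n)$ $\mathbb{P}'$-a.s.\ in $\bz_\bu \times \bz_c \times \bz_n$ and each $h_i$ is continuous and bounded, so $h_i(\bar{\bu}_m|_{[0,s]}) h_2(\bar{c}_m|_{[0,s]}) h_3(\bar{n}_m|_{[0,s]}) \to h_1(\bu|_{[0,s]}) h_2(c|_{[0,s]}) h_3(n|_{[0,s]})$ boundedly. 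This yields the martingale identity for $LN$ with respect to $\mathbb{F}'$.

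For the quadratic variation, the essential ingredient is the convergence of
\begin{equation*}
\int_0^t \bigl(G_m(\bar{\bu}_m(s),\bar{c}_m(s))^*\bv_1, G_m(\bar{\bu}_m(s),\bar{c}_m(s))^*\bv_2\bigr)_{\buc\times\mathbb{R}^2}\,ds
\end{equation*}
to the corresponding expression with $G(\bu,c)^*$. The strong convergence $\bar{\bu}_m \to \bu$ in $L^2(0,T;H)$ and $\bar{c}_m \to c$ in $L^2(0,T;H^1(\bo))$ from \re{3.80}, together with the continuity (and when invoked, Lipschitz continuity from Assumption \ref{ass_4}) of $g$ and the linearity of $\phi$, yields $G_m(\bar{\bu}_m,\bar{c}_m) \to G(\bu,c)$ in $L^2(0,T;\mathcal{L}^2(\buc\times\mathbb{R}^2, H \times L^2(\bo)))$ $\mathbb{P}'$-a.s. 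The bounds \re{4.95}--\re{4.97} applied with a power $p > 1$ give uniform integrability, so a second application of Vitali's theorem transfers \re{5.92*} to the limit.

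The main obstacle is controlling the nonlinear term $g(\bar{\bu}_m,\bar{c}_m)$ in the quadratic variation: we need strong convergence of $(\bar{\bu}_m, \bar{c}_m)$ in the $\bhc$-norm rather than only weak, and this is precisely supplied by \re{3.80}. A secondary technical point is verifying that $\bp_m^1 g(\bar{\bu}_m, \bar{c}_m) \to g(\bu,c)$ (and similarly for $\bp_m^2 \phi(\bar{c}_m)$) so that no stray projections survive in the limit; this follows from the strong convergence $\bp_m^1 \to I$ and $\bp_m^2 \to I$ on $H$ and $L^2(\bo)$, respectively, together with the uniform boundedness of $\{g(\bar{\bu}_m,\bar{c}_m)\}$ in $\mathcal{L}^2(\buc,H)$ guaranteed by Assumption \ref{ass_3} and the a priori estimates.
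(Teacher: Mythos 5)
Your proposal is correct and follows essentially the same route as the paper: pass to the limit in the finite-dimensional martingale identities \re{5.91*} and \re{5.92*} using the $\mathbb{P}'$-a.s. convergence in $\bz$ together with Lemmas \ref{lem4.7}--\ref{lem4.9}, obtain uniform integrability from the higher-moment bounds \re{4.95}--\re{4.97} and the linear growth of $g$, apply the Vitali theorem, and handle the quadratic-variation integrand by exploiting the strong convergence of $(\bar{\bu}_m,\bar{c}_m)$ and the vanishing of the projection defects $\bp_m^1, \bp_m^2$. The only cosmetic difference is that you test against $\bv_i\in V\times H^2(\bo)$ and invoke density, whereas the paper pairs directly with $\bv_i\in H\times L^2(\bo)$; this changes nothing of substance.
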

	\begin{proof}
		Let $t\in [0,T]$. We first prove that 	$LN$ is an $H\times L^2(\bo)$-valued  square integrable random variable.  Thanks to the continuity of $L$, it will be sufficient to prove that $\be \abs{N}_{V^*\times H^{-2}}^2<\infty$. Using Lemma \ref{lem4.8} and Lemma \ref{lem4.9}, we conclude that 
		$$\lim_{m\longrightarrow\infty} N_m(t)=N(t)\qquad \mathbb{P}'\text{-a.s. in } \ V^*\times H^{-2}(\bo).$$
		By the continuity of the injection $H\times L^2(\bo)\hookrightarrow V^*\times H^{-2}(\bo)$, the  Burkholder-Gundy-Davis inequality for continuous martingales and equality \re{5.90*} as well as inequalities \re{4.95} and \re{4.97}, we have
		\begin{align}
			\be'\sup_{0\leq s\leq T}\abs{N_m(s)}_{V^*\times H^{-2}}^4&\leq \bk\be'\sup_{0\leq s\leq T}\abs{N_m(s)}_{L^2\times L^{2}}^4\notag\\
			&\leq\bk\be'\left(\int_{0}^{T}\abs{G_m(\bar{\bu}_m(s),\bar{c}_m(s))}^2_{\mathcal{L}^2(\buc\times\mathbb{R}^2,H\times L^2)}ds\right)^2\notag\\
			&=2\bk\be'\left(\int_{0}^{T}\abs{\bp_m^1g(\bar{\bu}_m(s),\bar{c}_m(s))}^2_{\mathcal{L}^2(\buc,H)}ds\right)^2\notag\\
			&\qquad{}+2\bk\be'\left(\int_{0}^{T}\abs{\bp_m^2\phi(\bar{c}_m(s))}^2_{\mathcal{L}^2(\mathbb{R}^2,L^2)}ds\right)^2\label{5.91}\\
			%\bk\be\int_{0}^{T}(1+\abs{(\bar{\bu}_m(s),\bar{c}_m(s))}_{\bhc}^2)ds+\gamma^2 \sum_{k=1}^2\abs{\sigma_k}^2_{L^2}\be\int_{0}^{T}\abs{\nabla \bar{c}_m(s)}_{L^2}^2ds\notag\\
			&\leq\bk\left(1+ \be'\sup_{0\leq s\leq T}\abs{(\bar{\bu}_m(s),\bar{c}_m(s))}_{\bhc}^4\right)+ \bk \be'\sup_{0\leq s\leq T}\abs{\nabla \bar{c}_m(s)}^4_{L^2}<\bk.\notag
		\end{align}
		Hence, by the Vitali Theorem, we infer that $N(t)\in L^2(\Omega'; V^*\times H^{-2}(\bo))$ and 
		$$\lim_{m\longrightarrow\infty} N_m(t)=N(t)\ \text{ in } \ L^2(\Omega'; V^*\times H^{-2}(\bo)).$$
		Next, let $\bv=(\bw,\psi)\in V^*\times H^{-2}(\bo)$, and  $h_i$, $i=1,2,3$ be a bounded and continuous function on $\mathcal{C}([0,T]; V^*)$, $\mathcal{C}([0,T];  H^{-2}(\bo))$, and $ \mathcal{C}([0,T];  H^{-3}(\bo))$ respectively. Let $s,t\in [0,T]$ such that  $ s\leq t$. Let
		\begin{align*}
			&F_m(t,s):=\left(LN_m(t)-LN_m(s),\bv\right)_{H\times L^2(\bo)}h_1(\bar{\bu}_m|_{[0,s]})h_2(\bar{c}_m|_{[0,s]})h_3(\bar{n}_m|_{[0,s]}),\\
			&F(t,s):=\left(LN(t)-LN(s),\bv\right)_{H\times L^2(\bo)}h_1(\bu|_{[0,s]})h_2(c|_{[0,s]})h_3(n|_{[0,s]}).
		\end{align*} 
		We will prove that \begin{equation}
			\lim_{m\longrightarrow\infty}\be'F_m(t,s)=\be'F(t,s).\label{5.94}
		\end{equation}
		To this aim, we start by noting that  by the $\mathbb{P}'$-a.s.-convergence $(\bar{\bu}_{m},\bar{c}_{m},\bar{n}_{m})\to (\bu,c,n)$ in $\bz$ and Lemma \ref{lem4.8} as well as Lemma \ref{lem4.9},  we infer that  $$\lim_{m\longrightarrow\infty} F_m(t,s)=F(t,s),\qquad \mathbb{P}'\text{-a.s. } $$
		We will now show that the function $\{F_m(t,s)\}_{m\geq 1}$ are uniformly integrable.
		We use the estimate \re{5.91} to derive that
		\begin{align*}
			\be'\abs{F_m(t,s)}^4&\leq \bk \abs{h_1}^4_{L^\infty}\abs{h_2}^4_{L^\infty}\abs{h_3}^4_{L^\infty}\abs{\bv}^4_{H\times L^2}\be'\left[\abs{N_m(t)}_{L^2\times L^{2}}^4+\abs{N_m(s)}_{L^2\times L^{2}}^4\right]\\
			&\leq \bk \abs{h_1}^4_{L^\infty}\abs{h_2}^4_{L^\infty}\abs{h_3}^4_{L^\infty}\abs{\bv}^4_{H\times L^2}.
		\end{align*}
		Invoking the Vitali Theorem, we get  the convergence \re{5.94}.
		
Let  $0\leq s\leq t\leq T$ and $\bv_i=(\bw_i,\psi_i)\in H\times L^2(\bo)$, $i=1,2$. Let
		\begin{align*}
			Q_m(t,s):&=\left(\left(LN_m(t),\bv_1\right)_{H\times L^2(\bo)}\left(LN_m(t),\bv_2\right)_{H\times L^2(\bo)}\right.\\
			&\left.-\left(LN_m(s),\bv_1\right)_{H\times L^2(\bo)}\left(LN_m(s),\bv_2\right)_{H\times L^2(\bo)}\right)h_1(\bar{\bu}_m|_{[0,s]})h_2(\bar{c}_m|_{[0,s]})h_3(\bar{n}_m|_{[0,s]}),
		\end{align*} 
		\begin{align*}
			Q(t,s):&=\left(\left(LN(t),\bv_1\right)_{H\times L^2(\bo)}\left(LN(t),\bv_2\right)_{H\times L^2(\bo)}\right.\\
			&\left.-\left(LN(s),\bv_1\right)_{H\times L^2(\bo)}\left(LN(s),\bv_2\right)_{H\times L^2(\bo)}\right)h_1(\bu|_{[0,s]})h_2(c|_{[0,s]})h_3(n|_{[0,s]}).
		\end{align*} 
		Our purpose now is to prove that  
		\begin{equation}
			\be'Q(t,s)=\lim_{m\longrightarrow\infty}\be'Q_m(t,s),\label{5.92}
		\end{equation} 
imitating the proof before. Indeed, by $\mathbb{P}'$-a.s.-convergence $(\bar{\bu}_{m},\bar{c}_{m},\bar{n}_{m})\to (\bu,c,n)$ in $\bz$ and Lemma \ref{lem4.8} as well as Lemma \ref{lem4.9} once more, we obtain $$\lim_{m\longrightarrow\infty} Q_m(t,s)=Q(t,s),\qquad \mathbb{P}'\text{-a.s. } $$
We now prove the  uniform integrability of $Q_m(t,s)$.  For this purpose, by  \re{5.91}  we find that
		\begin{align*}
			\be'\abs{Q_m(t,s)}^2&\leq \bk \abs{h_1}^2_{L^\infty}\abs{h_2}^2_{L^\infty}\abs{h_3}^2_{L^\infty}\be'\left[\abs{\left(N_m(t),\bv_1\right)_{H\times L^2(\bo)}\left(N_m(t),\bv_2\right)_{H\times L^2(\bo)}}^2\right.\\
			&\qquad{}\left.+\abs{\left(N_m(s),\bv_1\right)_{H\times L^2(\bo)}\left(N_m(s),\bv_2\right)_{H\times L^2(\bo)}}^2\right]\\
			&\leq \bk \abs{h_1}^2_{L^\infty}\abs{h_2}^2_{L^\infty}\abs{h_3}^2_{L^\infty}\abs{\bv_1}^2_{H\times L^2}\abs{\bv_2}^2_{H\times L^2}\be'\left[\abs{N_m(t)}_{L^2\times L^{2}}^4+\abs{N_m(s)}_{L^2\times L^{2}}^4\right]\\
			&\leq \bk \abs{h_1}^2_{L^\infty}\abs{h_2}^2_{L^\infty}\abs{h_3}^2_{L^\infty}\abs{\bv}^2_{H\times L^2}.
		\end{align*}
		As before, the Vitali Theorem yields equality \re{5.92}.
		
		Finally, we also  define 
		\begin{align*}
			R_m(t,s):=&\left(\int_s^t\left(G_m(\bar{\bu}_m(r),\bar{c}_m(r))^*\bv_1,G_m	(\bar{\bu}_m(r),\bar{c}_m(r))^*\bv_2\right)_{\buc\times\mathbb{R}^2}dr\right)\times\\
			&\times h_1(\bar{\bu}_m|_{[0,s]})h_2(\bar{c}_m|_{[0,s]})h_3(\bar{n}_m|_{[0,s]}),
		\end{align*}
		and 
		\begin{equation*}
			R(t,s):=\left(\int_s^t\left(G(\bu(r),c(r))^*\bv_1,G	(\bu(r),c(r))^*\bv_2\right)_{\buc\times\mathbb{R}^2}dr\right)h_1(\bu|_{[0,s]})h_2(c|_{[0,s]})h_3(n|_{[0,s]}),
		\end{equation*}
		We claim that 
		\begin{equation}
			\lim_{m\longrightarrow\infty}\be'R_m(t,s)=\be'R(t,s).\label{4.93}
		\end{equation}
In order to establish this claim we first show that
		\begin{equation}
			\lim_{m\longrightarrow\infty}R_m(t,s)=R(t,s),\qquad\mathbb{P}'\text{-a.s}.\label{4.94*}
		\end{equation}
Since $h_1(\bar{\bu}_m|_{[0,s]})h_2(\bar{c}_m|_{[0,s]})h_3(\bar{n}_m|_{[0,s]})\to h_1(\bu|_{[0,s]})h_2(c|_{[0,s]})h_3(n|_{[0,s]})$ $\mathbb{P}$-a.s., in order to prove \re{4.94*},  it is  sufficient to prove that 
		\begin{align}
			&	\lim_{m\longrightarrow\infty}\int_s^t\left(G_m(\bar{\bu}_m(r),\bar{c}_m(r))^*\bv_1,G_m	(\bar{\bu}_m(r),\bar{c}_m(r))^*\bv_2\right)_{\buc\times\mathbb{R}^2}dr\notag\\
			&=\int_s^t\left(G(\bu(r),c(r))^*\bv_1,G	(\bu(r),c(r))^*\bv_2\right)_{\buc\times\mathbb{R}^2}dr,\qquad\mathbb{P}'\text{-a.s}.\label{5.95}
		\end{align}
		For all $r\in [s,t]$, we set
		\begin{align*}
			J(r):=&\left(G_m(\bar{\bu}_m(r),\bar{c}_m(r))^*\bv_1,G_m	(\bar{\bu}_m(r),\bar{c}_m(r))^*\bv_2\right)_{\buc\times\mathbb{R}^2}\notag\\
			&\qquad{}-\left(LG(\bu(r),c(r))^*\bv_1,LG	(\bu(r),c(r))^*\bv_2\right)_{\buc\times\mathbb{R}^2}.
		\end{align*}
		Then, we note that
		\begin{align}
			\int_s^t\abs{J(r)}dz
			&\leq \int_0^T\abs{\left(G_m(\bar{\bu}_m(r),\bar{c}_m(r))^*\bv_1-LG(\bu(r),c(r))^*\bv_1,G_m	(\bar{\bu}_m(r),\bar{c}_m(r))^*\bv_2\right)_{\buc\times\mathbb{R}^2}}dr\notag\\
			&\qquad{}+ \int_0^T\abs{\left(G	(\bu(r),c(r))^*\bv_1,G_m(\bar{\bu}_m(r),\bar{c}_m(r))^*\bv_2-G(\bu(r),c(r))^*\bv_2\right)_{\buc\times\mathbb{R}^2}}dr\label{5.96*}\\
			&=I_1(m)+I_2(m).\notag
		\end{align}
		Using the Cauchy-Schwarz inequality and the H\"older inequality, we derive that
		\begin{align*}
			I_1(m)&\leq \left(\int_0^T\abs{G_m(\bar{\bu}_m(r),\bar{c}_m(r))^*\bv_1-G(\bu(r),c(r))^*\bv_1)}_{\buc\times\mathbb{R}^2}^2dr\right)^{\frac{1}{2}}\times\\
			&\qquad\times\left(\int_0^T\abs{G_m	(\bar{\bu}_m(r),\bar{c}_m(r))^*\bv_2}_{\buc\times\mathbb{R}^2}^2dr\right)^{\frac{1}{2}}.
		\end{align*}
		Owing to the fact that $\bp_m^1g(\bar{\bu}_m,\bar{c}_m)e_k\in H$ and $\bp_m^2\phi(\bar{c}_m)g_k\in L^2(\bo)$,  we infer that $$(L^1\bp_m^1g(\bar{\bu}_m,\bar{c}_m)e_k,\bw_1):=\langle \bp_m^1g(\bar{\bu}_m,\bar{c}_m)e_k,i^{1*}\bw_1\rangle=(g(\bu,c)e_k,i^{1*}\bw_1).$$  and 
		$$(L^2\bp_m^2\phi(\bar{c}_m)g_k,\psi_2):=\langle \bp_m^2\phi(\bar{c}_m)g_k,i^{2*}\psi_2\rangle=(\bp_m^2\phi(\bar{c}_m)g_k,i^{2*}\psi_2).$$  
		Thus, using the inequality \re{2.5} and the fact that $\{e_k\}_{k\geq 1}$ and $\{g_k\}_{k= 1,2}$ are orthonormal basis of $\buc$ and $\mathbb{R}^2$ respectively, we derive that
		\begin{align}
			&\int_0^T\abs{G_m	(\bar{\bu}_m(r),\bar{c}_m(r))^*\bv_2}_{\buc\times\mathbb{R}^2}^2dr\notag\\
			&=\int_0^T\left(\abs{\sum_{k=1}^\infty(L^1\bp_m^1g(\bar{\bu}_m(r),\bar{c}_m(r))e_k,\bw_2)e_k}_\buc^2+\abs{\sum_{k=1}^2(L^2\bp_m^2\phi(\bar{c}_m(r))g_k,\psi_2)g_k}_{\mathbb{R}^2}^2\right)dr\notag\\
			&\leq \int_0^T\sum_{k=1}^\infty\abs{(\bp_m^1g(\bar{\bu}_m(r),\bar{c}_m(r))e_k,i^{1*}\bw_2)}^2dr+\int_0^T\sum_{k=1}^2\abs{(\bp_m^2\phi(\bar{c}_m(r))g_k,i^{2*}\psi_2)}^2dr\label{5.96}\\
			&\leq \abs{i^{1*}\bw_2}^2_{L^2}\int_0^T\abs{g(\bar{\bu}_m(r),\bar{c}_m(r))}^2_{\mathcal{L}^2(\buc,H)}dr+\abs{i^{2*}\psi_2}^2_{L^2}\int_0^T\abs{\phi(\bar{c}_m(r))}^2_{\mathcal{L}^2(\mathbb{R}^2,L^2)}dr\notag\\
			&\leq \bk\int_0^T(1+\abs{(\bar{\bu}_m(r),\bar{c}_m(r))}^2_{\bhc})dr+\bk\int_0^T\abs{\nabla\bar{c}_m(r))}^2_{L^2}dr\notag\\
			&\leq \bk, \qquad \mathbb{P}'\text{-a.s.}\notag
		\end{align}
		In the last line we used the fact that $\bar{c}_m\to c$ in $L^{2}(0,T;H^1(\bo))$  and $\bar{\bu}_m\to \bu$ in $L^{2}(0,T;H)$  $\mathbb{P}'$-a.s.
		
		On the other hand, we note that
		\begin{align}
			&\int_0^T\abs{G_m(\bar{\bu}_m(r),\bar{c}_m(r))^*\bv_1-G(\bu(r),c(r))^*\bv_1)}_{\buc\times\mathbb{R}^2}^2dr\notag\\
			&\leq \int_0^T\abs{\left[\sum_{k=1}^\infty(L^1\bp_m^1g(\bar{\bu}_m(r),\bar{c}_m(r))e_k,\bw_1)-\sum_{k=1}^\infty(L^1g(\bu(r),c(r))e_k,\bw_1)\right]e_k}_\buc^2dr\notag\\
			&\qquad{}+\int_0^T\abs{\left[\sum_{k=1}^2(L^2\bp_m^2\phi(\bar{c}_m(r))g_k,\psi_1)-\sum_{k=1}^2(L^2\phi(c(r))g_k,\psi_1)\right]g_k}_{\mathbb{R}^2}^2dr\notag.
		\end{align}
		Then  by this last inequality and the inequality \re{5.96}, we  infer that 
		\begin{align}
			I_1^2(m)&\leq \bk\int_0^T\abs{\sum_{k=1}^\infty(g(\bar{\bu}_m(r),\bar{c}_m(r))e_k,\bp_m^1i^{1*}\bw_1)-\sum_{k=1}^\infty(g(\bu(r),c(r))e_k,i^{1*}\bw_1)}^2dr\notag\\
			&\qquad{}+\bk\int_0^T\abs{\sum_{k=1}^2(\phi(\bar{c}_m(r))g_k,\bp_m^2i^{2*}\psi_1)-\sum_{k=1}^2(\phi(c(r))g_k,i^{2*}\psi_1)}^2dr\notag\\
			&\leq \bk\abs{i^{1*}\bw_1}^2_{L^2}\int_0^T\abs{g(\bar{\bu}_m(r),\bar{c}_m(r))-g(\bu(r),c(r))}^2_{\mathcal{L}^2(\buc,H)}dr\label{5.97}\\
			&\qquad{}+\bk\abs{\bp_m^1i^{1*}\bw_1-i^{1*}\bw_1}^2_{L^2}\int_0^T\abs{g(\bar{\bu}_m(r),\bar{c}_m(r))}^2_{\mathcal{L}^2(\buc,H)}dr\notag\\
			&\qquad{}+\abs{i^{2*}\psi_1}^2_{L^2}\int_0^T\abs{\phi(\bar{c}_m(r))-\phi(c(r))}^2_{\mathcal{L}^2(\mathbb{R}^2,L^2)}dr\notag\\
			&\qquad{}+\abs{\bp_m^2i^{2*}\psi_1-i^{2*}\psi_1}^2_{L^2}\int_0^T\abs{\phi(\bar{c}_m(r))}^2_{\mathcal{L}^2(\mathbb{R}^2,L^2)}dr\notag\\
			&:=II_1(m)+II_2(m)+II_3(m)+II_4(m).\notag
		\end{align}
		By means of the continuity of $g$, the $\mathbb{P}'$-a.s.-convergence $(\bar{\bu}_{m},\bar{c}_{m},\bar{n}_{m})\to (\bu,c,n)$ in $\bz$, the inequality \re{2.5} and the Vitali Theorem, we can derive that $\lim_{m\longrightarrow\infty} II_1(m)=0$. Furthermore, since 
		\begin{align}
			& \int_0^T\abs{g(\bar{\bu}_m(r),\bar{c}_m(r))}^2_{\mathcal{L}^2(\buc,H)}dr+\int_0^T\abs{\phi(\bar{c}_m(r))}^2_{\mathcal{L}^2(\mathbb{R}^2,L^2)}dr\notag\\
			&\leq \bk\int_0^T(1+\abs{(\bar{\bu}_m(r),\bar{c}_m(r))}^2_{\bhc})dr+\bk\int_0^T\abs{\nabla\bar{c}_m(r)}^2_{L^2}dr\notag\\
			&\leq \bk \qquad \mathbb{P}'\text{-a.s.},\notag
		\end{align}
		we deduce that  $$\lim_{m\longrightarrow\infty} II_2(m)=\lim_{m\longrightarrow\infty} II_4(m)=0.$$ 
		Now, we study the $II_3(m)$. We see that
		\begin{align}
			II_3(m)&\leq \abs{\psi_1}^2_{L^2}\gamma^2\abs{\sigma}^2_{L^\infty}\int_0^T\abs{\nabla\bar{c}_m(r)-\nabla c (r)}^2_{L^2}dr\notag\\
			&\leq\abs{\psi_1}^2_{L^2}\gamma^2\abs{\sigma}^2_{L^\infty}\int_0^T\abs{\bar{c}_m(r)-c (r)}^2_{H^1}dr.\notag
		\end{align}
By using the fact that $\bar{c}_m\to c$ in $L^{2}(0,T;H^1(\bo))$,  $\mathbb{P}'$-a.s.,  we can pass to the limit in this last inequality and infer that $\lim_{m\longrightarrow\infty} II_3(m)=0$. Hence passing to the limit in \re{5.97} we get  $\lim_{m\longrightarrow\infty} I_1(m)=0$. In a similar fashion, we can also prove that $\lim_{m\longrightarrow\infty} I_2(m)=0$. Therefore, passing to the limit in \re{5.96*}, we obtain the convergence \re{5.95} and completes the proof of  the almost surely convergence \re{4.94*}.  
		
		To finish the proof of equality \re{4.93}, it remains to prove the uniform integrability of $R_m(t,s)$. For this purpose, using the Young inequality, a similar calculations as in inequality \re{5.96} and the estimates \re{4.95} and \re{4.97},   we arrive at
		\begin{align}
			\be'\abs{R_m(t,s)}^2&\leq\prod_{i=1}^{3}\abs{h_i}^2_{L^\infty}\be'\left(\int_s^t\left(G_m(\bar{\bu}_m(r),\bar{c}_m(r))^*\bv_1,G_m	(\bar{\bu}_m(r),\bar{c}_m(r))^*\bv_2\right)_{\buc\times\mathbb{R}^2}dr\right)^2\notag\\
			&\leq \bk (t-s)\be'\int_s^t\abs{G_m	(\bar{\bu}_m(r),\bar{c}_m(r))^*\bv_1}_{\buc\times\mathbb{R}^2}^2\abs{G_m	(\bar{\bu}_m(r),\bar{c}_m(r))^*\bv_2}_{\buc\times\mathbb{R}^2}^2dr\notag\\
			&\leq \bk \be'\int_0^T\abs{G_m	(\bar{\bu}_m(r),\bar{c}_m(r))^*\bv_1}_{\buc\times\mathbb{R}^2}^4dr+\bk\be'\int_0^T\abs{G_m	(\bar{\bu}_m(r),\bar{c}_m(r))^*\bv_2}_{\buc\times\mathbb{R}^2}^4dr\notag\\
			&\leq \bk \be'\int_0^T\abs{g(\bar{\bu}_m(r),\bar{c}_m(r))}^4_{\mathcal{L}^2(\buc,H)}dr+\bk \be'\int_0^T\abs{\phi(\bar{c}_m(r))}^4_{\mathcal{L}^2(\mathbb{R}^2,L^2)}dr\notag\\
			&\leq \bk\be'\sup_{0\leq r\leq T}(1+\abs{(\bar{\bu}_m(r),\bar{c}_m(r))}^4_{\bhc})+\bk\be'\sup_{0\leq r\leq T}\abs{\nabla\bar{c}_m(r))}^4_{L^2}\notag\\
			&\leq \bk,\notag
		\end{align}
which prove the uniform integrability of $R_m(t,s)$.	Thus, invoking the Vitali Theorem, we obtain the convergence \re{4.93}. 

Taking into account the convergences \re{5.94}, \re{5.92} and \re{4.93}, we can pass to the limit in the equalities \re{5.91*} and \re{5.92*} to get
		\begin{equation}
			\be\left[\left(LN(t)-LN(s),\bv_1\right)_{H\times L^2(\bo)}h_1(\bu|_{[0,s]})h_2(c|_{[0,s]})h_3(n|_{[0,s]})\right]=0,\notag
		\end{equation}
		and 
		\begin{align}
			&\be\left[\left(\left(LN(t),\bv_1\right)_{H\times L^2(\bo)}\left(LN(t),\bv_2\right)_{H\times L^2(\bo)}-\left(LN(s),\bv_1\right)_{H\times L^2(\bo)}\left(LN(s),\bv_2\right)_{H\times L^2(\bo)}\right.\right.\notag\\
			&	\left.\left. -\int_0^t\left(G	(\bu(s),c(s))^*\bv_1,G	(\bu(s),c(s))^*\bv_2\right)_{\buc\times \mathbb{R}^2}ds \right)h_1(\bu|_{[0,s]})h_2(c|_{[0,s]})h_3(n|_{[0,s]})\right]=0,\notag
		\end{align}
		which complete the proof of  Lemma \ref{lem5.15}.
	\end{proof}
Thanks to  Lemma \ref{lem5.15}, we apply the usual martingale representation theorem proved in \cite[Theorem 8.2]{Da} to the process $LN$ and conclude that there exists a probability space $(\tilde{\Omega}, \tilde{\mathcal{F}}, \tilde{\mathbb{P}}) $, a filtration  $\tilde{\mathbb{F}}$ and a   $\buc\times\mathbb{R}^2$-cylindrical Wiener process $\bar{\mathbf{W}}_s:=(\bar{W}_s,\bar{\beta}_s)$  defined on the probability space $(\bar{\Omega},\bar{\mathcal{F}},\bar{\mathbb{P}})=(\Omega'\times\tilde{\Omega},\mathcal{F}'\otimes\tilde{\mathcal{F}},\mathbb{P}'\otimes\tilde{\mathbb{P}})$  adapted to the filtration  $\bar{\mathbb{F}}=\mathbb{F}'\otimes\tilde{\mathbb{F}}$ such that 
	\begin{equation*}
		LN(t,\omega',\tilde{\omega})=\int_0^tG	(\bu(s,\omega',\tilde{\omega}),c(s,\omega',\tilde{\omega}))d\bar{\mathbf{W}}_s(\omega',\tilde{\omega}), \qquad t\in [0,T],\qquad (\omega',\tilde{\omega})\in\bar{\Omega},
	\end{equation*}
	where 
	\begin{equation*}
		LN(t,\omega',\tilde{\omega})=LN(t,\omega'), \ (\bu(s,\omega',\tilde{\omega}),c(s,\omega',\tilde{\omega}))=(\bu(s,\omega'),c(s,\omega')),\ t\in [0,T],\ (\omega',\tilde{\omega})\in\bar{\Omega}.
	\end{equation*}
	This implies that in the probability space $(\bar{\Omega},\bar{\mathcal{F}},\bar{\mathbb{P}})$, for $t\in [0,T]$ and $\bar{\mathbb{P}}$-a.s.
	\begin{equation}\label{5.103}
		\begin{cases}
			L^1N^1(t)=\Int_0^tL^1g	(\bu(s),c(s))d\bar{W}_s, \text{  in  } H,\vspace{0.2cm}\\
			L^2N^2(t)=\Int_0^tL^2\phi	(c(s))d\bar{\beta}_s, \text{  in  } L^2(\bo).
		\end{cases}
	\end{equation}
	Thanks to \re{2.5} and  \re{4.104*}  the estimate 
	\begin{align}
		\bar{\be}\int_0^T\abs{g(\bu(s),c(s))}^2_{\mathcal{L}^2(\buc,V^*)}ds&\leq\bk \bar{\be}\int_0^T\abs{g(\bu(s),c(s))}^2_{\mathcal{L}^2(\buc,H)}ds\notag\\
		&\leq \bk\left(1+ \be'\sup_{0\leq s\leq T}\abs{(\bu(s),c(s))}_{\bhc}^2\right)<\infty,\notag
	\end{align}
	and 
	\begin{align}
		\bar{\be}\int_0^T\abs{\phi(c(s))}^2_{\mathcal{L}^2(\mathbb{R}^2,H^{-2})}ds&\leq\bk \bar{\be}\int_0^T\abs{\phi(c(s))}^2_{\mathcal{L}^2(\mathbb{R}^2,L^{2})}ds\notag\\
		&\leq \bk\left(1+\be'\sup_{0\leq s\leq T}\abs{c(s)}_{H^1}^2\right)<\infty,\notag
	\end{align}
	yield that $L^1N^1$ and $L^2N^2$ in \re{5.103} are continuous martingale in $H$ and $L^2(\bo)$ respectively. In a similar fashion as in \cite[Proof of Theorem 1.1]{Brze1}, using the continuity of the operators $L^1$ and $L^2$, we get
	\begin{equation*}
		\int_0^tL^1g	(\bu(s),c(s))d\bar{W}_s=L^1\left(\int_0^tg	(\bu(s),c(s))d\bar{W}_s\right) \text{ and } \int_0^tL^2\phi	(c(s))d\bar{\beta}_s=L^2\left(\int_0^t\phi	(c(s))d\bar{\beta}_s\right),
	\end{equation*}
	for all $t\in[0,T]$. Combining these two last inequalities with  the injectivity of the operators $L^1$ and $L^2$, we infer from the system \re{5.103} that for $t\in [0,T]$,
	\begin{equation}\label{5.104}
		\begin{cases}
			N^1(t)=\Int_0^tg	(\bu(s),c(s))d\bar{W}_s,\text{  in  } V^*,\vspace{0.2cm}\\
			N^2(t)=\Int_0^t\phi	(c(s))d\bar{\beta}_s,  \text{  in  } H^{-2}(\bo).
		\end{cases}
	\end{equation}  
	On the new probability space $(\bar{\Omega}, \bar{\mathcal{F}}, \bar{\mathbb{P}}) $, we also extend the random variable $n(t)$ by $$n(t,\omega',\tilde{\omega})=n(t,\omega'),\ t\in [0,T],\ (\omega',\tilde{\omega})\in\bar{\Omega},$$
	and infer that the equality \re{5.89} also hods in $(\bar{\Omega}, \bar{\mathcal{F}}, \bar{\mathbb{P}})$. Using this, the definition of $N^1$ and $N^2$, and the system \re{5.104}, we derive that $(\bar{\Omega}, \bar{\mathcal{F}},\bar{\mathbb{F}}, \bar{\mathbb{P}},(\bu,c,n),(\bar{W},\bar{\beta}))$ satisfies the system \re{4.1}. In particular, we have for all $t\in[0,T]$ and $\bar{\mathbb{P}}$-a.s.
	\begin{equation*}
		\begin{cases}
			\bu(t) =\bu_0-\Int_0^t[\eta A_0\bu(s)+B_0(\bu(s),\bu(s))+R_0(n(s),\varPhi)]ds+ \Int_0^tg(\bu(s),c(s)) d\bar{W}_s,\text{  in  } V^*,\vspace{0.2cm}\\
			c(t)=c_0- \Int_0^t[\xi A_1c(s)+B_1(\bu(s), c(s))- R_1(n(s),c(s))]ds+\gamma\int_0^t\phi(c(s))d\bar{\beta}_s,\text{  in  } H^{-2}(\bo),
		\end{cases}
	\end{equation*}
	which can be written as
	\begin{equation*}
		\begin{cases}
			\bu(t) =\bu_0-\Int_0^tG_0(s)ds + \Int_0^tS_0(s)d\bar{W}_s,\text{  in  } V^*,\vspace{0.2cm}\\
			c(t)=c_0- \Int_0^tG_1(s)ds + \Int_0^tS_1(s)d\bar{\beta}_s,\text{  in  } H^{-2}(\bo),
		\end{cases}
	\end{equation*}
where  for all $t\in [0,T]$, 
\begin{equation*}
\begin{split}
	&G_0(t):=\eta A_0\bu(t)+B_0(\bu(t),\bu(t))+R_0(n(t),\varPhi),\\
&G_1(t):=\xi A_1c(t)+B_1(\bu(t), c(t))- R_1(n(t),c(t)),\\
&S_0(t):= g(\bu(t),c(t)),\quad\text{and}\quad S_1(t):=\gamma\phi(c(t)).
\end{split}
\end{equation*}
	Since the identities \re{4.100},  \re{4.104*}  and \re{4.108} hold, following the idea of the proof of estimate \re{3.78}, we can see that $ G_0 \in L^2( [0, T]\times\bar{ \Omega}; V^*)$, $ G_1 \in L^2( [0, T]\times \bar{\Omega}; L^2(\bo))$, $S_0 \in L^2([0, T]\times\bar{\Omega}; H)$ and $S_1 \in L^2([0, T]\times\bar{\Omega}; H^1(\bo))$. Therefore, it follows from \cite[Theorem 3.2]{Kry} that there exists $\bar{\Omega}_0\in\bar{\mathcal{F}}$ such that $\bar{\mathbb{P}}(\bar{\Omega}_0) = 1$ and for all $\omega\in\bar{\Omega}_0$, the function $\bu$ and $c$ take values in $H$ and in $H^1(\bo)$ respectively and  are continuous in $H$ and $H^1(\bo)$ with respect to $t$.
	Owing to the fact that $(\bu,c,n)$ is $\bz_\bu\times\bz_c\times\bz_n$-valued random variable and progressively measurable over the filtration $\bar{\mathbb{F}}$, we derive that $(\bar{\Omega}, \bar{\mathcal{F}},\bar{\mathbb{F}}, \bar{\mathbb{P}},(\bu,c,n),(\bar{W},\bar{\beta}))$  is a  probabilistic weak solution of system \re{1.1}. We recall that the inequalities \re{4.41}  directly follows from  relations  \re{4.100}, \re{4.104*}, and \re{4.108}.

%%%%%%%%%%%%%%%%%%%%%%
\section{Properties of solution and energy inequality}
In  this section we prove the  mass conservation property, the non-negativity property and the $L^\infty$-norm stability for the prrobabilistic strong solution of system \re{1.1}. By these properties, we also prove an energy inequality which may be useful for the study of the invariant measure of system (\ref{1.1}) which is still an opened problem according to our knowledge.

\subsection{Non-negativity and mass conservation}
The following theorem gives the conservation of the total mass property  and the non-negativity of the strong solutions of system (\ref{1.1}).
\begin{theorem} \label{lem2.2}
	Let  $\mathfrak{A}=(\Omega,\mathcal{F},\{\mathcal{F}_t\}_{t\in[0,T]},\mathbb{P})$ be a filtered probability space,  $\mathcal{U}$ be a separable Hilbert space,  $W$ be
	cylindrical Wiener process on $\mathcal{U}$ over $\mathfrak{A}$, 	  and $\beta=(\beta^1,\beta^2)$ be a two dimensional standard Brownian motion
	over $\mathfrak{A}$ independent of $W$. If $(\bu,c,n)$ is a  probabilistic strong solution of system (\ref{1.1}), then the following equality holds for all $t\in [0,T]$
	\begin{equation}
		\int_\mathcal{O}n(t,x)dx=\int_\mathcal{O}n_0(x)dx, \ \mathbb{P}\text{-a.s}.\label{2.14}
	\end{equation}
	Furthermore, if  $c_0>0$ and $n_0> 0$, then the following inequality hold $\mathbb{P}$-a.s
	\begin{equation}
		n(t)>0,\text{ and } c(t)> 0, \text{   for all  } t\in [0,T].\label{2.15}
	\end{equation}
\end{theorem}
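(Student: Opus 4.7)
The plan is, first, to observe that the non-negativity assertion \eqref{2.15} is essentially already established. Indeed, a probabilistic strong solution in the sense of Definition~\ref{defi4.0} is \emph{a fortiori} a probabilistic weak solution in the sense of Definition~\ref{defi4.1}, with the given filtered probability space $\MA=(\Omega,\mathcal{F},(\mathcal{F}_t)_{t\in[0,T]},\mathbb{P})$ playing the role of $(\bar\Omega,\bar{\mathcal F},\bar{\mathbb F},\bar{\mathbb P})$ and $(W,\beta)$ playing the role of $(\bar W,\bar\beta)$. Consequently the inequalities $n(t)>0$ and $c(t)>0$ for all $t\in[0,T]$, $\mathbb{P}$-a.s., follow \emph{verbatim} from Lemma~\ref{lem3.6}. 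No new argument is required for \eqref{2.15}.

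For the mass conservation identity \eqref{2.14}, my plan is to test the third equation of \eqref{4.1} against the constant function $\psi\equiv 1$. Since $\mathcal{O}$ is bounded, $\psi\in H^3(\mathcal{O})$, and the $n$-equation is valid in $H^{-3}(\mathcal{O})$, so the duality pairing is legitimate. I would then verify that each of the three contributions on the left- and right-hand sides besides $n(t)$ and $n_0$ collapses. For the Laplacian term, $(A_1 n(s),1)=(n(s),A_1\mathbf{1})=0$ since $A_1\mathbf{1}=-\Delta\mathbf{1}=0$. For the convective term, using $\nabla\cdot\bu=0$, the divergence theorem, and the no-slip condition $\bu\vert_{\partial\mathcal{O}}=0$,
\begin{equation*}
(B_1(\bu(s),n(s)),1)=\int_{\mathcal O}\bu\cdot\nabla n\, dx = \int_{\mathcal O}\nabla\cdot(n\bu)\,dx - \int_{\mathcal O} n\,\nabla\cdot\bu\, dx = \int_{\partial\mathcal{O}}n\,\bu\cdot\nu\, d\sigma=0.
\end{equation*}
For the chemotactic term, the very definition of $R_2$ built the integration-by-parts with the Neumann condition $\partial c/\partial\nu=0$ into it, so $(R_2(n(s),c(s)),1)=-\int_{\mathcal{O}} n\nabla c\cdot\nabla\mathbf{1}\,dx=0$. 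Inserting these three vanishing identities into the third line of \eqref{4.1} paired with $\psi\equiv 1$ yields $(n(t),1)=(n_0,1)$, which is precisely \eqref{2.14}.

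The only point that requires a tiny bit of care is the exceptional-set bookkeeping: the identity is first obtained, for each fixed $t$, outside a $t$-dependent $\mathbb{P}$-null set. To upgrade this to a single null set valid for all $t\in[0,T]$, I would invoke the $\mathbb{P}$-a.s.\ continuity of $t\mapsto n(t)$ in $H^{-3}(\mathcal O)$ guaranteed by item~i) of Definition~\ref{defi4.0}, so that both sides of the identity are $\mathbb{P}$-a.s.\ continuous in $t$ and agreement on a countable dense subset of $[0,T]$ automatically extends to all $t$. There is no substantive obstacle in the argument; the theorem is essentially a bookkeeping consequence of Lemma~\ref{lem3.6}, the boundary conditions in \eqref{1.1}, and the definition of the trilinear form $r_2$.
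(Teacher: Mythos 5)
Your proposal is correct and follows essentially the same route as the paper, whose own proof is a one-line remark that \eqref{2.14} follows from $\nabla\cdot\bu=0$ and that \eqref{2.15} is proved as in Lemma \ref{lem3.6}; you simply supply the details (testing the $n$-equation against $\psi\equiv 1$, using the Neumann and no-slip boundary conditions, and observing that a strong solution is a fortiori a weak solution so Lemma \ref{lem3.6} applies verbatim). All the computations and the exceptional-set remark are sound.
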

\begin{proof} 
We	Note that, the conservation of the total mass (\ref{2.14}) follows straightforwardly
	from  the fact that  $\nabla\cdot\bu=0$ and the proof of \re{2.15} is very similar to the proof of Lemma \ref{lem3.6}.
\end{proof}
The following theorem gives the $L^\infty$-stability of the probabilistic  strong solution of system \re{1.1}.
\begin{theorem}\label{lem2.3}
	Let $\mathfrak{A}=(\Omega,\mathcal{F},\{\mathcal{F}_t\}_{t\in[0,T]},\mathbb{P})$ be a filtered probability space,  $\mathcal{U}$ be a separable Hilbert space,  $W$ be
	cylindrical Wiener process on $\mathcal{U}$ over $\mathfrak{A}$, 	  and $\beta=(\beta^1,\beta^2)$ be a two dimensional standard Brownian motion
	over $\mathfrak{A}$ independent of $W$. If $(\bu,c,n)$ is a  probabilistic strong solution of system (\ref{1.1}) in  the filtered probability space $\mathfrak{A}$,   then   for all $t\in [0,T]$
	\begin{equation}
		\abs{c(t)}_{L^\infty}\leq\abs{c_0}_{L^\infty},\quad\mathbb{P}\text{-a.s.}\label{2.17}
	\end{equation}
\end{theorem}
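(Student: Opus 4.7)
The plan is to mimic verbatim the argument used for Corollary \ref{lem3.7}, since a probabilistic strong solution is in particular a probabilistic weak solution in the sense of Definition \ref{defi4.1} and therefore satisfies the same abstract equation with the same regularity $c\in \mathcal{C}([0,T];H^1(\mathcal{O}))\cap L^2(0,T;H^2(\mathcal{O}))$ and the same non-negativity property $c(t)>0$, $n(t)>0$ guaranteed by Theorem \ref{lem2.2}. First, I would fix $p\ge 2$ and introduce the functional $\Psi:H^2(\mathcal{O})\to\mathbb{R}$ defined by $\Psi(c)=\int_\mathcal{O} c^p(x)\,dx$, which is twice Fréchet differentiable with derivatives as computed in the proof of Corollary \ref{lem3.7}.

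Next, I would apply the Itô formula to the process $t\mapsto\Psi(c(t))$, which is legitimate since $c$ is $H^1$-continuous in time with values a.s.\ in $H^2$ along the time integration. The computation then proceeds exactly as in \re{2.28}--\re{2.32*}: the stochastic term $\gamma\sum_k\int_0^t\Psi'(c(s))(\phi_k(c(s)))d\beta^k_s$ vanishes because integration by parts combined with $\nabla\cdot\sigma_k=0$ and $\sigma_k|_{\partial\mathcal{O}}=0$ forces $\Psi'(c)(\phi_k(c))=\int_\mathcal{O}\sigma_k\cdot\nabla c^p\,dx=0$; the convective term drops by the divergence-free condition on $\bu$; and most importantly, assumption ($\mathbf{A}_3$) produces the key identity $\sum_{k=1}^2 \sigma_k\cdot\nabla c\,\sigma_k\cdot\nabla c=|\nabla c|^2$, which is precisely what is needed for the Stratonovich correction (absorbed into $\xi=\eta+\gamma^2/2$) to cancel the quadratic variation contribution up to the ``good'' diffusion term. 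One is then left with the identity
\begin{equation*}
\Psi(c(t))-\Psi(c_0)=\int_0^t\int_\mathcal{O}\Bigl(-p(p-2)\xi|\nabla c(s,x)|^2 c^{p-2}(s,x)-p\,n(s,x)f(c(s,x))c^{p-1}(s,x)\Bigr)dx\,ds.
\end{equation*}

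Invoking the $\mathbb{P}$-a.s.\ non-negativity of $n$, $c$ given by \re{2.15} in Theorem \ref{lem2.2} together with $f\ge 0$ from Assumption \ref{ass_1}, both integrands on the right are non-negative, hence $|c(t)|_{L^p}^p\le |c_0|_{L^p}^p$, i.e.\ $|c(t)|_{L^p}\le|c_0|_{L^p}$ for every $p\ge 2$. Passing to the limit $p\to\infty$ (using the standard fact $|\cdot|_{L^p}\to|\cdot|_{L^\infty}$ as $p\to\infty$ on a bounded domain, see \cite[Theorem 2.14]{Adam}) yields \re{2.17}.

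The only conceivable obstacle is regularity: one must make sure the Itô formula genuinely applies in this infinite-dimensional setting to the non-quadratic functional $\Psi$. This is, however, not a new issue — exactly the same step was carried out for the probabilistic weak solution in Corollary \ref{lem3.7}, and since the strong solution inherits all the regularity required there (by Definition \ref{defi4.0}(i)--(ii)), no additional work is needed. Thus the proof is essentially a transcription of Corollary \ref{lem3.7} with $(\bar{\mathbb{P}},\bar{\beta})$ replaced by $(\mathbb{P},\beta)$.
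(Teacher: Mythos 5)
Your proposal is correct and follows exactly the route the paper takes: the paper's own proof of Theorem \ref{lem2.3} simply states that the argument is the same as that of Corollary \ref{lem3.7}, which is precisely the transcription you carry out (Itô formula for $\Psi(c)=\int_\mathcal{O}c^p\,dx$, vanishing of the stochastic term via $\nabla\cdot\sigma_k=0$ and $\sigma_k|_{\partial\mathcal{O}}=0$, the identity from ($\mathbf{A}_3$), non-negativity of $n$, $c$, $f$, and the limit $p\to\infty$). The only cosmetic slip is the phrase ``both integrands on the right are non-negative''; you mean the quantities multiplying the minus signs are non-negative, so the right-hand side is non-positive, which is what gives $\abs{c(t)}_{L^p}\leq\abs{c_0}_{L^p}$.
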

\begin{proof}
	The  proof is similar to the proof of Corollary \ref{lem3.7}. 
\end{proof}
 \subsection{Energy inequality}
 In this subsection, we will derive  an energy inequality. The probabilistic strong solution $(\bu,n,c )$ involving the following Lyapunov functional 
 \begin{equation*}
 	\bec (n,c,\bu)(t)=\int_\bo n(t)\ln n(t)dx +\bk_f\abs{\nabla c(t)}^2_{L^2}	+\frac{8\bk_f\bk_{GN}\abs{c_0}^2_{L^\infty}}{3\xi\eta}\abs{\bu(t)}^2_{L^2}+e^{-1}\abs{\bo},\quad t\in [0,T],
 \end{equation*}
where $\bk_{GN}$ is a  constant given by the Gagliardo-Niremberg inequality \re{4.4.} and $\bk_f$ is defined in \eqref{Eq:K-f}.
\begin{proposition}\label{proposition3.1}
Suppose that Assumption 1, Assumption 2 and the following inequality
\begin{equation}
	\frac{4\bk_f\Max_{0\leq c\leq \abs{c_0}_{L^\infty}}f^2}{\Min_{0\leq c\leq \abs{c_0}_{L^\infty}}f'} \leq \delta,\label{3.35}
\end{equation}
are satisfied.  Let $\mathfrak{A}=(\Omega,\mathcal{F},\{\mathcal{F}_t\}_{t\in[0,T]},\mathbb{P})$ be a filtered probability space,  $\mathcal{U}$ be a separable Hilbert space,  $W$ be
cylindrical Wiener process on $\mathcal{U}$ over $\mathfrak{A}$, 	  and $\beta=(\beta^1,\beta^2)$ be a two dimensional standard Brownian motion
over $\mathfrak{A}$ independent of $W$. Then, any probabilistic strong solution $(\bu,c,n)$  of system (\ref{1.1}) in the filtered probability space $\mathfrak{A}$ satisfies the following entropy functional relations for almost all $t \in[0,T]$,
\begin{align}
	\abs{c(t)}^2_{L^2}+2\eta\int_0^t\abs{\nabla c(s)}^2_{L^2}ds+2\int_0^t( n(s)f(c(s)),c(s))ds=\abs{c_0}^2_{L^2},\label{3.30}
\end{align}
\begin{align}
	\bec (n,c,\bu)(t)&+\int_0^t\left[\delta\abs{\nabla\sqrt{n(s)}}^2_{L^2}
	+\frac{3\xi\bk_f}{2}\abs{\Delta c(s)}^2_{L^2}+\frac{8\bk_f\bk_{GN}\abs{c_0}^2_{L^\infty}}{3\xi}\abs{\nabla \bu(s)}^2_{L^2}+\abs{\sqrt{n(s)}\nabla c(s)}_{L^2}^2\right]ds\notag\\
	&\leq \bec (n_0,c_0,\bu_0)+\bk_5t+\bk_6\int_0^t\abs{\bu(s)}^2_{L^2}ds+\gamma^2\bk_f\int_0^t\abs{\nabla\phi(c(s))}^2_{\mathcal{L}^2(\mathbb{R}^2;L^2)}ds\notag\\
	&\qquad{}+\frac{8\bk_f\bk_{GN}\abs{c_0}^2_{L^\infty}}{3\xi\eta}\int_0^t\abs{g(\bu(s),c(s))}^2_{\mathcal{L}^2(\buc;H)}ds+2\gamma\bk_f\int_0^t(\nabla\phi(c(s)),\nabla c(s))d\beta_s\label{3.21}\\
		&\qquad{}+\frac{16\bk_f\bk_{GN}\abs{c_0}^2_{L^\infty}}{3\xi\eta}\int_0^t(g(\bu(s),c(s)),\bu(s))dWs,\notag
\end{align}
$\mathbb{P}$-a.s.,
where  $\bk_5$ and $\bk_6$ are some positive constant to be given later.
\end{proposition}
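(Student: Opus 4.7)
I would establish \re{3.30} and \re{3.21} by applying It\^o's formula pathwise to the strong solution $(\bu,c,n)$, following the exact chain of computations carried out at the Galerkin level in the proof of Lemma \ref{proposition 3.2}. The required path regularity is provided by Definition \ref{defi4.0}, the non-negativity of $n$ and $c$ by Theorem \ref{lem2.2}, and the $L^\infty$-bound on $c$ by Theorem \ref{lem2.3}.

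For \re{3.30}, I would apply It\^o's formula to $t\mapsto \abs{c(t)}^2_{L^2}$ using the $c$-equation of \re{4.1} in $H^{-2}(\bo)$ together with $c\in L^2(0,T;H^2)\cap \mathcal{C}([0,T];H^1)$. Four cancellations match those already displayed at \re{3.29**}: first, $(B_1(\bu,c),c)=0$ since $\nabla\cdot\bu=0$; second, $(\phi(c),c)=0$ and $\abs{\phi(c)}^2_{\mathcal{L}^2(\mathbb{R}^2;L^2)}=\abs{\nabla c}^2_{L^2}$ by $(\mathbf{A}_2)$--$(\mathbf{A}_3)$ together with $\sigma_k|_{\partial\bo}=0$; third, the Stratonovich correction $\tfrac{\gamma^2}{2}\Delta c$ is already absorbed into $\xi=\eta+\tfrac{\gamma^2}{2}$ by Lemma \ref{lemma2.1}. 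The remaining $R_1$-contribution $(R_1(n,c),c)=\int_\bo nf(c)c\,dx\geq 0$ is retained on the left, yielding the identity.

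For \re{3.21}, I would combine three It\^o applications. First, to $t\mapsto \abs{\nabla c(t)}^2_{L^2}$, reproducing line by line the manipulations \re{3.31**}--\re{3.30**} and using Assumption \ref{ass_1} together with \re{2.17} to bound $-(\nabla R_1(n,c),\nabla c)$ in terms of $\abs{\sqrt n\,\nabla c}^2_{L^2}$ and $\abs{\nabla\sqrt n}^2_{L^2}$. Second, to $t\mapsto \abs{\bu(t)}^2_{L^2}$ through the usual $V\hookrightarrow H\hookrightarrow V^*$ triple, producing the $\abs{g(\bu,c)}^2_{\mathcal{L}^2(\buc;H)}$ term and the $\bu$-noise martingale. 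Third, the entropy $t\mapsto \int_\bo n(t)\ln n(t)\,dx$, which is the delicate step: since the $n$-equation carries no stochastic integrator, this reduces to a pathwise Lions--Magenes chain rule, but the functional $n\mapsto\int n\ln n\,dx$ fails to be $C^2$ at $n=0$. I would regularise by $\Psi_\eps(n)=\int_\bo(n+\eps)\ln(n+\eps)\,dx$, for which $\Psi_\eps'(n)=1+\ln(n+\eps)\in H^1(\bo)$ whenever $n\in H^1(\bo)$ with $n\geq 0$, apply the chain rule through the Gelfand triple $H^1\hookrightarrow L^2\hookrightarrow H^{-1}$, reproduce the Galerkin identity \re{3.27***}--\re{3.27****} at the $\eps$-level, and then let $\eps\downarrow 0$ using monotone convergence for $\int\frac{|\nabla n|^2}{n+\eps}\,dx\uparrow 4\abs{\nabla\sqrt n}^2_{L^2}$, dominated convergence for $\chi\int\frac{n}{n+\eps}\nabla n\cdot\nabla c\,dx$, and the bound $x\ln x\geq -e^{-1}$ (together with the non-negativity $n\geq 0$) to control the entropy itself.

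Finally, I would assemble the three It\^o identities with weights $1$, $\bk_f$, and $\tfrac{8\bk_f\bk_{GN}\abs{c_0}^2_{L^\infty}}{3\xi\eta}$ respectively, import the Young-type absorptions used in \re{3.36*} to handle $(\nabla B_1(\bu,c),\nabla c)$ against $\abs{\Delta c}^2_{L^2}$ and $\abs{\nabla\bu}^2_{L^2}$, and invoke hypothesis \re{3.35} to dominate $\tfrac{4\bk_f\Max f^2}{\Min f'}\abs{\nabla\sqrt n}^2_{L^2}$ by $\delta\abs{\nabla\sqrt n}^2_{L^2}$; no further smallness on $\gamma$ is needed at this level since the term $\gamma^2\bk_f\abs{\nabla\phi(c)}^2_{\mathcal{L}^2}$ is kept on the right. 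The main obstacle will be the entropy It\^o step: rigorously justifying the chain rule for $n\mapsto\int n\ln n\,dx$ under only the regularity $n\in\mathcal{C}([0,T];L^2_w(\bo))\cap L^2(0,T;H^1(\bo))$ of Definition \ref{defi4.0}, in particular ensuring that the integrations-by-parts leading to \re{3.27***} remain valid for the merely $H^1$ test function $\ln(n+\eps)$ and that all limits $\eps\downarrow 0$ commute with the time integral, via the uniform-in-$\eps$ bounds inherited from the a priori estimates of Lemma \ref{proposition 3.2}.
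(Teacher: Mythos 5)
Your plan coincides with the paper's own proof: the identity \re{3.30} is obtained exactly as you describe from It\^o's formula for $\abs{c(t)}^2_{L^2}$ together with the three cancellations $(B_1(\bu,c),c)=0$, $(\phi(c),c)=0$ and $\abs{\phi(c)}^2_{\mathcal{L}^2(\mathbb{R}^2;L^2)}=\abs{\nabla c}^2_{L^2}$, while \re{3.21} is assembled from the same three It\^o/chain-rule computations (entropy, $\abs{\nabla c}^2_{L^2}$, $\abs{\bu}^2_{L^2}$) with the same weights $1$, $\bk_f$ and $\frac{8\bk_f\bk_{GN}\abs{c_0}^2_{L^\infty}}{3\xi\eta}$, the same Young-type absorptions against $\abs{\Delta c}^2_{L^2}$ and $\abs{\nabla\bu}^2_{L^2}$, and the same invocation of \re{3.35}. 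The only divergence is that the paper tests the $n$-equation directly with $1+\ln n$ without addressing the singularity of the entropy at $n=0$; your $\varepsilon$-regularisation $\int_\bo(n+\varepsilon)\ln(n+\varepsilon)\,dx$ followed by monotone/dominated convergence is a rigorous refinement of that formal step rather than a different argument.
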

\begin{proof}
The equality \re{3.30} follows directly from the  application of  the It\^o formula to $t\mapsto\abs{c(t)}^2_{L^2}$ and the fact that 
\begin{equation*}
	(B_1(\bu,c),c)=\frac{1}{2}\int_\bo\bu(x)\cdot\nabla c^2(x)dx=-\frac{1}{2}\int_\bo c^2(x)\nabla\cdot\bu(x) dx=0,
\end{equation*}
as well as
\begin{equation*}
(\phi(c),c)=\sum_{k=1}^2\int_\bo\sigma_k(x)\cdot\nabla c(x)c(x)dx
=\frac{1}{2}\sum_{k=1}^2\int_\bo\sigma_k(x)\cdot\nabla c^2(x)dx=0
\end{equation*}
and 
\begin{equation*}
	\abs{\phi(c)}^2_{\mathcal{L}^2(\mathbb{R}^2;L^2)}=\abs{\nabla c}^2_{L^2}.
\end{equation*}
Next, we multiply  equation \re{3.1}$_3$ by $1+\ln n(s)$ for $s\in [0,t]$ and integrate the resulting equation in $\bo$ to obtain
\begin{equation}\label{3.27**}
\frac{d}{dt}\int_\bo n(s,x)\ln n(s,x)dx+\delta\int_\bo\dfrac{\abs{\nabla n(s,x)}^2}{n(s,x)}dx=\chi\int_\bo \nabla n(s,x)\cdot\nabla c(s,x)dx.
\end{equation}
Thanks to  the Young  inequality and the Cauchy-Schwartz inequality we note that
\begin{equation*}
	\chi\int_\bo \nabla n(x)\cdot \nabla c(x)dx\leq2\delta\int_\bo\abs{\nabla\sqrt{n(x)}}^2dx+\frac{\chi^2}{2\delta}\int_\bo n(x)\abs{\nabla c(x)}^2dx.
\end{equation*}
Combining the last inequality with equality \re{3.27**} we arrive at
\begin{align}\label{3.27}
	\int_\bo n(t,x)\ln n(t,x)dx+2\delta\int_0^t\abs{\nabla\sqrt{n(s)}}^2_{L^2}ds&\leq	\int_\bo n_0(x)\ln n_0(x)dx\notag\\
	&\qquad{}+\frac{\chi^2}{2\delta} \int_0^t\abs{\sqrt{n(s)}\nabla c(s)}_{L^2}^2ds.
\end{align}
By applying  the It\^o formula  to $t\mapsto\abs{\nabla c(t)}^2_{L^2}$, we find that
\begin{align}
	\abs{\nabla c(t)}^2_{L^2}+2\xi\int_0^t\abs{\Delta c(s)}^2_{L^2}ds&=\abs{\nabla c_0}^2_{L^2}-2\int_0^t(\nabla B_1(\bu(s),c(s)),\nabla c(s))ds\notag\\
	&\qquad{}-2\int_0^t (\nabla R_2(n(s),c(s)),\nabla c(s))ds\notag\\
	&\qquad{}+\gamma^2\int_0^t\abs{\nabla\phi(c(s))}^2_{\mathcal{L}^2(\mathbb{R}^2;L^2)}+2\gamma\int_0^t(\nabla\phi(c(s)),\nabla c(s))d\beta_s.\label{3.31}
\end{align}
Due to the Assumption $1$ and the $L^\infty$-norm stability obtained in Theorem \ref{lem2.3},  we obtain
\begin{align}
	(\nabla B_1(\bu,c),\nabla c)&\leq \abs{\nabla \bu}_{L^2}\abs{\nabla c}^2                                                                                                     _{L^4}\notag\\
	&\leq \frac{3\xi}{16\bk_{GN}\abs{c_0}^2_{L^\infty}}\abs{\nabla c}^4                                                                                                     _{L^4}+\frac{4\bk_{GN}\abs{c_0}^2_{L^\infty}}{3\xi} \abs{\nabla \bu}^2_{L^2}\notag\\
	&\leq \frac{\xi}{4}\abs{\Delta c}_{L^2}^2+\frac{4\bk_{GN}\abs{c_0}^2_{L^\infty}}{3\xi} \abs{\nabla \bu}^2_{L^2}+\frac{\xi(4\bk_2+3)}{16}\abs{c_0}_{L^\infty}^{2}.\notag
\end{align}
 and
\begin{align}
- (\nabla R_2(n,c),\nabla c(s))ds&\leq -\frac{\Min_{0\leq c\leq \abs{c_0}_{L^\infty}}f'(c)}{2}\int_\bo n(x)\abs{\nabla c(x)}^2dx\notag\\
&\qquad{}+\frac{1}{2\Min_{0\leq c\leq \abs{c_0}_{L^\infty}}f'}\int_\bo f^2(c(x))\frac{\abs{\nabla n(x)}^2}{n(x)}dx\notag\\
&\leq -\frac{\Min_{0\leq c\leq \abs{c_0}_{L^\infty}}f'(c)}{2}\abs{\sqrt{n}\nabla c}_{L^2}^2+\frac{2\Max_{0\leq c\leq \abs{c_0}_{L^\infty}}f^2}{\Min_{0\leq c\leq \abs{c_0}_{L^\infty}}f'(c)} \abs{\nabla \sqrt{n}}_{L^2}^2.\notag
\end{align}
Thus, we see from \re{3.31} that
\begin{align}
\abs{\nabla c(t)}^2_{L^2}	&+\frac{3\xi}{2}\int_0^t\abs{\Delta c(s)}^2_{L^2}ds+\Min_{0\leq c\leq \abs{c_0}_{L^\infty}}f'\int_0^t\abs{\sqrt{(s)}\nabla c(s)}_{L^2}^2ds\notag\\
	&\leq\abs{\nabla c_0}^2_{L^2}+\frac{\xi(4\bk_2+3)}{8}\abs{c_0}_{L^\infty}^{2}t+\frac{8\bk_{GN}\abs{c_0}^2_{L^\infty}}{3\xi} \int_0^t\abs{\nabla \bu(s)}^2_{L^2}ds\notag\\
	&\qquad{}+\frac{4\Max_{0\leq c\leq \abs{c_0}_{L^\infty}}f^2}{\Min_{0\leq c\leq \abs{c_0}_{L^\infty}}f'} \int_0^t\abs{\nabla \sqrt{n(s)}}_{L^2}^2ds\notag\\
	&\qquad{}+\gamma^2\int_0^t\abs{\nabla\phi(c(s))}^2_{\mathcal{L}^2(\mathbb{R}^2;L^2)}ds+2\gamma\int_0^t(\nabla\phi(c(s)),\nabla c(s))d\beta_s.\notag
\end{align}
Now, we multiply this last inequality by $\bk_f$, add the  result with  inequality \re{3.27}, and use the inequality  \re{3.35} to obtain
\begin{align}
	\int_\bo n(t,x)\ln n(t,x)dx& +\bk_f\abs{\nabla c(t)}^2_{L^2}	+\frac{3\xi\bk_f}{2}\int_0^t\abs{\Delta c(s)}^2_{L^2}ds\notag\\
	&\qquad{}+2\delta\int_0^t\abs{\nabla\sqrt{n(s)}}^2_{L^2}ds+\int_0^t\abs{\sqrt{n(s)}\nabla c(s)}_{L^2}^2ds\notag\\
	&\leq\bk_f\abs{\nabla c_0}^2_{L^2}+\int_\bo n_0(x)\ln n_0(x)dx+\frac{\bk_f\xi(4\bk_f\bk_2+3)}{8}\abs{c_0}_{L^\infty}^{2}t\notag\\
	&\qquad{}+\frac{8\bk_f\bk_{GN}\abs{c_0}^2_{L^\infty}}{3\xi} \int_0^t\abs{\nabla \bu(s)}^2_{L^2}ds+\gamma^2\bk_f\int_0^t\abs{\nabla\phi(c(s))}^2_{\mathcal{L}^2(\mathbb{R}^2;L^2)}ds\label{3.36}\\
	&\qquad{}+2\gamma\bk_f\int_0^t(\nabla\phi(c(s)),\nabla c(s))d\beta_s.\notag
\end{align}
Using the equality \re{2.14} and the inequality \re{4.4.} we note that
\begin{align}
\abs{n}_{L^2}&\leq \bk_{GN}\left( \abs{\sqrt{n}}_{L^2}\abs{\nabla\sqrt{n}}_{L^2}+ \abs{\sqrt{n}}^2_{L^2}\right)\notag\\
&\leq \bk_{GN}\left( \abs{n_0}^{\frac{1}{2}}_{L^1}\abs{\nabla\sqrt{n}}_{L^2}+ \abs{n_0}_{L^1}\right),\label{3.37}
\end{align}
which altogether with  the It\^o formula to $t\mapsto\abs{\bu(t)}^2_{L^2}$ implies  the existence of $\bk_3>0$ such that
\begin{align}
\abs{\bu(t)}^2_{L^2}+2\eta\int_0^t\abs{\nabla \bu(s)}^2_{L^2}ds&\leq 2\int_0^t\abs{\nabla\Phi}_{L^\infty}\abs{n(s)}_{L^2}\abs{\bu(s)}_{L^2}ds\notag\\
&\qquad{}+\int_0^t\abs{g(\bu(s),c(s))}^2_{\mathcal{L}^2(\buc;H)}ds+2\int_0^t(g(\bu(s),c(s)),\bu(s))dWs\notag\\
&\leq\abs{\bu_0}^2_{L^2}+ \frac{\delta\eta}{\bk_4}\int_0^t\abs{\nabla\sqrt{n(s)}}_{L^2}^2ds+\bk_3\abs{\nabla\Phi}_{L^\infty}^2\abs{n_0}_{L^1}\int_0^t\abs{\bu(s)}^2_{L^2}ds\label{3.38}\\
&\qquad{}+ \frac{1}{2}t+\frac{1}{2}\abs{\nabla\Phi}_{L^\infty}^2\abs{n_0}_{L^1}^2\int_0^t\abs{\bu(s)}^2_{L^2}ds\notag\\
&\qquad{}+\int_0^t\abs{g(\bu(s),c(s))}^2_{\mathcal{L}^2(\buc;H)}ds+2\int_0^t(g(\bu(s),c(s)),\bu(s))dWs,\notag
\end{align}
with $\bk_4=\frac{8\bk_f\bk_{GN}\abs{c_0}^2_{L^\infty}}{3\xi}$. Multiplying the inequality \re{3.38} by $\frac{\bk_4}{\eta}$, and adding the result with inequality \re{3.36}, we obtain some positive constants $\bk_5$ and $\bk_6$ such that the inequality \re{3.21} holds.
\end{proof}

\appendix
\section{Compactness and tightness criteria}
In this appendix we recall several compactness and tightness criteria that are frequently used in this paper.
%Let us consider the following functional spaces.
%\begin{center}
%	\begin{tabular}[t]{rp{5mm}p{12cm}}
%		$C([0,T];V^*)$:&=& The space of continuous functions  $\bu:[0,T]\to V^*$ with the topology
%		$\mathcal{T}_{1}$,\\
%		$\mathcal{C}([0,T];L^2(\bo))$:&=& the space of continuous functions $c:[0,T]\to L^2(\bo)$ with the topology $\mathcal{T}'_{1}$,\\
%		$L_{w}^{2}(0,T;V)$:&=& the space $L^{2}(0,T;V)$ with the weak topology $\mathcal{T}_{2}$,\\
%		$L_{w}^{2}(0,T;H^2(\bo))$:&=& the space $L^{2}(0,T;H^2(\bo))$ with the weak topology $\mathcal{T}_{2}'$,\\
%		$L^{2}(0,T;H)$:&=& the space of measurable functions $\bv:[0,T]\to H$ with the topology $\mathcal{T}_{3}$,\\
%		$L^{2}(0,T;H^1(\bo))$:&=& the space of measurable functions $c:[0,T]\to H^1(\bo)$ with the topology $\mathcal{T}_{3}'$.
%	\end{tabular}
%\end{center}
%Let $H_w$ denote the Hilbert space $H$ endowed with the weak topology. Let us consider the space
%
%$C([0,T];H_{w})$:= the space of weakly continuous functions $\bv:[0,T]\to H_{w}$ with the weakest topology $\mathfrak{T}_{4}$ such that for all $h\in H$ the mappings $C([0,T];H_{w})\ni \bv\mapsto (\bv(.),h)$ are continuous.
%
%Similarly, we define $C([0,T];H_w^1(\bo))$ with the topology $\mathcal{T}_{4}'$.

We start with the following lemma based on the Dubinsky Theorem.
\begin{lemma}
	Let us consider the space
	\begin{equation}
	\tilde{\mathcal{Z}}_{0}=L_{w}^{2}(0,T;H^1(\bo))\cap L^{2}(0,T;L^2(\bo))\cap\mathcal{C}([0,T];H^{-3}(\bo))
	\end{equation}
	and $\tilde{\mathcal{T}}_{0}$ be the supremum of the corresponding topologies. Then a set $\bar{\bar{K}}_{0}\subset\tilde{\mathcal{Z}}_{0}$ is $\tilde{\mathcal{T}}_{0}$-relatively compact if the following three conditions hold
	\begin{itemize}
		\item[(a)]
		$\sup\limits_{\varphi\in\bar{\bar{K}}_{0}}\Int\limits_{0}^{T}|\varphi(s)|_{H^1}^{2}ds<\infty$, i.e., $\bar{\bar{K}}_{0}$ is bounded in $L^{2}(0,T;H^1(\bo))$,
		\item[(b)]$\exists \gamma>0$: $\sup\limits_{\varphi\in\bar{\bar{K}}_{0}}\abs{\varphi}_{C^\gamma([0,T]; H^{-3})}<\infty$.
	\end{itemize}
\end{lemma}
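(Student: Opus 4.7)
The plan is to verify that conditions (a) and (b) imply relative compactness of $\bar{\bar{K}}_0$ in each of the three topologies whose supremum defines $\tilde{\mathcal{T}}_0$, and then invoke the general principle that relative compactness in a supremum topology is equivalent to relative compactness in each component. Since the three component spaces are metrizable (or admit a sequential characterization on bounded sets), it suffices to extract, from any sequence in $\bar{\bar{K}}_0$, a single subsequence that converges simultaneously in all three topologies.

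First I would handle the weak $L^2(0,T;H^1(\bo))$ component. Condition (a) says $\bar{\bar{K}}_0$ is bounded in the reflexive Hilbert space $L^2(0,T;H^1(\bo))$, so by the Banach--Alaoglu theorem it is weakly relatively compact there, and every sequence admits a weakly convergent subsequence. Next, for the space $\mathcal{C}([0,T];H^{-3}(\bo))$, I would apply the Arzel\`a--Ascoli theorem. Equicontinuity is immediate from (b), since a uniform bound on the $C^\gamma([0,T];H^{-3})$-seminorm forces a uniform modulus of continuity of order $\gamma$ in $H^{-3}$. For pointwise relative compactness at each $t\in[0,T]$, note that the continuous embedding $H^1(\bo)\hookrightarrow H^{-3}(\bo)$ is compact; combined with (a)--(b) (which give a uniform $L^\infty(0,T;H^{-3})$ bound via $|\varphi(t)|_{H^{-3}}\le |\varphi(0)|_{H^{-3}}+T^{\gamma}|\varphi|_{C^\gamma([0,T];H^{-3})}$, together with a uniform bound on $|\varphi(0)|_{H^{-3}}$ obtained from (a) through the trace/integration argument), we conclude that $\{\varphi(t):\varphi\in\bar{\bar{K}}_0\}$ is relatively compact in $H^{-3}(\bo)$ for each $t$.

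The main step, which I expect to be the principal obstacle, is relative compactness in the strong topology of $L^2(0,T;L^2(\bo))$. Here I would appeal to an Aubin--Lions--Simon type lemma applied to the Gelfand-like chain
\begin{equation*}
H^1(\bo) \hookrightarrow\hookrightarrow L^2(\bo) \hookrightarrow H^{-3}(\bo),
\end{equation*}
where the first embedding is compact and the second is continuous. Condition (a) gives uniform boundedness in $L^2(0,T;H^1(\bo))$, and condition (b) supplies the time-regularity estimate
\begin{equation*}
\sup_{\varphi\in \bar{\bar{K}}_0}\sup_{\substack{t,s\in[0,T]\\ t\ne s}}\frac{|\varphi(t)-\varphi(s)|_{H^{-3}}}{|t-s|^{\gamma}}<\infty,
\end{equation*}
which in particular implies that the time-shifts $\tau_h\varphi-\varphi$ are small in $L^2(0,T-h;H^{-3}(\bo))$ uniformly in $\varphi\in\bar{\bar{K}}_0$ as $h\to 0^+$. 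The Simon criterion (see e.g.\ J.\ Simon, \emph{Compact sets in $L^p(0,T;B)$}) then yields relative compactness in $L^2(0,T;L^2(\bo))$.

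Finally, I would combine the three conclusions as follows: given any sequence $\{\varphi_k\}\subset\bar{\bar{K}}_0$, first extract a subsequence converging weakly in $L^2(0,T;H^1(\bo))$, then a further subsequence converging strongly in $L^2(0,T;L^2(\bo))$, and then a further subsequence converging in $\mathcal{C}([0,T];H^{-3}(\bo))$. Uniqueness of limits (in $H^{-3}(\bo)$, which all three spaces embed into continuously) ensures the three limits coincide, so the last subsequence converges in $\tilde{\mathcal{T}}_0$. This establishes $\tilde{\mathcal{T}}_0$-relative (sequential) compactness of $\bar{\bar{K}}_0$, which is equivalent to relative compactness on the metrizable parts and on bounded subsets of $L^2_w(0,T;H^1(\bo))$, completing the proof.
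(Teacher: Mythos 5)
Your argument is essentially correct, but it routes the compactness through a different key lemma than the paper does. The paper's proof is two lines: Banach--Alaoglu gives relative compactness in $L^2_w(0,T;H^1(\bo))$ from (a), condition (b) gives equicontinuity in $H^{-3}(\bo)$, and then a single appeal to Dubinsky's theorem (Vishik--Fursikov, Theorem IV.4.1) delivers the remaining compactness in $L^2(0,T;L^2(\bo))\cap\mathcal{C}([0,T];H^{-3}(\bo))$ at once. You instead split that last step in two: Simon's criterion with the chain $H^1(\bo)\hookrightarrow\hookrightarrow L^2(\bo)\hookrightarrow H^{-3}(\bo)$ for the strong $L^2(0,T;L^2(\bo))$ compactness (using that the uniform $C^\gamma$ bound makes the time-shifts small in $H^{-3}$), and Arzel\`a--Ascoli for the $\mathcal{C}([0,T];H^{-3}(\bo))$ compactness, followed by a diagonal extraction and identification of the limits in $H^{-3}(\bo)$. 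Both routes are standard and both work; yours is more self-contained and makes the mechanism transparent, at the cost of being longer, while the paper's is shorter because Dubinsky packages the two metrizable components together.

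One point in your Arzel\`a--Ascoli step is stated too loosely: for the pointwise relative compactness of $\{\varphi(t):\varphi\in\bar{\bar{K}}_{0}\}$ in $H^{-3}(\bo)$, a uniform bound in $L^\infty(0,T;H^{-3})$ is not enough, since bounded sets of $H^{-3}(\bo)$ are not relatively compact in $H^{-3}(\bo)$ itself. The correct argument combines (a) and (b): given $\varepsilon>0$, the equicontinuity furnishes $\delta>0$ with $|\varphi(t)-\varphi(s)|_{H^{-3}}<\varepsilon$ for $|t-s|<\delta$, and by (a) each $\varphi$ admits some $s$ in $(t-\delta,t+\delta)\cap[0,T]$ with $|\varphi(s)|_{H^1}^2\leq C/\delta$; hence $\{\varphi(t)\}$ lies within $\varepsilon$ of a fixed ball of $H^1(\bo)$, which is relatively compact in $H^{-3}(\bo)$, so the set is totally bounded. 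With that repair your proof is complete.
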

\begin{proof}
	We note that the following embedding is continuous $H^1(\bo)\hookrightarrow L^2(\bo)\hookrightarrow H^{-3}(\bo)$ with $H^1(\bo)\hookrightarrow L^2(\bo)$ compact. By the Banach-Alaoglu
	Theorem condition (a) yields that $\bar{\bar{K}}_{0}$ is compact in $L_{w}^{2}(0,T;H^1(\bo))$. Moreover (b) implies that the functions $\varphi \in \bar{\bar{K}}_{0}$ are equicontinuous, i.e. for all $\eps>0$, there exists $\delta>0$ such that if $ \abs{t-s}<\delta $ then $\abs{\varphi(t)-\varphi(s)}_{H^{-3}}<\varepsilon$ for all $\varphi\in\bar{\bar{K}}_{0}$. We can then apply Dubinsky's Theorem (see \cite[Theorem IV.4.1]{Vis}) since by condition (a), $\bar{\bar{K}}_{0}$ is bounded in $L^2(0,T; H^{1}(\bo))$.
\end{proof}
Following the same method as in \cite[Lemma 3.3 ]{Brz2}, we obtain the following compactness result.

\begin{lemma}
	Let us consider the space
	\begin{equation}
	\tilde{\mathcal{Z}}_{n}=L_{w}^{2}(0,T;H^1(\bo))\cap L^{2}(0,T;L^2(\bo))\cap\mathcal{C}([0,T];H^{-3}(\bo))\cap\mathcal{C}([0,T];L^2_{w}(\bo)),
	\end{equation}
	and $\tilde{\mathcal{T}}_{0}$ be the supremum of the corresponding topologies. Then a set $\bar{\bar{K}}_{0}\subset\tilde{\mathcal{Z}}_{n}$ is $\tilde{\mathcal{T}}_{0}$-relatively compact if the following three conditions hold
	\begin{itemize}
		\item[(a)]$\sup\limits_{\varphi\in\bar{\bar{K}}_{0}}\abs{\varphi}_{L^\infty(0,T;L^2)}<\infty$,
		\item[(b)]
		$\sup\limits_{\varphi\in\bar{\bar{K}}_{0}}\Int\limits_{0}^{T}|\varphi(s)|_{H^1}^{2}ds<\infty$, i.e., $\bar{\bar{K}}_{0}$ is bounded in $L^{2}(0,T;H^1(\bo))$,
		\item[(c)]$\exists \gamma>0$: $\sup\limits_{\varphi\in\bar{\bar{K}}_{0}}\abs{\varphi}_{C^\gamma([0,T]; H^{-3})}<\infty$.
	\end{itemize}
\end{lemma}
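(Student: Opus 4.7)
The plan is to reduce the claim to the preceding lemma plus a separate check that compactness in $\mathcal{C}([0,T];L^2_w(\bo))$ follows from (a) and (c). Since $\tilde{\mathcal Z}_n$ carries the supremum of four topologies and three of them are exactly the four factors of $\tilde{\mathcal Z}_0$ treated in the previous lemma, conditions (b) and (c) already give relative compactness in $L^2_w(0,T;H^1)\cap L^2(0,T;L^2)\cap\mathcal{C}([0,T];H^{-3})$ for free. Thus I only need to prove that $\bar{\bar K}_0$ is relatively compact in $\mathcal{C}([0,T];L^2_w(\bo))$ under assumptions (a) and (c); the final conclusion is then immediate because the inclusion into the product of the four spaces sends the common sequential limit of a subsequence (which is the same element in each factor by uniqueness in $\mathcal{C}([0,T];H^{-3})$) to a single element of $\tilde{\mathcal Z}_n$.

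For relative compactness in $\mathcal{C}([0,T];L^2_w(\bo))$ I would invoke the standard Arzel\`a--Ascoli-type criterion for weakly continuous functions valued in a separable reflexive Banach space (as used, e.g., in Brze\'zniak--Motyl and Motyl's appendix cited earlier): a family is relatively compact in $\mathcal{C}([0,T];L^2_w(\bo))$ provided that for every $t\in[0,T]$ the set $\{\varphi(t):\varphi\in\bar{\bar K}_0\}$ is relatively weakly compact in $L^2(\bo)$, and for every $\psi\in L^2(\bo)$ the family of real-valued maps $t\mapsto (\varphi(t),\psi)$, $\varphi\in\bar{\bar K}_0$, is equicontinuous on $[0,T]$. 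Pointwise weak relative compactness is immediate from (a): the Banach--Alaoglu theorem applied to the uniform bound $\sup_{\varphi}\sup_t|\varphi(t)|_{L^2}<\infty$.

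The equicontinuity of $t\mapsto(\varphi(t),\psi)$ is the main obstacle, because (c) only provides H\"older continuity in the much weaker space $H^{-3}(\bo)$, and the test function $\psi$ is only in $L^2(\bo)$. I would overcome this by a density argument. Fix $\psi\in L^2(\bo)$ and $\eps>0$, and choose $\psi_\eps\in H^3(\bo)$ with $|\psi-\psi_\eps|_{L^2}<\eps$; density is ensured because $\mathcal{C}_c^\infty(\bo)\subset H^3(\bo)$ is dense in $L^2(\bo)$. Then split
\begin{equation*}
|(\varphi(t)-\varphi(s),\psi)|\le |\varphi(t)-\varphi(s)|_{H^{-3}}|\psi_\eps|_{H^3}+|\varphi(t)-\varphi(s)|_{L^2}|\psi-\psi_\eps|_{L^2}.
\end{equation*}
By (c), the first term is bounded by $C|\psi_\eps|_{H^3}|t-s|^\gamma$ uniformly over $\varphi\in\bar{\bar K}_0$; by (a), the second term is bounded by $2\sup_\varphi|\varphi|_{L^\infty(0,T;L^2)}\,\eps$. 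Choosing first $\eps$ small and then $|t-s|$ small yields the desired equicontinuity uniformly in $\varphi\in\bar{\bar K}_0$.

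Combining the two, $\bar{\bar K}_0$ is relatively compact in $\mathcal{C}([0,T];L^2_w(\bo))$. Taking any sequence $(\varphi_n)\subset\bar{\bar K}_0$, passing to a subsequence once in the compact space given by the previous lemma and once more in $\mathcal{C}([0,T];L^2_w(\bo))$, and identifying the two limits through the common continuous embedding into $\mathcal{C}([0,T];H^{-3}(\bo))$, produces a single limit in $\tilde{\mathcal Z}_n$ for the supremum topology. This gives the claim; the only conceptually new ingredient beyond the previous lemma is the density step in the equicontinuity proof.
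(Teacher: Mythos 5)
Your proof is correct and follows essentially the same route as the paper, which simply defers to the method of Brze\'zniak--Motyl \cite[Lemma 3.3]{Brz2}: that method is precisely your combination of the preceding Dubinsky-based lemma for the first three factors with the Arzel\`a--Ascoli criterion in $\mathcal{C}([0,T];L^2_w(\bo))$, where pointwise weak compactness comes from (a) via Banach--Alaoglu and equicontinuity of $t\mapsto(\varphi(t),\psi)$ comes from the density splitting of $\psi$ into an $H^3$ part controlled by (c) and a small $L^2$ remainder controlled by (a). You have supplied the details the paper leaves to the citation, and the identification of limits through the common embedding into $\mathcal{C}([0,T];H^{-3}(\bo))$ is handled correctly.
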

From this lemma we also get the following  tightness criteria for stochastic processes with paths in $\tilde{\mathcal{Z}}_{n}$ where the proof is the same as the proof of \cite[Lemma 5.5]{Brz3*}.
\begin{lemma}[Tightness criterion for $n$]\label{lemma 3.3}
	Let $\gamma>0$ be a given parameters and $(\varphi_n)_n$ be a sequence of continuous $\{\mathcal{F}_t\}_{t\in [0,T]}$-adapted $H^{-3}(\bo)$-valued processes. Let $\bl_m$ be the law of $\varphi_n$ on $\tilde{\mathcal{Z}}_{n}$. If for any $\eps>0$ there exists a constant $\bk_i$, $i=1,...,3$ such that
	\begin{equation*}
	\begin{split}
	&\sup_m\mathbb{P} \left( \abs{\varphi_m}_{L^\infty(0,T;L^2)}>K_1\right)\leq \eps, \\
	&\sup_m\mathbb{P} \left( \abs{\varphi_m}_{L^2(0,T;H^1)}>K_2\right)\leq \eps,\\
	&\sup_m\mathbb{P} \left( \abs{\varphi_m}_{C^\gamma(0,T;H^{-3})}>K_3\right)\leq \eps,
	\end{split}
	\end{equation*}
	then the sequence $(\bl_m)_m$ is tight on $\tilde{\mathcal{Z}}_{n}$.
\end{lemma}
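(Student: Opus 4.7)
The plan is to derive tightness of $(\bl_m)_m$ on $\tilde{\mathcal{Z}}_n$ directly from the preceding compactness lemma in the appendix, by constructing for each $\varepsilon>0$ an explicit relatively compact subset of $\tilde{\mathcal{Z}}_n$ on which the processes $\varphi_m$ concentrate uniformly in $m$. Recall that by Prokhorov's theorem, tightness of $(\bl_m)_m$ on $\tilde{\mathcal{Z}}_n$ is equivalent to the existence, for every $\varepsilon>0$, of a compact set $\bar{\bar{K}}_\varepsilon\subset\tilde{\mathcal{Z}}_n$ such that $\mathbb{P}(\varphi_m\in \bar{\bar{K}}_\varepsilon)\geq 1-\varepsilon$ for all $m$.

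For a fixed $\varepsilon>0$, I would first invoke the three hypotheses with $\varepsilon/3$ in place of $\varepsilon$ to obtain constants $K_1,K_2,K_3>0$ (depending on $\varepsilon$) with
\begin{equation*}
\sup_m\mathbb{P}\bigl(\abs{\varphi_m}_{L^\infty(0,T;L^2)}>K_1\bigr)\leq \tfrac{\varepsilon}{3},\ \sup_m\mathbb{P}\bigl(\abs{\varphi_m}_{L^2(0,T;H^1)}>K_2\bigr)\leq \tfrac{\varepsilon}{3},\ \sup_m\mathbb{P}\bigl(\abs{\varphi_m}_{C^\gamma(0,T;H^{-3})}>K_3\bigr)\leq \tfrac{\varepsilon}{3}.
\end{equation*}
Then I would define
\begin{equation*}
\bar{\bar{K}}_\varepsilon:=\bigl\{\varphi\in \tilde{\mathcal{Z}}_n : \abs{\varphi}_{L^\infty(0,T;L^2)}\leq K_1,\ \abs{\varphi}_{L^2(0,T;H^1)}\leq K_2,\ \abs{\varphi}_{C^\gamma([0,T];H^{-3})}\leq K_3\bigr\}.
\end{equation*}

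The key step is to verify that the closure of $\bar{\bar{K}}_\varepsilon$ in $\tilde{\mathcal{Z}}_n$ is compact, which is exactly the content of the compactness lemma stated just above: conditions (a), (b), (c) of that lemma are, by construction, satisfied uniformly on $\bar{\bar{K}}_\varepsilon$, so $\bar{\bar{K}}_\varepsilon$ is relatively $\tilde{\mathcal{T}}_0$-compact, and its closure $K_\varepsilon$ is compact in $\tilde{\mathcal{Z}}_n$. Using a simple union bound,
\begin{equation*}
\mathbb{P}(\varphi_m\notin K_\varepsilon)\leq \mathbb{P}(\varphi_m\notin \bar{\bar{K}}_\varepsilon)\leq \sum_{i=1}^{3}\tfrac{\varepsilon}{3}=\varepsilon,
\end{equation*}
uniformly in $m$, so $\mathbb{P}(\varphi_m\in K_\varepsilon)\geq 1-\varepsilon$, which is precisely the tightness criterion.

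The only subtle point, rather than any true obstacle, is to ensure that $\bar{\bar{K}}_\varepsilon$ is actually a subset of $\tilde{\mathcal{Z}}_n$ (so that we can take its closure there) and that the event $\{\varphi_m\in \bar{\bar{K}}_\varepsilon^c\}$ is measurable; this follows because each of the three defining seminorms is lower semicontinuous with respect to the strong topologies underlying $\tilde{\mathcal{Z}}_n$ and because, by the a priori regularity assumed on $(\varphi_m)_m$ (continuous, $\mathcal{F}_t$-adapted $H^{-3}$-valued processes satisfying the probabilistic bounds), the three norms are well-defined random variables. With this in place, the argument reduces to the trivial union bound combined with the previous compactness lemma, completing the proof.
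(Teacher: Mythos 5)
Your argument is correct and is precisely the standard proof of such tightness criteria: apply the hypotheses with $\eps/3$, form the set cut out by the three uniform bounds, note that the deterministic compactness lemma for $\tilde{\mathcal{Z}}_{n}$ makes (the closure of) this set compact, and conclude by a union bound via Prokhorov's characterization. The paper itself gives no proof here, deferring to \cite[Lemma 5.5]{Brz3*}, and that reference proceeds exactly as you do, so there is nothing to add beyond your (correctly flagged) remarks on measurability and on the set actually lying in $\tilde{\mathcal{Z}}_{n}$.
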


The following compactness results are due to \cite[Theorem 4.4 and Theorem 4.5]{Brz1} (see also \cite{mot}), where we can see the details of the proof.

\begin{lemma}
	\label{lem3.3}
	Let us consider the space
	\begin{equation}
	\tilde{\mathcal{Z}}_{\bu}=L_{w}^{2}(0,T;V)\cap L^{2}(0,T;H)\cap\mathcal{C}([0,T];V^*)\cap\mathcal{C}([0,T];H_{w}),
	\end{equation}
	and $\tilde{\mathcal{T}}_{1}$ be the supremum of the corresponding topologies. Then a set $\bar{\bar{K}}_{1}\subset\tilde{\mathcal{Z}}_{\bu}$ is $\tilde{\mathcal{T}}_{1}$-relatively compact if the following three conditions hold
	\begin{itemize}
		\item[(a)]
		$\sup\limits_{\bv\in\bar{\bar{K}}_{1}}\sup\limits_{t\in[0,T]}|\bv(t)|_{L^2}<\infty$,
		\item[(b)]
		$\sup\limits_{\bv\in\bar{\bar{K}}_{1}}\Int\limits_{0}^{T}|\nabla\bv(s)|_{L^2}^{2}ds<\infty$, i.e., $\bar{\bar{K}}_{2}$ is bounded in $L^{2}(0,T;V)$,
		\item[(c)]$\lim\limits_{\delta\to 0}\sup\limits_{\bv\in\bar{\bar{K}}_{1}}\sup\limits_{s,t\in[0,T], |t-s|\leq\delta} \abs{\bv(t)-\bv(s)}_{V^*}=0$.
	\end{itemize}
\end{lemma}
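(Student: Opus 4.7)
The plan is to show relative compactness of $\bar{\bar{K}}_{1}$ separately in each of the four topologies whose supremum defines $\tilde{\mathcal{T}}_{1}$, and then to build a single subsequence that converges simultaneously in all four by a diagonal extraction argument. Concretely, starting from any sequence $(\bv_n)\subset \bar{\bar{K}}_{1}$, I will pass to successive subsequences (still denoted $(\bv_n)$) converging in $L^{2}_{w}(0,T;V)$, in $\mathcal{C}([0,T];V^{*})$, in $\mathcal{C}([0,T];H_{w})$ and in $L^{2}(0,T;H)$ respectively, the final subsequence being convergent in $\tilde{\mathcal{Z}}_{\bu}$.

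The weak compactness in $L^{2}_{w}(0,T;V)$ is immediate from hypothesis (b) via the Banach--Alaoglu theorem, since $L^{2}(0,T;V)$ is reflexive. The compactness in $\mathcal{C}([0,T];V^{*})$ is an Arzel\`a--Ascoli argument: hypothesis (c) furnishes equicontinuity in $V^{*}$, while (a) combined with the compact embedding $H\hookrightarrow V^{*}$ (which follows from $V\hookrightarrow H$ being compact and dense, together with duality) gives pointwise relative compactness of $\{\bv_n(t)\}$ in $V^{*}$ uniformly in $t\in[0,T]$.

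For $\mathcal{C}([0,T];H_{w})$ I will invoke a Strauss-type lemma (used in \cite{mot,Brz1,Brz3*}): a sequence that is uniformly bounded in $L^{\infty}(0,T;H)$ and converges in $\mathcal{C}([0,T];Y)$ for some Banach space $Y$ with $H$ densely embedded into $Y$, actually converges in $\mathcal{C}([0,T];H_{w})$. Applied with $Y=V^{*}$ and using hypotheses (a) together with the convergence extracted in the previous step, this yields the desired subsequence in $\mathcal{C}([0,T];H_{w})$. Finally, for the strong space $L^{2}(0,T;H)$, I will use an Aubin--Lions--Simon flavoured interpolation: the Lions inequality
\begin{equation*}
\abs{\bv}_{L^2}^{2}\leq \eta\abs{\nabla\bv}_{L^2}^{2}+C_{\eta}\abs{\bv}_{V^{*}}^{2},\qquad \bv\in V,
\end{equation*}
combined with the uniform $L^{2}(0,T;V)$-bound from (b) and the already obtained convergence in $\mathcal{C}([0,T];V^{*})$, forces the difference $\bv_n-\bv_m$ to tend to zero in $L^{2}(0,T;H)$.

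The main obstacle will be the $\mathcal{C}([0,T];H_{w})$ step, since $H_{w}$ is not metrizable on bounded sets in a way compatible with Arzel\`a--Ascoli and the weak-in-$H$ continuity of the limit has to be promoted from strong-in-$V^{*}$ continuity, which is where the Strauss-type argument is essential; the remaining three pieces are standard and follow the same pattern used for the Navier--Stokes equations in \cite{Brz1}.
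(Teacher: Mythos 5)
Your proposal is correct and is essentially the argument behind the result as the paper presents it: the paper gives no proof of Lemma \ref{lem3.3} itself but defers to \cite[Theorems 4.4 and 4.5]{Brz1} and \cite{mot}, and those proofs proceed exactly by your four-fold decomposition (Banach--Alaoglu for $L^2_w(0,T;V)$, Arzel\`a--Ascoli with the compact embedding $H\hookrightarrow V^*$ for $\mathcal{C}([0,T];V^*)$, a Strauss-type lemma for $\mathcal{C}([0,T];H_w)$, and the Lions/Ehrling interpolation inequality for $L^2(0,T;H)$) followed by a diagonal extraction.
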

\begin{lemma}
	\label{lem3.5}
	Let us consider the space
	\begin{equation}
	\tilde{\mathcal{Z}}_{c}=L_{w}^{2}(0,T;H^2(\bo))\cap L^{2}(0,T;H_w^1(\bo))\cap\mathcal{C}([0,T];L^2(\bo))\cap\mathcal{C}([0,T];H_w^1(\bo)),
	\end{equation}
	and $\tilde{\mathcal{T}}_{2}$ be the supremum of the corresponding topologies. Then a set $\bar{\bar{K}}_{2}\subset\tilde{\mathcal{Z}}_{c}$ is $\tilde{\mathcal{T}}_{2}$-relatively compact if the following three conditions hold
	\begin{itemize}
		\item[(a)]
		$\sup\limits_{\varphi\in\bar{\bar{K}}_{2}}\sup\limits_{t\in[0,T]}|\varphi(t)|_{H^1}<\infty$,
		\item[(b)]
		$\sup\limits_{\varphi\in\bar{\bar{K}}_{2}}\Int\limits_{0}^{T}|\varphi(s)|_{H^2}^{2}ds<\infty$, i.e., $\bar{\bar{K}}_{2}$ is bounded in $L^{2}(0,T;H^2(\bo))$,
		\item[(c)]$\lim\limits_{\delta\to 0}\sup\limits_{\varphi\in\bar{\bar{K}}_{2}}\sup\limits_{s,t\in[0,T], |t-s|\leq\delta}\abs{\varphi(t)-\varphi(s)}_{L^2}=0$.
	\end{itemize}
\end{lemma}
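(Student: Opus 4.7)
The plan is to establish relative compactness in each of the four topologies that make up $\tilde{\mathcal{T}}_2$ and then invoke a diagonal subsequence argument: from any sequence $(\varphi_n)\subset \bar{\bar{K}}_2$ I would extract successively a subsequence convergent in each component topology, and the limit points being equal (as $L^2$-limits and weak-$L^2(0,T;H^2)$ limits), by definition of the supremum topology the final diagonal subsequence converges in $\tilde{\mathcal{T}}_2$.

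The two weak integral components are handled directly by Banach--Alaoglu. Condition (b) gives boundedness in the reflexive space $L^2(0,T;H^2(\bo))$, hence weak relative compactness there; the same bound, transported via the continuous embedding $H^2\hookrightarrow H^1$, yields boundedness in $L^2(0,T;H^1(\bo))$ and so weak-relative compactness in the $L^2(0,T;H^1_w)$ component as well.

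For the $\mathcal{C}([0,T];L^2)$ component I would apply the classical Arzel\`a--Ascoli theorem. Pointwise relative compactness of the set $\{\varphi(t):\varphi\in\bar{\bar{K}}_2\}$ in $L^2(\bo)$ follows from (a) combined with the Rellich--Kondrachov compact embedding $H^1(\bo)\hookrightarrow L^2(\bo)$, while the required uniform equicontinuity in $L^2$ is exactly hypothesis (c).

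The main technical step is the $\mathcal{C}([0,T];H^1_w)$ component, and I expect this to be where the bulk of the work lies. Setting $R:=\sup_{\varphi\in\bar{\bar{K}}_2,\,t\in[0,T]}|\varphi(t)|_{H^1}$, which is finite by (a), the closed ball $B_R\subset H^1(\bo)$ is weakly compact (by reflexivity and Kakutani) and, since $H^1(\bo)$ is separable, weakly metrizable on $B_R$. I would apply Arzel\`a--Ascoli with values in the compact metric space $(B_R,\text{weak})$: pointwise relative compactness is automatic, and what remains is equicontinuity of $t\mapsto \varphi(t)\in H^1_w$ uniformly in $\varphi\in\bar{\bar{K}}_2$. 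Concretely, for each $h\in H^{-1}(\bo)$ and $\eps>0$ I need $\delta>0$ with $|\langle\varphi(t)-\varphi(s),h\rangle|<\eps$ whenever $|t-s|<\delta$, for every $\varphi\in\bar{\bar{K}}_2$. For $h\in L^2(\bo)$ the estimate $|\langle\varphi(t)-\varphi(s),h\rangle|\leq|\varphi(t)-\varphi(s)|_{L^2}|h|_{L^2}$ together with (c) does it immediately. For a general $h\in H^{-1}(\bo)$ I would pick $(h_k)\subset L^2(\bo)$ with $h_k\to h$ in $H^{-1}$ and split
\begin{equation*}
|\langle\varphi(t)-\varphi(s),h\rangle|\leq|\langle\varphi(t)-\varphi(s),h_k\rangle|+2R\,|h-h_k|_{H^{-1}},
\end{equation*}
the tail being controlled by (a). The delicate point, and the obstacle I expect to take most care over, is reducing the verification of equicontinuity to a fixed countable dense family of test functionals in $H^{-1}(\bo)$, so that the Arzel\`a--Ascoli criterion on the metrizable compact set $(B_R,\text{weak})$ applies cleanly and uniformly in $\varphi\in\bar{\bar{K}}_2$; once this reduction is in place, the conclusion follows.
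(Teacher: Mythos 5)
Your overall architecture (componentwise compactness plus a diagonal extraction) is the standard one; the paper itself does not prove this lemma but defers to \cite[Theorems 4.4 and 4.5]{Brz1}, and your treatment of the $L^2_w(0,T;H^2)$, $\mathcal{C}([0,T];L^2)$ and $\mathcal{C}([0,T];H^1_w)$ components — Banach--Alaoglu, Arzel\`a--Ascoli with Rellich--Kondrachov plus (c), and Arzel\`a--Ascoli into the weakly compact metrizable ball $B_R$ with the density/truncation reduction to countably many test functionals — is exactly the argument used there and is sound.

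The gap is in the second component. You read it as a weak topology and dispose of it by Banach--Alaoglu, but this makes that slot vacuous: weak convergence in $L^2(0,T;H^1)$ is already implied by weak convergence in $L^2(0,T;H^2)$, so nothing new is proved. The subscript $w$ in the appendix statement is a misprint; the definition \re{3.79} of $\bz_c$ in the main text, the parallel lemma for $\tilde{\mathcal{Z}}_{\bu}$ (whose second slot is the \emph{strong} $L^2(0,T;H)$), and the way the lemma is consumed downstream (the convergence \re{3.80} asserts $\bar{c}_m\to c$ strongly in $L^2(0,T;H^1(\bo))$, and Lemma \ref{lem4.8} needs $\int_0^T\abs{\bar{c}_m(s)-c(s)}^2_{H^1}ds\to 0$) all show that the intended component is the norm topology of $L^2(0,T;H^1(\bo))$, for which Banach--Alaoglu is insufficient. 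The missing step is an interpolation argument combining the two components you have already established: from $\abs{\psi}_{H^1}^2\leq \bk\abs{\psi}_{L^2}\abs{\psi}_{H^2}$ one gets, along the subsequence converging in $\mathcal{C}([0,T];L^2(\bo))$ and bounded in $L^2(0,T;H^2(\bo))$,
\begin{equation*}
\int_0^T\abs{\varphi_{n}(t)-\varphi(t)}_{H^1}^2\,dt\leq \bk\,\sup_{0\leq t\leq T}\abs{\varphi_{n}(t)-\varphi(t)}_{L^2}\;T^{1/2}\left(\int_0^T\abs{\varphi_{n}(t)-\varphi(t)}_{H^2}^2\,dt\right)^{1/2}\longrightarrow 0,
\end{equation*}
the last factor being bounded by (b) and weak lower semicontinuity. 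With this insertion (the Dubinsky/Aubin--Lions mechanism) your proof is complete.
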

%We recall that the space $\tilde{\mathcal{Z}}_{n}$, $\tilde{\mathcal{Z}}_{\bu}$ and $\tilde{\mathcal{Z}}_{c}$ are not Polish spaces.
We now consider a filtered probability space $(\Omega,\mathcal{F},\mathbb{P})$ with filtration $\mathbb{F}:=\left\{\mathcal{F}_{t}\right\}_{t\geq 0}$ satisfying the usual hypotheses. Let $(\mathbb{M},d_{1})$ be a complete, separable metric space and
$(y_{n})_{n\in\mathbb{N}}$ be a sequence of $\mathbb{F}$-adapted and $\mathbb{M}$-valued processes. We recall from \cite{Joffe} the following definition.
\begin{definition}\label{definition3.1}
	A sequence $(y_{n})_{n\in\mathbb{N}}$ satisfies the \textbf{Aldous condition} in the space $\mathbb{M}$ if and only if
	\begin{align*}
	&\forall\epsilon>0~\forall\zeta>0~ \exists\delta>0~\text{such that for every sequence}~(\tau_{n})_{n\in\mathbb{N}}~\text{of}~ \mathbb{F}\text{-}\text{stopping times with}\\
	&\tau_{n}\leq T~\text{one has}~ \sup_{n\in\mathbb{N}}\sup_{0\leq\theta\leq \delta}\mathbb{P}\left\{|y_{n}(\tau_{n}+\theta)-y_{n}(\tau_{n})|_{\mathbb{M}}\geq\zeta\right\}\leq\epsilon.
	\end{align*}
\end{definition}
In Definition \ref{definition3.1}, and throughout we understand that $y_{n}$ is extended to zero outside the interval $[0,T]$.

The following lemma is proved in \cite[Appendix A, Lemma 6.3]{mot}.
\begin{lemma}
	\label{lem3.2}
	Let $(X,|.|_{X})$ be a separable Banach space and let $(y_{n})_{n\in\mathbb{N}}$ be a sequence of $X$-valued random variables. Assume that for every $(\tau_{n})_{n\in\mathbb{N}}$ of $\mathbb{F}$-stoppings times with $\tau_{n}\leq T$ and for every $n\in\mathbb{N}$ and $\theta\geq 0$ the following condition holds
	\begin{equation}
	\label{equa3.10}
	\mathbb{E}\left|y_{n}(\tau_{n}+\theta)-y_{n}(\tau_{n})\right|_{X}^{\alpha}\leq C\theta^{\beta},
	\end{equation}
	for some $\alpha,\beta>0$ and some constant $C>0$. Then the sequence $(y_{n})_{n\in\mathbb{N}}$ satisfies the \textbf{Aldous condition} in the space $X$.
\end{lemma}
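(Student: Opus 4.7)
The plan is to deduce the Aldous condition from the given moment bound by a direct application of the Chebyshev–Markov inequality. Fix arbitrary $\epsilon>0$ and $\zeta>0$; the goal is to exhibit a $\delta=\delta(\epsilon,\zeta)>0$ such that for every sequence $(\tau_n)$ of $\mathbb{F}$-stopping times bounded by $T$ and every $\theta\in[0,\delta]$,
\[
\mathbb{P}\bigl\{|y_n(\tau_n+\theta)-y_n(\tau_n)|_X\geq \zeta\bigr\}\leq \epsilon,
\]
uniformly in $n$.

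First, I apply the Chebyshev–Markov inequality at the exponent $\alpha>0$ given by the hypothesis: for any non-negative random variable $Z$ and any $\zeta>0$, one has $\mathbb{P}(Z\geq\zeta)\leq \zeta^{-\alpha}\mathbb{E}[Z^\alpha]$. Applying this with $Z=|y_n(\tau_n+\theta)-y_n(\tau_n)|_X$, and then using the standing hypothesis $\mathbb{E}|y_n(\tau_n+\theta)-y_n(\tau_n)|_X^\alpha\leq C\theta^\beta$, I obtain
\[
\mathbb{P}\bigl\{|y_n(\tau_n+\theta)-y_n(\tau_n)|_X\geq \zeta\bigr\}\leq \frac{C\theta^\beta}{\zeta^\alpha}.
\]
The estimate holds uniformly in $n$ and in the choice of stopping time $\tau_n$, precisely because the hypothesis is assumed uniformly in those data.

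Next, I choose the parameter $\delta$. Since $\beta>0$, I can pick
\[
\delta := \left(\frac{\epsilon\,\zeta^\alpha}{C}\right)^{1/\beta}>0.
\]
Then for every $\theta\in[0,\delta]$, the bound above gives $C\theta^\beta/\zeta^\alpha \leq C\delta^\beta/\zeta^\alpha=\epsilon$, so that
\[
\sup_{n\in\mathbb{N}}\sup_{0\leq\theta\leq\delta}\mathbb{P}\bigl\{|y_n(\tau_n+\theta)-y_n(\tau_n)|_X\geq \zeta\bigr\}\leq \epsilon,
\]
which is exactly the Aldous condition in the form of Definition \ref{definition3.1}. Since $\epsilon,\zeta$ were arbitrary, the proof is complete.

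There is essentially no obstacle here; the only subtlety worth flagging in the writeup is that the supremum over $(\tau_n)$ and over $\theta\in[0,\delta]$ is absorbed automatically because the moment bound in the hypothesis is stated uniformly in both $\tau_n$ (any sequence of stopping times $\leq T$) and $\theta\geq 0$, so no additional measurability or tightness argument is needed beyond the one-line Markov estimate.
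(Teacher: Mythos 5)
Your proof is correct and is the standard argument: the paper itself does not reproduce a proof but defers to \cite[Appendix A, Lemma 6.3]{mot}, where exactly this Chebyshev--Markov estimate combined with the choice $\delta=(\epsilon\zeta^{\alpha}/C)^{1/\beta}$ is used. Nothing is missing; the uniformity in $n$ and in the stopping times is indeed inherited directly from the hypothesis \re{equa3.10}.
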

In the view of Lemma \ref{lem3.3} and Lemma \ref{lem3.4}, in the next corollaries, we will state a tightness criteria for stochastic processes with part in $\tilde{\mathcal{Z}}_{\bu}$ or in $\tilde{\mathcal{Z}}_{c}$.
\begin{cor}\label{cor3.1}
	Let $(\bv_m)_m$ be a sequence of continuous $\{\mathcal{F}_t\}_{t\in [0,T]}$-adapted $V^*$-valued processes satisfying
	\begin{description}
		\item[(a)] there exists a constant $\bk_1>0$ such that $$ \sup_m\be\sup_{0\leq s\leq T}\abs{\bv_m(s)}_{L^2}^2\leq \bk_1,$$
		\item[(b)] there exists a constant $\bk_2>0$ such that $$ \sup_m\int_0^T\abs{\nabla\bv_m(s)}_{L^2}^2ds\leq \bk_2,$$
		\item[(c)]  $(\bv_m)_m$ satisfies the \textbf{Aldous condition} in $V^*$.
	\end{description}
	Let $\bl_m(\bv_m)$ be the law of $\bv_m$ on $\tilde{\mathcal{Z}}_{\bu}$.  Then, the sequence  $(\bl_m(\bv_m))_m$  is tight in $\tilde{\mathcal{Z}}_{\bu}$.
\end{cor}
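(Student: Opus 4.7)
The plan is to construct, for every $\varepsilon>0$, a set $K_\varepsilon\subset\tilde{\mathcal{Z}}_{\bu}$ which is compact by Lemma \ref{lem3.3} and on which the processes $\bv_m$ concentrate uniformly in $m$ with probability at least $1-\varepsilon$.

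First, recall that by Lemma \ref{lem3.3}, a subset of $\tilde{\mathcal{Z}}_{\bu}$ is relatively compact if it is bounded in $L^{\infty}(0,T;H)$, bounded in $L^{2}(0,T;V)$, and equicontinuous in $V^{*}$. Accordingly, given $\varepsilon>0$, I would fix three parameters $R_{1}(\varepsilon), R_{2}(\varepsilon)>0$ and a function $\eta\mapsto\delta(\eta,\varepsilon)$ (to be determined) and define
\begin{equation*}
K_\varepsilon:=\Bigl\{\bv\in\tilde{\mathcal{Z}}_{\bu}:\ \sup_{t\in[0,T]}\abs{\bv(t)}_{L^{2}}\leq R_{1},\ \int_{0}^{T}\abs{\nabla\bv(s)}_{L^{2}}^{2}ds\leq R_{2},\ \omega_{\bv}(\delta(\eta,\varepsilon))\leq\eta\ \forall \eta>0\Bigr\},
\end{equation*}
where $\omega_{\bv}(\delta):=\sup_{|t-s|\leq\delta}\abs{\bv(t)-\bv(s)}_{V^{*}}$. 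Taking the closure of $K_{\varepsilon}$ in $\tilde{\mathcal{Z}}_{\bu}$, Lemma \ref{lem3.3} ensures its compactness.

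Next, I would estimate $\mathbb{P}(\bv_m\notin K_\varepsilon)$ by splitting into the three failure modes. By Chebyshev's inequality together with assumption (a),
\begin{equation*}
\mathbb{P}\Bigl(\sup_{t\in[0,T]}\abs{\bv_{m}(t)}_{L^{2}}>R_{1}\Bigr)\leq \frac{\bk_{1}}{R_{1}^{2}},
\end{equation*}
so choosing $R_{1}=R_{1}(\varepsilon)$ large enough makes this bound smaller than $\varepsilon/3$. Similarly, assumption (b) (understood in expectation) combined with Chebyshev gives $\mathbb{P}(\int_{0}^{T}\abs{\nabla\bv_{m}}_{L^{2}}^{2}ds>R_{2})\leq \bk_{2}/R_{2}$, which can be made $\leq\varepsilon/3$ by choosing $R_{2}$ large enough.

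The main obstacle is controlling the equicontinuity failure $\mathbb{P}(\omega_{\bv_{m}}(\delta)>\eta)$ uniformly in $m$. This is precisely where assumption (c), the Aldous condition in $V^{*}$, enters. I would invoke the standard conversion of the Aldous property (in the form of, e.g., Métivier's characterization) which states that if a sequence of càdlàg processes satisfies the Aldous condition in a Polish space, then the sequence of moduli of continuity $\omega_{\bv_{m}}(\delta)$ is tight as $\delta\to 0$, uniformly in $m$. Concretely, for each $\eta=2^{-j}\varepsilon/3$ one can pick $\delta_{j}=\delta(2^{-j}\varepsilon/3,\varepsilon 2^{-j}/3)$ from the Aldous condition so that $\sup_{m}\mathbb{P}(\omega_{\bv_{m}}(\delta_{j})>\eta)\leq\varepsilon 2^{-j}/3$; summing over $j$ yields the desired global control by $\varepsilon/3$. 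Combining the three bounds with a union bound gives $\sup_{m}\mathbb{P}(\bv_{m}\notin K_{\varepsilon})\leq\varepsilon$, proving tightness of $(\mathcal{L}(\bv_{m}))_{m}$ on $\tilde{\mathcal{Z}}_{\bu}$.
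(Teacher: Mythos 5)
Your proposal is correct and follows the same route as the proof in the references the paper cites for this corollary ([Brz1, Theorems 4.4--4.5], [mot]): build a compact set from the three conditions of Lemma \ref{lem3.3}, control the first two failure probabilities by Chebyshev (with assumption (b) read in expectation, as you note), and convert the Aldous condition into uniform control of the modulus of continuity in $V^{*}$ via the M\'etivier/Aldous lemma, summing over a countable family of thresholds. No gaps; the only external ingredient you invoke (Aldous condition implies tightness of the moduli of continuity) is precisely the one used in the cited sources.
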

\begin{cor}\label{cor3.2}
	$(v_m)_m$ be a sequence of continuous $\{\mathcal{F}_t\}_{t\in [0,T]}$-adapted $L^2(\bo)$-valued processes satisfying
	\begin{description}
		\item[(a)] there exists a constant $\bk_1>0$ such that $$ \sup_m\be\sup_{0\leq s\leq T}\abs{v_m(s)}_{H^1}^2\leq \bk_1,$$
		\item[(b)] there exists a constant $\bk_2>0$ such that $$ \sup_m\int_0^T\abs{v_m(s)}_{H^2}^2ds\leq \bk_2,$$
		\item[(c)]  $(v_m)_m$ satisfies the \textbf{Aldous condition} in $L^2(\bo)$.
	\end{description}
	Let $\bl_m(v_m)$ be the law of $v_m$ on $\tilde{\mathcal{Z}}_{c}$.  Then, the sequence  $(\bl_m(v_m))_m$  is tight in $\tilde{\mathcal{Z}}_{c}$.
\end{cor}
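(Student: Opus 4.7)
The plan is to combine the deterministic compactness criterion of Lemma~\ref{lem3.5} with Chebyshev-type bounds for the integrability conditions (a) and (b), together with the classical passage from Aldous's stopping-time condition to a modulus-of-continuity estimate in probability. Concretely, for each $\eps>0$ I will construct a set $K_\eps\subset\tilde{\mathcal{Z}}_c$ whose closure is compact in $\tilde{\mathcal{Z}}_c$ and satisfies $\sup_m\mathbb{P}(v_m\notin K_\eps)\le \eps$, which gives tightness.

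\textbf{Step 1 (uniform norm bounds via Markov).} Using hypothesis (a), choose $r_1=r_1(\eps)$ with $K_1/r_1^2\le \eps/3$ and set
\[
B^1:=\{v\in\tilde{\mathcal{Z}}_c:\ \sup_{t\in[0,T]}|v(t)|_{H^1}\le r_1\}.
\]
Using hypothesis (b), choose $r_2=r_2(\eps)$ with $K_2/r_2\le \eps/3$ and set
\[
B^2:=\Bigl\{v\in\tilde{\mathcal{Z}}_c:\ \int_0^T|v(s)|_{H^2}^2\,ds\le r_2\Bigr\}.
\]
Chebyshev's inequality and the hypotheses yield $\mathbb{P}(v_m\notin B^i)\le \eps/3$ for $i=1,2$, uniformly in $m$. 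These two sets deliver conditions (a) and (b) of Lemma~\ref{lem3.5}.

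\textbf{Step 2 (equicontinuity from the Aldous condition).} This is the principal technical step. For each integer $k\ge1$, apply the Aldous condition with $\zeta=1/k$ and tolerance $\eps_k=\eps/(3\cdot 2^{k+1}C)$, where $C>0$ is the universal constant appearing in the dyadic chaining argument. This yields $\delta_k>0$ such that for every sequence of $\mathbb{F}$-stopping times $(\tau_m)$ bounded by $T$,
\[
\sup_m\,\sup_{0\le\theta\le\delta_k}\mathbb{P}\bigl\{|v_m(\tau_m+\theta)-v_m(\tau_m)|_{L^2}\ge 1/k\bigr\}\le \eps_k.
\]
A standard dyadic partitioning of $[0,T]$ into subintervals of length $\delta_k$ together with the application of the Aldous condition to the canonical hitting-time stopping times then converts this into a uniform modulus-of-continuity bound
\[
\sup_m\mathbb{P}\Bigl\{\sup_{s,t\in[0,T],\,|s-t|\le\delta_k}|v_m(s)-v_m(t)|_{L^2}>2/k\Bigr\}\le \eps/(3\cdot 2^k).
\]
Set
\[
B^3:=\bigcap_{k\ge 1}\Bigl\{v\in\tilde{\mathcal{Z}}_c:\ \sup_{s,t\in[0,T],\,|s-t|\le\delta_k}|v(s)-v(t)|_{L^2}\le 2/k\Bigr\}.
\]
Every $v\in B^3$ satisfies $\lim_{\delta\to 0}\sup_{|s-t|\le\delta}|v(s)-v(t)|_{L^2}=0$, which is exactly condition (c) of Lemma~\ref{lem3.5}, and the union bound gives $\mathbb{P}(v_m\notin B^3)\le \eps/3$.

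\textbf{Step 3 (conclusion).} Take $K_\eps:=\overline{B^1\cap B^2\cap B^3}$, where the closure is in $\tilde{\mathcal{Z}}_c$. By Lemma~\ref{lem3.5} the set $B^1\cap B^2\cap B^3$ is relatively compact in $\tilde{\mathcal{Z}}_c$, hence $K_\eps$ is compact. Since $B^1\cap B^2\cap B^3\subset K_\eps$,
\[
\mathbb{P}(v_m\in K_\eps)\ge 1-\mathbb{P}(v_m\notin B^1)-\mathbb{P}(v_m\notin B^2)-\mathbb{P}(v_m\notin B^3)\ge 1-\eps,
\]
uniformly in $m$. Since $\eps>0$ was arbitrary, the family $(\mathcal{L}_m(v_m))_m$ is tight on $\tilde{\mathcal{Z}}_c$.

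The main obstacle is Step~2: converting Aldous's stopping-time condition into the uniform modulus-of-continuity estimate required by condition (c) of Lemma~\ref{lem3.5}. The argument is classical (variants appear in Aldous's original work and in the Joffe--Métivier framework), but it requires a careful dyadic partitioning of $[0,T]$ and the introduction of suitable hitting-time stopping times to control $\sup_{|s-t|\le\delta_k}|v_m(s)-v_m(t)|_{L^2}$ in terms of increments of the form $v_m(\tau+\theta)-v_m(\tau)$ that the hypothesis gives access to. Once this passage is established, the remainder of the proof reduces to straightforward Markov-type probability bounds and an application of the deterministic compactness criterion.
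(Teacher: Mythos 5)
Your proof is correct and follows exactly the route the paper intends: the paper states Corollary \ref{cor3.2} without proof, deferring to the Brz\'ezniak--Motyl tightness machinery, which is precisely your argument of combining Chebyshev bounds for hypotheses (a)--(b) with the classical Aldous-to-modulus-of-continuity passage to verify the three conditions of the deterministic compactness criterion (Lemma \ref{lem3.5}). Your Step 2 is the content of the auxiliary lemma in the cited reference of Motyl, and your sketch of the hitting-time/dyadic argument there is the standard and correct way to establish it.
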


\section*{Acknowledgment}
We acknowledge financial support provided by the Austrian Science Fund (FWF). In particular, Boris Jidjou Moghomye  and partially Erika Hausenblas   were supported by the Austrian Science Fund, project 32295.

\begin{center}
  
\end{center}
\end{document}